\documentclass[reqno]{amsart}
\usepackage{amsmath}
\usepackage{amssymb}
\usepackage{enumerate}
\usepackage[all]{xy}
\usepackage{amsthm}
\usepackage{amscd}
\usepackage{amsfonts}
\usepackage{latexsym}
\usepackage{amscd}
\usepackage{graphicx}
\usepackage{setspace}
\usepackage{bbm}
\usepackage{tikz}

\usetikzlibrary{decorations.pathmorphing}
\usetikzlibrary{cd}

\textheight 213mm

\definecolor{darkblue}{HTML}{111199}
\definecolor{darkgreen}{HTML}{336633}
\definecolor{darkred}{HTML}{993333}
\definecolor{darkpurple}{HTML}{995599}
\makeatletter
\newcommand{\arxiv}[1]{{\tt arXiv:#1}}

\newtheorem{theorem}{Theorem}[section]
\newtheorem{lemma}[theorem]{Lemma}

\newtheorem{corollary}[theorem]{Corollary} 
\theoremstyle{definition}  
\newtheorem{definition}[theorem]{Definition}
\newtheorem{example}[theorem]{Example}
\newtheorem{examples}[theorem]{Example}

\newtheorem{remark}[theorem]{Remark}

\@addtoreset{equation}{section}

\def\ii{\text{\i}}
\def\jj{\text{\j}}
\def\SCat{\mathcal{SC}at}
\def\SMon{\mathcal{SM}on}
\def\SCAT{\mathfrak{SCat}}
\def\TSCAT{\text{2-}\mathfrak{SCat}}
\def\TGSCAT{\text{2-}\mathfrak{GSCat}}
\def\GSCat{\mathcal{GSC}at}
\def\GSCAT{\mathfrak{GSCat}}
\def\piSCat{\Pi\text{-}\mathcal{SC}at}
\def\piSMon{\Pi\text{-}\mathcal{SM}on}
\def\piSCAT{\Pi\text{-}\mathfrak{SCat}}

\def\qpiGSCAT{(Q,\Pi)\text{-}\mathfrak{GSCat}}
\def\qpiTGSCAT{(Q,\Pi)\text{-2-}\mathfrak{GSCat}}

\def\qpiCAT{(Q,\Pi)\text{-}\mathfrak{Cat}}
\def\piTSCat{\Pi\text{-2-}\mathcal{SC}at}
\def\piTSCAT{\Pi\text{-2-}\mathfrak{SCat}}

\def\piCat{\Pi\text{-}\mathcal{C}at}
\def\piMon{\Pi\text{-}\mathcal{M}on}
\def\piCAT{\Pi\text{-}\mathfrak{Cat}}
\def\piTCat{\Pi\text{-2-}\mathcal{C}at}
\def\piTCAT{\Pi\text{-2-}\mathfrak{Cat}}
\def\SVec{\underline{\mathcal{SV}ec}}
\def\fdSVec{\underline{\mathcal{SV}ec}_{fd}}
\def\SVEC{\mathcal{SV}ec}
\def\fdSVEC{\mathcal{SV}ec_{fd}}
\def\GSVec{\mathcal{GSV}ec}
\def\deg{\operatorname{deg}}
\def\ob{\operatorname{ob}}
\def\sop{\operatorname{sop}}

\newcommand{\rSMod}{\mathcal{S\!M}od\text{-}}
\newcommand{\lrSMod}{\text{-}\mathcal{S\!M}od\text{-}}

\newcommand{\lrGSMod}{\text{-}\mathcal{GS\!M}od\text{-}}
\newcommand{\End}{\operatorname{End}}

\newcommand{\Hom}{\operatorname{Hom}}

\newcommand{\Kar}{\operatorname{Kar}}
\newcommand{\KAR}{\operatorname{Kar}}
\newcommand{\SKar}{\operatorname{SKar}}
\newcommand{\GSKar}{\operatorname{GSKar}}
\newcommand{\GSKAR}{\operatorname{GSKar}}

\newcommand{\id}{\text{id}}

\newcommand{\g}{\mathfrak{g}}
\newcommand{\Z}{\mathbb{Z}}
\newcommand{\N}{\mathbb{N}}
\newcommand{\Q}{\mathbb{Q}}
\renewcommand{\k}{\Bbbk}

\newcommand{\eps}{\varepsilon}

\def\la{\lambda}

\def\C{\mathcal{C}}

\def\A{\mathcal{A}}
\def\AA{\mathfrak{A}}
\def\B{\mathcal{B}}
\def\BB{\mathfrak{B}}

\def\0{{\bar{0}}}
\def\1{{\bar{1}}}
\def\unit{\mathbbm{1}}
\def\smallcat{}

\hyphenation{theo-re-ti-cal group-theo-re-ti-cal
semi-sim-ple al-geb-ras di-men-sions sim-ple ob-jects
equi-va-lent pro-per-ties ca-te-go-ries ques-tion mo-dule
e-print auto-equi-valence equi-va-ri-an-ti-za-tion}

\allowdisplaybreaks

\begin{document}

\title{Monoidal supercategories}
\author[J. Brundan]{Jonathan Brundan}
\author[A. Ellis]{Alexander P. Ellis}
\address{Department of Mathematics,
University of Oregon, Eugene, OR 97403, USA}
\email{brundan@uoregon.edu, apellis@gmail.com}
\thanks{2010 {\it Mathematics Subject Classification}: 17B10, 18D10.}
\thanks{Research of J.B. supported in part by NSF grant DMS-1161094.}

\begin{abstract}
This work is a companion to our article ``Super Kac-Moody
2-categories,''  which introduces super analogs of the
Kac-Moody 2-categories of Khovanov-Lauda and Rouquier. In the case of
$\mathfrak{sl}_2$, the super Kac-Moody 2-category was
constructed already in [A. Ellis and A. Lauda, ``An odd categorification
of $U_q(\mathfrak{sl}_2)$''], 
but we found that the
formalism adopted there became too cumbersome in the general
case. Instead, it is better to work with {\em
  2-supercategories} (roughly, 2-categories enriched in
vector superspaces). Then the Ellis-Lauda
2-category, 
which we call here a {\em $\Pi$-2-category} (roughly, a 2-category equipped with a distinguished 
involution in its Drinfeld center), 
can be recovered by taking the superadditive
envelope then passing to the underlying 2-category. 
The main goal of this article is to develop this language and the
related formal constructions, in the hope that these foundations may prove useful in other contexts.
\end{abstract}

\maketitle  

\section{Introduction}

\noindent
1.1.
In representation theory,
one finds many monoidal
categories and 2-categories playing an increasingly prominent role.
Examples include
the {\em Brauer category} $\mathcal B(\delta)$,
the {\em oriented Brauer category} $\mathcal{OB}(\delta)$,
the  {\em Temperley-Lieb category}
$\mathcal{TL}(\delta)$, the 
{\em web category} $\mathcal{W}eb(U_q(\mathfrak{sl}_n))$,
the category of {\em Soergel bimodules} $\mathcal{S}(W)$ associated to a Coxeter group $W$,
and the {\em Kac-Moody 2-category} $\mathfrak{U}(\mathfrak{g})$ associated to
a Kac-Moody algebra $\mathfrak{g}$.
Each of these categories, or perhaps its additive Karoubi
envelope,
has a definition ``in nature,'' as well as a diagrammatic description by
generators and relations.
It is also often instructive after taking additive Karoubi
envelope to pass to the {\em Grothendieck ring}.
Let us go through our examples in turn.
\begin{itemize}
\item
The Brauer category $\mathcal B(\delta)$
is the symmetric monoidal category generated by a self-dual object of
dimension $\delta \in \mathbb{C}$.
By \cite[Theorem 2.6]{LZ}, it may be presented as the strict monoidal category
with one generating object $\color{darkblue}{\cdot}$ and three generating morphisms $\mathord{
\begin{tikzpicture}[baseline = -0.8]
	\draw[-,thick,darkblue] (0.18,-.2) to (-0.18,.3);
	\draw[thick,darkblue,-] (-0.18,-.2) to (0.18,.3);
\end{tikzpicture}
}:
{\color{darkblue}\cdot}\,\otimes\,{\color{darkblue}\cdot}\,
\rightarrow 
{\color{darkblue}\cdot}\,\otimes\,{\color{darkblue}\cdot}\,$,
$\mathord{
\begin{tikzpicture}[baseline = 0]
	\draw[-,thick,darkblue] (0.3,0.25) to[out=-90, in=0] (0.1,-0.05);
	\draw[-,thick,darkblue] (0.1,-0.05) to[out = 180, in = -90] (-0.1,0.25);
\end{tikzpicture}
}: \unit\rightarrow
{\color{darkblue}\cdot}\,\otimes\,{\color{darkblue}\cdot}\,$
and 
$\mathord{
\begin{tikzpicture}[baseline = 0]
	\draw[-,thick,darkblue] (0.3,-0.1) to[out=90, in=0] (0.1,0.2);
	\draw[-,thick,darkblue] (0.1,0.2) to[out = 180, in = 90] (-0.1,-.1);
\end{tikzpicture}
}:
{\color{darkblue}\cdot}\,\otimes\,{\color{darkblue}\cdot}\,
\rightarrow \unit$, 
subject to the following relations:
\begin{align*}
\mathord{
\begin{tikzpicture}[baseline = 0]
	\draw[-,thick,darkblue] (0.28,0) to[out=90,in=-90] (-0.28,.6);
	\draw[-,thick,darkblue] (-0.28,0) to[out=90,in=-90] (0.28,.6);
	\draw[-,thick,darkblue] (0.28,-.6) to[out=90,in=-90] (-0.28,0);
	\draw[-,thick,darkblue] (-0.28,-.6) to[out=90,in=-90] (0.28,0);
\end{tikzpicture}
}\,
&=\,
\mathord{
\begin{tikzpicture}[baseline = 0]
	\draw[-,thick,darkblue] (0.18,-.4) to (0.18,.4);
	\draw[-,thick,darkblue] (-0.18,-.4) to (-0.18,.4);
\end{tikzpicture}
}\:,
&
\mathord{
\begin{tikzpicture}[baseline = 0]
	\draw[-,thick,darkblue] (0.45,.6) to (-0.45,-.6);
	\draw[-,thick,darkblue] (0.45,-.6) to (-0.45,.6);
        \draw[-,thick,darkblue] (0,-.6) to[out=90,in=-90] (-.45,0);
        \draw[-,thick,darkblue] (-0.45,0) to[out=90,in=-90] (0,0.6);
\end{tikzpicture}
}
&=
\mathord{
\begin{tikzpicture}[baseline = 0]
	\draw[-,thick,darkblue] (0.45,.6) to (-0.45,-.6);
	\draw[-,thick,darkblue] (0.45,-.6) to (-0.45,.6);
        \draw[-,thick,darkblue] (0,-.6) to[out=90,in=-90] (.45,0);
        \draw[-,thick,darkblue] (0.45,0) to[out=90,in=-90] (0,0.6);
\end{tikzpicture}
}\:,
&\!\!\!\!\!\!\!\!\mathord{
\begin{tikzpicture}[baseline = 0]
  \draw[-,thick,darkblue] (0.2,0) to (0.2,.5);
	\draw[-,thick,darkblue] (0.2,0) to[out=-90, in=0] (0,-0.35);
	\draw[-,thick,darkblue] (0,-0.35) to[out = 180, in = -90] (-0.2,0);
	\draw[-,thick,darkblue] (-0.2,0) to[out=90, in=0] (-0.4,0.35);
	\draw[-,thick,darkblue] (-0.4,0.35) to[out = 180, in =90] (-0.6,0);
  \draw[-,thick,darkblue] (-0.6,0) to (-0.6,-.5);
\end{tikzpicture}
}
\,&=\,
\mathord{\begin{tikzpicture}[baseline=0]
  \draw[-,thick,darkblue] (0,-0.4) to (0,.4);
\end{tikzpicture}
}\:,
\qquad
\mathord{
\begin{tikzpicture}[baseline = 0]
  \draw[-,thick,darkblue] (0.2,0) to (0.2,-.5);
	\draw[-,thick,darkblue] (0.2,0) to[out=90, in=0] (0,0.35);
	\draw[-,thick,darkblue] (0,0.35) to[out = 180, in = 90] (-0.2,0);
	\draw[-,thick,darkblue] (-0.2,0) to[out=-90, in=0] (-0.4,-0.35);
	\draw[-,thick,darkblue] (-0.4,-0.35) to[out = 180, in =-90] (-0.6,0);
  \draw[-,thick,darkblue] (-0.6,0) to (-0.6,.5);
\end{tikzpicture}
}\,
=\,
\mathord{\begin{tikzpicture}[baseline=0]
  \draw[-,thick,darkblue] (0,-0.4) to (0,.4);
\end{tikzpicture}
}\:,\\
\mathord{
\begin{tikzpicture}[baseline = 0]
\draw[-,thick,darkblue](.6,-.3) to (.1,.4);
	\draw[-,thick,darkblue] (0.6,0.4) to[out=-140, in=0] (0.1,-0.1);
	\draw[-,thick,darkblue] (0.1,-0.1) to[out = 180, in = -90] (-0.2,0.4);
\end{tikzpicture}
}&=
\mathord{
\begin{tikzpicture}[baseline = 0]
\draw[-,thick,darkblue](-.5,-.3) to (0,.4);
	\draw[-,thick,darkblue] (0.3,0.4) to[out=-90, in=0] (0,-0.1);
	\draw[-,thick,darkblue] (0,-0.1) to[out = 180, in = -40] (-0.5,0.4);
\end{tikzpicture}
}\,,\!\!\!\!\!\!\!\!
&
\mathord{
\begin{tikzpicture}[baseline = 0]
	\draw[-,thick,darkblue] (0.28,-0.2) to[out=90,in=-90] (-0.28,.4);
	\draw[-,thick,darkblue] (-0.28,-0.2) to[out=90,in=-90] (0.28,.4);
	\draw[-,thick,darkblue] (0,-.5) to[out=180,in=-90] (-0.28,-0.2);
	\draw[-,thick,darkblue] (0,-.5) to[out=0,in=-90] (0.28,-0.2);
\end{tikzpicture}
}
&=
\mathord{
\begin{tikzpicture}[baseline = 0]
	\draw[-,thick,darkblue] (0,-.2) to[out=180,in=-90] (-0.25,0.3);
	\draw[-,thick,darkblue] (0,-.2) to[out=0,in=-90] (0.25,0.3);
\end{tikzpicture}
}\,,
&
\mathord{
\begin{tikzpicture}[baseline = 0]
	\draw[-,thick,darkblue] (0,-.25) to[out=180,in=-90] (-0.28,0.05);
	\draw[-,thick,darkblue] (0,-.25) to[out=0,in=-90] (0.28,0.05);
	\draw[-,thick,darkblue] (0,.35) to[out=180,in=90] (-0.28,0.05);
	\draw[-,thick,darkblue] (0,.35) to[out=0,in=90] (0.28,0.05);
\end{tikzpicture}
}\,
&=\delta.
\end{align*}
Here, we are using the well-known string calculus for morphisms in a
strict monoidal category as in \cite{BK}.
We remark also that the additive Karoubi envelope of $\mathcal B(\delta)$
is Deligne's interpolating category
$\operatorname{REP}(O_\delta)$.
There is a similar story for the oriented Brauer category.
It is the symmetric monoidal category generated by a dual pair of
objects of
dimension $\delta$. An explicit presentation is recorded in
\cite[Theorem 1.1]{BCNR}.
Its additive Karoubi envelope 
is Deligne's interpolating category
$\operatorname{REP}(GL_\delta)$.
\item
For $\delta = -(q+q^{-1}) \in \Q(q)$,
the additive Karoubi envelope of 
$\mathcal{TL}(\delta)$ is monoidally equivalent to the 
category of finite-dimensional
representations of the quantum group
$U_q(\mathfrak{sl}_2)$.
More generally, for $n \geq 2$,
the additive Karoubi envelope of 
$\mathcal{W}eb(U_q(\mathfrak{sl}_n))$ is monoidally equivalent to the 
category of finite-dimensional
representations of 
$U_q(\mathfrak{sl}_n)$.
An explicit diagrammatic presentation 
was derived by Cautis,
Kamnitzer and Morrison \cite{CKM}, building on
the influential work of Kuperberg \cite{K} which treated the case $n=3$.
\item
When $W$ is a Weyl group, Soergel \cite{S} showed that 
$\mathcal{S}(W)$ is monoidally equivalent to
the Hecke category $\mathcal H(G/B)$ of Kazhdan-Lusztig (certain 
$B$-equivariant sheaves on the associated Lie group $G$).
In general, $\mathcal{S}(W)$ 
is the additive Karoubi envelope of the category
of {\em Bott-Samelson bimodules}. In almost all cases, a diagrammatic presentation of the latter monoidal category 
has been derived by Elias and Williamson
\cite{EW}.
The Grothendieck ring $K_0(\mathcal{S}(W))$
is isomorphic to the group ring of $W$;
if one incorporates the natural grading into the picture one actually
gets the Iwahori-Hecke algebra $H_q(W)$ associated to $W$.
\item 
The Kac-Moody 2-category $\mathfrak{U}(\mathfrak{g})$ was defined by generators and relations by 
Rouquier \cite{Rou} and
Khovanov-Lauda \cite{KL3}; see also \cite{B}. The Grothendieck ring of its additive
Karoubi envelope is naturally an idempotented ring, with idempotents
indexed by the underlying weight lattice, and is
isomorphic
to the idempotented integral form $\dot{U}(\mathfrak{g})$ of the universal
enveloping algebra of $\mathfrak{g}$;
if one incorporates the grading one gets Lusztig's idempotented
integral form 
$\dot U_q(\mathfrak{g})$ of the associated quantum group.
(These statements are still only conjectural
outside of finite type.)
\end{itemize}

\vspace{2mm}
\noindent
1.2.
We are interested in this article in {\em superalgebra},
i.e. $\Z/2$-graded algebra. 
Our motivation comes from the belief that
there should be interesting 
super analogs of all of the categories just mentioned. In fact, 
they are already known in several cases. For example, 
analogs of the Brauer and
oriented Brauer categories are suggested by \cite{KT} and
\cite{JK}, respectively.
Also in \cite{BE}, we have defined a super analog of the
Kac-Moody 2-category, building on \cite{EL} which treated the case of
$\mathfrak{sl}_2$.
In thinking about such questions, one quickly runs into some basic
foundational issues. To start with, already in the literature, there are several competing
notions as to what should be called a ``super
monoidal category.'' The goal of the paper is to clarify 
these notions
and the connections between them; see also \cite{Usher} for further developments.

Let $\k$ be a field of characteristic different from 2.
A {\em superspace} is a $\Z/2$-graded vector space $V = V_\0 \oplus
V_\1$. 
We use the notation $|v|$ for the parity of a homogeneous vector $v$
in a superspace. Formulae involving this notation for 
inhomogeneous $v$ should be interpreted by extending additively from
the homogeneous case.
 
Let $\SVEC$ (resp.\ $\fdSVEC$) be the category of all \smallcat
superspaces (resp.\ finite dimensional superspaces) and (not
necessarily homogeneous) linear
maps. 
These categories
possess
some additional structure:
\begin{itemize}
\item
A linear map between superspaces $V$ and $W$
is {\em even} (resp.\ {\em odd})
if it preserves (resp. reverses) the parity of vectors.
Moreover, any linear map
$f:V \rightarrow W$ 
decomposes uniquely as a sum $f = f_\0 + f_\1$ with $f_\0$ even and
$f_\1$ odd.
This makes each morphism space $\Hom_{\SVEC}(V,W)$ into a superspace.
\item
The usual $\k$-linear tensor product of two superspaces is again a superspace with
$(V \otimes W)_\0 = V_\0 \otimes W_{\0} \oplus
V_{\1} \otimes W_{\1}$
and
$(V \otimes W)_\1 = V_\0 \otimes W_{\1} \oplus
V_{\1} \otimes W_{\0}$.
Also the tensor product $f \otimes g$ of two linear maps is 
the linear map defined from $(f \otimes g)(v \otimes w) := (-1)^{|g||v|}f(v)
\otimes g(w)$.
\end{itemize}
Let $\SVec$ be the 
subcategory of $\SVEC$ consisting of all superspaces but only the even
linear maps.
The restriction of the tensor product operation just defined 
gives a functor
$-\otimes-:\SVec \times \SVec \rightarrow \SVec$ 
making $\SVec$ into a monoidal category.
However, $\SVEC$ itself is {\em not} monoidal in the usual sense, because of the sign in the
following formula for composing
tensor products of linear maps:
\begin{equation}\label{sint}
(f \otimes g) \circ (h \otimes k) =
(-1)^{|g||h|}(f \circ h) \otimes (g \circ k).
\end{equation}
In fact, $\SVEC$ is what we'll call a {\em monoidal supercategory}.
We proceed to the formal definitions.

\begin{definition}\label{defsupercat}
(i) A {\em supercategory} means a $\SVec$-enriched category,
i.e. each morphism space is a superspace
and composition induces an even linear map.
(We refer to 
\cite[$\S$1.2]{kelly} for the basic language of enriched categories.)

(ii) A {\em superfunctor} $F:\A \rightarrow \B$
between
supercategories is a $\SVec$-enriched functor, i.e. the
function $\Hom_{\A}(\lambda,\mu) \rightarrow
\Hom_{\B}(F\lambda, F\mu), f \mapsto Ff$
is an even linear map
for all $\lambda, \mu \in \ob \A$.
(See \cite[$\S$1.2]{kelly} again.)

(iii) Given superfunctors $F, G:\A \rightarrow \B$,
a {\em supernatural transformation}
$x:F \Rightarrow G$ 
is a family of 
morphisms
$x_\lambda = x_{\lambda,\0}+x_{\lambda,\1} \in \Hom_{\mathcal
  \B}(F\lambda, G\lambda)$
for $\lambda\in\ob\A$, such that
$|x_{\lambda,p}| = p$ and
$x_{\mu,p} \circ Ff = (-1)^{p|f|}
Gf \circ x_{\lambda,p}$ 
for all
$p \in \Z/2$ and $f\in \Hom_{\A}(\lambda, \mu)$.
The supernatural transformation $x$ decomposes as a sum of 
{homogeneous} supernatural transformations as
$x=x_{\0}+x_{\1}$
where $(x_p)_\lambda := x_{\lambda,p}$, making
the space $\Hom(F,G)$ of all supernatural transformations
from $F$ to $G$ into a superspace.
(Even supernatural transformations are just the same as the $\SVec$-enriched
natural transformations of \cite[$\S$1.2]{kelly}.)

(iv)
A superfunctor $F:\A\rightarrow\B $ is a {\em superequivalence} 
if there is a superfunctor $G:\B\rightarrow \A$ 
such that $F \circ G$ and $G \circ F$ are isomorphic to
identities via 
even supernatural transformations.
To check that $F$ is
a superequivalence, it suffices to
show that it is full, faithful, and {\em evenly dense}, i.e. every
object of $\B$ should be isomorphic 
to an object in the image of $F$ via an even isomorphism.
 
(v)
For any supercategory $\A$, the {\em underlying category}
$\underline{\A}$ is the category with the same objects as $\A$ but only its
even morphisms.
If $F:\A \rightarrow \B$ is a superfunctor between supercategories, it
restricts to $\underline{F}:\underline{\A} \rightarrow
\underline{\B}$.
Also an even supernatural transformation $x:F \Rightarrow G$ 
is the same data as a natural transformation $x:\underline{F} \Rightarrow \underline{G}$.
(These definitions are special cases of ones in
\cite[$\S$1.3]{kelly}.)
\end{definition}

\begin{examples}\label{snuggly}
(i) We've already explained how to make
$\SVEC$ into a supercategory. The underlying category
is $\SVec$.
 
(ii) A {\em superalgebra} is a superspace $A = A_\0 \oplus A_\1$
equipped with an even linear map $m_A:A \otimes A \rightarrow A$
making $A$ into an associative, unital algebra; we denote
the image of $a \otimes b$ under this map simply by $ab$.
Any superalgebra $A$
can be viewed as a supercategory $\A$ with one object whose
endomorphism superalgebra is $A$.

(iii) 
Suppose we are given superalgebras $A$ and $B$.
Then there is a supercategory $A\lrSMod B$ consisting of
all \smallcat 
$(A,B)$-superbimodules and superbimodule homomorpisms.
Here, 
an {\em $(A,B)$-superbimodule} is a superspace $V$
plus an even linear map
$m_V:A \otimes V \otimes B\rightarrow V$
making $V$ into an $(A,B)$-bimodule in the usual sense;
we denote the image of $a \otimes v \otimes b$ 
under this map simply by $a v b$.
A {\em superbimodule homomorphism} $f:V \rightarrow W$ 
is a linear map such that $m_W\circ (1_A \otimes f \otimes 1_B) = f
\circ m_V$. In view of the definition of tensor product of linear
maps between superspaces, this means explicitly that
$f(avb) = (-1)^{|f||a|} a f(v) b$.

(iv)
For any two supercategories $\A$ and $\B$, 
there is a supercategory $\mathcal{H}om(\A, \B)$ consisting of all
superfunctors and supernatural transformations.
\end{examples}

The monoidal category $\SVec$ is symmetric with 
braiding 
$u \otimes v \mapsto (-1)^{|u||v|} v \otimes u$.
As in \cite[$\S$1.4]{kelly},
this allows us to introduce a product operation
$- \boxtimes - $ which makes the category
$\SCat$ of all \smallcat supercategories and
superfunctors into a monoidal category.
On objects (i.e. supercategories) $\A$ and $\B$, this operation is defined by letting
 $\A \boxtimes \B$ be the supercategory
whose objects are ordered pairs $(\lambda,\mu)$ of objects of $\A$
and $\B$, respectively, and 
$
\Hom_{\A \boxtimes \B}((\lambda,\mu), (\sigma,\tau)) :=
\Hom_{\A}(\lambda,\sigma) \otimes \Hom_{\B}(\mu,\tau).
$
Composition in $\A \boxtimes \B$
is defined using the symmetric braiding in $\SVec$, so that
$(f \otimes g) \circ (h \otimes k) =
(-1)^{|g||h|}(f \circ h) \otimes (g \circ k)$.
The unit object $\mathcal{I}$ is a distinguished
supercategory with one object whose endomorphism superalgebra is $\k$ 
(concentrated in even parity).
 The definition of $-\boxtimes -$ on morphisms (i.e. superfunctors) is
obvious, as are the coherence maps.

\begin{remark}
Example~\ref{snuggly}(iii) is a special case of Example~\ref{snuggly}(iv).
Let $\A$ and $\B$ be defined from superalgebras $A$ and $B$ as in
Example~\ref{snuggly}(ii).
Let $\B^{\sop}$ be the
supercategory with $\ob \B^{\sop} := \ob \B$, and new composition law
$a \bullet b := (-1)^{|a||b|} b \circ a$.
Then the supercategory $\mathcal{H}om(\A \boxtimes \B^{\sop}, \SVEC)$ 
is isomorphic to $A\lrSMod
B$ via the 
superfunctor which identifies $V:\A \boxtimes \B^{\sop} \rightarrow \SVEC$
with the superspace obtained by
evaluating at the only object, viewed as a superbimodule so 
$avb := (-1)^{|b||v|} V(a\otimes b)(v)$. The data of a
supernatural transformation
$f:V \rightarrow W$ is exactly the same as the data of a superbimodule
homomorphism.
\end{remark}

\begin{definition}\label{dash}
(i) A {\em monoidal supercategory} is a supercategory $\A$
equipped with a superfunctor $-\otimes -:\A \boxtimes \A
\rightarrow \A$, a unit object $\unit$, and 
even supernatural isomorphisms\footnote{By $(- \otimes -) \otimes -$ we mean the
superfunctor 
$(\A \boxtimes \A) \boxtimes \A \rightarrow \A$
obtained by applying $\otimes$ twice in the order indicated.
Similarly, $- \otimes (- \otimes -)$ is a superfunctor
$\A \boxtimes (\A \boxtimes \A) \rightarrow \A$, but we are viewing it
as a superfunctor $(\A \boxtimes \A) \boxtimes \A\rightarrow \A$ by using the
canonical isomorphism defined by the associator in $\SCat$.
Also, $\unit \otimes -:\A\rightarrow\A$ and $- \otimes \unit:\A
\rightarrow \A$ denote the
superfunctors defined by tensoring on the left and right by the
unit object, respectively.}
$a:(- \otimes -) \otimes -
\stackrel{\sim}{\Rightarrow} - \otimes (- \otimes -)$,
$l:\unit \otimes - \stackrel{\sim}{\Rightarrow} -$
and $r:- \otimes \unit \stackrel{\sim}{\Rightarrow} -$
called
{\em coherence maps},
which satisfy
axioms analogous to the ones of a monoidal category.
In any monoidal supercategory,
tensor products of morphisms 
compose according to the same rule (\ref{sint}) that we already observed
in $\SVEC$. We call this the {\em super interchange law}.

(ii)
Given monoidal supercategories $\A$ and $\B$, a {\em monoidal
  superfunctor} is a
superfunctor $F:\A \rightarrow \B$ plus 
coherence maps $c: (F\: -)
\otimes (F\: -)  \stackrel{\sim}{\Rightarrow}
F(- \otimes -)$ and $i:\unit_\B
\stackrel{\sim}{\rightarrow} F\unit_\A$ satisfying axioms 
analogous to the ones of a monoidal category;
we require that $c$ is an even supernatural isomorphism and that $i$
is an even isomorphism. (Note we implicitly assume all monoidal
(super)functors are strong throughout the article.)

(iii) Given monoidal superfunctors $F, G:\A \rightarrow \B$,
a {\em monoidal natural transformation}
is an even supernatural transformation $x:F \Rightarrow G$ 
such that 
\begin{align*}
x_{\lambda \otimes \mu} \circ (c_F)_{\lambda,\mu} &=
(c_G)_{\lambda,\mu} \circ (x_\lambda \otimes x_\mu),\\
x_{\unit_\A} \circ i_F &= i_G,
\end{align*}
in $\Hom_\B((F\lambda)
\otimes (F\mu), G(\lambda \otimes \mu))$
and $\Hom_\B(\unit_\B, G \unit_\A)$, respectively.
(There is no such thing as a monoidal {\em super}natural transformation.)
\end{definition}

A monoidal supercategory (resp. superfunctor) is {\em strict} if its
coherence maps are identities.
There is a version of Mac Lane's {\em Coherence Theorem} \cite{Mac} for monoidal
supercategories. It implies that any
monoidal supercategory $\A$ is {\em monoidally superequivalent} to a strict
monoidal supercategory $\B$,
i.e. there are monoidal superfunctors $F:\A \rightarrow \B$ and $G:\B
\rightarrow \A$ such that 
$G \circ F$ and $F \circ G$ are
isomorphic to identities via monoidal natural transformations;
equivalently, there is a monoidal superfunctor
$F:\A \rightarrow \B$
which defines a superequivalence between the underlying supercategories.

With a little care about signs, 
the string calculus mentioned earlier can be used to represent morphisms in a strict
monoidal supercategory $\A$. 
Thus, a morphism $f \in \Hom_\A(\lambda,\mu)$
is the picture
\begin{equation}\label{jazzy}
\mathord{
\begin{tikzpicture}[baseline = 0]
	\draw[-,thick,darkred] (0.08,-.4) to (0.08,-.13);
	\draw[-,thick,darkred] (0.08,.4) to (0.08,.13);
      \draw[thick,darkred] (0.08,0) circle (4pt);
   \node at (0.08,0) {\color{darkred}$\scriptstyle{f}$};
   \node at (0.08,-.53) {$\scriptstyle{\lambda}$};
   \node at (0.08,.53) {$\scriptstyle{\mu}$};
\end{tikzpicture}
}\,.
\end{equation}
Often we will omit the object labels $\lambda, \mu$ here.
Then the horizontal and vertical compositions $f \otimes g$ and $f
\circ g$ are obtained by horizontally
and vertically stacking:
$$
\mathord{
\begin{tikzpicture}[baseline = -2]
	\draw[-,thick,darkred] (0.08,-.4) to (0.08,-.13);
	\draw[-,thick,darkred] (0.08,.4) to (0.08,.13);
      \draw[thick,darkred] (0.08,0) circle (4pt);
   \node at (0.08,0) {\color{darkred}$\scriptstyle{f}$};
\end{tikzpicture}
}
\otimes
\mathord{
\begin{tikzpicture}[baseline = -2]
	\draw[-,thick,darkred] (0.08,-.4) to (0.08,-.13);
	\draw[-,thick,darkred] (0.08,.4) to (0.08,.13);
      \draw[thick,darkred] (0.08,0) circle (4pt);
   \node at (0.08,0) {\color{darkred}$\scriptstyle{g}$};
\end{tikzpicture}
}=
\mathord{
\begin{tikzpicture}[baseline = -2]
	\draw[-,thick,darkred] (0.08,-.4) to (0.08,-.13);
	\draw[-,thick,darkred] (0.08,.4) to (0.08,.13);
      \draw[thick,darkred] (0.08,0) circle (4pt);
   \node at (0.08,0) {\color{darkred}$\scriptstyle{g}$};
	\draw[-,thick,darkred] (-.8,-.4) to (-.8,-.13);
	\draw[-,thick,darkred] (-.8,.4) to (-.8,.13);
      \draw[thick,darkred] (-.8,0) circle (4pt);
   \node at (-.8,0) {\color{darkred}$\scriptstyle{f}$};
\end{tikzpicture}
}\,,
\qquad\qquad
\mathord{
\begin{tikzpicture}[baseline = -2]
	\draw[-,thick,darkred] (0.08,-.4) to (0.08,-.13);
	\draw[-,thick,darkred] (0.08,.4) to (0.08,.13);
      \draw[thick,darkred] (0.08,0) circle (4pt);
   \node at (0.08,0) {\color{darkred}$\scriptstyle{f}$};
\end{tikzpicture}
}
\circ
\mathord{
\begin{tikzpicture}[baseline = -2]
	\draw[-,thick,darkred] (0.08,-.4) to (0.08,-.13);
	\draw[-,thick,darkred] (0.08,.4) to (0.08,.13);
      \draw[thick,darkred] (0.08,0) circle (4pt);
   \node at (0.08,0) {\color{darkred}$\scriptstyle{g}$};
\end{tikzpicture}
}
=
\mathord{
\begin{tikzpicture}[baseline = 8]
	\draw[-,thick,darkred] (0.08,-.4) to (0.08,-.13);
	\draw[-,thick,darkred] (0.08,.57) to (0.08,.13);
	\draw[-,thick,darkred] (0.08,.83) to (0.08,1.1);
      \draw[thick,darkred] (0.08,0) circle (4pt);
      \draw[thick,darkred] (0.08,.7) circle (4pt);
   \node at (0.08,0) {\color{darkred}$\scriptstyle{g}$};
   \node at (0.08,.71) {\color{darkred}$\scriptstyle{f}$};
\end{tikzpicture}
}\:.
$$
More complicated pictures should be interpreted by {\em first}
composing horizontally {\em then} composing vertically. For example,
the following is $(f \otimes g) \circ (h \otimes k)$:
$$
\mathord{
\begin{tikzpicture}[baseline = 8]
	\draw[-,thick,darkred] (0.08,-.4) to (0.08,-.13);
	\draw[-,thick,darkred] (0.08,.57) to (0.08,.13);
	\draw[-,thick,darkred] (0.08,1.17) to (0.08,.83);
      \draw[thick,darkred] (0.08,0) circle (4pt);
      \draw[thick,darkred] (0.08,.7) circle (4pt);
   \node at (0.08,0) {\color{darkred}$\scriptstyle{h}$};
   \node at (0.08,.71) {\color{darkred}$\scriptstyle{f}$};
\end{tikzpicture}
}\qquad
\mathord{
\begin{tikzpicture}[baseline = 8]
	\draw[-,thick,darkred] (0.08,-.4) to (0.08,-.13);
	\draw[-,thick,darkred] (0.08,.57) to (0.08,.13);
	\draw[-,thick,darkred] (0.08,1.17) to (0.08,.83);
      \draw[thick,darkred] (0.08,0) circle (4pt);
      \draw[thick,darkred] (0.08,.7) circle (4pt);
   \node at (0.08,0) {\color{darkred}$\scriptstyle{k}$};
   \node at (0.08,.71) {\color{darkred}$\scriptstyle{g}$};
\end{tikzpicture}
}\:\:.
$$
Unlike in the purely even setting, this is {\em not} the same as $(f \circ h) \otimes (g
\circ k)$.
In fact, in pictures, the super interchange law tells us that
\begin{equation}
\mathord{
\begin{tikzpicture}[baseline = 0]
	\draw[-,thick,darkred] (0.08,-.4) to (0.08,-.23);
	\draw[-,thick,darkred] (0.08,.4) to (0.08,.03);
      \draw[thick,darkred] (0.08,-0.1) circle (4pt);
   \node at (0.08,-0.1) {\color{darkred}$\scriptstyle{g}$};
	\draw[-,thick,darkred] (-.8,-.4) to (-.8,-.03);
	\draw[-,thick,darkred] (-.8,.4) to (-.8,.23);
      \draw[thick,darkred] (-.8,0.1) circle (4pt);
   \node at (-.8,.1) {\color{darkred}$\scriptstyle{f}$};
\end{tikzpicture}
}
\quad=\quad
\mathord{
\begin{tikzpicture}[baseline = 0]
	\draw[-,thick,darkred] (0.08,-.4) to (0.08,-.13);
	\draw[-,thick,darkred] (0.08,.4) to (0.08,.13);
      \draw[thick,darkred] (0.08,0) circle (4pt);
   \node at (0.08,0) {\color{darkred}$\scriptstyle{g}$};
	\draw[-,thick,darkred] (-.8,-.4) to (-.8,-.13);
	\draw[-,thick,darkred] (-.8,.4) to (-.8,.13);
      \draw[thick,darkred] (-.8,0) circle (4pt);
   \node at (-.8,0) {\color{darkred}$\scriptstyle{f}$};
\end{tikzpicture}
}
\quad=\quad
(-1)^{|f||g|}\:
\mathord{
\begin{tikzpicture}[baseline = 0]
	\draw[-,thick,darkred] (0.08,-.4) to (0.08,-.03);
	\draw[-,thick,darkred] (0.08,.4) to (0.08,.23);
      \draw[thick,darkred] (0.08,0.1) circle (4pt);
   \node at (0.08,0.1) {\color{darkred}$\scriptstyle{g}$};
	\draw[-,thick,darkred] (-.8,-.4) to (-.8,-.23);
	\draw[-,thick,darkred] (-.8,.4) to (-.8,.03);
      \draw[thick,darkred] (-.8,-0.1) circle (4pt);
   \node at (-.8,-.1) {\color{darkred}$\scriptstyle{f}$};
\end{tikzpicture}
}\:.
\end{equation}

\begin{examples}
(i) The supercategory
$\SVEC$ is a monoidal supercategory with tensor functor as defined
above.
The unit object is $\k$.
More generally, for a superalgebra $A$, 
$A\lrSMod A$ is a monoidal supercategory with
tensor functor
defined by taking the usual tensor product of superbimodules over
$A$.
The unit object is the regular superbimodule $A$.

(ii)
For a supercategory $\A$, 
$\mathcal{E}nd(\A)$ is a 
strict monoidal supercategory, with $-\otimes-$ defined on
functors $F, G:\A \rightarrow \A$ by 
$F \otimes G := F \circ G$,
and on supernatural transformations $x:F \Rightarrow G$ and $y:H
\Rightarrow K$ so that
$(x \otimes y)_\lambda := x_{K \lambda} \circ F
y_{\lambda}$.
The unit object is the identity functor $I :\A \rightarrow \A$.
Later on, we will denote the horizontal compositions $F \otimes G$ and
$x \otimes y$
of two superfunctors or two supernatural transformations
simply by $FG$ and $xy$, respectively.
In more complicated horizontal compositions, we often adopt the standard shorthand of writing simply
$F$ in place of the identity morphism $1_F$, e.g. for $x:F \Rightarrow
G$, $y:H \Rightarrow
K$, the expressions $F y$ and $x H$ denote $1_F
\,y:FH \Rightarrow FK$
and $x 1_H:FH \Rightarrow GH$, respectively.

(iii)
Here is a purely diagrammatic example.
The
{\em odd Brauer supercategory} 
is the strict monoidal supercategory
$\mathcal{SB}$
with one generating object $\color{darkgreen}{\cdot}$, an {\em even} generating morphism $\mathord{
\begin{tikzpicture}[baseline = -0.8]
	\draw[-,thick,darkgreen] (0.18,-.2) to (-0.18,.3);
	\draw[thick,darkgreen,-] (-0.18,-.2) to (0.18,.3);
\end{tikzpicture}
}:
{\color{darkgreen}\cdot}\,\otimes\,{\color{darkgreen}\cdot}\,
\rightarrow 
{\color{darkgreen}\cdot}\,\otimes\,{\color{darkgreen}\cdot}\,$,
and two {\em odd} generating morphisms
$\mathord{
\begin{tikzpicture}[baseline = 0]
	\draw[-,thick,darkgreen] (0.3,0.25) to[out=-90, in=0] (0.1,-0.05);
	\draw[-,thick,darkgreen] (0.1,-0.05) to[out = 180, in = -90] (-0.1,0.25);
\end{tikzpicture}
}: \unit\rightarrow
{\color{darkgreen}\cdot}\,\otimes\,{\color{darkgreen}\cdot}\,$
and 
$\mathord{
\begin{tikzpicture}[baseline = 0]
	\draw[-,thick,darkgreen] (0.3,-0.1) to[out=90, in=0] (0.1,0.2);
	\draw[-,thick,darkgreen] (0.1,0.2) to[out = 180, in = 90] (-0.1,-.1);
\end{tikzpicture}
}:
{\color{darkgreen}\cdot}\,\otimes\,{\color{darkgreen}\cdot}\,
\rightarrow \unit$, 
subject to the following relations:
\begin{align*}
\mathord{
\begin{tikzpicture}[baseline = 0]
	\draw[-,thick,darkgreen] (0.28,0) to[out=90,in=-90] (-0.28,.6);
	\draw[-,thick,darkgreen] (-0.28,0) to[out=90,in=-90] (0.28,.6);
	\draw[-,thick,darkgreen] (0.28,-.6) to[out=90,in=-90] (-0.28,0);
	\draw[-,thick,darkgreen] (-0.28,-.6) to[out=90,in=-90] (0.28,0);
\end{tikzpicture}
}\,
&=\,
\mathord{
\begin{tikzpicture}[baseline = 0]
	\draw[-,thick,darkgreen] (0.18,-.4) to (0.18,.4);
	\draw[-,thick,darkgreen] (-0.18,-.4) to (-0.18,.4);
\end{tikzpicture}
}\:,
&
\mathord{
\begin{tikzpicture}[baseline = 0]
	\draw[-,thick,darkgreen] (0.45,.6) to (-0.45,-.6);
	\draw[-,thick,darkgreen] (0.45,-.6) to (-0.45,.6);
        \draw[-,thick,darkgreen] (0,-.6) to[out=90,in=-90] (-.45,0);
        \draw[-,thick,darkgreen] (-0.45,0) to[out=90,in=-90] (0,0.6);
\end{tikzpicture}
}
&=
\mathord{
\begin{tikzpicture}[baseline = 0]
	\draw[-,thick,darkgreen] (0.45,.6) to (-0.45,-.6);
	\draw[-,thick,darkgreen] (0.45,-.6) to (-0.45,.6);
        \draw[-,thick,darkgreen] (0,-.6) to[out=90,in=-90] (.45,0);
        \draw[-,thick,darkgreen] (0.45,0) to[out=90,in=-90] (0,0.6);
\end{tikzpicture}
}\:,
&\!\!\!\!\!\!\!\!\mathord{
\begin{tikzpicture}[baseline = 0]
  \draw[-,thick,darkgreen] (0.2,0) to (0.2,.5);
	\draw[-,thick,darkgreen] (0.2,0) to[out=-90, in=0] (0,-0.35);
	\draw[-,thick,darkgreen] (0,-0.35) to[out = 180, in = -90] (-0.2,0);
	\draw[-,thick,darkgreen] (-0.2,0) to[out=90, in=0] (-0.4,0.35);
	\draw[-,thick,darkgreen] (-0.4,0.35) to[out = 180, in =90] (-0.6,0);
  \draw[-,thick,darkgreen] (-0.6,0) to (-0.6,-.5);
\end{tikzpicture}
}
\,&=\,
\mathord{\begin{tikzpicture}[baseline=0]
  \draw[-,thick,darkgreen] (0,-0.4) to (0,.4);
\end{tikzpicture}
}\:,
\qquad
\mathord{
\begin{tikzpicture}[baseline = 0]
  \draw[-,thick,darkgreen] (0.2,0) to (0.2,-.5);
	\draw[-,thick,darkgreen] (0.2,0) to[out=90, in=0] (0,0.35);
	\draw[-,thick,darkgreen] (0,0.35) to[out = 180, in = 90] (-0.2,0);
	\draw[-,thick,darkgreen] (-0.2,0) to[out=-90, in=0] (-0.4,-0.35);
	\draw[-,thick,darkgreen] (-0.4,-0.35) to[out = 180, in =-90] (-0.6,0);
  \draw[-,thick,darkgreen] (-0.6,0) to (-0.6,.5);
\end{tikzpicture}
}\,
=\,
-\:\,\mathord{\begin{tikzpicture}[baseline=0]
  \draw[-,thick,darkgreen] (0,-0.4) to (0,.4);
\end{tikzpicture}
}\:,\\
\mathord{
\begin{tikzpicture}[baseline = 0]
\draw[-,thick,darkgreen](.6,-.3) to (.1,.4);
	\draw[-,thick,darkgreen] (0.6,0.4) to[out=-140, in=0] (0.1,-0.1);
	\draw[-,thick,darkgreen] (0.1,-0.1) to[out = 180, in = -90] (-0.2,0.4);
\end{tikzpicture}
}&=
\mathord{
\begin{tikzpicture}[baseline = 0]
\draw[-,thick,darkgreen](-.5,-.3) to (0,.4);
	\draw[-,thick,darkgreen] (0.3,0.4) to[out=-90, in=0] (0,-0.1);
	\draw[-,thick,darkgreen] (0,-0.1) to[out = 180, in = -40] (-0.5,0.4);
\end{tikzpicture}
}\,,\!\!\!\!\!\!\!\!
&
\mathord{
\begin{tikzpicture}[baseline = 0]
	\draw[-,thick,darkgreen] (0.28,-0.2) to[out=90,in=-90] (-0.28,.4);
	\draw[-,thick,darkgreen] (-0.28,-0.2) to[out=90,in=-90] (0.28,.4);
	\draw[-,thick,darkgreen] (0,-.5) to[out=180,in=-90] (-0.28,-0.2);
	\draw[-,thick,darkgreen] (0,-.5) to[out=0,in=-90] (0.28,-0.2);
\end{tikzpicture}
}
&=
\mathord{
\begin{tikzpicture}[baseline = 0]
	\draw[-,thick,darkgreen] (0,-.2) to[out=180,in=-90] (-0.25,0.3);
	\draw[-,thick,darkgreen] (0,-.2) to[out=0,in=-90] (0.25,0.3);
\end{tikzpicture}
}\,.
\end{align*}
This was introduced in \cite{KT} where it is called the marked Brauer category, motivated by Schur-Weyl duality for the
Lie superalgebra $\mathfrak{p}_n(\C)$.
Unlike the Brauer category defined earlier, there is no parameter
$\delta$. Indeed, using the relations and super interchange, one can check that
$$
\mathord{
\begin{tikzpicture}[baseline = 0]
	\draw[-,thick,darkgreen] (0.28,0.2) to[out=-90,in=90] (-0.28,-.4);
	\draw[-,thick,darkgreen] (-0.28,0.2) to[out=-90,in=90] (0.28,-.4);
	\draw[-,thick,darkgreen] (0,.5) to[out=-180,in=90] (-0.28,0.2);
	\draw[-,thick,darkgreen] (0,.5) to[out=0,in=90] (0.28,0.2);
\end{tikzpicture}
}
=
-\:\mathord{
\begin{tikzpicture}[baseline = -4]
	\draw[-,thick,darkgreen] (0,.2) to[out=180,in=90] (-0.25,-0.3);
	\draw[-,thick,darkgreen] (0,.2) to[out=0,in=90] (0.25,-0.3);
\end{tikzpicture}
}\,
\qquad\text{hence}\qquad
\mathord{
\begin{tikzpicture}[baseline = 0]
	\draw[-,thick,darkgreen] (0,-.25) to[out=180,in=-90] (-0.28,0.05);
	\draw[-,thick,darkgreen] (0,-.25) to[out=0,in=-90] (0.28,0.05);
	\draw[-,thick,darkgreen] (0,.35) to[out=180,in=90] (-0.28,0.05);
	\draw[-,thick,darkgreen] (0,.35) to[out=0,in=90] (0.28,0.05);
\end{tikzpicture}
}\,
=
\frac{1}{2}\:
\mathord{
\begin{tikzpicture}[baseline = -1.6mm]
	\draw[-,thick,darkgreen] (0,-0.7) to[out=180,in=-90] (-0.28,-.4);
	\draw[-,thick,darkgreen] (0,-0.7) to[out=0,in=-90] (0.28,-.4);
	\draw[-,thick,darkgreen] (0.28,0.2) to[out=-90,in=90] (-0.28,-.4);
	\draw[-,thick,darkgreen] (-0.28,0.2) to[out=-90,in=90] (0.28,-.4);
	\draw[-,thick,darkgreen] (0,.5) to[out=-180,in=90] (-0.28,0.2);
	\draw[-,thick,darkgreen] (0,.5) to[out=0,in=90] (0.28,0.2);
\end{tikzpicture}
}
-\frac{1}{2}\:
\mathord{
\begin{tikzpicture}[baseline = -1.6mm]
	\draw[-,thick,darkgreen] (0,-0.7) to[out=180,in=-90] (-0.28,-.4);
	\draw[-,thick,darkgreen] (0,-0.7) to[out=0,in=-90] (0.28,-.4);
	\draw[-,thick,darkgreen] (0.28,0.2) to[out=-90,in=90] (-0.28,-.4);
	\draw[-,thick,darkgreen] (-0.28,0.2) to[out=-90,in=90] (0.28,-.4);
	\draw[-,thick,darkgreen] (0,.5) to[out=-180,in=90] (-0.28,0.2);
	\draw[-,thick,darkgreen] (0,.5) to[out=0,in=90] (0.28,0.2);
\end{tikzpicture}
}
 = 0.
$$
\end{examples}

\vspace{2mm}
\noindent
1.3.
Now we switch the focus to one of the competing notions. Instead of
working with additive categories {\em enriched in} the monoidal category $\SVec$, one can work with
module categories {\em over} the monoidal category
$\fdSVec$ (e.g. see \cite[$\S$7.1]{EGNO}), or equivalently, 
additive $\k$-linear categories equipped with a strict
action of the cyclic group $\Z/2$ (e.g. see \cite[$\S$1.3]{EW2}). We adopt the following language for such structures:

\begin{definition}\label{jazz}
(i) A {\em $\Pi$-category} $(\A, \Pi, \xi)$ 
is a $\k$-linear category $\A$
plus a $\k$-linear endofunctor $\Pi:\A \rightarrow
\A$ and a natural isomorphism
$\xi:\Pi^2 \stackrel{\sim}{\Rightarrow} I$
such that $\xi \Pi = \Pi \xi$ in $\Hom(\Pi^3, \Pi)$.
Note then that $\Pi$ is a self-inverse equivalence.

(ii) Given $\Pi$-categories $(\A, \Pi_\A, \xi_\A)$ and $(\B, \Pi_\B, \xi_\B)$, a {\em $\Pi$-functor} $F:\A \rightarrow \B$
is a $\k$-linear functor plus the data of a natural isomorphism
$\beta_F:\Pi_\B F\stackrel{\sim}{\Rightarrow} F \Pi_\A $ such that
$\xi_\B F (\xi_\A)^{-1}=\beta_F \Pi_\A \circ \Pi_\B \beta_F$
in $\Hom((\Pi_\B)^2F, F (\Pi_\A)^2)$.
For example, 
the identity functor $I$ is a $\Pi$-functor
with $\beta_I = 1_\Pi$, and
$\Pi$ is a $\Pi$-functor with
$\beta_\Pi := - 1_{\Pi^2}$.
Note also that the composition of two $\Pi$-functors 
$F:\A \rightarrow \mathcal
B$
and $G:\B \rightarrow \C$
is itself a $\Pi$-functor
with $\beta_{G F} := G \beta_F\circ \beta_G F$.

(iii) Given $\Pi$-functors $F, G:\A \rightarrow \B$,
a {\em $\Pi$-natural transformation} is a natural transformation 
$x:F \Rightarrow G$ such that $x
\Pi_\A\circ \beta_F= \beta_G \circ \Pi_\B x$  in $\Hom(\Pi_\B F, G \Pi_\A)$.
\end{definition}

There is a close relationship between supercategories and
$\Pi$-categories.
To explain this formally, we need the following intermediate
notion. Actually, our experience suggests this is often the most
convenient place to work in practice.

\begin{definition}\label{defscat}
A {\em $\Pi$-supercategory} $(\A, \Pi, \zeta)$ is a supercategory $\A$ plus the
extra data of a superfunctor $\Pi:\A \rightarrow \A$
and an odd supernatural isomorphism $\zeta:\Pi\stackrel{\sim}{\Rightarrow}
I$.
Note then that $\xi := \zeta\zeta:\Pi^2 \stackrel{\sim}{\Rightarrow} I$ is an even
supernatural isomorphism, i.e. $\A$ is equipped with canonical even isomorphisms
$\xi_\lambda: \Pi^2
\lambda \stackrel{\sim}{\rightarrow} \lambda$ satisfying
\begin{equation}
\xi_\lambda = \zeta_\lambda \circ \Pi \zeta_\lambda = - \zeta_\lambda
\circ \zeta_{\Pi \lambda}
\end{equation}
for all $\lambda \in \ob \A$.
Moreover, we have that $\xi \Pi = \Pi \xi$ in $\Hom(\Pi^3,
\Pi)$.
\end{definition}

To specify the extra data needed to make
a supercategory into a $\Pi$-supercategory,
one just needs to give objects $\Pi \lambda$ 
and odd isomorphisms $\zeta_\lambda : \Pi
\lambda\stackrel{\sim}{\rightarrow} \lambda$ for each $\lambda \in
\ob \A$. The effect of $\Pi$ on a morphism $f:\lambda
\rightarrow \mu$ is uniquely determined 
by the requirement that 
$\zeta_\mu \circ \Pi f = (-1)^{|f|} f \circ \zeta_\lambda$.
It is then automatic that
$\zeta = (\zeta_\lambda):\Pi \stackrel{\sim}{\Rightarrow} I$ is
an odd supernatural isomorphism.

\begin{example}\label{psf}
Given superalgebras $A$ and $B$,
$A \lrSMod B$ 
is a $\Pi$-supercategory; hence, taking $A = B = \k$, 
so is $\SVEC$. 
To
specify $\Pi$ and $\zeta$, 
we just need to define an $(A,B)$-supermodule
$\Pi V$ 
and
 an odd isomorphism
$\zeta_V:\Pi V \stackrel{\sim}{\rightarrow} V$ 
for each
$(A,B)$-superbimodule $V$.
We take
$\Pi V$ to be
the same underlying vector space as $V$ viewed
as a superspace with the
opposite $\Z/2$-grading $(\Pi V)_\0 := V_\1$ and $(\Pi V)_\1 := V_\0$.
The superbimodule structure on $\Pi V$ is 
defined in terms of the original action by $a \cdot v \cdot b := (-1)^{|a|} avb$.
This ensures that the identity function on the underlying vector space
defines an odd superbimodule
isomorphism $\zeta_V: \Pi V \stackrel{\sim}{\rightarrow} V$.
Everything else is forced; for example,
for a morphism $f:V \rightarrow W$ we must have that $\Pi f : \Pi V \rightarrow
\Pi W$ is the function $(-1)^{|f|} f$; also,
$\xi_V:\Pi^2 V \stackrel{\sim}{\rightarrow} V$ is minus the identity.
\end{example}

Now we can explain the connection between supercategories and
$\Pi$-categories.
Let $\SCat$ be the category of \smallcat supercategories and
superfunctors as above.
Also let
$\piSCat$ be the category of \smallcat $\Pi$-supercategories and
superfunctors, and $\piCat$ be the category of \smallcat
$\Pi$-categories and $\Pi$-functors.
There are functors 
\begin{equation}\label{golf1}
\SCat \stackrel{(1)}{\longrightarrow} \piSCat \stackrel{(2)}{\longrightarrow} \piCat.
\end{equation}
The functor (1) is defined in Definition~\ref{pienv} below; it sends supercategory $\A$ to its {\em $\Pi$-envelope}
$\A_\pi$.
The functor (2) sends $\Pi$-supercategory
$(\A, \Pi, \zeta)$ to the {\em underlying
$\Pi$-category}
$(\underline{\A}, \underline{\Pi}, \xi)$, where $\underline{\A}$ and $\underline{\Pi}$ are as in
Definition~\ref{defsupercat}(v), 
and $\xi := \zeta \zeta$; it sends
superfunctor $F:(\A,\Pi_\A,\zeta_\A) \rightarrow (\B, \Pi_\B, \zeta_\B)$
to the $\Pi$-functor $(\underline{F}, \beta_{F})$, where 
$\beta_F := -\zeta_\B F (\zeta_\A)^{-1} :\Pi_\B F
\stackrel{\sim}{\Rightarrow} F\Pi_\A$.

\begin{theorem}\label{hop}
The functors just defined have the following
properties:
\begin{itemize}
\item
The functor (1) is left 2-adjoint to the forgetful functor $\nu:\piSCat
\rightarrow \SCat$ in the sense that there is a superequivalence
$\mathcal{H}om(\A,\nu \B) \rightarrow \mathcal{H}om(\A_\pi, \B)$
for every supercategory $\A$ and $\Pi$-supercategory $\B$.
\item
The functor (2) is an equivalence of categories.
\end{itemize}
\end{theorem}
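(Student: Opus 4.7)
For the first part, I would first unpack the $\Pi$-envelope of Definition~\ref{pienv}: $\A_\pi$ has objects $\ob\A \times \Z/2$, morphism spaces $\Hom_{\A_\pi}((\lambda,p),(\mu,q)) = \Hom_\A(\lambda,\mu)$ regraded by adding $p+q$ to the parity, superfunctor $\Pi_{\A_\pi}$ swapping the $\Z/2$-factor, and $\zeta_{(\lambda,p)} := \id_\lambda$ treated as an odd morphism. The canonical inclusion $\iota_\A:\A \hookrightarrow \nu(\A_\pi)$, $\lambda \mapsto (\lambda,\0)$, yields a restriction superfunctor $\mathcal{H}om(\A_\pi,\B) \to \mathcal{H}om(\A,\nu\B)$. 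In the reverse direction, I would send a superfunctor $F:\A \to \nu\B$ to its extension $\tilde F:\A_\pi \to \B$ with $\tilde F(\lambda,p) := \Pi_\B^p F\lambda$, the action on morphisms being forced by the requirement that $\tilde F$ intertwine $\zeta$; supernatural transformations extend pointwise. I would then check that restriction-after-extension is the identity on the nose, while extension-after-restriction admits a canonical even supernatural isomorphism back to the identity built from the $\zeta_\B$'s.

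For the second part, I would construct an explicit quasi-inverse $T:\piCat \to \piSCat$. Given $(\A_0,\Pi,\xi) \in \piCat$, let $T\A_0$ be the supercategory with the same objects as $\A_0$ and
\[
\Hom_{T\A_0}(\lambda,\mu)_\0 := \Hom_{\A_0}(\lambda,\mu), \qquad \Hom_{T\A_0}(\lambda,\mu)_\1 := \Hom_{\A_0}(\lambda,\Pi\mu).
\]
Composition extends that of $\A_0$, with the key rule that for odd $f:\lambda \to \Pi\mu$ and odd $g:\mu \to \Pi\nu$ the composite in $T\A_0$ is the even morphism $\xi_\nu \circ \Pi g \circ f:\lambda \to \nu$. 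Promote $T\A_0$ to a $\Pi$-supercategory by taking $\Pi$ on objects as in $\A_0$ and setting $\zeta_\lambda := \id_{\Pi\lambda}$ viewed as an odd isomorphism $\Pi\lambda \to \lambda$ in $T\A_0$; the effect of $\Pi$ on morphisms is then forced by the axiom $\zeta_\mu \circ \Pi f = (-1)^{|f|} f \circ \zeta_\lambda$. A $\Pi$-functor $(F,\beta_F)$ lifts to the superfunctor $TF$ which agrees with $F$ on even morphisms and sends an odd $f:\lambda \to \Pi\mu$ to $(\beta_F)_\mu^{-1} \circ Ff$, with analogous behaviour on $\Pi$-natural transformations.

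I would then verify that $T$ is quasi-inverse to functor (2). The composite $(2)\circ T$ returns the original data on the nose: the underlying category of $T\A_0$ is $\A_0$, the induced endofunctor is $\Pi$, and $\zeta_\lambda \circ \Pi\zeta_\lambda = \xi_\lambda$ is a direct computation from the odd-odd composition rule applied to $\id_{\Pi\lambda}$. For $T\circ(2)$, given $(\A,\Pi,\zeta) \in \piSCat$, the natural comparison $T(\underline\A,\underline\Pi,\zeta\zeta) \to \A$ is the identity on objects and on even morphisms, and acts on odd morphisms by $f \mapsto \zeta_\mu \circ f$, using the identification $\Hom_{\underline\A}(\lambda,\underline\Pi\mu) = \Hom_\A(\lambda,\Pi\mu)_\0$; supernaturality of $\zeta$ delivers compatibility with composition.

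The expected obstacle in part (1) is the evenly dense condition: showing that every superfunctor $G:\A_\pi \to \B$ is evenly isomorphic to an extension $\tilde F$ with $F := G \circ \iota_\A$. This requires constructing natural even isomorphisms $\Pi_\B^p F\lambda \cong G(\lambda,p)$, which uses $\zeta_\B$ witnessing $\Pi_\B \Rightarrow I_\B$ to pin down behaviour on objects of the form $(\lambda,\1)$. In part (2), the delicate technical point is associativity of odd-odd-odd composition in $T\A_0$: nesting the odd-odd rule the two possible ways produces expressions differing by $\Pi\xi_\rho$ versus $\xi_{\Pi\rho}$ applied after $\Pi^2 h$, and the axiom $\xi\Pi = \Pi\xi$ from Definition~\ref{jazz}(i) is precisely what forces the two to agree.
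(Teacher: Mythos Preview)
Your proposal is correct and follows essentially the same route as the paper: for part~(1) the paper constructs $\tilde F$ and $\tilde x$ exactly as you describe (Lemma~\ref{uniprop}) and then checks even density via the isomorphism $\Pi_\B F(\Pi^\0\lambda)\cong F(\Pi^\1\lambda)$ coming from $\beta_F = -\zeta_\B F\zeta_\A^{-1}$ (Theorem~\ref{2adj}); for part~(2) your quasi-inverse $T$ is the paper's functor $D_1$, the odd--odd--odd associativity check using $\xi\Pi=\Pi\xi$ is singled out there too, and the comparison $T\circ(2)\Rightarrow I$ via $f\mapsto \zeta_\mu\circ f$ on odd morphisms is precisely the natural isomorphism $T_\A$ of Lemma~\ref{maineq}.
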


\begin{definition}\label{pienv}
The {\em $\Pi$-envelope}
$\A_\pi$ of supercategory $\A$
is the $\Pi$-supercategory with objects $\{\Pi^a \lambda\:|\:\lambda \in \ob\A, a \in \Z/2\}$,
i.e. we double the objects in $\A$.  Morphisms are
defined from 
$$
\Hom_{\A_\pi}(\Pi^a \lambda, \Pi^b \mu) := \Pi^{a+b}
\Hom_{\A}(\lambda,\mu),
$$
where the $\Pi$ on the right hand side is the parity-switching functor on
$\SVEC$ from Example~\ref{psf}.
We denote the morphism $\Pi^a \lambda \rightarrow \Pi^b \mu$ in
$\A_\pi$ coming from a homogeneous morphism $f:\lambda \rightarrow \mu$
in $\A$ under this identification by $f_a^b$.
Thus, if $|f|$ denotes the parity of $f$ in $\A$, then $f_a^b:\Pi^a \lambda \rightarrow
\Pi^{b} \mu$ is of parity $a+b+|f|$ in $\A_\pi$.
Composition in $\A_\pi$ is induced by composition in
$\A$, so $f^{c}_b \circ g^b_a :=
(f \circ g)^c_{a}.$
To make $\A_\pi$ into a $\Pi$-supercategory, we set
$\Pi(\Pi^a \lambda) := \Pi^{a+\1} \lambda$ and 
$\zeta_{\Pi^a \lambda}:= (1_\lambda)^a_{a+\1}:\Pi^{a+\1}
\lambda \rightarrow \Pi^a \lambda$.
If $F:\A \rightarrow \B$ is a superfunctor, it extends to
$F_\pi:\A_\pi \rightarrow \B_\pi$ sending $\Pi^a \lambda \mapsto \Pi^a
(F\lambda)$
and $f_a^b \mapsto (F f)_a^b$.
\end{definition}

\begin{remark}\rm
In \cite{Man}, one finds already the notion of a {\em superadditive
  category}. In our language, this is an additive
$\Pi$-supercategory.
The {\em superadditive envelope} of a supercategory $\A$ may be
constructed by first taking the $\Pi$-envelope, then taking the usual
additive envelope after that.
\end{remark}

\vspace{2mm}
\noindent
1.4.
We can now introduce {\em
  monoidal $\Pi$-categories} and {\em monoidal
  $\Pi$-supercategories}. It is best to start with
monoidal $\Pi$-supercategories, since this definition is on the
surface. Then we'll recover the correct definition of monoidal
$\Pi$-category on passing to the underlying category.

\begin{definition}\label{thnder}
A {\em monoidal $\Pi$-supercategory} $(\A, \pi, \zeta)$ is a monoidal supercategory $\A$
with the additional data of a distinguished object $\pi$ and an odd isomorphism
$\zeta: \pi \stackrel{\sim}{\rightarrow} \unit$
from $\pi$ to the unit object $\unit$.
\end{definition}

Any monoidal $\Pi$-supercategory $(\A,\pi,\zeta)$ 
is a $\Pi$-supercategory in the sense of Definition~\ref{defscat}
with parity-switching functor $\Pi := \pi \otimes-:\A
\rightarrow \A$
and 
$\zeta_\lambda := l_\lambda \circ \zeta \otimes 1_\lambda:\Pi \lambda \stackrel{\sim}{\rightarrow} \lambda$.
One could also choose to define $\Pi$ to be the functor $-\otimes
\pi$, but that is isomorphic to our choice
because there is an even supernatural
isomorphism $\beta:\pi \otimes -\stackrel{\sim}{\Rightarrow} - \otimes \pi $
with $\beta_\lambda$ defined as the composite
\begin{equation*}
\pi \otimes \lambda
\stackrel{\zeta
  \otimes 1_\lambda}{\longrightarrow} 
\unit \otimes \lambda
\stackrel{l_\lambda}{\longrightarrow} 
\lambda
\stackrel{r_\lambda^{-1}}{\longrightarrow} \lambda \otimes \unit
\stackrel{1_\lambda \otimes \zeta^{-1}}{\longrightarrow} 
\lambda \otimes \pi.
\end{equation*}
We observe moreover that the pair $(\pi, \beta)$ is an object in the
{\em Drinfeld
center} of $\A$, i.e. we have that
\begin{align}
l_\pi \circ \beta_\unit
&=  r_\pi,\\
a_{\lambda,\mu,\pi}
\circ 
\beta_{\lambda \otimes \mu}
\circ  
a_{\pi,\lambda,\mu}
 &= 
(1_\lambda \otimes \beta_\mu) 
\circ  
a_{\lambda,\pi,\mu}
\circ 
(\beta_\lambda \otimes 1_\mu),
\end{align}
 for all objects $\lambda,\mu \in
\ob \A$.
Moreover, $\beta_\pi = - 1_{\pi\otimes\pi}$.
There is also an even isomorphism $\xi  := (l_\unit=r_{\unit}) \circ \zeta \otimes \zeta:\pi \otimes \pi \rightarrow \unit$
such that
\begin{equation}\label{bugg}
(1_\lambda \otimes \xi^{-1})
\circ 
r_\lambda^{-1} 
\circ
l_\lambda \circ 
 (\xi \otimes 1_\lambda)
= 
a_{\lambda,\pi,\pi}
\circ (\beta_\lambda \otimes 1_\pi)
\circ a_{\pi,\lambda,\pi}^{-1}
\circ 
 (1_\pi \otimes \beta_\lambda) 
\circ a_{\pi,\pi,\lambda}
\end{equation}
in $\Hom_{\A}((\pi \otimes \pi) \otimes \lambda, \lambda \otimes (\pi
\otimes \pi))$.

\begin{examples}\label{washing}
(i) We've already explained how $A \lrSMod A$ is both a monoidal
supercategory and a $\Pi$-supercategory.
In fact, it is a monoidal $\Pi$-supercategory with $\pi := \Pi A$
and $\zeta:\Pi A \stackrel{\sim}{\rightarrow} A$ being the identity function.
In particular, this makes $\SVEC$ into a monoidal $\Pi$-supercategory.

(ii) If $(\A, \Pi, \zeta)$ is any $\Pi$-supercategory, then
$(\mathcal{E}nd(\A), \Pi, \zeta)$ is a
strict monoidal $\Pi$-supercategory.
\end{examples}

\begin{definition}\label{green}
(i) 
A {\em monoidal $\Pi$-category} $(\A, \pi, \beta,\xi)$ is a
$\k$-linear monoidal category $\A$
plus the extra data of an object $(\pi, \beta)$ in its Drinfeld center
with $\beta_\pi = - 1_{\pi \otimes \pi}$, and an isomorphism
$\xi:\pi \otimes \pi \stackrel{\sim}{\rightarrow} \unit$ satisfying
(\ref{bugg}).

(ii) 
A {\em monoidal $\Pi$-functor} between 
monoidal $\Pi$-categories $(\A, \pi_\A, \beta_\A,
\xi_\A)$
and $(\B, \pi_\B, \beta_\B, \xi_\B)$ is a $\k$-linear monoidal
functor
$F:\A \rightarrow \B$ with its usual coherence maps $c$ and $i$,
plus an additional coherence map
$j:\pi_\B \stackrel{\sim}{\rightarrow} F\pi_\A$ which is an isomorphism
compatible with the $\beta$'s 
and the $\xi$'s in the sense that
\begin{align*}
F
(\beta_\A)_\lambda
\circ
c_{\pi_\A,\lambda} \circ (j \otimes 1_{F\lambda}) 
&=
c_{\lambda, \pi_\A} \circ 
(1_{F\lambda} \otimes j)
\circ
(\beta_\B)_{F\lambda},\\
i \circ \xi_\B &= F \xi_A \circ c_{\pi_\A,\pi_\A} \circ (j \otimes j),
\end{align*}
in $\Hom(\pi_\B \otimes F\lambda, F(\lambda\otimes \pi_\A))$ and
$\Hom(\pi_\B\otimes\pi_\B,F \unit_\A)$, respectively.

(iii) A {\em monoidal $\Pi$-natural transformation}
$x:F \Rightarrow G$ between monoidal $\Pi$-functors $F,G:\A
\rightarrow \B$ is a monoidal natural transformation as usual, such
that 
$x_{\pi_\A} \circ j_F = j_G$
in $\Hom_\B(\pi_\B, G \pi_\A)$.
\end{definition}

There are categories $\SMon$, $\piSMon$ and $\piMon$ consisting of all
\smallcat
monoidal supercategories, monoidal $\Pi$-supercategories and monoidal
$\Pi$-categories, respectively.
Morphisms in $\SMon$ and $\piSMon$ are monoidal superfunctors
as in Definition~\ref{dash}(ii). Morphisms in $\piMon$ are monoidal
  $\Pi$-functors in the sense of Definition~\ref{green}(ii).
Now, just like in (\ref{golf1}), there are functors
\begin{equation}\label{golf2}
\SMon \stackrel{(1)}{\longrightarrow} \piSMon \stackrel{(2)}{\longrightarrow} \piMon.
\end{equation}
The functor (1) is defined by the $\Pi$-envelope construction
explained in
Definition~\ref{env2} below.
The functor (2) sends 
monoidal $\Pi$-supercategory $(\A, \pi, \zeta)$
to the underlying category $\underline{\A}$
with the obvious monoidal structure,
 made into
a monoidal $\Pi$-category $(\underline{\A},
\pi, \beta, \xi)$ as explained before Definition~\ref{green}.
It sends a monoidal superfunctor $F$ between monoidal
$\Pi$-supercategories $\A$ and $\B$ to
$\underline{F}:\underline{\A} \rightarrow
\underline{\B}$,
made into a monoidal $\Pi$-functor by setting
$j := 
(F \zeta_\A)^{-1} \circ i \circ \zeta_\B :\pi_\B
\stackrel{\sim}{\rightarrow} F \pi_\A$.

\begin{theorem}\label{bop}
The functors just defined satisfy
analogous properties to Theorem~\ref{hop}: (1) is left 2-adjoint to the
forgetful functor
and (2) is an equivalence.
\end{theorem}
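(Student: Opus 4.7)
The plan is to mirror the proof of Theorem~\ref{hop} while carrying along the monoidal coherence data. Once the monoidal structure has been correctly lifted in each direction, both assertions reduce to the underlying statements already established there.

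\emph{Part (1): monoidal $\Pi$-envelope.} Extend the construction of Definition~\ref{pienv} to a functor $\SMon \to \piSMon$ by setting
$$(\Pi^a\lambda) \otimes (\Pi^b\mu) := \Pi^{a+b}(\lambda\otimes\mu), \qquad \unit_{\A_\pi} := \Pi^\0 \unit_\A, \qquad \pi := \Pi^\1 \unit_\A,$$
with $\zeta := (1_{\unit_\A})^\0_\1$, and on morphisms declaring $f_a^b \otimes g_c^d$ to be $\pm(f\otimes g)_{a+c}^{b+d}$ with the sign forced by the super interchange law. The associator and unitors of $\A$ lift objectwise to $\A_\pi$, and the pentagon and triangle reduce to those of $\A$ after a routine sign bookkeeping. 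For the adjunction, a monoidal superfunctor $F:\A \to \nu\B$ extends to $F_\pi:\A_\pi \to \B$ with inherited coherence maps; restriction along the canonical inclusion $\A \hookrightarrow \A_\pi$ recovers $F$, and the resulting map of $\mathcal{H}om$-supercategories is a superequivalence by the same argument as in Theorem~\ref{hop}.

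\emph{Part (2): the underlying functor is an equivalence.} To build a quasi-inverse, begin with a monoidal $\Pi$-category $(\C, \pi, \beta, \xi)$ and apply Theorem~\ref{hop}(2) to its underlying $\Pi$-category to obtain a $\Pi$-supercategory $(\A, \Pi_\A, \zeta_\A)$ with $\underline{\A} = \C$; explicitly, $\A$ has the same objects as $\C$, even morphisms as in $\C$, and odd morphisms $\lambda \to \mu$ identified with morphisms $\Pi_\A \lambda \to \mu$ of $\C$ via composition with $\zeta_\A$. Take $\pi_\A := \pi$ and let $\zeta_\A: \pi \to \unit$ be the odd isomorphism corresponding under this identification to $\xi:\pi\otimes\pi \to \unit$. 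Transport the tensor product from $\C$ by tensoring even morphisms as in $\C$ and, for odd morphisms, first straightening through $\zeta_\A$ into even maps in $\C$, then using $\beta$ to commute any resulting $\pi$ past intervening objects, and finally reassembling via $\zeta_\A^{-1}$.

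\textbf{Main obstacle.} The substantive content of Part~(2) is verifying that the transported tensor product obeys the super interchange law~(\ref{sint}), i.e.\ extends to a superfunctor $\A \boxtimes \A \to \A$. This is precisely where the axioms $\beta_\pi = -1_{\pi\otimes\pi}$ and equation~(\ref{bugg}) do their work: computing $(f\otimes g) \circ (h \otimes k)$ with $g$ and $h$ odd rewrites, after straightening, as an expression in $\C$ in which a copy of $\pi$ from $g$ must commute past a copy of $\pi$ from $h$, and $\beta_\pi = -1_{\pi\otimes\pi}$ produces exactly the sign $(-1)^{|g||h|}$ demanded by~(\ref{sint}); the compatibility~(\ref{bugg}) between $\beta$ and the associator ensures the outcome is independent of bracketing. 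Once this check is complete, the coherence maps of $\C$ manifestly give even supernatural isomorphisms in $\A$, functoriality and naturality of the quasi-inverse follow from Definition~\ref{green}(ii)-(iii), and the 2-adjointness in Part~(1) follows by restricting $\mathcal{H}om$-supercategories as in Theorem~\ref{hop}.
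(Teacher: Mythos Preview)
Your proposal is correct and takes essentially the same approach as the paper: the paper establishes the more general 2-categorical statements (Theorem~\ref{golly} for Part~(1), Lemma~\ref{groups} for Part~(2)) and then obtains Theorem~\ref{bop} by specializing to 2-(super)categories with one object, which is precisely your direct monoidal argument. The key check you isolate---the super interchange law for the lifted tensor product, governed by $\beta_\pi = -1_{\pi\otimes\pi}$ and~(\ref{bugg})---is exactly the one the paper flags (and leaves as an exercise) in its construction of $D_2$.
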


\begin{definition}\label{env2}
The {\em $\Pi$-envelope} of a monoidal supercategory $\A$
is the monoidal $\Pi$-supercategory
$(\A_\pi, \pi, \zeta)$ 
where $\A_\pi$ is as in Definition~\ref{pienv},
$\pi := \Pi^\1 \unit$, $\zeta := (1_\unit)^\0_\1$, and tensor products
of objects and morphisms are defined from
\begin{align*}
(\Pi^a \lambda) \otimes (\Pi^b \mu) &:= \Pi^{a+b} (\lambda \otimes \mu),\\
f^b_a \otimes g^d_c &:= (-1)^{a|g|+|f|d+ad+ac} (f \otimes g)^{b+d}_{a+c}.
\end{align*}
The unit object of $\A_\pi$ is $\Pi^\0 \unit$. The coherence maps $a,
l$ and $r$ extend to $\A_\pi$ in an obvious way.
Also if $F:\A \rightarrow \B$ is a monoidal superfunctor then 
the superfunctor $F_\pi:\A_\pi \rightarrow \B_\pi$ from
Definition~\ref{pienv} is naturally monoidal too.
\end{definition}

In the strict case, one can work with $\A_\pi$ diagrammatically as
follows. For $f$ as in (\ref{jazzy}), 
we represent $f^b_a \in
\Hom_{\A_\pi}(\Pi^a \lambda, \Pi^b \mu)$ by the diagram
$$
\mathord{
\begin{tikzpicture}[baseline = 0]
	\draw[-,thick,darkred] (0.08,-.4) to (0.08,-.13);
	\draw[-,thick,darkred] (0.08,.4) to (0.08,.13);
      \draw[thick,darkred] (0.08,0) circle (4pt);
   \node at (0.08,0) {\color{darkred}$\scriptstyle{f}$};
   \node at (0.08,-.53) {$\scriptstyle{\lambda}$};
   \node at (0.08,.53) {$\scriptstyle{\mu}$};
\draw[-,thin,red](.4,-.4) to (-.24,-.4);
\draw[-,thin,red](.4,.4) to (-.24,.4);
\node at (.5,.4) {$\color{red}\scriptstyle b$};
\node at (.5,-.4) {$\color{red}\scriptstyle a$};
\end{tikzpicture}
}
$$
Then the rules for horizontal and vertical composition become:
$$
\mathord{
\begin{tikzpicture}[baseline =-1.3]
	\draw[-,thick,darkred] (0.08,-.4) to (0.08,-.13);
	\draw[-,thick,darkred] (0.08,.4) to (0.08,.13);
      \draw[thick,darkred] (0.08,0) circle (4pt);
   \node at (0.08,0) {\color{darkred}$\scriptstyle{f}$};
\draw[-,thin,red](.4,-.4) to (-.24,-.4);
\draw[-,thin,red](.4,.4) to (-.24,.4);
\node at (.5,.4) {$\color{red}\scriptstyle b$};
\node at (.5,-.4) {$\color{red}\scriptstyle a$};
\end{tikzpicture}
}
\,\otimes
\mathord{
\begin{tikzpicture}[baseline = -1.3]
	\draw[-,thick,darkred] (0.08,-.4) to (0.08,-.13);
	\draw[-,thick,darkred] (0.08,.4) to (0.08,.13);
      \draw[thick,darkred] (0.08,0) circle (4pt);
   \node at (0.08,0) {\color{darkred}$\scriptstyle{g}$};
\draw[-,thin,red](.4,-.4) to (-.24,-.4);
\draw[-,thin,red](.4,.4) to (-.24,.4);
\node at (.5,.4) {$\color{red}\scriptstyle d$};
\node at (.5,-.4) {$\color{red}\scriptstyle c$};
\end{tikzpicture}
}
=
(-1)^{a|g|+|f|d+ad+ac}
\mathord{
\begin{tikzpicture}[baseline = -2]
	\draw[-,thick,darkred] (0.08,-.4) to (0.08,-.13);
	\draw[-,thick,darkred] (0.08,.4) to (0.08,.13);
      \draw[thick,darkred] (0.08,0) circle (4pt);
   \node at (0.08,0) {\color{darkred}$\scriptstyle{g}$};
	\draw[-,thick,darkred] (-.8,-.4) to (-.8,-.13);
	\draw[-,thick,darkred] (-.8,.4) to (-.8,.13);
      \draw[thick,darkred] (-.8,0) circle (4pt);
   \node at (-.8,0) {\color{darkred}$\scriptstyle{f}$};
\draw[-,thin,red](.45,-.4) to (-1.1,-.4);
\draw[-,thin,red](.45,.4) to (-1.1,.4);
\node at (.74,.4) {$\color{red}\scriptstyle b+d$};
\node at (.74,-.4) {$\color{red}\scriptstyle a+c$};
\end{tikzpicture}
},
\qquad\!
\mathord{
\begin{tikzpicture}[baseline = -1.3]
	\draw[-,thick,darkred] (0.08,-.4) to (0.08,-.13);
	\draw[-,thick,darkred] (0.08,.4) to (0.08,.13);
      \draw[thick,darkred] (0.08,0) circle (4pt);
   \node at (0.08,0) {\color{darkred}$\scriptstyle{f}$};
\draw[-,thin,red](.4,-.4) to (-.24,-.4);
\draw[-,thin,red](.4,.4) to (-.24,.4);
\node at (.5,.4) {$\color{red}\scriptstyle c$};
\node at (.5,-.4) {$\color{red}\scriptstyle b$};
\end{tikzpicture}
}
\:\circ
\mathord{
\begin{tikzpicture}[baseline =-1.3]
	\draw[-,thick,darkred] (0.08,-.4) to (0.08,-.13);
	\draw[-,thick,darkred] (0.08,.4) to (0.08,.13);
      \draw[thick,darkred] (0.08,0) circle (4pt);
   \node at (0.08,0) {\color{darkred}$\scriptstyle{h}$};
\draw[-,thin,red](.4,-.4) to (-.24,-.4);
\draw[-,thin,red](.4,.4) to (-.24,.4);
\node at (.5,.4) {$\color{red}\scriptstyle b$};
\node at (.5,-.4) {$\color{red}\scriptstyle a$};
\end{tikzpicture}
}
=\!
\mathord{
\begin{tikzpicture}[baseline = 8]
	\draw[-,thick,darkred] (0.08,-.4) to (0.08,-.13);
	\draw[-,thick,darkred] (0.08,.57) to (0.08,.13);
	\draw[-,thick,darkred] (0.08,.83) to (0.08,1.1);
      \draw[thick,darkred] (0.08,0) circle (4pt);
      \draw[thick,darkred] (0.08,.7) circle (4pt);
   \node at (0.08,0) {\color{darkred}$\scriptstyle{h}$};
   \node at (0.08,.71) {\color{darkred}$\scriptstyle{f}$};
\draw[-,thin,red](.4,-.4) to (-.24,-.4);
\draw[-,thin,red](.4,1.1) to (-.24,1.1);
\node at (.5,1.1) {$\color{red}\scriptstyle c$};
\node at (.5,-.4) {$\color{red}\scriptstyle a$};
\end{tikzpicture}
}.
$$
In order to appreciate the need for the sign in this definition of
horizontal composition, the reader might want to verify the super
interchange law in $\A_\pi$.

\vspace{2mm}
\noindent
1.5.
Let us make a few remarks about Grothendieck
groups/rings.
Recall for a category $\A$ that its {\em additive Karoubi envelope}
$\Kar(\A)$ is the
idempotent completion of the additive envelope of $\A$.
The {\em Grothendieck group} 
$K_0(\Kar(\A))$ is the
Abelian group generated by isomorphism classes of objects of $\Kar(\A)$,
subject to the relations $[V] + [W] = [V  \oplus W]$.
 In case $\A$ is a monoidal category, the monoidal structure on $\A$
extends canonically to $\Kar(\A)$, hence we get a ring
structure on 
$K_0(\Kar(\A))$ with $[V] \cdot [W] = [V \otimes W]$.

For a supercategory $\A$, we 
propose 
that 
the role of additive Karoubi envelope should be played by
the $\Pi$-category
$\SKar(\A) := \Kar(\underline{\A}_\pi)$, i.e. one first passes to the
$\Pi$-envelope, then to the underlying category, and then one
takes additive Karoubi envelope as usual.
The Grothendieck group $K_0(\SKar(\A))$
comes equipped with a distinguished involution $\pi$ defined from
$\pi([V]) := [\Pi V]$, making it into a module over the ring
$$
\Z^\pi := \Z[\pi] / (\pi^2-1).
$$
In case $\A$ is a monoidal supercategory,
$\SKar(\A)$ is a monoidal
$\Pi$-category. The tensor product induces a multiplication on
$K_0(\SKar(\A))$, making it into a $\Z^\pi$-algebra.

\begin{example}\label{day}
(i) Suppose $A$ is a superalgebra viewed as a supercategory $\A$ with
one object. Then $\SKar(\A)$
is equivalent to the category of finitely generated projective
$A$-supermodules and 
even $A$-supermodule homomorphisms.
Hence, $K_0(\SKar(\A))$ is the usual split Grothendieck group of
the superalgebra $A$.
 
(ii)
Recall that $\mathcal{I}$, the unit object of the monoidal category $\SCat$,
is a supercategory with one object whose endomorphism superalgebra is
$\k$. There is a unique way to define a tensor product making $\mathcal{I}$ into
a strict monoidal supercategory.
Its super Karoubi envelope
$\SKar(\mathcal{I})$ is monoidally equivalent to $\fdSVec$.
Hence, it is
a semisimple Abelian category with just two isomorphism classes of irreducible objects
represented
by  $\k$ and $\Pi \k$, and
$K_0(\SKar(\mathcal{I})) \cong K_0(\fdSVec) \cong \Z^\pi$.

(iii)
Here is an example which may be of independent interest.
For $\delta \in \k$,
the {\em odd Temperley-Lieb supercategory}
is the strict monoidal supercategory
$\mathcal{STL}(\delta)$
with one generating object 
$\color{darkpurple}{\cdot}$
and two {\em odd} generating morphisms
$\mathord{
\begin{tikzpicture}[baseline = 0]
	\draw[-,thick,darkpurple] (0.3,0.25) to[out=-90, in=0] (0.1,-0.05);
	\draw[-,thick,darkpurple] (0.1,-0.05) to[out = 180, in = -90] (-0.1,0.25);
\end{tikzpicture}
}: \unit\rightarrow
{\color{darkpurple}\cdot}\,\otimes\,{\color{darkpurple}\cdot}\,$
and 
$\mathord{
\begin{tikzpicture}[baseline = 0]
	\draw[-,thick,darkpurple] (0.3,-0.1) to[out=90, in=0] (0.1,0.2);
	\draw[-,thick,darkpurple] (0.1,0.2) to[out = 180, in = 90] (-0.1,-.1);
\end{tikzpicture}
}:
{\color{darkpurple}\cdot}\,\otimes\,{\color{darkpurple}\cdot}\,
\rightarrow \unit$, 
subject to the following relations:
\begin{align*}
\mathord{
\begin{tikzpicture}[baseline = 0]
  \draw[-,thick,darkpurple] (0.2,0) to (0.2,.5);
	\draw[-,thick,darkpurple] (0.2,0) to[out=-90, in=0] (0,-0.35);
	\draw[-,thick,darkpurple] (0,-0.35) to[out = 180, in = -90] (-0.2,0);
	\draw[-,thick,darkpurple] (-0.2,0) to[out=90, in=0] (-0.4,0.35);
	\draw[-,thick,darkpurple] (-0.4,0.35) to[out = 180, in =90] (-0.6,0);
  \draw[-,thick,darkpurple] (-0.6,0) to (-0.6,-.5);
\end{tikzpicture}
}
\,&=\,
\mathord{\begin{tikzpicture}[baseline=0]
  \draw[-,thick,darkpurple] (0,-0.4) to (0,.4);
\end{tikzpicture}
}\:,
\qquad
\mathord{
\begin{tikzpicture}[baseline = 0]
  \draw[-,thick,darkpurple] (0.2,0) to (0.2,-.5);
	\draw[-,thick,darkpurple] (0.2,0) to[out=90, in=0] (0,0.35);
	\draw[-,thick,darkpurple] (0,0.35) to[out = 180, in = 90] (-0.2,0);
	\draw[-,thick,darkpurple] (-0.2,0) to[out=-90, in=0] (-0.4,-0.35);
	\draw[-,thick,darkpurple] (-0.4,-0.35) to[out = 180, in =-90] (-0.6,0);
  \draw[-,thick,darkpurple] (-0.6,0) to (-0.6,.5);
\end{tikzpicture}
}\,
=\,
-\:\,\mathord{\begin{tikzpicture}[baseline=0]
  \draw[-,thick,darkpurple] (0,-0.4) to (0,.4);
\end{tikzpicture}
}\:,\qquad
\mathord{
\begin{tikzpicture}[baseline = 0]
	\draw[-,thick,darkpurple] (0,-.25) to[out=180,in=-90] (-0.28,0.05);
	\draw[-,thick,darkpurple] (0,-.25) to[out=0,in=-90] (0.28,0.05);
	\draw[-,thick,darkpurple] (0,.35) to[out=180,in=90] (-0.28,0.05);
	\draw[-,thick,darkpurple] (0,.35) to[out=0,in=90] (0.28,0.05);
\end{tikzpicture}
}\,
=\delta.
\end{align*}
The following theorem will be proved in the appendix.

\begin{theorem}\label{stl}
Assume that $\delta = -(q-q^{-1})$ for $q \in \k^\times$ that is
not a root of unity.
Then $\SKar(\mathcal{STL}(\delta))$ is
a semisimple Abelian category.
Moreover, as a based ring with canonical basis coming from the
isomorphism classes of irreducible objects,
$K_0(\SKar(\mathcal{STL}(\delta)))$ is isomorphic to the subring of $\Z^\pi[x,x^{-1}]$
spanned over $\Z$ by $\left\{[n+1]_{x,\pi}, \pi [n+1]_{x,\pi}\:\big|\:n \in \N\right\}$,
where
\begin{equation}\label{qint}
[n+1]_{x,\pi}:= 
x^{n} + \pi x^{n-2}+\cdots+\pi^{n} x^{-n}.
\end{equation}
\end{theorem}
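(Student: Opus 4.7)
The plan is to adapt the classical theory of Jones--Wenzl idempotents in $\mathcal{TL}(-(q+q^{-1}))$ to the super setting, then read off semisimplicity and the Grothendieck ring from an explicit classification of simple objects together with their fusion rules. The starting point is a combinatorial description of $E_n := \End_{\mathcal{STL}(\delta)}(\underbrace{{\color{darkpurple}\cdot}\otimes\cdots\otimes{\color{darkpurple}\cdot}}_n)$. Using the relations and the super interchange law, one can put every morphism in normal form as a planar pair-matching of $n$ top and $n$ bottom points, so $E_n$ has a basis indexed by the classical Temperley--Lieb diagrams; each diagram involves an equal number of cups and caps, so the entire basis lies in parity $\0$. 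The multiplication rule is the usual ``stack and delete closed loops,'' but with a sign that tracks how many adjacent odd cups and caps must be commuted past one another via super interchange, together with the value $\delta$ (resp.\ $-1$) for each pure loop (resp.\ each zig-zag resolved by the second relation).

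Once $E_n$ is understood, I would construct super Jones--Wenzl idempotents $p_n\in E_n$ recursively by the ansatz
\begin{equation*}
p_{n+1} \;=\; (p_n\otimes 1_{{\color{darkpurple}\cdot}}) \;-\; c_n\,(p_n\otimes 1_{{\color{darkpurple}\cdot}})\,u_n\,(p_n\otimes 1_{{\color{darkpurple}\cdot}}),
\end{equation*}
where $u_n\in E_{n+1}$ is the diagram with a single cup--cap on strands $n,n+1$, and $c_n$ is determined by the annihilation condition $p_{n+1}u_n=0$. Tracking signs carefully, I expect $c_n$ to be a ratio of super quantum integers $[n]_{x,\pi}$ evaluated at $x=q,\pi=1$; the hypothesis that $q$ is not a root of unity is precisely what makes the recursion well-defined. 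The idempotents $p_n$ are central with respect to the Temperley--Lieb cellular filtration on $E_n$, and cellularity yields the decomposition of $\cdot^{\otimes n}$ in $\SKar(\mathcal{STL}(\delta))$ into images of the $p_k$ for $k\le n$ of appropriate parity. This gives simple objects $L_n := \mathrm{Im}(p_n)$ with $\End_{\SKar(\mathcal{STL}(\delta))}(L_n)=\k$ (concentrated in even parity), exhibits $\{L_n,\Pi L_n\,|\,n\in\N\}$ as a complete, irredundant list of isomorphism classes of simples, and proves semisimplicity.

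For the Grothendieck ring, the key computation is the super Clebsch--Gordan formula
\begin{equation*}
L_n \otimes L_1 \;\cong\; L_{n+1} \;\oplus\; \Pi L_{n-1},
\end{equation*}
which follows from the Jones--Wenzl recursion above (with the $\Pi$ coming from the odd cap that produces the summand $L_{n-1}$). One checks directly that
\begin{equation*}
[n+1]_{x,\pi}\cdot[2]_{x,\pi} \;=\; [n+2]_{x,\pi} \;+\; \pi\,[n]_{x,\pi}
\end{equation*}
in $\Z^\pi[x,x^{-1}]$, so the assignment $[L_n]\mapsto [n+1]_{x,\pi}$, $[\Pi L_n]\mapsto \pi[n+1]_{x,\pi}$ extends by induction on $n$ to a well-defined, injective, and surjective homomorphism of based $\Z^\pi$-algebras onto the specified $\Z$-span. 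Since $[L_1]=[2]_{x,\pi}$ generates both rings, this identifies the Grothendieck ring with the asserted subring.

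The main obstacle will be the sign bookkeeping: every application of super interchange, every braid-like rearrangement in $E_n$, and every evaluation of a cup against a cap through the $-1$ in the second relation contributes a sign, and these must combine exactly to produce the $\pi$-twist in $L_n\otimes L_1\cong L_{n+1}\oplus\Pi L_{n-1}$ (rather than, say, $L_{n+1}\oplus L_{n-1}$ or $\Pi L_{n+1}\oplus L_{n-1}$). A secondary technical point is verifying that the super Jones--Wenzl idempotents are absolutely primitive rather than merely primitive, which amounts to showing that $L_n\not\cong \Pi L_n$; this will follow from an eigenvalue computation for a suitable central element of $E_n$, or equivalently, from the fact that $[n+1]_{x,\pi}$ and $\pi[n+1]_{x,\pi}$ are $\Z$-linearly independent in $\Z^\pi[x,x^{-1}]$.
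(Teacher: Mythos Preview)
Your overall strategy---Jones--Wenzl idempotents plus a Clebsch--Gordan recursion---is exactly what the paper does, but there is one substantive gap and one point where the paper is sharper.

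The gap is that you assert $E_n$ has a basis of crossingless matchings without proving linear independence. Putting morphisms in normal form only gives a spanning set; in a category presented by generators and relations, ruling out hidden relations is the nontrivial direction. The paper handles this (as Theorem~\ref{stlbasis}) by constructing an explicit monoidal superfunctor $G:\mathcal{STL}(\delta)\to\SVEC$ landing in tensor powers of a two-dimensional superspace, then checking that the images of crossingless matchings are linearly independent via a triangularity argument on Dyck sequences. Without this, you do not know that $p_n\neq 0$, so the entire subsequent argument is vacuous. Your sign bookkeeping alone cannot supply this: the signs tell you how diagrams multiply, not that they are independent.

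On semisimplicity, the paper avoids invoking cellularity and instead works with the locally unital superalgebra $A=\bigoplus_{m,n}\Hom_{\mathcal{STL}(\delta)}(m,n)$, showing directly that $f_nAf_n\cong\k$ and $f_nAf_m=0$ for $m\neq n$ (both immediate from the cup/cap annihilation property of $f_n$ together with the basis theorem). This is shorter than setting up a cellular structure in the super setting. Finally, your proposed argument for $L_n\not\cong\Pi L_n$ via linear independence of $[n+1]_{x,\pi}$ and $\pi[n+1]_{x,\pi}$ is circular, since that identification is what you are trying to establish; the clean argument is the one you already noted in passing, namely that $E_n$ is concentrated in even parity, so $f_nE_nf_n$ has no odd part and hence there is no even isomorphism $L_n\to\Pi L_n$ in $\SKar$.
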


When $\k$ is of characteristic zero, we will explain this result by constructing a monoidal equivalence between 
$\SKar(\mathcal{STL}(\delta))$ and
the category of 
finite-dimensional representations of the quantum
superalgebra $U_q(\mathfrak{osp}_{1|2})$ as defined by Clark and Wang
\cite{CW}. 
We note that
\begin{equation}\label{qint2}
[n+1]_{x,\pi} [m+1]_{x,\pi} = 
\sum_{r=0}^{\min(m,n)} \pi^r [n+m-2r+1]_{x,\pi},
\end{equation}
which may be interpreted as the analog of Clebsch-Gordon for
$U_q(\mathfrak{osp}_{1|2})$.
Also
\begin{equation}
\sum_{n=0}^\infty [n]_{x,\pi} t^n = \frac{1}{1 - [2]_{x,\pi} t + \pi t^2},
\end{equation}
which is a $\pi$-deformed version of the generating function for Chebyshev
polynomials of the second kind.
It follows that
$K_0(\SKar(\mathcal{STL}(\delta)))$ is a polynomial algebra over
$\Z^\pi$ generated by $[2]_{x,\pi}$, which is the isomorphism class of
the generating object $\color{darkpurple}{\cdot}$.
\end{example}

\vspace{2mm}
\noindent
1.7.
In the remainder of the article, we will work in the more general
setting of 2-categories.
Recalling that a monoidal category is essentially the same as a
2-category with one object, the reader should have no trouble
recovering the definitions made in this introduction from the more
general ones formulated later on.

In Section 2, we will discuss {\em 2-supercategories}, which (in the
strict case) are
categories enriched in $\SCat$; the basic
example is the 2-supercategory of supercategories, superfunctors and
supernatural transformations.
Then in Section 3, we introduce {\em $\Pi$-2-supercategories};
the basic example is the $\Pi$-2-supercategory of
$\Pi$-supercategories, superfunctors and supernatural transformations.
Section 4 develops the appropriate generalization of the notion of
{\em $\Pi$-envelope} to 2-supercategories, 
in particular establishing the properties of the
functors (1) above.
In Section 5, we discuss {\em $\Pi$-2-categories}; the
basic example is the $\Pi$-2-category of $\Pi$-categories,
$\Pi$-functors and $\Pi$-natural transformations.
Then we prove that the functors (2) above are equivalences; more
generally, we show that the categories of
$\Pi$-2-categories and $\Pi$-2-supercategories
are equivalent.

The approach to $\Z/2$-graded categories developed by this point can
also be applied in almost exactly the same way to $\Z$-graded
categories.
We give a brief account of this in the final Section
6. Actually, we will combine the two gradings into a single $\Z \oplus
\Z/2$-grading, and develop a theory of {\em graded supercategories}. 
Although we won't discuss it further here, there are two natural ways to suppress the $\Z/2$-grading
(thereby leaving the domain of superalgebra): 
one can either view $\Z$-gradings as $\Z\oplus \Z/2$-gradings with
the $\Z/2$-grading being trivial, i.e. concentrated in parity $\0$; or one
can view $\Z$-gradings as $\Z \oplus \Z/2$-gradings with
the $\Z/2$-grading being induced by the $\Z$-grading, i.e.
all elements of degree $n\in\Z$ are of parity $n\pmod{2}$.
The first of these variations is already extensively used in representation
theory, e.g. see the last paragraph of \cite[$\S$2.2.1]{Rou} or \cite[$\S$5.2]{BLW}.

We would like to say finally
that many of the general definitions in this article can be found in some
equivalent form in many
places in the literature. We were influenced especially
by the work of Kang, Kashiwara and Oh in \cite[Section 7]{KKO2}; see also
\cite[Section 2]{EL}.
Our choice of terminology is different. We include here
a brief dictionary
for readers familiar with \cite{KKO2} and \cite{EL}; note also that in
\cite{KKO2} additivity is assumed
from the outset.
$$
\begin{array}{l|l}
\text{Our language}&\text{Language of \cite{KKO2, EL}}\\
\hline
\text{Supercategory}&\text{$1$-supercategory \cite[Def. 7.7]{KKO2}}\\
\text{Superfunctor}&\text{Superfunctor \cite[Def. 7.7]{KKO2}}\\
\text{Supernatural transformation}&\text{Even and odd morphisms
  \cite[Def. 7.8]{KKO2}}\\
\text{2-supercategory}&\text{$2$-supercategory
  \cite[Def. 7.12]{KKO2}}\\
\text{$\Pi$-category}&\text{Supercategory \cite[Def. 7.1]{KKO2},
  \!\cite[Def. 2.13]{EL}}\\
\text{$\Pi$-functor}&\text{Superfunctor \cite[Def. 7.1]{KKO2},
  \!\cite[Def. 2.13]{EL}}\\
\text{$\Pi$-natural transformation}&\text{Supernatural transformation
  \cite[Def. 2.16]{EL}}\\
\text{$\Pi$-2-category}&\text{Super-2-category \cite[Def. 2.17]{EL}}
\end{array}
$$
There is a similar linguistic clash in our development of the graded
theory in Section 6:
by a {\em graded category}, we mean a category enriched in graded
vector spaces. It is more common in the literature for a
graded category to mean a category equipped with a distinguished 
autoequivalence.
When working with the latter structure, we will denote this autoequivalence by $Q$, and call
it a {\em $Q$-category}.

\vspace{2mm}
\noindent
{\em Acknowledgements.}
The first author would like to thank Jon Kujawa for convincing him to
take categories enriched in super vector spaces seriously in the first place. 
We also benefitted greatly from conversations with Victor Ostrik and Ben Elias.

\section{Supercategories}

In the main body of the article, $\k$ will denote some fixed
commutative ground ring.
By {\em superspace}, we mean now a $\Z/2$-graded $\k$-module $V = V_\0 \oplus
V_\1$; as usual when working over a commutative ring, we make no
distinction between left modules and right modules, indeed, we'll often view $\k$-modules
as $(\k,\k)$-bimodules whose left and right actions are 
related by $cv = vc$.
By a linear map, we mean a $\k$-module homomorphism.
Viewing $\k$ as a superalgebra concentrated in even parity, these are
the same as $\k$-supermodules and $\k$-supermodule homomorphisms\footnote{In Sections 2--4, one can actually work even more generally over any 
commutative superalgebra $\k = \k_\0\oplus\k_\1$, 
interpreting a superspace as a $(\k,\k)$-superbimodule
whose left and right actions are related by $cv =
(-1)^{|c||v|}vc$.}.

We have the $\Pi$-supercategory $\SVEC$ of all superspaces\footnote{One should be careful about
set-theoretic issues here 
by fixing a 
Grothendieck universe and taking only {\em small} superspaces. 
We won't be doing anything high enough for this to cause
difficulties, so will ignore issues of this nature.}
 and linear
maps defined just like in the introduction.
The underlying category
$\SVec$ consisting of superspaces and even linear maps
is a symmetric monoidal category with braiding
defined as in the introduction.

Recall also the definitions of {\em superfunctor} and
{\em supernatural transformation} from Definition~\ref{defsupercat}.
Let $\SCat$ be the category of all \smallcat supercategories and
superfunctors. 
We make it into a monoidal category with tensor functor denoted
$\boxtimes$ as explained after Example~\ref{snuggly}.

\begin{definition}\label{defsuper2cat}
A {\em strict 2-supercategory} is a category enriched in the
monoidal category
$\SCat$ just defined.
Thus, for objects $\lambda,\mu$ in a strict 2-supercategory
$\AA$,
there is given a \smallcat
supercategory $\mathcal{H}om_{\AA}(\lambda,\mu)$ of
morphisms from $\lambda$ to $\mu$, whose objects $F,G$
are the {\em 1-morphisms} of $\AA$, and whose morphisms
$x:F \rightarrow G$ are
the {\em 2-morphisms} of $\AA$.
We use the shorthand $\Hom_{\AA}(F,G)$ for the superspace
$\Hom_{\mathcal{H}om_{\AA}(\lambda,\mu)}(F,G)$ of all such
2-morphisms.
\end{definition}

The string calculus explained for monoidal supercategories in the
introduction
can also be used for strict 2-supercategories:
given $1$-morphisms $F, G:\lambda \rightarrow \mu$,
one represents a 2-morphism $x:F\Rightarrow G$
by the picture
$$
\mathord{
\begin{tikzpicture}[baseline = 0]
	\draw[-,thick,darkred] (0.08,-.4) to (0.08,-.13);
	\draw[-,thick,darkred] (0.08,.4) to (0.08,.13);
      \draw[thick,darkred] (0.08,0) circle (4pt);
   \node at (0.08,0) {\color{darkred}$\scriptstyle{x}$};
   \node at (0.08,-.53) {$\scriptstyle{F}$};
   \node at (0.52,0) {$\scriptstyle{\lambda}.$};
   \node at (-0.32,0) {$\scriptstyle{\mu}$};
   \node at (0.08,.53) {$\scriptstyle{G}$};
\end{tikzpicture}
}
$$
The composition $y \circ x$ of $x$ with another 2-morphism
$y \in \Hom_{\AA}(G,H)$ is obtained by vertically stacking
pictures:
$$
\mathord{
\begin{tikzpicture}[baseline = 0]
	\draw[-,thick,darkred] (0.08,-.4) to (0.08,-.13);
	\draw[-,thick,darkred] (0.08,.4) to (0.08,.13);
	\draw[-,thick,darkred] (0.08,.77) to (0.08,.6);
	\draw[-,thick,darkred] (0.08,1.37) to (0.08,1.03);
      \draw[thick,darkred] (0.08,0) circle (4pt);
      \draw[thick,darkred] (0.08,.9) circle (4pt);
   \node at (0.08,0) {\color{darkred}$\scriptstyle{x}$};
   \node at (0.08,.91) {\color{darkred}$\scriptstyle{y}$};
   \node at (0.08,-.53) {$\scriptstyle{F}$};
   \node at (0.62,.5) {$\scriptstyle{\lambda}.$};
   \node at (-0.42,0.5) {$\scriptstyle{\mu}$};
   \node at (0.08,1.5) {$\scriptstyle{H}$};
   \node at (0.08,.5) {$\scriptstyle{G}$};
\end{tikzpicture}
}
$$
The composition law in $\AA$ gives a 
coherent family of superfunctors
$$
T_{\nu,\mu,\lambda}:\mathcal{H}om_{\AA}(\mu,\nu) \boxtimes
\mathcal{H}om_{\AA}(\lambda,\mu) \rightarrow \mathcal{H}om_{\AA}(\lambda,\nu)
$$ 
for all objects $\lambda,\mu,\nu \in \AA$. Given 2-morphisms 
$x:F \rightarrow H,
y:G \rightarrow K$ between 1-morphisms $F, H:\lambda \rightarrow\mu,
G,K:\mu \rightarrow\nu$,
we denote 
$T_{\nu,\mu,\lambda}(y\otimes x):T_{\nu,\mu,\lambda}(G, F) \rightarrow T_{\nu,\mu,\lambda}(K, H)$
simply by $yx:GF\rightarrow KH$,
and represent it by horizontally stacking pictures:
$$
\mathord{
\begin{tikzpicture}[baseline = 0]
	\draw[-,thick,darkred] (0.08,-.4) to (0.08,-.13);
	\draw[-,thick,darkred] (0.08,.4) to (0.08,.13);
      \draw[thick,darkred] (0.08,0) circle (4pt);
   \node at (0.08,0) {\color{darkred}$\scriptstyle{x}$};
   \node at (0.08,-.53) {$\scriptstyle{F}$};
   \node at (0.58,0) {$\scriptstyle{\lambda}.$};
   \node at (-0.37,0) {$\scriptstyle{\mu}$};
   \node at (0.08,.53) {$\scriptstyle{H}$};
	\draw[-,thick,darkred] (-.8,-.4) to (-.8,-.13);
	\draw[-,thick,darkred] (-.8,.4) to (-.8,.13);
      \draw[thick,darkred] (-.8,0) circle (4pt);
   \node at (-.8,0) {\color{darkred}$\scriptstyle{y}$};
   \node at (-.8,-.53) {$\scriptstyle{G}$};
   \node at (-1.22,0) {$\scriptstyle{\nu}$};
   \node at (-.8,.53) {$\scriptstyle{K}$};
\end{tikzpicture}
}
$$
When confusion seems unlikely, we will use the same notation for a $1$-morphism $F$
as for its identity $2$-morphism. 
With this convention, 
we have that $yH \circ Gx = yx = (-1)^{|x||y|} Kx \circ yF$, or in pictures:
$$
\mathord{
\begin{tikzpicture}[baseline = 0]
   \node at (0.08,-.53) {$\scriptstyle{F}$};
   \node at (0.58,0) {$\scriptstyle{\lambda}$};
   \node at (-0.37,0) {$\scriptstyle{\mu}$};
   \node at (0.08,.53) {$\scriptstyle{H}$};
   \node at (-.8,-.53) {$\scriptstyle{G}$};
   \node at (-1.22,0) {$\scriptstyle{\nu}$};
   \node at (-.8,.53) {$\scriptstyle{K}$};
	\draw[-,thick,darkred] (0.08,-.4) to (0.08,-.23);
	\draw[-,thick,darkred] (0.08,.4) to (0.08,.03);
      \draw[thick,darkred] (0.08,-0.1) circle (4pt);
   \node at (0.08,-0.1) {\color{darkred}$\scriptstyle{x}$};
	\draw[-,thick,darkred] (-.8,-.4) to (-.8,-.03);
	\draw[-,thick,darkred] (-.8,.4) to (-.8,.23);
      \draw[thick,darkred] (-.8,0.1) circle (4pt);
   \node at (-.8,.1) {\color{darkred}$\scriptstyle{y}$};
\end{tikzpicture}
}
\quad=\quad
\mathord{
\begin{tikzpicture}[baseline = 0]
   \node at (0.08,-.53) {$\scriptstyle{F}$};
   \node at (0.58,0) {$\scriptstyle{\lambda}$};
   \node at (-0.37,0) {$\scriptstyle{\mu}$};
   \node at (0.08,.53) {$\scriptstyle{H}$};
   \node at (-.8,-.53) {$\scriptstyle{G}$};
   \node at (-1.22,0) {$\scriptstyle{\nu}$};
   \node at (-.8,.53) {$\scriptstyle{K}$};
	\draw[-,thick,darkred] (0.08,-.4) to (0.08,-.13);
	\draw[-,thick,darkred] (0.08,.4) to (0.08,.13);
      \draw[thick,darkred] (0.08,0) circle (4pt);
   \node at (0.08,0) {\color{darkred}$\scriptstyle{x}$};
	\draw[-,thick,darkred] (-.8,-.4) to (-.8,-.13);
	\draw[-,thick,darkred] (-.8,.4) to (-.8,.13);
      \draw[thick,darkred] (-.8,0) circle (4pt);
   \node at (-.8,0) {\color{darkred}$\scriptstyle{y}$};
\end{tikzpicture}
}
\quad=\quad
(-1)^{|x||y|}\:
\mathord{
\begin{tikzpicture}[baseline = 0]
   \node at (0.08,-.53) {$\scriptstyle{F}$};
   \node at (0.58,0) {$\scriptstyle{\lambda}$};
   \node at (-0.37,0) {$\scriptstyle{\mu}$};
   \node at (0.08,.53) {$\scriptstyle{H}$};
   \node at (-.8,-.53) {$\scriptstyle{G}$};
   \node at (-1.22,0) {$\scriptstyle{\nu}$};
   \node at (-.8,.53) {$\scriptstyle{K}$};
	\draw[-,thick,darkred] (0.08,-.4) to (0.08,-.03);
	\draw[-,thick,darkred] (0.08,.4) to (0.08,.23);
      \draw[thick,darkred] (0.08,0.1) circle (4pt);
   \node at (0.08,0.1) {\color{darkred}$\scriptstyle{x}$};
	\draw[-,thick,darkred] (-.8,-.4) to (-.8,-.23);
	\draw[-,thick,darkred] (-.8,.4) to (-.8,.03);
      \draw[thick,darkred] (-.8,-0.1) circle (4pt);
   \node at (-.8,-.1) {\color{darkred}$\scriptstyle{y}$};
\end{tikzpicture}
}.
$$
This identity is a special case of the {\em super interchange law} in a
strict 2-supercategory, which is proved by the following calculation:
\begin{align*}
(v u) \circ (y x)
&= T_{\nu,\mu,\lambda}(v \otimes u) \circ T_{\nu,\mu,\lambda}(y\otimes
x)
= T_{\nu,\mu,\lambda}((v \otimes u) \circ (y \otimes x))\\
&= (-1)^{|u||y|} T_{\nu,\mu,\lambda}((v \circ y) \otimes (u \circ
x))
=
 (-1)^{|u||y|} (v \circ y) (u \circ
x).
\end{align*}
The presence of the sign 
here means that
a strict 2-supercategory is {\em not} a 2-category in the
usual sense.

For example, we can make $\SCat$ into a strict 2-supercategory $\SCAT$ by
declaring that its morphism categories
are the
supercategories $\mathcal{H}om(\A, \B)$
consisting of all superfunctors from $\A$ to $\B$,
with morphisms being all supernatural transformations.
The horizontal composition $GF$ of two superfunctors $F:\mathcal{A}
\rightarrow \mathcal{B}$ and $G:\B \rightarrow \C$ is
defined by
$G F := G \circ F$. The horizontal composition $yx:GF \Rightarrow KH$
of supernatural transformations
$x:F \Rightarrow H$ and $y:G \Rightarrow K$ 
is given by $(y x)_\lambda := y_{H\lambda} \circ Gx_{\lambda}$ for each object $\lambda$ of $\A$.
We leave it to the reader to verify that the super interchange law
holds; this works because of the signs built into the definition of
supernatural transformation.

So far, we have only defined the notion of {\em strict}
2-supercategory.
There is also a ``weak'' notion, which we call simply {\em
  2-supercategory}, 
in which the horizontal composition is
only assumed to be associative and unital up to some even supernatural
isomorphisms.
The following are the superizations of the definitions in
the purely even setting (e.g. see the definition of bicategory in
\cite{Leinster}, or \cite[$\S$2.2.2]{Rou}), replacing the usual Cartesian product $\times$ of categories with
the product $\boxtimes$.

\begin{definition}\label{ms}
(i)
A {\em 2-supercategory} $\AA$ consists of:
\begin{itemize}
\item A set of objects $\ob \AA$.
\item
A supercategory $\mathcal{H}om_{\AA}(\mu,\lambda)$ for each
$\lambda,\mu \in \ob\AA$, whose objects and morphisms are called
1-morphisms and 2-morphisms, respectively.
We refer to the composition of 2-morphisms in these supercategories as
{\em vertical composition}.
\item
A family of $1$-morphisms
$\unit_\lambda:\lambda\rightarrow \lambda$ for each $\lambda \in
\ob\AA$.
\item Superfunctors
$T_{\nu,\mu,\lambda}:\mathcal{H}om_{\AA}(\mu,\nu) \boxtimes
\mathcal{H}om_{\AA}(\lambda,\mu) \rightarrow \mathcal{H}om_{\AA}(\lambda,\nu)$ 
for all $\lambda,\mu,\nu \in \ob\AA$. 
We usually denote $T_{\nu,\mu,\lambda}$ simply by $-\:-$, and call it
{\em horizontal composition}.
\item
Even supernatural
isomorphisms
$a:(-\:-)\:- \stackrel{\sim}{\Rightarrow}
-\:(-\:-)$,
$l:\unit_\lambda\:- \stackrel{\sim}{\Rightarrow} -$
and $r:-\:\unit_\lambda\stackrel{\sim}{\Rightarrow} -$ in all
situations that such horizontal compositions makes sense.
\end{itemize}
Then we require that the following diagrams of supernatural transformations commute:
$$
\begin{tikzpicture}[commutative diagrams/every diagram]
\node  (P0) at (90:2.3cm) {$((-\: -)\: -)\: -$};
\node (P1) at (90+72:2cm) {$(-\: (-\: -))\: -$};
\node (P2) at (90+2*72:2cm) {\makebox[5ex][r]{$-\: ((-\: -)\: -)$}};
\node (P3) at (90+3*72:2cm) {\makebox[5ex][l]{$-\: (-\: (-\: -))$}};
\node (P4) at (90+4*72:2cm) {$(-\: -)\: (-\: -)$};
\path
[commutative diagrams/.cd, every arrow, every label]
(P0) edge node[swap] {$a\:-$} (P1)
(P1) edge node[swap] {$a$} (P2)
(P2) edge node[swap] {$-\:a$} (P3)
(P4) edge node {$a$} (P3)
(P0) edge node {$a$} (P4);
\end{tikzpicture},
\quad
\begin{tikzcd}
(-\:\unit_\la) \:-
\arrow[dd,swap,"a"]\arrow[rd,"r\:-"]\\
&
-\:-\\
-\:(\unit_\la\:-) \arrow[ru,"-\:l",swap]
\end{tikzcd}.
$$
A 1-morphism $F:\lambda\rightarrow \mu$ in a 2-supercategory is called
a
{\em superequivalence} if there is a 1-morphism $G$ in the other direction
such that $GF \cong \unit_\lambda$ and $F G \cong \unit_\mu$ via even 2-isomorphisms.

(ii) A {\em 2-superfunctor} $\mathbb{R}:\AA \rightarrow \BB$ between $2$-supercategories is
the following data:
\begin{itemize}
\item A function $\mathbb{R}:\ob \AA \rightarrow \ob \BB$.
\item Superfunctors
$\mathbb{R}:\mathcal{H}om_{\AA}(\lambda,\mu) \rightarrow
\mathcal{H}om_{\BB}(\mathbb{R}\lambda,\mathbb{R}\mu)$ for $\lambda,\mu \in \ob \AA$.
\item Even supernatural isomorphisms
$c:(\mathbb{R}\:-)\: (\mathbb{R}\:-) \stackrel{\sim}{\Rightarrow} \mathbb{R}(-\:-)$.
\item Even 2-isomorphisms
  $i:\unit_{\mathbb{R}\lambda}\stackrel{\sim}{\Rightarrow} \mathbb{R} \unit_\lambda$
for all $\lambda \in \ob \AA$.
\end{itemize}
Then we require that the following diagrams commute:
$$
\begin{tikzcd}
&(\mathbb{R}(-\:-))\:(\mathbb{R}\:-)
\arrow[dr,"c"] \\
((\mathbb{R}\:-)\: (\mathbb{R}\:-))\: (\mathbb{R}\:-)
\arrow[dd,"a",swap]\arrow[ur,"c\:(\mathbb{R} -)"]&
&\mathbb{R}((-\:-)\:-)\arrow[dd,"\mathbb{R} a"]\\\\
(\mathbb{R}\:-)
\:((\mathbb{R}\:-)\:(\mathbb{R}\:-))\arrow[dr,"(\mathbb{R}
-)\:c",swap]&&\mathbb{R}(-\:(-\:-))\\
&(\mathbb{R}\:-)\:(\mathbb{R}(-\:-))\arrow[ur,"c",swap]
\end{tikzcd},
$$
$$
\quad
\begin{tikzcd}
\arrow[d,"r",swap](\mathbb{R}\:-)\:\unit_{\mathbb{R}\lambda}\arrow[r,"(\mathbb{R}
-)\, i"]&(\mathbb{R}\:-)\:(\mathbb{R}\unit_\lambda)\arrow[d,"c"]\\
\mathbb{R}\:-&\arrow[l,"\mathbb{R}\, r"]\mathbb{R}(- \:\unit_\lambda)
\end{tikzcd},
\begin{tikzcd}
\arrow[d,swap,"l"]\unit_{\mathbb{R}\mu}\:(\mathbb{R}\:-)\arrow[r,"i\:(\mathbb{R}
-)"]&(\mathbb{R}\unit_\mu)\:(\mathbb{R}\:-)
\arrow[d,"c"]\\
\mathbb{R}\:-&\arrow[l,"\mathbb{R} \,l"]\mathbb{R}(\unit_\mu\:-)
\end{tikzcd}.
$$
There is a natural way to compose two 2-superfunctors. Also each 2-supercategory $\AA$
possesses an
identity 2-superfunctor, which will be denoted $\mathbb{I}$.
Hence, we get a category 2-$\SCat$ consisting of 2-supercategories and
2-superfunctors.

(iii)
Given 
2-superfunctors $\mathbb{R}, \mathbb{S}:\AA \rightarrow \BB$
for 2-supercategories $\AA$ and $\BB$,
a {\em 2-natural transformation}\footnote{In $n$Lab this is an {\em oplax
    natural transformation}.}
$(X,x):\mathbb{R} \Rightarrow \mathbb{S}$ 
is the following data:
\begin{itemize}
\item 1-morphisms $X_\lambda:\mathbb{R}\lambda \rightarrow \mathbb{S}
  \lambda$ in $\BB$ for each
  $\lambda \in \ob \AA$.
\item Even supernatural transformations $x_{\mu,\lambda}: 
X_\mu (\mathbb{R}\, -)
\Rightarrow
(\mathbb{S}\, -) X_\lambda$ 
(which are superfunctors
$\mathcal{H}om_\AA(\lambda,\mu) \rightarrow
\mathcal{H}om_{\BB}(\mathbb{R} \lambda, \mathbb{S} \mu)$)
for all $\lambda,\mu\in\ob\AA$
\end{itemize}
We require that the following diagrams commute for all 
$F:\lambda\rightarrow\mu$ and $G:\mu\rightarrow\nu$:
$$
\begin{tikzcd}
X_\nu ((\mathbb{R} G)(\mathbb{R} F))
\arrow[d,"X_\nu c",swap]
\arrow[r,"a^{-1}"] &
(X_\nu (\mathbb{R} G)) (\mathbb{R} F)
\arrow[rr,"(x_{\nu,\mu})_G(\mathbb{R} F)"]&&((\mathbb{S} G) X_\mu )(\mathbb{R} F)
\arrow[d,"a"]
\\
X_\nu \mathbb{R} (GF)\arrow[d,"(x_{\nu,\lambda})_{GF}",swap]&&&(\mathbb{S} G)
 (X_\mu (\mathbb{R} F))\arrow[d,"(\mathbb{S} G) (x_{\mu,\lambda})_F"]\\
\mathbb{S}(GF) X_\lambda&\arrow[l,"c X_\lambda"] ((\mathbb{S}
G)(\mathbb{S} F)) X_\lambda&&\arrow[ll,"a^{-1}"](\mathbb{S} G)
((\mathbb{S} F) X_\lambda)
\end{tikzcd},
$$
$$
\begin{tikzcd}
\unit_{\mathbb{S} \lambda} X_\lambda \arrow[d,"i
X_\lambda",swap]\arrow[r,"l"] &X_\lambda \arrow[r,"r^{-1}"]&X_\lambda \unit_{\mathbb{R}\lambda}\arrow[d,"X_\lambda i"]\\
(\mathbb{S} \unit_\lambda) X_\lambda&& \arrow[ll,"(x_{\lambda,\lambda})_{\unit_\lambda}",swap]X_\lambda
(\mathbb{R} \unit_\lambda)
\end{tikzcd}.
$$
A 2-natural transformation $(X,x)$ is {\em strong}\footnote{Or a
  {\em pseudonatural transformation} in $n$Lab.}
if each $x_{\mu,\lambda}$ is an isomorphism.
There is a 2-category
2-$\SCAT$ consisting of all 2-supercategories, 2-superfunctors and
2-natural transformations.

(iv) Suppose that $(X,x), (Y,y):\mathbb{R} \rightarrow \mathbb{S}$ are 
2-natural transformations for 
2-superfunctors $\mathbb{R},\mathbb{S}: \AA\rightarrow\BB$. A {\em supermodification} 
$\alpha:(X,x) 
{\:\Rightarrow\!\!\!\!\!\!\!-\!\!\!\!\!-\:\,} (Y,y)$ 
is a family of 2-morphisms
$\alpha_\lambda = \alpha_{\lambda,\0}+\alpha_{\lambda,\1}: X_\lambda \Rightarrow Y_\lambda$ for all $\lambda \in \ob \AA$,
such that the diagram
$$
\begin{tikzcd}
X_\mu (\mathbb{R} F)
\arrow[r,"(x_{\mu,\lambda})_F"]\arrow[d,swap,"\alpha_\mu (\mathbb{R}
F)"]&(\mathbb{S} F)X_\lambda \arrow[d,"(\mathbb{S} F) \alpha_\lambda"]\\
Y_\mu (\mathbb{R} F) \arrow[r,swap,"(y_{\mu,\lambda})_F"]&(\mathbb{S}
F) Y_\lambda
\end{tikzcd}
$$
commutes for all 1-morphisms $F:\lambda\rightarrow \mu$ in $\AA$.
We have that $\alpha = \alpha_\0 + \alpha_\1$ where $(\alpha_p)_\lambda :=
\alpha_{\lambda,p}$.
This makes the space $\Hom((X,x), (Y,y))$ of supermodifications
$\alpha:(X,x)
 {\:\Rightarrow\!\!\!\!\!\!\!-\!\!\!\!\!-\:\,}
(Y,y)$ into a superspace.
There is a supercategory $\mathcal{H}om(\mathbb{R}, \mathbb{S})$ consisting of
all 2-natural transformations
and supermodifications.
There is a
2-supercategory $\mathfrak{Hom}(\AA,\BB)$ consisting of
2-superfunctors, 2-natural transformations and supermodifications; it is strict if
$\BB$ is strict.
These are the morphism 2-supercategories in the strict 3-supercategory
of 2-supercategories.
Since we won't do anything with this here, we omit the details.
\end{definition}

We note that a strict 2-supercategory in the sense of
Definition~\ref{defsuper2cat} 
is the same thing as a 2-supercategory whose
coherence maps $a, l$ and $r$ are identities. In the strict case, the unit
objects $\unit_\lambda$ are uniquely determined, so do not need to be
given as part of the data.
A {\em strict 2-superfunctor} is a 2-superfunctor whose coherence maps
$c$ and $i$ are identities.
There exist 2-superfunctors between strict 2-supercategories which are
themselves not strict.

Recall for superalgebras $A$ and $B$ that 
$B\lrSMod A$ denotes the
supercategory
of $(B,A)$-superbimodules; see Example~\ref{snuggly}(iii).
Given another superalgebra $C$,
the usual tensor product over $B$ gives a superfunctor 
$$
-\otimes_B -:C\lrSMod B \boxtimes B \lrSMod A
\rightarrow C\lrSMod A.
$$
The
2-supercategory $\mathfrak{SBim}$ of
{\em superbimodules} has objects that are
superalgebras, the
morphism supercategories are defined from $\mathcal{H}om_{\mathfrak{SBim}}(A,B) :=
B\lrSMod A$, and horizontal composition comes from the tensor product operation
just mentioned.
It gives a basic example of a 2-supercategory which is not strict.

Two 2-supercategories $\AA$ and $\BB$ are 
{\em 2-superequivalent} if there 
are 2-superfunctors 
$\mathbb{R}:\AA \rightarrow \BB$ 
and $\mathbb{S}:\BB\rightarrow \AA$
such that
$\mathbb{S} \circ \mathbb{R}$ 
and $\mathbb{R} \circ \mathbb{S}$
are superequivalent to the identities in 
$\mathfrak{Hom}(\AA,\AA)$
and
$\mathfrak{Hom}(\BB,\BB)$, respectively.
Equivalently,
there is a 2-superfunctor
$\mathbb{R}:\AA\rightarrow\BB$ that induces a 
superequivalence
$\mathcal{H}om_{\AA}(\lambda,\mu)
\rightarrow \mathcal{H}om_{\BB}(\mathbb{R} \lambda, \mathbb{R}\mu)$
for all $\lambda,\mu\in\ob\AA$,
and  every $\nu \in \ob \BB$ is 
superequivalent to an object of the form
$\mathbb{R} \lambda$
for some $\lambda \in \ob \AA$.

The {\em Coherence Theorem} for 2-supercategories implies that any
2-supercategory is 2-superequivalent to a strict $2$-supercategory.
The
proof can be obtained by mimicking the argument in the purely even
case from
\cite{Leinster}.
In view of this result, 
we will sometimes assume for simplicity
that we are working in the strict case.

\begin{definition}\label{drinfeld}
Let $\AA$ be a 2-supercategory.
The {\em Drinfeld center} of $\AA$
is the monoidal supercategory 
of all strong 2-natural transformations 
$\mathbb{I} \Rightarrow \mathbb{I}$
and 
supermodifications.
Thus, an object $(X,x)$ of the Drinfeld center is a coherent family of 1-morphisms
$X_\lambda:\lambda\rightarrow \lambda$
and even supernatural isomorphisms
$x_{\mu,\lambda}: X_\mu \:- \stackrel{\sim}{\Rightarrow} -\:
X_\lambda$
for $\lambda,\mu \in \ob \A$; a morphism
$\alpha:(X,x) 
{\:\Rightarrow\!\!\!\!\!\!\!-\!\!\!\!\!-\:\,} (Y,y)$ is coherent
family of 2-morphisms $\alpha_\lambda:X_\lambda \Rightarrow
Y_\lambda$.
The tensor product 
$(X \otimes Y, x \otimes y)$
of objects $(X,x)$ and $(Y,y)$ is
defined from $(X \otimes Y)_\lambda := X_\lambda Y_\lambda$, $(x \otimes y)_{\mu,\lambda} := x_{\mu,\lambda} y_{\mu,\lambda}$;
the tensor product $\alpha \otimes \beta$
of morphisms $\alpha:(X,x) \rightarrow (U,u)$ and
$\beta:(Y,y)\rightarrow (V,v)$ is
defined from $(\alpha \otimes \beta)_{\lambda} := \alpha_\lambda \beta_\lambda$.
If $\AA$ is strict then its Drinfeld center is strict too.
\end{definition}

We remark that the Drinfeld center of a 2-supercategory is a {\em braided} monoidal supercategory,
although we omit the definition of such a structure.
(See \cite{MS} for more about Drinfeld center in the purely even setting.)

\section{$\Pi$-Supercategories}

According to Definition~\ref{defscat}, a {\em $\Pi$-supercategory}
is a supercategory with the additional data of a
parity-switching functor $\Pi$ and an odd supernatural isomorphism
$\zeta:\Pi\stackrel{\sim}{\Rightarrow} I$. It is an easy structure to work with
as there are no additional axioms,
unlike the situation for the $\Pi$-categories of Definition~\ref{jazz}. 
The same goes for superfunctors and
supernatural transformations
between $\Pi$-supercategories: there
are no additional compatibility constraints with respect to $\Pi$.

\begin{definition}\label{georgia}
A {\em $\Pi$-2-supercategory} $(\AA, \pi, \zeta)$
is a 2-supercategory $\AA$ plus families $\pi = (\pi_\lambda)$
and $\zeta = (\zeta_\lambda)$ of $1$-morphisms
$\pi_\lambda:\lambda\rightarrow \lambda$
and odd 2-isomorphisms
$\zeta_\lambda \in \Hom_{\AA}(\pi_\lambda, \unit_\lambda)$
for each object $\lambda \in \AA$.
It is {\em strict} if $\AA$ is strict.
\end{definition}

Let $\piSCat$ be the category of all $\Pi$-supercategories and
superfunctors.
Let $\piSCAT$ be the strict 2-supercategory of all $\Pi$-supercategories,
superfunctors and supernatural transformations.
The latter gives the archetypal example of a strict {$\Pi$-2-supercategory}:
the additional data of $\pi = (\pi_\A)$ and $\zeta =
(\zeta_\A)$ are defined
by letting $\pi_\A$  be the
parity-switching functor $\Pi_\A:\A \rightarrow \A$ on the
$\Pi$-supercategory $\A$,
and taking
$\zeta_\A:\pi_\A \stackrel{\sim}{\rightarrow} \unit_\A$
to be the given odd supernatural isomorphism 
$\Pi_\A \stackrel{\sim}{\Rightarrow} I_\A$.

The basic example of a $\Pi$-2-supercategory that is not strict
is the 2-supercategory $\mathfrak{SBim}$ defined at the end of the
previous section. Recall the objects are superalgebras, the 1-morphisms
are superbimodules, the 2-morphisms are superbimodule
homomorphisms, and horizontal composition is given by tensor product.
Also, for each object (i.e. superalgebra) $A$, the unit 1-morphism $\unit_A$ is the regular
superbimodule $A$.
The extra data $\pi$ and $\zeta$ needed to make $\mathfrak{SBim}$ into a
$\Pi$-2-supercategory
are given by declaring that $\pi_A :=
\Pi A$ (i.e. we apply the parity-switching functor to the regular
superbimodule),
and each $\zeta_A:\pi_A \stackrel{\sim}{\Rightarrow} \unit_A$
comes from the superbimodule homomorphism $\Pi A \rightarrow A$ that is the
identity function on the underlying set.

Each morphism supercategory $\mathcal{H}om_{\AA}(\lambda,\mu)$
in a $\Pi$-2-supercategory $\AA$ admits a parity-switching functor
$\Pi$ making it into 
a $\Pi$-supercategory, namely,
the endofunctor
$\pi_\mu -$ arising by horizontally composing on the left
by $\pi_\mu$. Alternatively, one could take the endofunctor $- \pi_\lambda$
defined by horizontally composing on the right by $\pi_\lambda$. 
These two choices are isomorphic according to our first lemma.

\begin{lemma}\label{fish}
Let $(\AA, \pi, \zeta)$ be a $\Pi$-2-supercategory.
For objects $\lambda,\mu$, 
there is an even supernatural isomorphism
$$
\beta_{\mu,\lambda}: 
\pi_\mu -
\stackrel{\sim}{\Rightarrow}
- \:\pi_\lambda.
$$
Assuming $\AA$ is strict for simplicity, this is defined by
$(\beta_{\mu,\lambda})_F := 
- \zeta_\mu F
\zeta_\lambda^{-1}$ for each $1$-morphism $F:\lambda\rightarrow\mu$.
Setting $\beta := (\beta_{\mu,\lambda})$,
the pair $(\pi, \beta)$ is an object in the {Drinfeld center} of
$\AA$ as in Definition~\ref{drinfeld}, i.e. 
the following hold (still assuming strictness):
\begin{itemize}
\item[(i)]
$(\beta_{\nu,\lambda})_{GF} = 
G (\beta_{\mu,\lambda})_F 
\circ (\beta_{\nu,\mu})_G F$
for $1$-morphisms $F:\lambda \rightarrow \mu$ and $G:\mu \rightarrow \nu$;
\item[(ii)] $(\beta_{\lambda,\lambda})_{\unit_\lambda} = 1_{\pi_\lambda}$.
\end{itemize}
Moreover:
\begin{itemize}
\item[(iii)]
$\pi_\lambda \zeta_\lambda = - \zeta_\lambda \pi_\lambda$ hence
$(\beta_{\lambda,\lambda})_{\pi_\lambda} = -1_{\pi_\lambda^2}$;
\item[(iv)]
$\xi_\lambda := \zeta_\lambda\zeta_\lambda:\pi_\lambda^2\Rightarrow
\unit_\lambda$
is an even 2-isomorphism
such that $\xi_\mu F \xi_\lambda^{-1} = 
(\beta_{\mu,\lambda})_F \pi_\lambda \circ
\pi_\mu (\beta_{\mu,\lambda})_F$
in $\Hom_{\C}(\pi_\mu^2 F, F\pi_\lambda^2)$
for all 
$F:\lambda\rightarrow \mu$.
\end{itemize}
\end{lemma}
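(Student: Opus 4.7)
Each claim reduces to a super-interchange computation exploiting that $\zeta$ is odd and invertible, and I handle them in the order stated. For simplicity I work in the strict case.

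First, well-definedness of $\beta_{\mu,\lambda}$. Its value $(\beta_{\mu,\lambda})_F = -\zeta_\mu F \zeta_\lambda^{-1}$ has parity $1+0+1=0$ and is invertible with inverse $-\zeta_\mu^{-1} F \zeta_\lambda$. For supernaturality at a homogeneous 2-morphism $x : F \Rightarrow G$ I expand both sides of $(\beta_{\mu,\lambda})_G \circ \pi_\mu x = x\pi_\lambda \circ (\beta_{\mu,\lambda})_F$ as vertical composites of triple horizontal compositions; two applications of super interchange on each side show they collapse to the common value $-(-1)^{|x|} \zeta_\mu x \zeta_\lambda^{-1}$.

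Parts (i) and (ii) are the two strict-case specializations of the Drinfeld-center diagrams in Definition~\ref{ms}(iii). For (i), expand
\[
G(\beta_{\mu,\lambda})_F \circ (\beta_{\nu,\mu})_G F = (G \zeta_\mu F \zeta_\lambda^{-1}) \circ (\zeta_\nu G \zeta_\mu^{-1} F),
\]
the two minus signs from the $\beta$'s having cancelled. A super-interchange regrouping brings $\zeta_\mu^{-1}$ and $\zeta_\mu$ into vertical adjacency; their cancellation, together with the sign produced by super interchange, reproduces $-\zeta_\nu GF \zeta_\lambda^{-1} = (\beta_{\nu,\lambda})_{GF}$. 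For (ii), $(\beta_{\lambda,\lambda})_{\unit_\lambda} = -\zeta_\lambda \zeta_\lambda^{-1}$ as a horizontal composition, and the identity $yx = (-1)^{|y||x|} Kx \circ yF$ with $y=\zeta_\lambda$, $x=\zeta_\lambda^{-1}$ computes $\zeta_\lambda \zeta_\lambda^{-1} = -\zeta_\lambda^{-1} \circ \zeta_\lambda = -1_{\pi_\lambda}$, so the signs cancel.

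For (iii), the two forms of super interchange applied to $\zeta_\lambda \zeta_\lambda$ give
\[
\zeta_\lambda \circ \pi_\lambda \zeta_\lambda \;=\; \zeta_\lambda \zeta_\lambda \;=\; -\zeta_\lambda \circ \zeta_\lambda \pi_\lambda,
\]
and cancelling the invertible $\zeta_\lambda$ yields $\pi_\lambda \zeta_\lambda = -\zeta_\lambda \pi_\lambda$; then $(\beta_{\lambda,\lambda})_{\pi_\lambda} = -1_{\pi_\lambda^2}$ drops out of the supernaturality identity for $\beta_{\lambda,\lambda}$ applied at $\zeta_\lambda$, combined with (ii) and the sign just produced. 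For (iv), I write $\xi_\lambda = \zeta_\lambda \circ \pi_\lambda \zeta_\lambda$ and $\xi_\lambda^{-1} = \pi_\lambda \zeta_\lambda^{-1} \circ \zeta_\lambda^{-1}$, expand $\xi_\mu F \xi_\lambda^{-1}$ accordingly, and compare with the analogous expansion of $(\beta_{\mu,\lambda})_F \pi_\lambda \circ \pi_\mu (\beta_{\mu,\lambda})_F$; repeated super interchange reduces both to the same normal form. The only genuine obstacle throughout is sign bookkeeping, which is heaviest in (iv) but introduces no new ideas beyond the super interchange law.
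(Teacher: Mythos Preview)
Your proposal is correct and follows essentially the same approach as the paper: each item is a short super-interchange calculation with the odd invertible $\zeta$, and the verifications match almost line for line. The only cosmetic differences are that the paper deduces $(\beta_{\lambda,\lambda})_{\pi_\lambda}=-1_{\pi_\lambda^2}$ in (iii) by a direct expansion of $-\zeta_\lambda\pi_\lambda\zeta_\lambda^{-1}$ rather than via supernaturality, and for (iv) it composes with $F\xi_\lambda$ on the left (avoiding the explicit form of $\xi_\lambda^{-1}$) to reach $\xi_\mu F$ directly; neither variation changes the substance of the argument.
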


\begin{proof}
To show that $\beta_{\mu,\lambda}$ is an even supernatural
isomorphism, we need to show for any 2-morphism $x:F
\Rightarrow G$ between 1-morphisms $F, G:\lambda\rightarrow \mu$ that
\begin{equation}\label{house}
x \pi_\lambda \circ (\beta_{\mu,\lambda})_F=
(\beta_{\mu,\lambda})_G\circ \pi_\mu x.
\end{equation}
This follows from the following calculation with the super
interchange law:
\begin{align*}
x \pi_\lambda\circ \zeta_\mu G
\zeta^{-1}_\lambda = (-1)^{|x|}\zeta_\mu x
\zeta_\lambda^{-1}
= \zeta_\mu F \zeta_\lambda^{-1}\circ 
\pi_\mu x.
\end{align*}
For (i), we must show that
$G \zeta_\mu F \zeta_\lambda^{-1}
\circ
\zeta_\nu G \zeta_\mu^{-1} F
=
-\zeta_\nu GF \zeta_\lambda^{-1}$, which is clear by the super
interchange law again.
For (ii), we have that $-\zeta_\lambda \zeta_\lambda^{-1} =
\zeta_{\lambda}^{-1} \circ \zeta_\lambda = 1_{\pi_\lambda}$.
For (iii), $\zeta_\lambda \zeta_\lambda = \zeta_\lambda \circ
\pi_\lambda \zeta_\lambda = - \zeta_\lambda \circ \zeta_\lambda
\pi_\lambda$.
Cancelling $\zeta_\lambda$ on the left, we deduce that 
$\pi_\lambda \zeta_\lambda = - \zeta_\lambda \pi_\lambda$,
hence $-\zeta_\lambda \pi_\lambda \zeta_\lambda^{-1} = 
\pi_\lambda \zeta_\lambda^{-1}\circ
\zeta_\lambda \pi_\lambda
= -
1_{\pi_\lambda^2}$.
Finally (iv) follows from the calculation:
\begin{align*} 
F \xi_\lambda \circ (\beta_{\mu,\lambda})_F \pi_\lambda\circ \pi_\mu
  (\beta_{\mu,\lambda})_F
&=
F \zeta_\lambda \zeta_\lambda \circ
\zeta_\mu F \zeta_\lambda^{-1}\pi_\lambda\circ \pi_\mu
\zeta_\mu F \zeta_\lambda^{-1}
\\
&=-\zeta_\mu F\pi_\lambda \zeta_\lambda\circ \pi_\mu
\zeta_\mu F \zeta_\lambda^{-1}
= \zeta_\mu\zeta_\mu F = \xi_\mu F.
\end{align*}
\end{proof}

Applying Lemma~\ref{fish} to the strict $\Pi$-2-supercategory $\piSCAT$, we
obtain the following.

\begin{corollary}\label{pi}
Let $(\A, \Pi_\A, \zeta_\A)$ and $(\B, \Pi_\B, \zeta_\B)$ be
$\Pi$-supercategories.
As in Definition~\ref{defscat},
there are even supernatural isomorphisms
$\xi_\A:\Pi_\A^2 \stackrel{\sim}{\Rightarrow} I_\A$ and
$\xi_\B:\Pi_\B^2 \stackrel{\sim}{\Rightarrow} I_\B$ both defined by setting
$\xi := \zeta \zeta$.
\begin{itemize}
\item[(i)]
We have that $\Pi \zeta = -\zeta \Pi$ in $\Hom(\Pi^2, \Pi)$, hence
$\Pi \xi = \xi \Pi $ in $\Hom(\Pi^3, \Pi)$.
\item[(ii)]
 For a superfunctor $F:\A \rightarrow \B$,
define
$
\beta_F := -\zeta_\B F (\zeta_\A)^{-1}:\Pi_\B F \Rightarrow F \Pi_\A.
$ 
This is an even
supernatural isomorphism such that
$\xi_\B F (\xi_\A)^{-1}=\beta_F \Pi_\A\circ\Pi_\B \beta_F$ in $\Hom((\Pi_\B)^2 F,F (\Pi_\A)^2)$.
\item[(iii)]
If $x:F \Rightarrow G$ is a supernatural transformation between
superfunctors $F, G:\A \rightarrow \B$
then $\beta_G \circ \Pi_\B x = x \Pi_\A \circ \beta_F$ in $\Hom(\Pi_\B
F, G\Pi_\A)$.
\item[(iv)] 
For superfunctors $F: \A \rightarrow \B$ and
$G:\B \rightarrow \C$, 
we have that $\beta_{G  F} = G \beta_F \circ \beta_G F$.
Also $\beta_{I} = 1_\Pi$ and $\beta_{\Pi} = - 1_{\Pi^2}$.
\end{itemize}
\end{corollary}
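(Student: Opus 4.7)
The plan is to specialize Lemma~\ref{fish} to the strict $\Pi$-$2$-supercategory $\piSCAT$ introduced immediately after Definition~\ref{georgia}, whose objects are $\Pi$-supercategories $\A$, whose $1$-morphisms are superfunctors, whose $2$-morphisms are supernatural transformations, and whose distinguished data are $\pi_\A := \Pi_\A$ together with the given odd supernatural isomorphism $\zeta_\A:\Pi_\A \stackrel{\sim}{\Rightarrow} I_\A$. Under this dictionary the abstract $2$-isomorphism $\xi_\lambda = \zeta_\lambda\zeta_\lambda$ appearing in Lemma~\ref{fish}(iv) becomes precisely the even supernatural isomorphism $\xi_\A = \zeta_\A \zeta_\A: \Pi_\A^2 \stackrel{\sim}{\Rightarrow} I_\A$ of the corollary.

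For (i), the identity $\Pi\zeta = -\zeta\Pi$ in $\Hom(\Pi^2,\Pi)$ is Lemma~\ref{fish}(iii) transcribed verbatim (with the object $\lambda$ of $\piSCAT$ taken to be $\A$). The relation $\Pi\xi = \xi\Pi$ in $\Hom(\Pi^3,\Pi)$ is then a short component-wise check: using $\xi_{\A,\lambda} = \zeta_{\A,\lambda} \circ \Pi\zeta_{\A,\lambda}$ and applying $\Pi\zeta = -\zeta\Pi$ twice, the two signs cancel to give
\begin{equation*}
\Pi(\xi_{\A,\lambda}) \;=\; \Pi(\zeta_{\A,\lambda}) \circ \Pi^2(\zeta_{\A,\lambda}) \;=\; -\zeta_{\A,\Pi\lambda} \circ \zeta_{\A,\Pi^2\lambda} \;=\; \zeta_{\A,\Pi\lambda} \circ \Pi(\zeta_{\A,\Pi\lambda}) \;=\; \xi_{\A,\Pi\lambda}
\end{equation*}
for every object $\lambda$ of $\A$.

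For (ii), I would identify the $\beta_F$ of the corollary with the component $(\beta_{\B,\A})_F = -\zeta_\B F \zeta_\A^{-1}$ from Lemma~\ref{fish} under the dictionary above; the assertions that it is an even supernatural isomorphism and that $\xi_\B F \xi_\A^{-1} = \beta_F \Pi_\A \circ \Pi_\B \beta_F$ are precisely the content of Lemma~\ref{fish}(iv). Statement (iii) is a direct rewrite of the supernaturality equation (\ref{house}) established inside the proof of Lemma~\ref{fish}, specialised to $\mu = \B$, $\lambda = \A$ and rearranged. Finally (iv) collects three pieces: $\beta_{GF} = G\beta_F \circ \beta_G F$ is Lemma~\ref{fish}(i); $\beta_I = 1_\Pi$ is Lemma~\ref{fish}(ii) applied with the identity $1$-morphism $\unit_\A = I_\A$; and $\beta_\Pi = -1_{\Pi^2}$ is the second half of Lemma~\ref{fish}(iii), in which the $\pi_\lambda$ of the lemma now plays the role of $\Pi_\A$.

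The only genuine obstacle is bookkeeping: one must confirm that the Godement-style horizontal composition of superfunctors and supernatural transformations in $\piSCAT$ is the same operation as the abstract horizontal composition used in the proof of Lemma~\ref{fish}, so that each invocation of the super interchange law there applies verbatim here. Once that translation is in place no further calculation is required.
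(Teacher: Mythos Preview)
Your proposal is correct and follows exactly the approach the paper takes: the corollary is obtained by specializing Lemma~\ref{fish} to the strict $\Pi$-2-supercategory $\piSCAT$, with the dictionary you describe. Your explicit matching of each part of the corollary with the corresponding item of Lemma~\ref{fish} (and with equation~(\ref{house}) from its proof) is precisely the intended argument, just spelled out in more detail than the paper itself gives.
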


When working with $\Pi$-2-supercategories, notions of
2-superfunctors, 2-natural transformations and supermodifications
are just as defined for 2-supercategories in Definition~\ref{ms}: there
are no additional compatibility constraints.
Let $\piTSCat$ be the category of all $\Pi$-2-supercategories and
2-superfunctors,
and $\piTSCAT$ be the strict 2-category of all $\Pi$-2-supercategories,
2-superfunctors and 2-natural transformations.

\section{Envelopes}

In this subsection, we 
prove the statements about the functors (1) in Theorems~\ref{hop} and
\ref{bop}.
We will also construct  $\Pi$-envelopes of 2-supercategories.
We start at the level of supercategories. Recall the
functor
$-_\pi:\SCat \rightarrow \piSCat$ 
from Definition~\ref{pienv}, which sends supercategory 
$\A$ to its {\em $\Pi$-envelope} $(\A_\pi, \Pi, \zeta)$, 
and superfunctor $F$ to $F_\pi$.
In fact, this is part of the data of a strict 2-superfunctor
\begin{equation}\label{music}
-_\pi:\SCAT \rightarrow \piSCAT,
\end{equation}
sending a supernatural transformation
$x:F \Rightarrow G$ to 
$x_\pi:F_\pi \Rightarrow G_\pi$
defined from $(x_\pi)_{\Pi^a \lambda} := (-1)^{|x|a} (x_\lambda)_a^a$.

For any supercategory $\A$, there is a canonical superfunctor $J:\A \rightarrow \A_\pi$
which sends $\lambda\mapsto \Pi^\0 \lambda$ and $f\mapsto f^\0_\0$.
This is full and faithful. It is also dense: each object
$\Pi^\0 \lambda$ of $\A_\pi$ is obviously in the image, while
$\Pi^\1 \lambda$ is isomorphic to $\Pi^\0 \lambda$ via the odd
isomorphism $(1_\lambda)^\0_\1$.
This means that $\A$ and $\A_\pi$ are equivalent as
abstract categories. However they need not be superequivalent
as $J$ need not be evenly dense:

\begin{lemma}\label{e}
The canonical superfunctor $J:\A \rightarrow \A_\pi$
is a superequivalence if and only if $\A$ 
is {\em $\Pi$-complete}, meaning that every object of
$\A$ is the target of an odd isomorphism.
\end{lemma}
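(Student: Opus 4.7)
The plan is to exploit the fact, noted in the paragraph preceding the lemma, that $J$ is already full and faithful. By the criterion in Definition~\ref{defsupercat}(iv), it will then be a superequivalence precisely when it is evenly dense, so the task reduces to showing that evenly density of $J$ is equivalent to $\Pi$-completeness of $\A$.

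Next I would unpack even density concretely. The image of $J$ literally contains every object of the form $\Pi^\0 \lambda$, so only the objects $\Pi^\1 \mu$ need to be tested. Using the morphism-space formula $\Hom_{\A_\pi}(\Pi^\1 \mu, \Pi^\0 \lambda) = \Pi^\1 \Hom_\A(\mu,\lambda)$ from Definition~\ref{pienv}, any morphism between these objects has the form $f^\0_\1$ for some $f \in \Hom_\A(\mu,\lambda)$, of parity $\0 + \1 + |f|$. Hence $f^\0_\1$ is even iff $f$ is odd, and it is invertible in $\A_\pi$ iff $f$ is invertible in $\A$. Thus even isomorphisms $\Pi^\1 \mu \stackrel{\sim}{\to} \Pi^\0 \lambda$ in $\A_\pi$ are in bijection with odd isomorphisms $\mu \stackrel{\sim}{\to} \lambda$ in $\A$.

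From here both directions fall out: even density of $J$ amounts to the existence, for each $\mu \in \ob\A$, of an odd isomorphism with source $\mu$; passing to its inverse (homogeneous of the same parity) rephrases this as the existence of an odd isomorphism with target $\mu$, which is exactly $\Pi$-completeness. Running the same argument backwards gives the reverse implication. No step here presents a real obstacle; the only thing one needs to be attentive about is the parity bookkeeping given by the formula $|f^b_a| = a+b+|f|$, which makes the conversion between "even isomorphism in $\A_\pi$" and "odd isomorphism in $\A$" transparent.
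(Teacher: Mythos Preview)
Your proposal is correct and follows essentially the same route as the paper: both reduce the question to even density of $J$ (using full faithfulness), and both translate an even isomorphism $\Pi^\1\mu \cong \Pi^\0\lambda$ in $\A_\pi$ into an odd isomorphism $\mu \cong \lambda$ in $\A$ via the parity formula $|f^b_a| = a+b+|f|$. The only cosmetic difference is that the paper handles the ``only if'' direction by observing that $\A_\pi$ is always $\Pi$-complete and then pulling this back along the superequivalence, whereas you derive both directions uniformly from the same bijection; your version is arguably cleaner.
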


\begin{proof}
The ``only if'' direction is clear as every object $\Pi^a \lambda$ of $\A_\pi$ is the
target of the odd isomorphism $(1_\lambda)_{a+\1}^a:\Pi^{a+\1}\lambda \rightarrow \Pi^a \lambda$. 
Conversely, assume that $\A$ is $\Pi$-complete.
To show that $J$ is a superequivalence, it suffices to check that it
is evenly dense.
Let $\lambda$ be an object of $\A$
and $f:\mu \rightarrow \lambda$ be an odd isomorphism in $\A$.
Then $f^\1_\0:\Pi^\0 \mu \rightarrow \Pi^\1 \lambda$ is an even
isomorphism in $\A_\pi$. Hence, $\Pi^\1 \lambda$ is 
isomorphic via an even isomorphism to something in the image of
$J$, as of course is $\Pi^\0 \lambda$.
\end{proof}

Here is the universal property 
of $\Pi$-envelopes.

\begin{lemma}\label{uniprop}
Suppose $\A$ is a supercategory and $(\B, \Pi, \zeta)$ is a
$\Pi$-supercategory.
\begin{itemize}
\item[(i)]
Given a superfunctor $F:\A \rightarrow \B$, there is a canonical
superfunctor $\tilde F:\A_\pi \rightarrow \B$ such that
$F = \tilde F  J$.
\item[(ii)]
Given a supernatural transformation $x:F \Rightarrow G$
between superfunctors $F, G:\A \rightarrow \B$, there is a unique
supernatural transformation $\tilde x:\tilde F \Rightarrow \tilde G$
such that $x = \tilde x J$.
\end{itemize}
\end{lemma}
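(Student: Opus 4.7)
The plan is to construct $\tilde F$ and $\tilde x$ directly using the parity-switching structure of the target $\Pi$-supercategory $\B$, and then verify the axioms. For part (i), I would define $\tilde F(\Pi^a \lambda) := \Pi^a (F\lambda)$, where the $\Pi$ on the right is the parity-switching functor of $\B$. On a morphism $f_a^b:\Pi^a \lambda \to \Pi^b \mu$ of $\A_\pi$ coming from $f:\lambda\to\mu$ in $\A$, I would set
\[
\tilde F(f_a^b) := (\zeta^b_{F\mu})^{-1} \circ Ff \circ \zeta^a_{F\lambda},
\]
where $\zeta^{\0}_\nu := 1_\nu$ and $\zeta^{\1}_\nu := \zeta_\nu$. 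This has the correct parity $a+b+|f|$ since $\zeta$ is odd, and the identities $\tilde F(J\lambda) = F\lambda$, $\tilde F(Jf) = Ff$ hold because $\zeta^{\0}$ is the identity, giving $\tilde F \circ J = F$ as required.

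To verify $\tilde F$ is a superfunctor I would check that the definition respects composition: for $f^b_a$ and $g^c_b$, the composite $\tilde F(g^c_b) \circ \tilde F(f^b_a)$ reduces to $(\zeta^c_{F\nu})^{-1} \circ Fg \circ Ff \circ \zeta^a_{F\lambda} = \tilde F((g\circ f)^c_a)$ after cancelling the middle pair $\zeta^b_{F\mu} \circ (\zeta^b_{F\mu})^{-1}$; this uses only the $\SVec$-linearity of $F$ and involves no signs. Linearity of $\tilde F$ on each hom-space and preservation of identities are immediate.

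For part (ii), I would define $\tilde x$ on its homogeneous components of parity $p\in\Z/2$ by
\[
\tilde x_{\Pi^a \lambda, p} := (-1)^{pa}\, (\zeta^a_{G\lambda})^{-1} \circ x_{\lambda,p} \circ \zeta^a_{F\lambda},
\]
extending $\Z/2$-linearly in $p$. To check supernaturality I would verify the identity $\tilde x_{\Pi^b\mu,p} \circ \tilde F(f^b_a) = (-1)^{p(a+b+|f|)}\, \tilde G(f^b_a) \circ \tilde x_{\Pi^a\lambda,p}$ for each homogeneous $f$ and each $p$, using the supernaturality of $x$ at $f$ in $\A$ together with the cancellation of the $\zeta^a_{F\lambda}, \zeta^b_{G\mu}$ terms in the middle; the prefactor $(-1)^{pa}$ is exactly what absorbs the sign produced by super-interchanging $x_{\lambda,p}$ past $\zeta^a$. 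For uniqueness, if $\tilde x \circ J = x$ then $\tilde x_{\Pi^{\0}\lambda} = x_\lambda$ is forced, and applying supernaturality of $\tilde x$ to the odd isomorphism $(1_\lambda)^{\0}_{\1}:\Pi\lambda \to \Pi^{\0}\lambda$ (on which $\tilde F$ and $\tilde G$ evaluate to $\zeta_{F\lambda}$ and $\zeta_{G\lambda}$) uniquely determines $\tilde x_{\Pi^{\1}\lambda}$ as well.

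The main obstacle is the sign bookkeeping in the supernaturality check of $\tilde x$: the prefactor $(-1)^{pa}$ must be precisely the one dictated by the super interchange law so that every naturality square commutes, not merely the one for $(1_\lambda)_{\1}^{\0}$ used in the uniqueness argument. Once this sign is fixed, the rest of part (ii) reduces to a diagram chase, and part (i) is essentially forced by requiring $\tilde F$ to preserve parities and composition.
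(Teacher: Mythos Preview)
Your proposal is correct and follows essentially the same approach as the paper: the definitions of $\tilde F$ on objects and morphisms, the formula $\tilde x_{\Pi^a\lambda,p} = (-1)^{pa}(\zeta^a_{G\lambda})^{-1}\circ x_{\lambda,p}\circ \zeta^a_{F\lambda}$, and the uniqueness argument via supernaturality at $(1_\lambda)^\0_\1$ all match. One small remark: the sign $(-1)^{pa}$ is not really a super-interchange sign (everything here is vertical composition) but is forced by the supernaturality condition at the odd morphism $(1_\lambda)^\0_\1$, exactly as your uniqueness argument indicates; the verification you sketch then confirms it works for all $f^b_a$.
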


\begin{proof}
(i)
For $\lambda \in \ob \A$, we set
$\tilde F (\Pi^a \lambda) := F \lambda$
if $a = \0$ or $\Pi (F \lambda)$ if $a = \1$.
For a morphism $f:\lambda \rightarrow \mu$ in $\A$,
let $\tilde F(f_a^b):\tilde F(\Pi^a \lambda) \rightarrow \tilde
F(\Pi^b \mu)$ be $(\zeta_{F\mu}^b)^{-1} \circ Ff \circ
\zeta_{F\lambda}^a$,
where $\zeta_{F\lambda}^a$ denotes $1_{F\lambda}$ if $a = \0$ or
$\zeta_{F\lambda}$ if $a = \1$, and $\zeta_{F\mu}^b$ is interpreted similarly.

(ii) 
We are given
that 
${\tilde x}_{\Pi^\0 \lambda} = x_\lambda$ for each $\lambda \in \ob\A$.
Also, by the definition in (i), we have that
$\tilde F \zeta_{\Pi^\0 \lambda} =
\zeta_{F\lambda}$
for each $\lambda \in \ob \A$.
Hence, to ensure the supernaturality property on the morphism
$\zeta_{\Pi^\0 \lambda}:\Pi^\1 \lambda \rightarrow \Pi^\0 \lambda$, 
we must have that
${\tilde x}_{\Pi^\1 \lambda} = (-1)^{|x|} (\zeta_{G \lambda})^{-1}
\circ x_\lambda \circ \zeta_{F\lambda}$.
Thus, in general, we have that
\begin{equation}\label{snow}
{\tilde x}_{\Pi^a \lambda} = (-1)^{a|x|} (\zeta_{G\lambda}^a)^{-1}
\circ x_\lambda \circ (\zeta_{F\lambda}^b).
\end{equation}
It just remains to check that this
is indeed a supernatural
transformation, i.e. it satisfies supernaturality on all other
morphisms in $\A_\pi$.
Take a homogeneous $f:\lambda \rightarrow \mu$ in $\A$ and consider
$f_a^b:\Pi^a \lambda \rightarrow \Pi^b \mu$. We must show that
$$
(\zeta_{G\mu}^b)^{-1} \circ Gf \circ (\zeta_{G\lambda}^a) \circ 
{\tilde x}_{\Pi^a \lambda} = (-1)^{|x|(|f|+a+b)} 
{\tilde x}_{\Pi^b \mu} \circ (\zeta_{F\mu}^b)^{-1} \circ Ff \circ
(\zeta_{F\lambda}^a).
$$
This follows on substituting in the definitions of the $\tilde x$'s
from (\ref{snow}) and using that $Gf \circ x_\lambda = (-1)^{|x||f|} x_\mu
\circ Ff$.
\end{proof}

Most of the time, Lemmas~\ref{e}--\ref{uniprop} are all that one needs when
working with $\Pi$-envelopes in practice. The following gives a more
formal statement, enough to establish the claim made about the
functor (1) in Theorem~\ref{hop} from the introduction.
To state it, we let $\nu:\piSCAT\rightarrow\SCAT$ be the obvious
forgetful 2-superfunctor.

\begin{theorem}\label{2adj}
For all supercategories $\A$ and $\Pi$-supercategories $\B$,
there is a functorial superequivalence
$\mathcal{H}om(\A, \nu \B)
\rightarrow
\mathcal{H}om(\A_\pi, \B)$,
sending superfunctor $F$ to $\tilde F$ and supernatural transformation
$x$ to $\tilde x$, both as defined in Lemma~\ref{uniprop}.
Hence, the strict 2-superfunctor $-_\pi$ is left 2-adjoint to $\nu$.
\end{theorem}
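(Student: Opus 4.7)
The plan is to define a superfunctor
$\Phi:\mathcal{H}om(\A,\nu\B)\rightarrow\mathcal{H}om(\A_\pi,\B)$
by $\Phi(F):=\tilde F$ on objects and $\Phi(x):=\tilde x$ on morphisms,
invoking Lemma~\ref{uniprop}, and then to verify that $\Phi$ is a
superequivalence and is natural in $\A$ and $\B$. Parity preservation of
$\Phi$ is immediate from formula~(\ref{snow}): the two factors
$\zeta_{F\lambda}^a$ and $(\zeta_{G\lambda}^a)^{-1}$ each contribute odd
degree when $a=\1$, so $|\tilde x|=|x|$ in all cases. Compatibility of
$\Phi$ with vertical composition, $\widetilde{y\circ x}=\tilde y\circ\tilde x$,
and with identities is cleanest to deduce from the uniqueness clause of
Lemma~\ref{uniprop}(ii): both candidates restrict along the canonical
$J:\A\rightarrow\A_\pi$ to $y\circ x$, so they must agree.

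Full faithfulness of $\Phi$ on each morphism superspace is really just a
restatement of Lemma~\ref{uniprop}(ii): the existence clause says any
$z:\tilde F\Rightarrow\tilde G$ arises as $\tilde x$ for $x:=zJ$, and the
uniqueness clause says $\tilde x=\tilde y$ forces $x=y$.

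The step I expect to carry the main content is even density, and this is
precisely where the $\Pi$-structure on $\B$ enters essentially. Given a
superfunctor $H:\A_\pi\rightarrow\B$, the natural candidate preimage is
$F:=HJ$, and I want to exhibit an even supernatural isomorphism
$\eta:\tilde F\stackrel{\sim}{\Rightarrow} H$. On $\Pi^\0\lambda$ I take
$\eta_{\Pi^\0\lambda}:=1_{F\lambda}$, while on $\Pi^\1\lambda$ I must
compare $\tilde F(\Pi^\1\lambda)=\Pi_\B H(\Pi^\0\lambda)$ with
$H(\Pi^\1\lambda)$. The appropriate even isomorphism is
$$\eta_{\Pi^\1\lambda}:=H\bigl((1_\lambda)_\0^\1\bigr)\circ\zeta_{H(\Pi^\0\lambda)},$$
obtained by composing the odd structural isomorphism
$\zeta_{H(\Pi^\0\lambda)}$ of $\B$ with the odd isomorphism of $\A_\pi$
that $H$ sends to an odd isomorphism in $\B$. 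Supernaturality of $\eta$
need only be checked on morphisms of the form $f_\0^\0$ and
$(1_\lambda)_\0^\1$, since any $f_a^b$ factors as a vertical composition
of such; the first case is immediate from $\tilde F(f_\0^\0)=Ff=Hf_\0^\0$,
and the second reduces, upon unpacking
$\tilde F(\zeta_{\Pi^\0\lambda})=\zeta_{F\lambda}$ from the definition of
$\tilde F$, to $H((1_\lambda)_\1^\0)\circ H((1_\lambda)_\0^\1)=1_{H(\Pi^\0\lambda)}$,
which is functoriality of $H$.

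Functoriality of $\Phi$ in $\A$ and $\B$ then follows formally from the
universal characterization of $\tilde F$ and $\tilde x$, so precomposing
or postcomposing with appropriate (super)functors commutes with $-_\pi$.
The asserted 2-adjointness between $-_\pi$ and $\nu$ is exactly the
statement that the family of superequivalences
$\mathcal{H}om(\A,\nu\B)\simeq\mathcal{H}om(\A_\pi,\B)$ constructed above
is natural in both arguments, which is what we have built.
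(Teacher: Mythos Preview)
Your proof is correct and follows essentially the same route as the paper's: full faithfulness from Lemma~\ref{uniprop}(ii), and even density by comparing an arbitrary $H:\A_\pi\rightarrow\B$ with $\widetilde{HJ}$ via an explicit even isomorphism. The only cosmetic difference is that the paper packages your isomorphism $\eta_{\Pi^\1\lambda}=H\bigl((1_\lambda)_\0^\1\bigr)\circ\zeta_{H(\Pi^\0\lambda)}$ as the component $(\beta_H)_{\Pi^\0\lambda}$ supplied by Corollary~\ref{pi}(ii) (noting that $(\zeta_{\A_\pi})_{\Pi^\0\lambda}^{-1}=(1_\lambda)_\0^\1$), and leaves the naturality check to the reader, whereas you spell it out.
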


\begin{proof}
We must show that the given superfunctor is fully faithful and evenly
dense.
The fully faithfulness follows from 
Lemma~\ref{uniprop}(ii).
To see that it is evenly dense, take a superfunctor 
$F:\A_\pi \rightarrow \B$. Consider the composite functor 
$FJ:\A \rightarrow \nu \B$.
Then there is an even supernatural isomorphism
$\widetilde{F J} \stackrel{\sim}{\Rightarrow} F$, which is defined by the
following even isomorphisms
$\widetilde{FJ}(\Pi^a \lambda) \stackrel{\sim}{\rightarrow} F (\Pi^a \lambda)$
for each $\lambda \in \ob \A$ and $a \in \Z/2$:
if $a = \0$, then
$\widetilde{FJ}(\Pi^\0 \lambda) = F (\Pi^\0 \lambda)$, and we just
take the identity map;
if $a = \1$, then
$\widetilde{FJ}(\Pi^\1 \lambda) = \Pi F (\Pi^\0 \lambda)$,
so we need to produce an 
isomorphism $\Pi F
(\Pi^\0 \lambda)
\stackrel{\sim}{\rightarrow} 
F (\Pi^\1 \lambda)$, which we get from
Corollary~\ref{pi}(ii).
We leave it to the reader to check the naturality.
\end{proof}

We turn our attention to $2$-supercategories.

\begin{definition}\label{sixo}
The {\em $\Pi$-envelope} of a 2-supercategory $\AA$
is the $\Pi$-2-supercategory $(\AA_\pi, \pi, \zeta)$ 
with morphism supercategories 
that are the $\Pi$-envelopes of the morphism supercategories
in $\AA$:
\begin{itemize}
\item 
The object set for $\AA_\pi$ is the same as for $\AA$.
\item 
The set of 1-morphisms $\lambda \rightarrow \mu$
in $\AA_\pi$ is
$$
\{\Pi^a F\:|\:\text{for all 1-morphisms $F:\lambda\rightarrow\mu$ in
$\AA$ and $a \in
\Z/2$}\}.
$$
\item
The 
horizontal composition of 1-morphisms
$\Pi^a F:\lambda\rightarrow\mu$ and $\Pi^b G:\mu \rightarrow \nu$ 
is defined by $(\Pi^b G)(\Pi^a F) := \Pi^{a+b} (GF)$.
\item
The superspace of
2-morphisms
$\Pi^a F \Rightarrow \Pi^b G$ in $\AA_\pi$ is defined from
$$
\Hom_{\AA_\pi}(\Pi^a F, \Pi^b G)
:=
\Pi^{a+b} \Hom_{\AA}(F, G).
$$
We denote the 2-morphism $\Pi^a F \Rightarrow \Pi^b G$ coming from
$x:F \Rightarrow G$ under this identification by $x_a^b$. If
$x$ is homogeneous of parity $|x|$ then $x_a^b$ is
homogeneous of parity $|x|+a+b$.
\item
The vertical composition of $x^b_a:\Pi^a F \Rightarrow \Pi^b G$ and
$y^c_b:\Pi^b G \Rightarrow \Pi^c H$
is defined from 
\begin{equation}\label{shot1}
y^c_b \circ x^b_a := (y \circ x)^{c}_{a}:\Pi^a F \Rightarrow \Pi^c H.
\end{equation}
\item
The horizontal composition of $x^c_a:\Pi^a F \Rightarrow \Pi^c H$
and $y^d_b:\Pi^b G \Rightarrow \Pi^d K$
is defined by 
\begin{equation}\label{shot2}
y^d_b x^c_a := (-1)^{b|x|+|y|c+bc+ab}(yx)^{c+d}_{a+b}:\Pi^{a+b}(GF) \Rightarrow \Pi^{c+d}(KH).
\end{equation}
\item
The units $\unit_\lambda$ in $\AA_\pi$ are the 1-morphisms $\Pi^\0 \unit_\lambda$.
Also define $\pi = (\pi_\lambda)$ by $\pi_\lambda := \Pi^{\1} \unit_\lambda$
and $\zeta = (\zeta_\lambda)$ by $\zeta_\lambda := (1_{\unit_\lambda})^\0_\1:\pi_\lambda
\stackrel{\sim}{\Rightarrow} \unit_\lambda$;
in particular, $\xi_\lambda := \zeta_\lambda
\zeta_\lambda:\pi_\lambda^2 \stackrel{\sim}{\Rightarrow} \unit_\lambda$ is minus the
identity.
\item
The structure maps $a, l$ and $r$ in $\AA_\pi$ are induced by the ones
in $\AA$ in the obvious way, but there are some signs to be checked to
see that this makes sense. For example, for the associator,
one needs to note that the signs in the following two expressions agree:
\begin{align*}
(z_c^f y_b^e) x_a^d &= 
(-1)^{c|y|+|z|e+ce+bc+(b+c)|x|+(|y|+|z|)d + (b+c)d+a(b+c)}
((zy)x)_{a+b+c}^{d+e+f},\\
z_c^f (y_b^e x_a^d) &= 
(-1)^{b|x|+|y|d+bd+ab+c(|x|+|y|)+|z|(d+e)+c(d+e)+(a+b)c} (z(yx))_{a+b+c}^{d+e+f}.
\end{align*}
\end{itemize}
The main check needed to verify that this is indeed a
$\Pi$-2-supercategory is made in the following lemma:
\end{definition}

\begin{lemma}\label{shot}
The horizontal and vertical compositions 
from (\ref{shot1})--(\ref{shot2}) 
satisfy the
super interchange law.
\end{lemma}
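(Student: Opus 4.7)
The plan is to verify the super interchange law in $\AA_\pi$ by expanding both sides using the definitions (\ref{shot1}) and (\ref{shot2}), reducing to the super interchange law already known to hold in $\AA$, and checking that the accumulated signs agree modulo $2$.

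Concretely, I would start with vertically composable $2$-morphisms $x\colon F_1 \Rightarrow F_2$, $x'\colon F_2 \Rightarrow F_3$ in $\Hom_\AA(F,H)$-type data, and $y\colon G_1 \Rightarrow G_2$, $y'\colon G_2 \Rightarrow G_3$ with $F_i,G_i$ horizontally composable, and lift them to $x^b_a$, $(x')^c_b$, $y^e_d$, $(y')^f_e$ in $\AA_\pi$ so that both the horizontal and vertical compositions make sense. The parities in $\AA_\pi$ are $|x^b_a|=|x|+a+b$ etc. Expanding the left-hand side
\[
\bigl((y')^f_e \circ y^e_d\bigr)\bigl((x')^c_b \circ x^b_a\bigr) = (y'\circ y)^f_d\,(x'\circ x)^c_a,
\]
a single application of (\ref{shot2}) followed by the super interchange law $(y'\circ y)(x'\circ x) = (-1)^{|y||x'|}(y'x')\circ(yx)$ in $\AA$ rewrites it as a scalar multiple of $((y'x')\circ(yx))^{c+f}_{a+d}$. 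The right-hand side
\[
(-1)^{|(x')^c_b|\,|y^e_d|}\bigl((y')^f_e (x')^c_b\bigr) \circ \bigl(y^e_d x^b_a\bigr)
\]
is expanded by applying (\ref{shot2}) to each factor and then (\ref{shot1}) to the vertical composition; this also produces a scalar multiple of the same $((y'x')\circ(yx))^{c+f}_{a+d}$.

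The remaining task is to verify that the two exponents of $-1$ agree modulo $2$. In the exponent of the right-hand side, the factor $(-1)^{(|x'|+b+c)(|y|+d+e)}$ coming from the external sign contributes the ``interchange sign'' $|x'||y|$ that matches its counterpart on the left; the remaining terms pair up into cross terms that either appear on both sides or cancel as $2\cdot(\text{something})$. I expect this to be a purely mechanical check: each ``grading $\times$ grading'' term $a|\cdot|$, $b|\cdot|$, $\ldots$ and each ``grading $\times$ grading'' term $ab, cd, \ldots$ appears an even number of times after reduction.

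The main obstacle is not conceptual but bookkeeping: writing down both exponents with enough care that no term is dropped and no sign is miscounted. Once that is done the lemma falls out immediately, and with the super interchange law in $\AA_\pi$ in hand, the remaining axioms of a $\Pi$-$2$-supercategory for $(\AA_\pi,\pi,\zeta)$ are straightforward: functoriality of horizontal composition on $2$-morphisms, well-definedness of $a,l,r$ (checked analogously, using the sign match already indicated in Definition~\ref{sixo}), and the $\Pi$-data $\pi_\lambda=\Pi^\1\unit_\lambda$, $\zeta_\lambda=(1_{\unit_\lambda})^\0_\1$ being an odd $2$-isomorphism by construction.
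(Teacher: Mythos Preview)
Your proposal is correct and follows essentially the same route as the paper's proof: expand both sides of the super interchange law in $\AA_\pi$ via (\ref{shot1})--(\ref{shot2}), reduce to the super interchange law in $\AA$, and verify that the resulting sign exponents agree modulo~$2$. The paper uses the labelling $u,v,x,y$ with grading shifts $a,b,c,d,e,f$ rather than your primed notation and organizes the two sides in the opposite order, but the computation is the same, and the paper likewise leaves the final sign comparison to the reader.
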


\begin{proof}
We need to show that 
$$
(v^f_d u^e_c) \circ (y^d_b x_a^c) =(-1)^{(c+e+|u|)(b+d+|y|)}(v^f_d
\circ y^d_b) (u^e_c \circ x^c_a).
$$
The left hand side equals
\begin{multline*}
(-1)^{d|u|+|v|e+de+cd+b|x|+|y|c+bc+ab}
 (vu)^{e+f}_{c+d} \circ (yx)^{c+d}_{a+b}
=\\
(-1)^{d|u|+|v|e+de+cd+b|x|+|y|c+bc+ab}
 ((vu) \circ (yx) )^{e+f}_{a+b}
 =\\
(-1)^{d|u|+|v|e+de+cd+b|x|+|y|c+bc+ab+|u||y|}
 ((v \circ y) (u \circ x))^{e+f}_{a+b},
\end{multline*}
using the super interchange law in $\AA$ for the last equality.
The right hand side equals
\begin{multline*}
(-1)^{(c+e+|u|)(b+d+|y|)}
 (v \circ y)^f_{b}
(u\circ x)^e_{a}
=\\
(-1)^{(c+e+|u|)(b+d+|y|)+b(|u|+|x|)+(|v|+|y|)e+be+ab}
 ((v \circ y) (u \circ x))^{e+f}_{a+b}.
\end{multline*}
We leave it to the reader to check that the signs here are indeed equal.
\end{proof}

For any 2-supercategory $\AA$, there is a canonical strict 2-superfunctor
$\mathbb{J}:\AA \rightarrow \AA_\pi$; it is the identity on objects,
it sends the 1-morphism $F:\lambda\rightarrow \mu$ to $\Pi^\0 F$,
and the 2-morphism $x:F \Rightarrow G$ to $x_\0^\0:\Pi^\0 F \Rightarrow
\Pi^\0 G$. The analog of Lemma~\ref{e} is as follows:

\begin{lemma}\label{e2}
For a 2-supercategory $\AA$, the canonical 2-superfunctor
$\mathbb{J}:\AA \rightarrow \AA_\pi$ is a 2-superequivalence if and only
if  $\AA$ is {\em $\Pi$-complete}, meaning that it
possesses 1-morphisms $\pi_\lambda:\lambda\rightarrow \lambda$
and odd 2-isomorphisms $\pi_\lambda \cong \unit_\lambda$
for every $\lambda \in \ob \AA$.
\end{lemma}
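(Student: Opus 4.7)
The plan is to reduce this to the one-categorical analog, Lemma~\ref{e}, applied to the morphism supercategories. By construction of $\AA_\pi$ in Definition~\ref{sixo}, we have $\mathcal{H}om_{\AA_\pi}(\lambda,\mu) = \mathcal{H}om_{\AA}(\lambda,\mu)_\pi$ (the $\Pi$-envelope of Definition~\ref{pienv}), and the superfunctor induced by $\mathbb{J}$ on these morphism supercategories is exactly the canonical embedding $J$ of Lemma~\ref{e}. Since $\mathbb{J}$ is the identity on objects, the condition for $\mathbb{J}$ to be a 2-superequivalence (per the definition recalled just before Definition~\ref{drinfeld}) reduces to requiring that $J: \mathcal{H}om_\AA(\lambda,\mu) \rightarrow \mathcal{H}om_\AA(\lambda,\mu)_\pi$ be a superequivalence for every pair $\lambda, \mu \in \ob\AA$. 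By Lemma~\ref{e}, this is equivalent to each $\mathcal{H}om_\AA(\lambda,\mu)$ being $\Pi$-complete as a supercategory, i.e.\ every $1$-morphism $F:\lambda\rightarrow\mu$ being the target of an odd $2$-isomorphism.

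For the forward direction, I would specialize this to the case $\lambda=\mu$ and apply it to the object $\unit_\lambda \in \mathcal{H}om_\AA(\lambda,\lambda)$: one obtains a 1-morphism $\pi_\lambda:\lambda\rightarrow\lambda$ and an odd 2-isomorphism $\zeta_\lambda: \pi_\lambda \Rightarrow \unit_\lambda$, which is exactly the data of $\Pi$-completeness as in the statement of the lemma.

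For the reverse direction, I would assume $\AA$ is $\Pi$-complete and promote the given data on identities to arbitrary 1-morphisms as follows. Fix $\lambda,\mu\in\ob\AA$, let $F:\lambda\rightarrow\mu$ be any 1-morphism, and choose $\pi_\mu$ and $\zeta_\mu:\pi_\mu\Rightarrow\unit_\mu$ from the $\Pi$-completeness hypothesis. The horizontal composite $\zeta_\mu F: \pi_\mu F \Rightarrow \unit_\mu F$ is odd, since horizontal composition is a superfunctor and $|1_F|=0$, and it is a 2-isomorphism because $\zeta_\mu$ is. Post-composing with the even coherence $2$-isomorphism $l_F:\unit_\mu F\stackrel{\sim}{\Rightarrow} F$ yields an odd 2-isomorphism $\pi_\mu F \Rightarrow F$, exhibiting $F$ as the target of an odd 2-isomorphism and hence verifying $\Pi$-completeness of $\mathcal{H}om_\AA(\lambda,\mu)$. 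Applying Lemma~\ref{e} then gives that each component $\mathcal{H}om_\AA(\lambda,\mu) \rightarrow \mathcal{H}om_\AA(\lambda,\mu)_\pi$ is a superequivalence, which together with $\mathbb{J}$ being the identity on objects yields the 2-superequivalence.

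The only possible subtlety is bookkeeping in the non-strict case, ensuring that horizontal composition with an odd 2-iso really produces an odd 2-iso (which follows from the fact that $T_{\nu,\mu,\lambda}$ is a superfunctor hence $\SVec$-enriched) and that the coherence $l_F$ is even (which is part of Definition~\ref{ms}(i)). Both of these are built into the axioms, so no real obstacle arises; the main conceptual content is simply the translation between the "pointwise" notion of $\Pi$-completeness in the lemma (only on units) and the "morphism-wise" notion needed to invoke Lemma~\ref{e}.
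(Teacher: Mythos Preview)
Your proof is correct and follows essentially the same approach as the paper's own proof, which also reduces to Lemma~\ref{e} on morphism supercategories and then observes that it suffices to verify the odd-2-isomorphism condition just for the unit 1-morphisms. The paper states this last reduction in a single sentence (``It is clearly sufficient\ldots''), whereas you spell out explicitly the construction $l_F \circ (\zeta_\mu F):\pi_\mu F \Rightarrow F$ that makes it work; this added detail is fine and does not constitute a different argument.
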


\begin{proof}
Applying Lemma~\ref{e} to the morphism supercategories, we get that
$\mathbb{J}$ is a 2-superequivalence if and only if every 1-morphism
in $\AA$ is the target of an odd 2-isomorphism. It is clearly
sufficient to verify this condition just for the unit
1-morphisms $\unit_\lambda$ in $\AA$.
\end{proof}

Taking $\Pi$-envelopes actually defines a strict 2-functor
\begin{equation}\label{massage}
-_\pi:\TSCAT \rightarrow \piTSCAT.
\end{equation}
We still need to specify this on 2-superfunctors and 2-natural
transformations:
\begin{itemize}
\item
Suppose that 
$\mathbb{R}:\AA \rightarrow \BB$ is a 2-superfunctor
with coherence maps
$c:(\mathbb{R} -)(\mathbb{R} -)
\stackrel{\sim}{\Rightarrow} \mathbb{R}(-\,-)$ and $i:\unit_{\mathbb{R} \lambda} \stackrel{\sim}{\Rightarrow}
\mathbb{R} \unit_\lambda$ for each $\lambda \in \ob \AA$.
Then we let
$\mathbb{R}_\pi:\AA_\pi \rightarrow \BB_\pi$ be the 2-superfunctor
equal to $\mathbb{R}$ on objects, and given by the rules $\Pi^a F \mapsto \Pi^a
(\mathbb{R} F)$ on 1-morphisms and $x_a^b \mapsto (\mathbb{R} x)_a^b$
on 2-morphisms.
Its coherence maps $c_\pi:(\mathbb{R}_\pi -)(\mathbb{R}_\pi -)
\stackrel{\sim}{\Rightarrow} \mathbb{R}_\pi(-\,-)$ and
$i_\pi:
\unit_{\mathbb{R}_\pi \lambda} \stackrel{\sim}{\Rightarrow}
\mathbb{R}_\pi \unit_\lambda$ for $\mathbb{R}_\pi$
are defined by
$(c_\pi)_{\Pi^a F, \Pi^b G} := (c_{F,G})_{a+b}^{a+b}$ and
$i_\pi := i_{\0}^{\0}$.
\item
If $(X,x):\mathbb{R} \Rightarrow \mathbb{S}$ is a 2-natural
transformation,
we let $(X_\pi, x_\pi):\mathbb{R}_\pi \Rightarrow \mathbb{S}_\pi$ be
the 2-natural transformation
defined from
$(X_\pi)_\lambda := \Pi^\0 X_\lambda$ and
$((x_\pi)_{\mu,\lambda})_{\Pi^a F} := ((x_{\mu,\lambda})_F)^a_a$.
\end{itemize}

\begin{lemma}\label{uniprop2}
Suppose $\AA$ is a 2-supercategory and $(\BB, \pi, \zeta)$ is a
$\Pi$-2-supercategory.
\begin{itemize}
\item[(i)]
Given a graded 
2-superfunctor $\mathbb{R}:\AA \rightarrow \BB$, there is a canonical
graded 2-superfunctor $\tilde{\mathbb{R}}:\AA_\pi \rightarrow \BB$ such that
$\mathbb{R} = \tilde{\mathbb{R}}  \mathbb{J}$.
\item[(ii)]
Given a 2-natural transformation $(X,x):\mathbb{R} \Rightarrow
\mathbb{S}$ between 2-superfunctors $\mathbb{R},\mathbb{S}:\AA
\rightarrow \BB$, there is a unique
2-natural transformation $(\tilde X, \tilde x):\tilde{\mathbb{R}} \Rightarrow \tilde{\mathbb{S}}$
such that 
${\tilde X}_\lambda = X_\lambda$ 
and $x_{\mu,\lambda} = \tilde x_{\mu,\lambda} \mathbb{J}$
for all
$\lambda,\mu \in \ob \AA$.
\end{itemize}
\end{lemma}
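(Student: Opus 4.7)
The plan is to reduce both parts to the corresponding statements for plain supercategories (Lemma~\ref{uniprop}), applied one morphism supercategory at a time. By the Coherence Theorem stated in Section~2, I may assume throughout that $\AA$, $\BB$, and hence $\AA_\pi$ are all strict; the non-strict case then follows on inserting the canonical coherence isomorphisms. Of course $\tilde{\mathbb{R}}$ must agree with $\mathbb{R}$ on objects, since $\ob\AA_\pi = \ob\AA$.

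For (i), the key observation is that for each pair $\lambda,\mu \in \ob\AA$ the target supercategory $\mathcal{H}om_\BB(\mathbb{R}\lambda,\mathbb{R}\mu)$ is naturally a $\Pi$-supercategory with parity-switching functor $\pi_{\mathbb{R}\mu}(-)$ and odd supernatural isomorphism $\zeta_{\mathbb{R}\mu}(-)$, while by Definition~\ref{sixo} the source $\mathcal{H}om_{\AA_\pi}(\lambda,\mu)$ is the $\Pi$-envelope of $\mathcal{H}om_\AA(\lambda,\mu)$. Hence Lemma~\ref{uniprop}(i) extends the superfunctor $\mathbb{R}:\mathcal{H}om_\AA(\lambda,\mu) \to \mathcal{H}om_\BB(\mathbb{R}\lambda,\mathbb{R}\mu)$ uniquely to a superfunctor $\tilde{\mathbb{R}}:\mathcal{H}om_{\AA_\pi}(\lambda,\mu) \to \mathcal{H}om_\BB(\mathbb{R}\lambda,\mathbb{R}\mu)$, given on objects by $\Pi^a F \mapsto \pi_{\mathbb{R}\mu}^a (\mathbb{R} F)$. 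To promote these superfunctors into a 2-superfunctor $\tilde{\mathbb{R}}:\AA_\pi \to \BB$, one must provide coherence maps $\tilde c,\tilde\imath$. The unit coherence $\tilde\imath := i$ transports unchanged, since $\tilde{\mathbb{R}}(\unit_\lambda) = \tilde{\mathbb{R}}(\Pi^{\0}\unit_\lambda) = \mathbb{R}\unit_\lambda$. For the multiplication coherence, given $\Pi^a F:\lambda\to\mu$ and $\Pi^b G:\mu\to\nu$ in $\AA_\pi$, I would construct
$$
\tilde c_{\Pi^b G,\Pi^a F}:\bigl(\pi_{\mathbb{R}\nu}^b \mathbb{R} G\bigr)\bigl(\pi_{\mathbb{R}\mu}^a \mathbb{R} F\bigr) \stackrel{\sim}{\Longrightarrow} \pi_{\mathbb{R}\nu}^{a+b}\mathbb{R}(GF)
$$
by first applying the inverse of the Drinfeld-center isomorphism $\beta_{\mathbb{R}\nu,\mathbb{R}\mu}$ from Lemma~\ref{fish} iteratively to slide $\pi_{\mathbb{R}\mu}^a$ past $\mathbb{R} G$, producing $\pi_{\mathbb{R}\nu}^a$ on the outside, and then applying $\pi_{\mathbb{R}\nu}^{a+b}c_{G,F}$. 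The pentagon and triangle axioms for $(\tilde c,\tilde\imath)$ then follow by combining the corresponding axioms for $(c,i)$ with the Drinfeld-center relations of Lemma~\ref{fish}(i)--(ii) applied inside $\BB$.

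For (ii), the object part $\tilde X_\lambda := X_\lambda$ is forced by the compatibility with $\mathbb{J}$. For each pair $\lambda,\mu$, the given even supernatural transformation $x_{\mu,\lambda}:X_\mu(\mathbb{R} -) \Rightarrow (\mathbb{S} -)X_\lambda$ of superfunctors $\mathcal{H}om_\AA(\lambda,\mu) \to \mathcal{H}om_\BB(\mathbb{R}\lambda,\mathbb{S}\mu)$ extends by Lemma~\ref{uniprop}(ii) to a unique even supernatural transformation $\tilde x_{\mu,\lambda}:X_\mu(\tilde{\mathbb{R}} -) \Rightarrow (\tilde{\mathbb{S}} -)X_\lambda$ on the $\Pi$-envelope. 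It remains to check the two coherence diagrams of Definition~\ref{ms}(iii) for $(\tilde X,\tilde x)$, but this comes for free: both sides of each diagram are supernatural transformations of superfunctors defined on $\mathcal{H}om_{\AA_\pi}(\lambda,\mu) = \mathcal{H}om_\AA(\lambda,\mu)_\pi$ that already agree after restriction along $\mathbb{J}$ (by the hypothesis that $(X,x)$ is a 2-natural transformation for $(\mathbb{R},\mathbb{S})$), so the uniqueness clause of Lemma~\ref{uniprop}(ii) forces equality on all of $\AA_\pi$.

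The main obstacle will be the construction and verification of $\tilde c$ in part (i) — in particular, tracking the parity signs produced when iterated applications of $\beta$ slide past each other in a triple horizontal composite $\Pi^c H \cdot \Pi^b G \cdot \Pi^a F$, so that the pentagon actually closes. This is precisely where the Drinfeld-center hexagon of Lemma~\ref{fish}(i) enters, while the identities $(\beta_{\lambda,\lambda})_{\unit_\lambda} = 1_{\pi_\lambda}$ and $(\beta_{\lambda,\lambda})_{\pi_\lambda} = -1_{\pi_\lambda^2}$ from Lemma~\ref{fish}(ii)--(iii) ensure that the unit triangles close up with the correct signs prescribed in Definition~\ref{sixo}.
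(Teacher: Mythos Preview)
Your overall strategy matches the paper's: build $\tilde{\mathbb{R}}$ on each morphism supercategory via Lemma~\ref{uniprop}(i), manufacture $\tilde c$ using the Drinfeld-center isomorphism $\beta$ of Lemma~\ref{fish}, and then verify coherence. But there are two concrete gaps.

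First, in part~(i) your description of $\tilde c_{\Pi^b G,\Pi^a F}$ is incomplete. After sliding $\pi^a_{\mathbb{R}\mu}$ past $\mathbb{R} G$ you land in $\pi^b_{\mathbb{R}\nu}\pi^a_{\mathbb{R}\nu}(\mathbb{R} G)(\mathbb{R} F)$, and $\pi^b_{\mathbb{R}\nu}\pi^a_{\mathbb{R}\nu}$ is \emph{not} the same $1$-morphism as $\pi^{a+b}_{\mathbb{R}\nu}$ when $a=b=\1$: one is $\pi_{\mathbb{R}\nu}^2$, the other is $\unit_{\mathbb{R}\nu}$. The paper inserts an explicit collapsing map $m_{b,a}:\pi^b_{\mathbb{R}\nu}\pi^a_{\mathbb{R}\nu}\stackrel{\sim}{\Rightarrow}\pi^{a+b}_{\mathbb{R}\nu}$, equal to $-\xi_{\mathbb{R}\nu}$ when $ab=\1$ and the identity otherwise. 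This sign is not a cosmetic detail: it is exactly what makes $\tilde c$ natural with respect to the tensor-product sign rule $(-1)^{a|g|+|f|d+ad+ac}$ in Definition~\ref{sixo}. The paper in fact identifies the \emph{naturality} of $\tilde c$ (not the pentagon) as ``the key point,'' and establishes it via two auxiliary squares involving $m_{b,a}$ and $\beta$; you do not mention naturality at all.

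Second, in part~(ii) your appeal to Lemma~\ref{uniprop}(ii) does not apply as stated. That lemma extends a supernatural transformation between two superfunctors to one between their canonical Lemma~\ref{uniprop}(i) extensions. But $X_\mu(\tilde{\mathbb{R}}\,-)$ is \emph{not} the canonical extension of $X_\mu(\mathbb{R}\,-)$: the former sends $\Pi^a F$ to $X_\mu\,\pi^a_{\mathbb{R}\mu}(\mathbb{R} F)$, while the canonical extension (with respect to the $\Pi$-structure $\pi_{\mathbb{S}\mu}(-)$ on the target) sends it to $\pi^a_{\mathbb{S}\mu}\,X_\mu(\mathbb{R} F)$. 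These differ by $(\beta_{\mathbb{S}\mu,\mathbb{R}\mu})_{X_\mu}$, and indeed the paper's explicit formula for $(\tilde x_{\mu,\lambda})_{\Pi^a F}$ contains precisely this factor. Consequently your ``uniqueness forces equality'' argument for the coherence diagrams of Definition~\ref{ms}(iii) breaks down: the two sides are supernatural transformations between functors that are only isomorphic, not equal, to canonical extensions, so Lemma~\ref{uniprop}(ii) cannot be invoked directly. The paper instead writes down $\tilde x$ explicitly and leaves the verification of Definition~\ref{ms}(iii) to a direct (if tedious) check using Lemma~\ref{fish}.
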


\begin{proof}
To simplify notation throughout this proof, we will assume that $\BB$
is strict.

(i)
On objects, we take 
$\tilde{\mathbb{R}} \lambda := \mathbb{R} \lambda$.
To specify its effect on 1- and 2-morphisms, we first introduce some
notation:
for $a \in \Z/2$ and $\lambda \in \ob \BB$, 
let $\zeta_\lambda^a : \pi_\lambda^a \Rightarrow \unit_\lambda$ denote the
2-morphism
$1_{\unit_\lambda} \in \Hom_{\BB}(\unit_\lambda,\unit_\lambda)$
if $a = \0$ or the 2-morphism
$\zeta_\lambda\in \Hom_{\BB}(\pi_\lambda, \unit_\lambda)$ if $a = \1$.
Then, for a 1-morphism $F:\lambda \rightarrow \mu$ in $\AA$ and $a\in \Z/2$,
we set $\tilde{\mathbb{R}} (\Pi^a F) := \pi_{\mathbb{R}\mu}^a (\mathbb{R}F)$.
Also, if $x:F \Rightarrow G$ is a 2-morphism in $\AA$
between 1-morphisms $F, G:\lambda\rightarrow\mu$,
we define $\tilde{\mathbb{R}}(x_a^b):\tilde{\mathbb{R}}(\Pi^a F)
\Rightarrow 
\tilde{\mathbb{R}}(\Pi^b G)$ to be the following composition:
$$
\begin{CD}
\pi_{\mathbb{R}\mu}^a (\mathbb{R}F)
&@>\zeta_{\mathbb{R}\mu}^a(\mathbb{R}F)>>
&\mathbb{R}F
&
@>\mathbb{R}x>>
&\mathbb{R}G
&@>(\zeta_{\mathbb{R}\mu}^b)^{-1}(\mathbb{R}G)>>
&\pi_{\mathbb{R}\mu}^b (\mathbb{R}G).
\end{CD}
$$
In other words, by the super interchange law, we have that
\begin{equation}\label{snowy}
\tilde{\mathbb{R}}(x_a^b)
=(-1)^{a |x|} (\zeta_{\mathbb{R}\mu}^b)^{-1} \zeta_{\mathbb{R}\mu}^a
(\mathbb{R} x).
\end{equation}
Recalling (\ref{shot1}), it is easy to see from this definition that
$\tilde{\mathbb{R}}(y_b^c \circ x_a^b) = 
\tilde{\mathbb{R}}(y_b^c) \circ \tilde{\mathbb{R}}(x_a^b)$.
Thus, we have specified the first two pieces of data from Definition~\ref{ms}(ii)
that are required to define the
2-superfunctor $\tilde{\mathbb{R}}$.

For the other two pieces of required 
data, let $c:(\mathbb{R} -) (\mathbb{R}-) \stackrel{\sim}{\Rightarrow}
\mathbb{R}(-\,-)$ and $i:\unit_{\mathbb{R}\lambda} \stackrel{\sim}{\Rightarrow}
\mathbb{R} \unit_\lambda$ be the coherence maps for $\mathbb{R}$.
The coherence map $\tilde\imath$ for $\tilde{\mathbb{R}}$ is just the same
as $i$.
We define the other coherence map $\tilde c$ for $\tilde{\mathbb{R}}$
by letting ${\tilde c}_{\Pi^b G, \Pi^a F}:\tilde{\mathbb{R}}(\Pi^b G)\,
\tilde{\mathbb{R}}(\Pi^a F) \Rightarrow \tilde{\mathbb{R}}(\Pi^{a+b}
(GF))$ 
be the following composition (for $F:\lambda \rightarrow \mu$ and $G:\mu
\rightarrow \nu$):
$$
\begin{CD}
\pi^b_{\mathbb{R}\nu} (\mathbb{R} G) \pi^a_{\mathbb{R}\mu} (\mathbb{R}
F)
&@>\pi^b_{\mathbb{R}\nu}
(\beta^a_{\mathbb{R}\nu,\mathbb{R}\mu})^{-1}_{\mathbb{R}G} (\mathbb{R} F)>>&
\pi^b_{\mathbb{R}\nu} \pi^a_{\mathbb{R}\nu} (\mathbb{R} G) (\mathbb{R}
F)
&@>m_{b,a} c_{G,F}>>&
\pi^{a+b}_{\mathbb{R} \nu} \mathbb{R}(GF).
\end{CD}
$$
Here, 
$\beta_{\mathbb{R}\nu,\mathbb{R}\mu}^a:\pi^a_{\mathbb{R}\nu} -
\stackrel{\sim}{\Rightarrow}
- \pi^a_{\mathbb{R}\mu}$ is the identity if $a = \0$ or the even
supernatural isomorphism $\beta_{\mathbb{R}\nu,\mathbb{R}\mu}$ 
from Lemma~\ref{fish} if $a = \1$, and
$m_{b,a}:\pi^b_{\mathbb{R}\nu} \pi^a_{\mathbb{R}\nu} \stackrel{\sim}{\Rightarrow}
\pi^{a+b}_{\mathbb{R}\nu}$
is the identity 
if $ab = \0$, or
the 2-isomorphism $-\xi_{\mathbb{R}\nu} = -\zeta_{\mathbb{R}\nu}
\zeta_{\mathbb{R}\nu}$ from 
Lemma~\ref{fish}(iv) if $ab = \1$.

The key point now is to check the naturality of $\tilde c$.
Take $x:F \Rightarrow H$ and $y:G \Rightarrow K$. We must show that
the following diagram commutes for all $a,b,c,d \in \Z/2$:
$$
\begin{CD}
\pi^b_{\mathbb{R}\nu}(\mathbb{R}G) \pi^a_{\mathbb{R}\mu}(\mathbb{R} F)
&@>\tilde
c_{\Pi^b G, \Pi^a F} >>&\pi^{a+b}_{\mathbb{R}\nu} \mathbb{R}(GF)\\
@V\tilde{\mathbb R}(y^d_b)
\tilde{\mathbb{R}}(x^c_a)VV&&@VV\tilde{\mathbb{R}}(y_b^d x_a^c)V\\
\pi^d_{\mathbb{R}\nu}(\mathbb{R}K) \pi^c_{\mathbb{R}\mu} (\mathbb{R} H)&@>\tilde
c_{\Pi^d K, \Pi^c H} >>&\pi^{c+d}_{\mathbb{R}\nu} \mathbb{R}(KH).
\end{CD}
$$
Recalling (\ref{shot2}) and (\ref{snowy}), 
the composite of the top and right hand maps is equal to
$$
(-1)^{a|x|+(a+b+c)|y|+ab+bc}
\left((\zeta_\nu^{c+d})^{-1} \zeta_\nu^{a+b} \mathbb{R}(yx)\right)
\circ 
\left(m_{b,a} c_{G,F}\right) \circ \left(\pi_{\mathbb{R}\nu}^b
(\beta_{\mathbb{R}\nu, \mathbb{R}\mu}^a)_{\mathbb{R}G}^{-1}
(\mathbb{R} F)\right).
$$
Also the composite of the bottom and left hand maps is 
$$
(-1)^{a|x|+b|y|} \left( m_{d,c} c_{K,H}\right) \circ \left(
  \pi^d_{\mathbb{R}\nu} (\beta^c_{\mathbb{R} \nu,
    \mathbb{R}\mu})_{\mathbb{R}K}^{-1}
(\mathbb{R}H) \right)\circ
\left( 
(\zeta_\nu^d)^{-1} \zeta_\nu^b (\mathbb{R} y) (\zeta_\mu^c)^{-1}
\zeta_\mu^a (\mathbb{R} x)
\right).
$$
To see that these two are indeed equal, use the following
commutative diagrams:
\begin{align*}
\begin{CD}
\pi^b_{\mathbb{R}\nu} \pi^a_{\mathbb{R}\nu}
&@>m_{b,a}>>&\pi^{a+b}_{\mathbb{R}\nu}\\
@V(\zeta_{\mathbb{R}\nu}^d)^{-1} \zeta_{\mathbb{R}\nu}^b (\zeta_{\mathbb{R}\nu}^c)^{-1} \zeta_{\mathbb{R}\nu}^a
VV&&@VV(-1)^{ab+bc} (\zeta_{\mathbb{R}\nu}^{c+d})^{-1} \zeta_{\mathbb{R}\nu}^{a+b} V\\
\pi^d_{\mathbb{R}\nu} \pi^c_{\mathbb{R}\nu}
&@>>m_{d,c}>&\pi^{c+d}_{\mathbb{R}\nu},
\end{CD}\\
\begin{CD}
\pi^a_{\mathbb{R}\nu} (\mathbb{R}
G)&@>(\beta^a_{\mathbb{R}\nu,\mathbb{R}\mu})_{\mathbb{R} G}>>& (\mathbb{R} G)
\pi^a_{\mathbb{R}\mu}\\
@V(\zeta_{\mathbb{R}\nu}^c)^{-1} \zeta_{\mathbb{R}\nu}^a (\mathbb{R} y) VV&&@VV (-1)^{(a+c)|y|}(\mathbb{R} y)
(\zeta_{\mathbb{R}\mu}^c)^{-1} \zeta_{\mathbb{R}\mu}^a V\\
\pi^c_{\mathbb{R}\nu} (\mathbb{R} K)&@>>(\beta_{\mathbb{R}\nu,
  \mathbb{R}\mu}^c)_{\mathbb{R} K}>& (\mathbb{R} K)
\pi^c_{\mathbb{R}\mu}.
\end{CD}
\end{align*}
To establish the latter two diagrams, note by the definitions of
$m_{b,a}$ and $\beta^a_{\mathbb{R}\nu, \mathbb{R}\mu}$ that
$\zeta_{\mathbb{R}\nu}^b  \zeta_{\mathbb{R}\nu}^a = (-1)^{ab}
\zeta_{\mathbb{R}\nu}^{a+b} \circ m_{b,a}$ and
$\left((\mathbb{R} y) \zeta^a_{\mathbb{R}\mu}\right) \circ (\beta^a_{\mathbb{R}\nu,
  \mathbb{R}\mu})_{\mathbb{R}G}
= (-1)^{a|y|} \zeta^a_{\mathbb{R}\nu}(\mathbb{R} y)$, then use the
super interchange law.

We leave it to the reader to verify that the coherence axioms hold,
i.e. the two diagrams of Definition~\ref{ms}(ii) commute. This
depends crucially on Lemma~\ref{fish}.

(ii)
Take a 1-morphism $F:\lambda\rightarrow\mu$ in $\AA$.
We are given that $({\tilde x}_{\mu,\lambda})_{\Pi^\0 F} =
(x_{\mu,\lambda})_F$.
In order for $\tilde x_{\mu,\lambda}$ to satisfy naturality on
the 2-morphism $(1_F)_{\1}^{\0}:\Pi^\1 F \Rightarrow \Pi^\0 F$,
we are also forced to have
$({\tilde x}_{\mu,\lambda})_{\Pi^\1 F} = \left(\zeta_{\mathbb{S}\mu}
(\mathbb{S} F) X_\lambda\right)^{-1}
\circ (x_{\mu,\lambda})_F \circ \left(X_\mu \zeta_{\mathbb{R}\mu}
(\mathbb{R} F)\right)$.
Thus, in general, we have that
\begin{align}\label{steak}
({\tilde x}_{\mu,\lambda})_{\Pi^a F} &= \left(\zeta^a_{\mathbb{S}\mu}
(\mathbb{S} F)X_\lambda\right)^{-1}
\circ (x_{\mu,\lambda})_F \circ \left(X_\mu \zeta^a_{\mathbb{R}\mu}
(\mathbb{R} F)\right)\notag\\
&
= \left(\pi^a_{\mathbb{S}\mu}
(x_{\mu,\lambda})_F\right) \circ 
\left((\beta_{\mathbb{S}\mu, \mathbb{R}\mu}^a)^{-1} (\mathbb{R}
  F)\right).
\end{align}
To check naturality in general, take some homogeneous $x:F \Rightarrow G$, and consider
$x_a^b:\Pi^a F \Rightarrow \Pi^b G$. We know that
$(x_{\mu,\lambda})_G \circ \left(X_\mu (\mathbb{R}x)\right) = \left((\mathbb{S} x)
X_\lambda\right) \circ (x_{\mu,\lambda})_F$, and need to prove that
$(\tilde x_{\mu,\lambda})_{\Pi^b G} \circ \left(\tilde X_\mu (\tilde{\mathbb{R}}x_a^b)\right) = \left((\tilde{\mathbb{S}} x_a^b)
\tilde X_\lambda\right) \circ (\tilde{x}_{\mu,\lambda})_{\Pi^a F}$.
On expanding all the definitions, this reduces to checking the
following identity:
\begin{multline*}
\left(\zeta_{\mathbb{S}\mu}^b (\mathbb{S} G) X_\lambda\right)^{-1} 
\circ (x_{\mu,\lambda})_G \circ \left(X_\mu \zeta^b_{\mathbb{R}\mu}
(\mathbb{R} G)\right)  \circ \left(X_\mu (\zeta^b_{\mathbb{R}\mu})^{-1}
  \zeta^a_{\mathbb{R}\mu} (\mathbb{R} x)\right) =\\
\left((\zeta_{\mathbb{S}\mu}^b)^{-1} \zeta_{\mathbb{S}\mu}^a (\mathbb{S} x)
X_\lambda\right)
\circ \left(\zeta^a_{\mathbb{S}\mu} (\mathbb{S} F) X_\lambda\right)^{-1} \circ
(x_{\mu,\lambda})_F \circ \left(X_\mu \zeta^a_{\mathbb{R}\mu}(\mathbb{R} F)\right),
\end{multline*}
which is quite straightforward.

It remains to verify that $(\tilde X, \tilde x)$ satisfies the two axioms for 2-natural transformations from
Definition~\ref{ms}(iii). We leave this to the reader again; one needs
to use Lemma~\ref{fish} repeatedly.
\end{proof}

\begin{example}
Assume that $\k$ is a field, and
recall the monoidal supercategory $\mathcal{I}$ with one object from
Example~\ref{day}(ii).
Its $\Pi$-envelope $\mathcal{I}_\pi$ is a monoidal $\Pi$-supercategory
with two objects $\Pi^\0$ and $\Pi^\1$.
Each morphism space $\Hom_{\mathcal{I}_\pi}(\Pi^a, \Pi^b)$ is
one-dimensional
with basis $1_a^b$.
The tensor product satisfies $\Pi^b \otimes \Pi^a = \Pi^{a+b}$
and $1_b^d \otimes 1_a^c = (-1)^{(a+c)b} 1_{a+b}^{c+d}$.
We also have the monoidal $\Pi$-supercategory $\SVEC$ from Example~\ref{washing}(i).
By Lemma~\ref{uniprop2}(i), 
the canonical superfunctor $F:\mathcal{I} \rightarrow \SVEC$
sending the only object to $\k$ extends to
a monoidal superfunctor $\tilde F:\mathcal{I}_\pi \rightarrow \SVEC$.
This sends $\Pi^a\mapsto\Pi^a \k$ and
$1_a^b \mapsto (\operatorname{id}_a^b:\Pi^a \k \rightarrow \Pi^b
\k, 1 \mapsto 1)$; its coherence maps are $\Pi^b \k \otimes \Pi^a \k \rightarrow
\Pi^{a+b} \k,
1 \otimes 1 \mapsto 1$.
The signs are consistent because
the linear map $\id_b^d\otimes \id_a^c:\Pi^b \k \otimes \Pi^a \k
\rightarrow \Pi^d \k \otimes \Pi^c \k$ sends
$1 \otimes 1 \mapsto
(-1)^{(a+c)b} 1 \otimes 1$.
\end{example}

Using Lemma~\ref{uniprop2}, one can prove the following. In the statement, $\nu$ denotes the
obvious forgetful functor (actually, here it is a 2-functor).

\begin{theorem}\label{golly}
For all 2-supercategories $\AA$ and $\Pi$-2-supercategories $\BB$,
there is a functorial equivalence
$\mathcal{H}om(\AA, \nu \BB)
\rightarrow
\mathcal{H}om(\AA_\pi, \BB)$,
sending 2-superfunctor $\mathbb{R}$ to $\tilde{\mathbb{R}}$ and 2-natural transformation
$(X,x)$ to $(\tilde X, \tilde x)$, both as defined in Lemma~\ref{uniprop2}.
Hence, the strict 2-functor $-_\pi$ is left 2-adjoint to $\nu$.
\end{theorem}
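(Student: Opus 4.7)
The plan is to mimic the proof of Theorem~\ref{2adj} one level up. The assignments $\mathbb{R} \mapsto \tilde{\mathbb{R}}$ on 2-superfunctors and $(X,x) \mapsto (\tilde X, \tilde x)$ on 2-natural transformations are already given by Lemma~\ref{uniprop2}, so first I extend to supermodifications by $\tilde\alpha_\lambda := \alpha_\lambda$. To see this is a supermodification $(\tilde X, \tilde x) \Rightarrow (\tilde Y, \tilde y)$, substitute the formula (\ref{steak}) for $\tilde x_{\mu,\lambda}$ and $\tilde y_{\mu,\lambda}$ into the defining square for $\tilde\alpha$ on a 1-morphism $\Pi^a F$ of $\AA_\pi$; after cancelling the common $\pi^a_{\mathbb{S}\mu}(-)$ and $(\beta^a_{\mathbb{S}\mu,\mathbb{R}\mu})^{-1}(\mathbb{R}F)$ factors the square reduces to the supermodification square for $\alpha$ at the 1-morphism $F$ of $\AA$, plus an application of the super interchange law. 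Functoriality of the whole assignment in $\alpha$ is immediate.

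Fully faithfulness follows from the uniqueness statement in Lemma~\ref{uniprop2}(ii) together with the observation that $\tilde\alpha$ is determined by its components at objects of $\AA_\pi$, which agree with the objects of $\AA$. For essential surjectivity at the level of 2-superfunctors, given $\mathbb{S}:\AA_\pi\to\BB$ I form $\mathbb{S}\mathbb{J}:\AA\to\BB$ and build a strong 2-natural isomorphism $\widetilde{\mathbb{S}\mathbb{J}}\stackrel{\sim}{\Rightarrow}\mathbb{S}$ that is the identity on objects. On a 1-morphism $\Pi^a F:\lambda\to\mu$ of $\AA_\pi$ the source is $\pi_{\mathbb{S}\mu}^a \mathbb{S}(\Pi^\0 F)$ while the target is $\mathbb{S}(\Pi^a F)$; using that $\Pi^a F = (\Pi^a \unit_\mu)(\Pi^\0 F)$ in $\AA_\pi$, the required even 2-isomorphism is built by composing the inverse coherence map $c^{-1}$ of $\mathbb{S}$ with an iterated application (parametrized by $a$) of the canonical even 2-isomorphism $\pi_{\mathbb{S}\mu}\stackrel{\sim}{\Rightarrow}\mathbb{S}\,\pi_\mu^{\AA_\pi}$ obtained as $(\mathbb{S}\zeta_\mu^{\AA_\pi})^{-1}\circ i\circ \zeta_{\mathbb{S}\mu}^\BB$ (an even composition, odd$\circ$even$\circ$odd). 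Naturality in 2-morphisms reduces to supernaturality of $c$, while compatibility with horizontal composition and units comes from the hexagon and triangle axioms for $\mathbb{S}$ together with Lemma~\ref{fish}. For essential surjectivity at the level of 2-natural transformations, given $(Y,y):\tilde{\mathbb{R}}\Rightarrow\tilde{\mathbb{S}}$, restrict along $\mathbb{J}$ to obtain $(X,x):\mathbb{R}\Rightarrow\mathbb{S}$; then $\widetilde{(X,x)}=(Y,y)$ by the uniqueness clause of Lemma~\ref{uniprop2}(ii). The ``hence'' about 2-adjointness is then formal from this universal property.

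The main obstacle is verifying the coherence of the 2-natural isomorphism $\widetilde{\mathbb{S}\mathbb{J}} \cong \mathbb{S}$ in the preceding step with all of the structural data in Definitions~\ref{ms}(ii)--(iii). The sign bookkeeping here, and in particular the match between the doubled odd compositions $\zeta_{\mathbb{S}\lambda}\zeta_{\mathbb{S}\lambda}$ and the even $\xi$ of Lemma~\ref{fish}(iv), is precisely what formula (\ref{steak}) and the $\beta$-isomorphisms of Lemma~\ref{fish} were set up to handle; no essentially new calculation beyond those already performed in the proof of Lemma~\ref{uniprop2} is required.
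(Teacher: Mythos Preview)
Your proposal is correct and follows exactly the approach the paper intends: the paper gives no proof of Theorem~\ref{golly} beyond the sentence ``Using Lemma~\ref{uniprop2}, one can prove the following,'' and what you have written is precisely the expected lift of the proof of Theorem~\ref{2adj} one categorical level up---restrict along $\mathbb{J}$ for fullness and faithfulness, and build the comparison $\widetilde{\mathbb{S}\mathbb{J}}\stackrel{\sim}{\Rightarrow}\mathbb{S}$ for essential surjectivity.

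Two small remarks on presentation. First, the statement only asserts an equivalence of \emph{ordinary} categories (objects are 2-superfunctors, morphisms are 2-natural transformations), so your extension to supermodifications is not needed here; it is exactly the content of Remark~\ref{phone}, which the paper separates out. Second, what you call ``essential surjectivity at the level of 2-natural transformations'' is really \emph{fullness} of the functor on morphisms; your argument for it (restrict $(Y,y)$ along $\mathbb{J}$, then invoke the uniqueness clause of Lemma~\ref{uniprop2}(ii)) is correct, and together with faithfulness from the same uniqueness clause gives full faithfulness. With that relabelling your write-up matches the paper's intended proof.
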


On specializing to 2-supercategories with one object, this implies the result about the
functor (1) made in the statement of Theorem~\ref{bop} from the introduction.

\begin{remark}\label{phone}
In fact, we should really go one level higher here, viewing $-_\pi$ as a strict
3-superfunctor from the strict 3-supercategory of 2-supercategories to
the strict 3-supercategory of $\Pi$-2-supercategories,
by associating a supermodification
$\alpha_\pi:(X_\pi,x_\pi)
{\:\Rightarrow\!\!\!\!\!\!\!-\!\!\!\!\!-\:\,} (Y_\pi,y_\pi)$ 
to each supermodification
$\alpha:(X,x)
{\:\Rightarrow\!\!\!\!\!\!\!-\!\!\!\!\!-\:\,} (Y,y)$ defined from
$(\alpha_\pi)_\lambda := 
(\alpha_\lambda)_\0^\0$.
We leave it to the reader to formulate an appropriate part (iii)
to 
Lemma~\ref{uniprop2} explaining how to extend $\alpha:(X,x)
{\:\Rightarrow\!\!\!\!\!\!\!-\!\!\!\!\!-\:\,} (Y,y)$
to
$\tilde\alpha:(\tilde X, \tilde x)
{\:\Rightarrow\!\!\!\!\!\!\!-\!\!\!\!\!-\:\,} (\tilde Y ,\tilde y)$.
Then Theorem~\ref{golly} becomes a
2-superequivalence
$$
\mathfrak{Hom}(\AA, \nu \BB)
\rightarrow
\mathfrak{Hom}(\AA_\pi, \BB).
$$
In particular, 
it follows that
there is an induced monoidal superfunctor
from the Drinfeld center of a 2-supercategory $\AA$ to the Drinfeld center of
its $\Pi$-envelope $\AA_\pi$; the latter is a monoidal $\Pi$-supercategory in the sense
of Definition~\ref{thnder}.
\end{remark}

\section{$\Pi$-Categories}

We continue to assume that $\k$ is a commutative ground
ring.
Let $\piCat$ be the category of all \smallcat $\Pi$-categories
and $\Pi$-functors in the sense of 
Definition~\ref{jazz}.
Recall also that we denote the underlying category of a
supercategory $\A$ by $\underline{\A}$; see Definition~\ref{defsupercat}(v).
If $(\A, \Pi_\A, \zeta_\A)$ 
is a $\Pi$-supercategory and we set $\xi_\A := \zeta_\A \zeta_\A$,
then $(\underline{\A}, \underline{\Pi}_\A, \xi_\A)$ is a
$\Pi$-category thanks to Corollary~\ref{pi}(i).
Given another $\Pi$-supercategory
$(\B, \Pi_\B, \zeta_\B)$ and a superfunctor $F:\A
\rightarrow \B$,
Corollary~\ref{pi}(ii) explains how to construct the additional natural isomorphism
$\beta_F$
needed to make the underlying functor $\underline{F}$ into a
$\Pi$-functor from $(\underline{\A}, \underline{\Pi}_\A, \xi_\A)$ to
$(\underline{\B}, \underline{\Pi}_\B,\xi_\B)$.
Using also Corollary~\ref{pi}(iv), this shows that there is a functor
\begin{align}\label{upgrade}
E_1:\piSCat \rightarrow \piCat
\qquad
&(\A, \Pi, \zeta) \mapsto (\underline{\A},
\underline{\Pi}, \xi),
\quad
F \mapsto (\underline{F}, \beta_F).
\end{align}
This is the functor (2) in (\ref{golf1}).

In order to complete the proof of Theorem~\ref{hop}, 
we must show that the functor $E_1$ is an equivalence, so that
a $\Pi$-supercategory $(\A, \Pi, \zeta)$ can be recovered up
to superequivalence from
its underlying category
$(\underline{\A}, \underline{\Pi}, \xi)$.
To establish this, we define a functor
in the other direction:
\begin{align}\label{D}
D_1:\piCat \rightarrow \piSCat
\qquad
&(\A, \Pi, \xi) \mapsto (\widehat{\A}, \widehat{\Pi},
\zeta),
\quad
(F, \beta_F) \mapsto \widehat{F}.
\end{align}
This sends $\Pi$-category $(\A, \Pi, \xi)$
to the {\em associated $\Pi$-supercategory} $(\widehat{\A}, \widehat{\Pi},
\zeta)$, which is the supercategory with the same objects as $\A$ and
morphisms 
$
\Hom_{\widehat{\A}}(\lambda,\mu)_{\0} :=
\Hom_{\A}(\lambda,\mu)$,
$\Hom_{\widehat{\A}}(\lambda,\mu)_{\1} := \Hom_{\A}(\lambda, \Pi
\mu)$.
Composition in $\widehat{\A}$
is induced by the composition in
$\A$: 
if $\hat f:\lambda \rightarrow \mu$ and
$\hat g:\mu \rightarrow \nu$ are homogeneous morphisms
in $\widehat{\A}$ then
\begin{itemize}
\item
if $\hat f$ and $\hat g$ are both even, so $\hat f= f$ and $\hat g =
g$ for morphisms
$f:\lambda\rightarrow \mu$ and $g:\mu \rightarrow \nu$ in
$\A$, then we set
 $\hat g \circ  \hat f := g \circ f$;
\item
if $\hat f$ is even and $\hat g$ is odd, so 
$\hat f = f$ and $\hat g =  g$ for
$f:\lambda\rightarrow \mu$ and $g:\mu\rightarrow \Pi \nu$ in
$\A$, then we again set
 $\hat g \circ  \hat f := g \circ f$;
\item
if $\hat f$ is odd and $\hat g$ is even, so $\hat f = f$ and $\hat g =
g$ for $f:\lambda \rightarrow \Pi \mu$ and $g:\mu \rightarrow \nu$,
we set $\hat g \circ  \hat f := (\Pi g) \circ f$;
\item
if $\hat f$ and $\hat g$ are both odd, so $\hat f = f$ and $\hat g =
g$ for $f:\lambda \rightarrow \Pi \mu$ and $g:\mu \rightarrow \Pi
\nu$, 
we set $\hat g\circ \hat f := \xi_\nu \circ (\Pi g) \circ f$.
\end{itemize}
The check that $(\hat h \circ \hat g) \circ
\hat f = \hat h \circ (\hat g \circ \hat f)$ for odd $\hat f, \hat g,
\hat h$ depends on the axiom $\xi \Pi = \Pi \xi$.
To make $\widehat{\A}$ into a $\Pi$-supercategory, we define
$\widehat{\Pi}:\widehat{\A} \rightarrow \widehat{\A}$
to be the superfunctor that is equal to $\Pi$ on objects, while
$\widehat{\Pi} \hat f := \Pi f$ if $\hat f$ is even coming from
$f:\lambda \rightarrow \mu$ in $\A$, and
$\widehat{\Pi} \hat f := -\Pi f$ if $\hat f$ is odd coming from
$f:\lambda \rightarrow \Pi \mu$ in $\A$.
The odd natural isomorphism $\zeta:\widehat{\Pi} \rightarrow
I$
is defined on object $\lambda$ by $\zeta_\lambda :=
1_{\Pi\lambda}$,
i.e. it is the identity morphism $\Pi \lambda \rightarrow \Pi \lambda$ in $\A$ viewed as an odd morphism $\Pi \lambda \rightarrow \lambda$ in
$\widehat{\A}$.
Finally, 
if $(F,\beta_F): \A \rightarrow \B$ 
is a $\Pi$-functor,
we get induced a superfunctor $\widehat{F}:\widehat{\A}
\rightarrow \widehat{\B}$
between the associated supercategories as follows:
it is the same as $F$ on objects; on a homogeneous morphism $\hat f:\lambda \rightarrow
\mu$ in $\widehat{\A}$
we have that $\widehat{F} \hat f := Ff$ if $\hat f$ is even coming
from $f:\lambda\rightarrow \mu$ in $\A$, or
$\widehat{F} \hat f := (\beta_F)^{-1}_\mu\circ Ff$ if $\hat f$ is odd coming from
$f:\lambda\rightarrow\Pi_\A \mu$ in $\A$.
The check that $\widehat{F} (\hat g \circ \hat f) = (\widehat{F} \hat
g) \circ (\widehat{F} \hat f)$ for odd $\hat f, \hat g$ depends on the axiom 
$F \xi_\A = \xi_\B F\circ \Pi_\B \beta_F^{-1} \circ \beta_F^{-1} \Pi_\A$.

\begin{lemma}\label{maineq}
The functors 
$D_1:\piCat \rightarrow \piSCat$
and $E_1:\piSCat \rightarrow \piCat$ are mutually inverse equivalences
of categories.
\end{lemma}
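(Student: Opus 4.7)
The plan is to exhibit mutually inverse natural isomorphisms $E_1 \circ D_1 \cong \id_{\piCat}$ and $D_1 \circ E_1 \cong \id_{\piSCat}$, from which the lemma follows.

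For the composite $E_1 \circ D_1$, fix a $\Pi$-category $(\A, \Pi, \xi)$. Unwinding the constructions of $D_1$ and $E_1$, one sees that $\underline{\widehat{\A}} = \A$---the even morphisms of $\widehat{\A}$ are by definition $\Hom_\A(\la,\mu)$ with the original composition---and the restriction of $\widehat{\Pi}$ to the underlying category coincides with $\Pi$ on both objects and morphisms. The only substantive check is that the composite $\zeta\zeta: \widehat{\Pi}^2 \Rightarrow I$ (which is an even natural isomorphism, per Definition~\ref{defscat}) recovers the original $\xi$; this is a direct computation from the composition rule for two odd morphisms in $\widehat{\A}$ together with the rule $\widehat{\Pi}\hat f = -\Pi f$ on odd morphisms. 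The identity then gives the required natural isomorphism at the object level, and naturality on morphisms is automatic because $\widehat{F}$ is built precisely so that its underlying functor is $F$ and the datum $-\zeta_\B \widehat{F} \zeta_\A^{-1}$ recovered by $E_1$ equals the prescribed $\beta_F$.

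For the composite $D_1 \circ E_1$, given a $\Pi$-supercategory $(\A, \Pi, \zeta)$ I would construct an evenly dense, fully faithful superfunctor $\Phi_\A: \widehat{\underline{\A}} \to \A$. Define $\Phi_\A$ to be the identity on objects and on even morphisms, and send an odd morphism $\hat f: \la \to \mu$ in $\widehat{\underline{\A}}$---that is, an even morphism $f: \la \to \Pi\mu$ in $\A$---to $\zeta_\mu \circ f$, which is odd in $\A$. Fullness and faithfulness are immediate because $\zeta_\mu$ is an isomorphism, and $\Phi_\A$ is evenly dense since it is the identity on objects; so $\Phi_\A$ is a superequivalence once it is known to preserve composition. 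This check splits into four parity cases: even/even is trivial, the two mixed cases reduce to supernaturality of $\zeta$ applied to an even morphism, and the odd/odd case combines this supernaturality with the identity $\zeta_\nu \circ \zeta_{\Pi\nu} = -\xi_\nu$ from Definition~\ref{defscat} and the axiom $\xi\Pi = \Pi\xi$.

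The final step is naturality of $\Phi$ in the superfunctor variable: for $F: \A \to \B$, the identity $\Phi_\B \circ \widehat{\underline{F}} = F \circ \Phi_\A$ unwinds on odd morphisms into precisely the formula $\beta_F = -\zeta_\B F \zeta_\A^{-1}$ used by $E_1$ to produce the $\Pi$-functor structure on $\underline{F}$ from which $\widehat{\underline{F}}$ is then built. The main obstacle throughout is disciplined sign bookkeeping, concentrated in the three places signs arise---the rule $\widehat{\Pi}\hat f = -\Pi f$, the composition $\hat g \circ \hat f = \xi_\nu \circ \Pi g \circ f$ of two odd morphisms in $\widehat{\A}$, and the sign in $\beta_F = -\zeta_\B F \zeta_\A^{-1}$; once one verifies that associativity in $\widehat{\A}$ rests exactly on the axiom $\xi\Pi = \Pi\xi$ and that $\widehat{F}$ is a superfunctor precisely when $(F,\beta_F)$ is a $\Pi$-functor, the remaining verifications amount to routine bookkeeping.
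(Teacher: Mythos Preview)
Your approach matches the paper's: both verify that $E_1 \circ D_1$ is literally the identity on $\piCat$, and then construct the same map $\Phi_\A$ (identity on objects, $\hat f \mapsto \zeta_\mu \circ f$ on odd morphisms) with the same four-case functoriality check and the same naturality identity $F\,\Phi_\A = \Phi_\B\,\widehat{(\underline{F})}$. One small correction: to obtain a natural \emph{isomorphism} $D_1\circ E_1 \cong \id_{\piSCat}$ you need each $\Phi_\A$ to be an isomorphism in $\piSCat$, not merely a superequivalence; this is immediate since $\Phi_\A$ is the identity on objects and bijective on hom-superspaces, but you should say so rather than stop at ``evenly dense and fully faithful.''
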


\begin{proof}
We have simply that $E_1 \circ D_1 = I_{\piCat}$. It remains to show that $D_1 \circ E_1\cong I_{\piSCat}$.
To see this, we have to define a natural isomorphism $T:D_1 \circ E_1
\stackrel{\sim}{\Rightarrow} I_{\piSCat}$.
So for each $\Pi$-supercategory $(\A, \Pi, \zeta)$, we need
to produce 
an isomorphism of supercategories $T_\A:\widehat{{(}\underline{\A}{)}}
\stackrel{\sim}{\rightarrow}
\A$.
We take $T_{\A}$ to be the identity on objects (which are the
same in $\widehat{{(}\underline{\A}{)}}$ as in $\A)$.
On a morphism $\hat f:\lambda \rightarrow \mu$ in
$\widehat{{(}\underline{\A}{)}}$,
we let $T_{\A}(\hat f) := f$ if $\hat f$ is even
coming from an even morphism $f:\lambda\rightarrow\mu$ in $\A$, or
$\zeta_\mu \circ f$ if $\hat f$ is odd coming from an even morphism $f:\lambda
\rightarrow \Pi \mu$ in $\A$.

To check that $T_{\A}$ is a functor, we need to show that
$T_{\A}(\hat g \circ \hat f) = T_{\A}(\hat g) \circ
T_{\A}(\hat f)$ for $\hat f:\lambda\rightarrow \mu$ and
$\hat g:\mu \rightarrow \nu$:
\begin{itemize}
\item
This is
clear if both $\hat f$ and $\hat g$ are even.
\item
If $\hat f$ is even and $\hat g$ is odd,
so $\hat f = f$ and $\hat g = g$ for even $f:\lambda \rightarrow \mu$
and $g:\mu \rightarrow \Pi \nu$ in $\A$,
we have that $T_{\A}(\hat g \circ \hat f) = \zeta_\nu \circ g
\circ f = T_{\A}(\hat g) \circ T_{\A}(\hat f)$.
\item
If $f$ is
odd and $g$ is even, so $\hat f = f$ and $\hat g = g$ for
$f:\lambda\rightarrow \Pi \mu$ and $g:\mu \rightarrow \nu$, then
$T_{\A}(\hat g \circ \hat f) = \zeta_\nu \circ (\Pi g) \circ f$,
while
$T_{\A}(\hat g) \circ T_{\A}(\hat f) = g \circ \zeta_\mu
\circ f$. These are equal as $\zeta_\nu \circ \Pi g = g \circ
\zeta_\mu$ by the supernaturality of $\zeta$.
\item
If both are odd, so $\hat f = f$ and $\hat g = g$ for
$f:\lambda \rightarrow \Pi \mu$ and $g:\mu \rightarrow \Pi \nu$, 
then $T_{\A}(\hat g \circ \hat f) =
\xi_\nu \circ (\Pi g)
\circ f$. By the super interchange law, $\xi_\nu = - \zeta_\nu \circ
\zeta_{\Pi\nu}$, while supernaturality of $\zeta$ gives that 
$\zeta_{\Pi \nu} \circ \Pi g = -g \circ \zeta_\mu$.
Hence, $T_{\A}(\hat g \circ \hat f)$ equals $\zeta_\nu \circ
g \circ \zeta_\mu \circ f=
T_{\A}(\hat g) \circ T_{\mathcal
  A}(\hat f)$.
\end{itemize}

To see that $T_{\A}$ is an isomorphism, we just need to see
that it is bijective on morphisms. This is clear on even morphisms,
and follows on odd morphisms because the function
$\Hom_{\widehat{(\underline{\A})}}(\lambda,\mu)_{\1} = 
\Hom_{\A}(\lambda, \Pi \mu)_{\0}
\rightarrow
\Hom_{\A}(\lambda,\mu)_{\1}, f \mapsto 
\zeta_\mu \circ f$ is invertible with inverse $f \mapsto
\zeta_\mu^{-1} \circ f$. 

Finally we must check the naturality of
$T$:
there is an equality 
of superfunctors
$F  T_\A = T_\B \widehat{(\underline{F})}:
\widehat{(\underline{A})} \rightarrow \B$
for any superfunctor $F:\A \rightarrow
\B$
between $\Pi$-supercategories $\A$ and $\B$.
This is clear on objects and even morphisms.
Consider an odd morphism $\hat f:\lambda\rightarrow\mu$ in
$\widehat{(\underline{\A})}$ coming from an even morphism
$f:\lambda\rightarrow \Pi_\A\mu$ in $\A$.
We have that $F (T_\A \hat f) = F (\zeta_\A)_\mu \circ F f$
and $T_\B (\widehat{(\underline{F})} \hat f) = (\zeta_\B)_{F\mu} \circ  (\beta_F)_\mu^{-1}
\circ Ff$. These are equal because $\zeta_\B F = F
\zeta_\A \circ \beta_F$ by the definition in Corollary~\ref{pi}(ii)
and the super interchange law.
\end{proof}

We need one more general notion, which we spell out below
under the simplifying assumption that our 2-categories are strict. The reader should have no trouble
interpreting this in the non-strict case; see Definition~\ref{green} from the
introduction where this is done when there is only one object.
Our general conventions regarding 2-categories are analogous to 
the ones for 2-supercategories in Definition~\ref{ms}.

\begin{definition}\label{clean}
(i) A {\em $\Pi$-2-category} $(\AA, \pi, \beta, \xi)$
is a $\k$-linear $2$-category $\AA$
plus
a family $\pi = (\pi_\lambda)$ of $1$-morphisms
$\pi_\lambda:\lambda \rightarrow \lambda$,
a family $\beta = (\beta_{\mu,\lambda})$  
of natural isomorphisms $\beta_{\mu,\lambda}: \pi_\mu\, -
\stackrel{\sim}{\Rightarrow} -\,\pi_\lambda$,
and a family
$\xi = (\xi_\lambda)$ of $2$-isomorphisms
$\xi_\lambda:\pi_\lambda^2 \stackrel{\sim}{\Rightarrow}
\unit_\lambda$, such that (assuming $\AA$ is strict):
\begin{itemize}
\item
the pair $(\pi, \beta)$ is an object in the Drinfeld center of
$\AA$, i.e. the properties from
Lemma~\ref{fish}(i)--(ii) hold;
\item $(\beta_{\lambda,\lambda})_{\pi_\lambda} =
-1_{\pi_\lambda^2}$;
\item
$\xi_\mu F \xi_\lambda^{-1} = 
(\beta_{\mu,\lambda})_F \pi_\lambda 
\circ
\pi_\mu (\beta_{\mu,\lambda})_F$
in $\Hom_{\AA}(\pi_\mu^2 F,F \pi_\lambda^2)$
for all $1$-morphisms
$F:\lambda\rightarrow \mu$.
\end{itemize}
Using the second two of these properties, we get that
$\xi_\mu \pi_\mu = \pi_\mu \xi_\mu$ in $\Hom_{\AA}(\pi_\mu^3,
\pi_\mu)$.
Hence, each of the morphism categories
$\mathcal{H}om_{\AA}(\lambda,\mu)$
in a $\Pi$-2-category is itself a $\Pi$-category, with $\Pi := \pi_\mu
-$ and $\xi := \xi_\mu -$.

(ii) A {\em $\Pi$-2-functor}
between two 
$\Pi$-2-categories $\AA$ and $\BB$
is a $\k$-linear 
2-functor
$\mathbb{R}:\AA \rightarrow \BB$ with its usual coherence maps $c$ and
$i$, plus an additional family of 2-isomorphisms
$j:\pi_{\mathbb{R} \lambda} \stackrel{\sim}{\Rightarrow} \mathbb{R}
\pi_\lambda$
for each $\la\in\ob\AA$, such that the following commute (assuming
$\AA$ and $\BB$ are strict):
$$
\begin{tikzcd}
&(\mathbb{R} \pi_\mu)\:(\mathbb{R}\:-)
\arrow[dr,"c"] \\
\pi_{\mathbb{R}\mu} (\mathbb{R}\:-)
\arrow[dd,"\beta_{\mathbb{R}\mu,\mathbb{R}\lambda}",swap]\arrow[ur,"j\:(\mathbb{R} -)"]&
&\mathbb{R}(\pi_\mu\:-)\arrow[dd,"\mathbb{R} \beta_{\mu,\lambda}"]\\\\
(\mathbb{R}\:-) \pi_{\mathbb{R}\lambda}\arrow[dr,"(\mathbb{R}\,-)
j",swap]&&\mathbb{R}(-\:\pi_\lambda)\\
&(\mathbb{R}\:-) \mathbb{R}\pi_\lambda 
\arrow[ur,"c",swap]
\end{tikzcd},
$$
$$
\:
\begin{tikzcd}
\pi^2_{\mathbb{R} \lambda}
\arrow[r,"jj"]\arrow[d,"\xi_{\mathbb{R}\lambda}",swap] 
&(\mathbb{R}
\pi_\lambda)^2
\arrow[r,"c"]
&\mathbb{R}( \pi_\lambda^2)
\arrow[d,"\mathbb{R}\xi_\lambda"]\\
\unit_{\mathbb{R}\lambda}
\arrow[rr,"i",swap]
&&
\mathbb{R} \unit_\lambda
\end{tikzcd}.
$$
A $\Pi$-2-functor is {\em strict} if its coherence maps $c, i$ and $j$
are identities.

(iii) A {\em $\Pi$-2-natural transformation}
$(X,x):\mathbb{R} \Rightarrow \mathbb{S}$
between two $\Pi$-2-functors $\mathbb{R}, \mathbb{S}:\AA \rightarrow
\BB$
is a 2-natural transformation as usual, with one additional 
coherence axiom:
$$
\begin{tikzcd}
\pi_{\mathbb{S}\lambda} X_\lambda
\arrow[d,swap,"j X_\lambda"]
\arrow[rr,"(\beta_{\mathbb{S}\lambda,\mathbb{R}\lambda})_{X_\lambda}"]
&&X_\lambda \pi_{\mathbb{R}\lambda}
\arrow[d,"X_\lambda j"]\\
(\mathbb{S} \pi_\lambda ) X_\lambda
&&
\arrow[ll,"(x_{\lambda,\lambda})_{\pi_\lambda}",swap]
 X_\lambda (\mathbb{R} \pi_\lambda)
\end{tikzcd}.
$$
\end{definition}

The basic example of a strict $\Pi$-2-category is $\piCAT$: objects
are
\smallcat $\Pi$-categories, $1$-morphisms are $\Pi$-functors, and
$2$-morphisms are $\Pi$-natural transformations.
We define the additional data $\pi, \beta$ and $\xi$ 
so that
$\pi_{\A} := \Pi_\A$ for each $\Pi$-category $\A$,
and $\xi$ and $\beta$ come from the natural transformations of 
Definition~\ref{jazz}(i)--(ii).

For a $2$-supercategory $\AA$, the {\em underlying $2$-category} $\underline{\AA}$ is
the $2$-category with the same objects as $\AA$, morphism categories that
are the underlying categories of the morphism supercategories in
$\AA$, and horizontal composition that is the restriction of
the one in $\AA$.
If $(\AA, \pi, \zeta)$ is a (strict) $\Pi$-2-supercategory,
Lemma~\ref{fish} shows how
to define $\beta$ and $\xi$ making $(\underline{\AA}, \pi,
\beta, \xi)$ into a $\Pi$-2-category.
In particular, starting from the $\Pi$-2-supercategory $\piSCAT$, we see
that
$\Pi$-$\underline{\SCAT}$ is a $\Pi$-2-category.

Now we upgrade the functors $E_1$ and $D_1$ from
(\ref{upgrade})--(\ref{D}) to strict
$\Pi$-2-functors
\begin{equation}\label{lun}
\mathbb{E}_1:
{\Pi\text{-}\underline{\SCAT}}
\rightarrow \piCAT,
\qquad
\mathbb{D}_1:
\piCAT
\rightarrow
{\Pi\text{-}\underline{\SCAT}}.
\end{equation}
These agree with $E_1$ and $D_1$ on objects and 1-morphisms.
On 2-morphisms, $\mathbb{E}_1$ sends an even supernatural transformation
$x:F \Rightarrow G$ to $\underline{x}:\underline{F}\Rightarrow \underline{G}$ defined from ${\underline{x}}_\lambda :=
x_\lambda$, which is a $\Pi$-natural transformation thanks to
Corollary~\ref{pi}(iii).
In the other direction, 
$\mathbb{D}_1$ sends a $\Pi$-natural transformation $y:F \Rightarrow G$
to $\hat y:\widehat{F}\Rightarrow\widehat{G}$ 
defined from ${\hat y}_\lambda := y_\lambda$.
In order to check that $\hat y$ is an even supernatural transformation,
the subtle point is to show that ${\hat y}_\mu \circ \widehat{F} \hat f =
\widehat{G} \hat f \circ {\hat y}_\lambda$ 
for an odd morphism $\hat f:\lambda \rightarrow \mu$ 
coming from $f:\lambda
\rightarrow \Pi_\A \mu$ in $\A$,
i.e. $\Pi_\B y_\mu \circ (\beta_F)_\mu^{-1} \circ Ff = (\beta_G)_\mu^{-1}
\circ Gf \circ y_\lambda$. This follows from the property
$\beta_G \circ \Pi_\B y = y \Pi_\A\circ \beta_F$ from
Definition~\ref{jazz}(iii), plus the fact that $y_{\Pi_\A \mu} \circ F f =
Gf \circ y_\lambda$ by the naturality of $y$.
The following strengthens Lemma~\ref{maineq} by taking natural
transformations into account:

\begin{theorem}\label{pro}
The strict $\Pi$-2-functors $\mathbb{D}_1$ and $\mathbb{E}_1$ from
(\ref{lun}) give
mutually inverse $\Pi$-2-equivalences between
$\piCAT$ and $\Pi$-$\underline{\SCAT}$.
\end{theorem}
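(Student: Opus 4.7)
The plan is to build on Lemma~\ref{maineq}, which gives mutually inverse equivalences at the level of objects and 1-morphisms, and verify that $\mathbb{D}_1$ and $\mathbb{E}_1$ continue to behave well once 2-morphisms and the $\Pi$-coherence data are brought into the picture. Both $\mathbb{D}_1$ and $\mathbb{E}_1$ are manifestly strict $\Pi$-2-functors (their $c$, $i$, and $j$ maps are all identities), so the task splits into checking the two composites separately.

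The composite $\mathbb{E}_1\circ\mathbb{D}_1$ is the identity on the nose. Lemma~\ref{maineq} handles objects and 1-morphisms, and for a $\Pi$-natural transformation $y:F\Rightarrow G$ one has $\mathbb{D}_1 y = \hat y$ with $\hat y_\lambda = y_\lambda$, and then $\mathbb{E}_1\hat y = \underline{\hat y}$ with $(\underline{\hat y})_\lambda = \hat y_\lambda = y_\lambda$, recovering $y$ itself.

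For the composite $\mathbb{D}_1\circ\mathbb{E}_1$, I would promote the natural isomorphism $T$ from Lemma~\ref{maineq} into a $\Pi$-$2$-natural isomorphism $(T,t):\mathbb{D}_1\mathbb{E}_1\Rightarrow\mathbb{I}$, taking $T_\A:\widehat{(\underline{\A})}\stackrel{\sim}{\rightarrow}\A$ as the 1-morphism data and setting the 2-naturality components to be the identities $(t_{\B,\A})_F := 1_{FT_\A}$; this last is valid because the strict equality $FT_\A = T_\B\widehat{(\underline F)}$ was already established in the proof of Lemma~\ref{maineq}. I would then verify in order: (i) each $T_\A$ is a morphism of $\Pi$-supercategories (i.e., genuinely a 1-morphism in $\Pi$-$\underline{\SCAT}$), which follows from the unwinding $T_\A(\zeta_{\widehat{(\underline{\A})},\lambda}) = \zeta_{\A,\lambda}\circ 1_{\Pi_\A\lambda} = \zeta_{\A,\lambda}$, giving $T_\A\zeta_{\widehat{(\underline{\A})}} = \zeta_\A T_\A$ as whiskered natural transformations; (ii) 2-naturality of $t$ along 2-morphisms $x:F\Rightarrow G$ in $\Pi$-$\underline{\SCAT}$, which is automatic since both $xT_\A$ and $T_\B\widehat{(\underline x)}$ have component $x_\lambda$ at each $\lambda$; and (iii) the $\Pi$-coherence axiom of Definition~\ref{clean}(iii).

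Step (iii) is the main obstacle. In the strict setting, with $t_F = 1$ throughout, it collapses to showing $(\beta_{\A,\widehat{(\underline{\A})}})_{T_\A} = 1_{\Pi_\A T_\A}$ inside the underlying $\Pi$-$2$-category $\Pi$-$\underline{\SCAT}$. Starting from the formula $(\beta)_{T_\A} = -\zeta_\A T_\A \zeta_{\widehat{(\underline{\A})}}^{-1}$ of Lemma~\ref{fish} and applying the super interchange law to convert the horizontal composition of the two odd 2-morphisms $\zeta_\A T_\A$ and $\zeta_{\widehat{(\underline{\A})}}^{-1}$ into a vertical composition, the sign produced by the interchange exactly cancels the explicit minus sign, leaving $(T_\A\zeta_{\widehat{(\underline{\A})}}^{-1})\circ(\zeta_\A T_\A)$. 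Using (i) to rewrite $T_\A\zeta_{\widehat{(\underline{\A})}}^{-1}$ as $\zeta_\A^{-1} T_\A$ and collapsing the common whiskering on the right yields $(\zeta_\A^{-1}\circ\zeta_\A) T_\A = 1_{\Pi_\A T_\A}$, as required. The only real subtlety is careful bookkeeping of the super interchange signs; no ingredient beyond Lemma~\ref{fish} and the explicit description of $T_\A$ recalled in (i) is needed.
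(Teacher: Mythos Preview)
Your proposal is correct and follows essentially the same route as the paper: both establish $\mathbb{E}_1\circ\mathbb{D}_1=\mathbb{I}$ on the nose, then upgrade the $T$ of Lemma~\ref{maineq} to a $\Pi$-2-natural isomorphism $(T,t)$ with identity $t$, the key point being the verification $T_\A\zeta_{\widehat{(\underline{\A})}}=\zeta_\A T_\A$ which forces $(\beta_{\A,\widehat{(\underline{\A})}})_{T_\A}=1$. The only cosmetic difference is that you front-load this computation as step (i) and then invoke it in step (iii), whereas the paper performs it at the end; your phrasing in (i) that $T_\A$ is ``a morphism of $\Pi$-supercategories'' is slightly off (1-morphisms in $\Pi$-$\underline{\SCAT}$ are bare superfunctors), but the content you actually verify there is exactly what is needed.
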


\begin{proof}
We have that $\mathbb{E}_1 \circ \mathbb{D}_1 = \mathbb{I}_{\piCAT}$.
Conversely, we show that $\mathbb{D}_1 \circ \mathbb{E}_1$ is isomorphic
(not merely equivalent!)
to
$\mathbb{I}_{\Pi\text{-}\underline{\SCAT}}$
in the 2-category $\Pi\text{-}\underline{\SCAT}$
by producing a $\Pi$-2-natural isomorphism $$(T,t):\mathbb{D}_1 \circ
\mathbb{E}_1 \stackrel{\sim}{\Rightarrow}
\mathbb{I}_{\Pi\text{-}\underline{\SCAT}}.
$$
Thus, we need to supply supercategory isomorphisms
$T_\A:\widehat{(\underline{\A})} \stackrel{\sim}{\rightarrow} \A$
and even supernatural isomorphisms
$(t_{\B,\A})_F : T_\B \widehat{(\underline{F})}
\stackrel{\sim}{\Rightarrow} F T_\A$
for all $\Pi$-supercategories and superfunctors
$F:\A \rightarrow \B$.
The isomorphisms $T_\A$ have already been defined in the proof of
Lemma~\ref{maineq}.
Also, in the last paragraph of that proof,
we observed that $T_\B \widehat{(\underline{F})} = F T_\A$. So we can
simply take each $(t_{\B,\A})_F$ to be the identity.
To see that $t_{\B,\A}$ is natural, one needs to observe that
$x T_\A = T_\B \widehat{(\underline{x})}$ for all even supernatural
isomorphisms $x:F \Rightarrow G$. The only other non-trivial check
required is for the coherence axiom of Definition~\ref{clean}(iii). For
this, we must show that $(\beta_{\A,
  \widehat{(\underline{\A})}})_{T_\A}$ is the identity 
for each $\Pi$-supercategory $\A$.
This amounts to
checking that the natural transformations
$\zeta_\A T_\A$ and $T_\A
\zeta_{\widehat{(\underline{\A})}}$ are equal. By definition, on an
object $\lambda$,
$\zeta_{\widehat{(\underline{\A})}}$ is the odd morphism
$\hat{1}_{\Pi_\A \lambda}:\Pi_\A \lambda \rightarrow \lambda$ 
in $\widehat{(\underline{\A})}$ associated to the identity morphism
$1_{\Pi_\A \lambda}$.
Hence, 
according to the definition from the first
paragraph of the proof of Lemma~\ref{maineq},
$(T_\A
\zeta_{\widehat{(\underline{\A})}})_\lambda = 
T_\A \hat{1}_{\Pi_\A \lambda}
=(\zeta_\A)_\lambda = (\zeta_\A T_\A)_\lambda,$
as required.
\end{proof}

Recall that $\piTSCat$ is the category of $\Pi$-2-supercategories and
2-superfunctors. Also let $\piTCat$ denote the category of
$\Pi$-2-categories and $\Pi$-2-functors.
There is a functor
\begin{equation}
E_2:\piTSCat \rightarrow \piTCat,\qquad
(\AA, \pi, \zeta) \mapsto (\underline{\AA}, \pi, \beta, \xi),
\quad
\mathbb{R} \mapsto \underline{\mathbb{R}}.
\end{equation}
We've already defined the effect of this on $\Pi$-2-supercategories.
On a 2-superfunctor $\mathbb{R}:\AA \rightarrow \BB$, we define $\underline{\mathbb{R}}$
to be
the same function as $\mathbb{R}$ on objects and the underlying
functor to $\mathbb{R}$ on morphism categories. The coherence maps $c$
and $i$ restrict in an obvious way to give coherence maps for
$\underline{\mathbb{R}}$. We also need the additional coherence map
$j:\pi_{\mathbb{R}\lambda} \stackrel{\sim}{\Rightarrow} \mathbb{R}
\pi_\lambda$, which is defined so that the following diagram commutes:
$$
\begin{tikzcd}
\pi_{\mathbb{R}\lambda}
\arrow[d,swap,"\zeta_{\mathbb{R} \lambda}"]
\arrow[r,"j"]
&\mathbb{R} \pi_\lambda
\arrow[d,"\mathbb{R} \zeta_\lambda"]\\
\unit_{\mathbb{R}\lambda}\arrow[r,"i",swap]
&
\mathbb{R} \unit_{\lambda}
\end{tikzcd}.
$$
Now one has to check that the two axioms from Definition~\ref{clean}(ii)
are satisfied.
The first of these is a consequence of the second two diagrams from
Definition~\ref{ms}(ii) plus the definition of $\beta$.
For the second one, we have by the super interchange law that
\begin{align*}
(
(\mathbb{R} \zeta_\lambda)
(\mathbb{R} \zeta_\lambda)
)
\circ jj 
= 
((\mathbb{R} \zeta_\lambda)\circ  j)
((\mathbb{R} \zeta_\lambda)\circ  j)
&= 
(i \circ \zeta_{\mathbb{R} \lambda})
(i \circ \zeta_{\mathbb{R} \lambda})
=
ii \circ (\zeta_{\mathbb{R} \lambda}\zeta_{\mathbb{R} \lambda}).
\end{align*}
Also, by the naturality of $c$, we have that
$\mathbb{R} (\zeta_\lambda \zeta_\lambda) \circ c = c \circ ((\mathbb{R}
\zeta_\lambda) (\mathbb{R} \zeta_\lambda))$.
Putting these together gives 
$\mathbb{R} (\zeta_\lambda \zeta_\lambda) \circ c \circ jj
=
c \circ ii \circ
(\zeta_{\mathbb{R}\lambda}\zeta_{\mathbb{R}\lambda})$, and the
conclusion follows easily.

In the other direction, we define a functor
\begin{equation}
D_2:\piTCat
\rightarrow \piTSCat,
\qquad
(\AA,\pi,\beta,\xi) \mapsto
(\widehat{\AA}, \pi, \zeta),
\quad
\mathbb{R} \mapsto \widehat{\mathbb{R}}
\end{equation}
as follows.
The 2-supercategory $\widehat{\AA}$
has the same objects as $\AA$. Its morphism supercategories
$\mathcal{H}om_{\widehat{\AA}}(\lambda,\mu)$
arise as associated $\Pi$-supercategories to the morphism categories
$\mathcal{H}om_{\AA}(\lambda,\mu)$.
Thus the 1-morphisms in $\widehat{\AA}$ are the same as in
$\AA$, while
for 1-morphisms $F, G:\lambda \rightarrow \mu$ we have that
$\Hom_{\widehat{\AA}}(F,G)_{\0} := 
\Hom_{\AA}(F,G)$ and
$\Hom_{\widehat{\AA}}(F,G)_{\1} := 
\Hom_{\AA}(F,\pi_\mu G)$. To describe horizontal and
vertical composition in $\widehat{\AA}$, we assume to simplify the exposition that $\AA$ is strict.
Then vertical composition in
$\widehat{\AA}$ is induced
by that of $\AA$ (using $\xi$ when composing two odd
$2$-morphisms).
Horizontal composition of 1-morphisms in $\widehat{\AA}$ is the same as in $\AA$;
the horizontal composition $\hat y \hat x$ of homogeneous
$2$-morphisms $\hat x:F \Rightarrow H$ and
$\hat y:G \Rightarrow K$ for $F, H:\lambda \rightarrow \mu$ and $G,
K:\mu\rightarrow\nu$
in $\widehat{\AA}$ is defined as follows:
\begin{itemize}
\item
if they are both even, so $\hat  x= x$ and $\hat y = y$ for morphisms
$x:F\Rightarrow H$ and $y:G \Rightarrow K$ in $\AA$,
we define $\hat y \hat x$ to be the horizontal
composition $yx:GF \Rightarrow
KH$ in $\AA$; 
\item
if $\hat x$ is even and $\hat y$ is odd, so $\hat x = x$ and $\hat y =
y$ for $x:F \Rightarrow H$ and $y:G \Rightarrow \pi_\nu K$ in
$\AA$, we let $\hat y \hat x$ be
the horizontal composition $yx:GF\Rightarrow \pi_\nu K H$ in $\AA$ viewed as an odd 2-morphism $GF \Rightarrow KH$ in
$\widehat{\AA}$;
\item
if $\hat y$ is even and $\hat x$ is odd, so $\hat y = y$ and $\hat x = x$ for
$x:F \Rightarrow \pi_\mu H$ and $y:G \Rightarrow K$,
we let $\hat y \hat x$ be
$(\beta_{\nu,\mu})^{-1}_K H \circ y x: G F \Rightarrow K \pi_\mu H
\Rightarrow \pi_\nu K H$;
\item
if both are odd, so $\hat x = x$ and $\hat y = y$ for 
$x:F \Rightarrow
\pi_\mu H$ and $y:G \Rightarrow \pi_\nu K$,
 we let $\hat y \hat x$ be
$-\xi_\nu KH \circ \pi_\nu (\beta_{\nu,\mu})^{-1}_{K} H \circ yx:GF \Rightarrow  \pi_\nu K \pi_\mu H \Rightarrow \pi_\nu^2 KH
\Rightarrow KH$.
\end{itemize}
We leave it as an instructive exercise for the reader to check the
super interchange law using the axioms from Definition~\ref{clean}(i); 
see also \cite[(2.44)--(2.45)]{EL} for helpful pictures. To make
$\widehat{\AA}$ into a $\Pi$-2-supercategory,
we already have the required data $\pi = (\pi_\lambda)$, and
we get $\zeta = (\zeta_\lambda)$ by defining
$\zeta_\lambda:\pi_\lambda \Rightarrow \unit_\lambda$ to be 
$1_{\pi_\lambda}$ viewed as an odd $2$-isomorphism in $\widehat{\AA}$.  

To complete the definition of $D_2$, 
we still need to define the 2-superfunctor
$\widehat{\mathbb{R}}:\widehat{\AA} \rightarrow \widehat{\BB}$ given a
$\Pi$-2-functor
$\mathbb{R}:\AA \rightarrow \BB$. For simplicity, we assume
that $\AA$ and $\BB$ are strict.
Then $\widehat{\mathbb{R}}$ is the same as $\mathbb{R}$ on objects and 1-morphisms. 
On an even 2-morphism $\hat x:F \Rightarrow G$, coming from $x:F
\Rightarrow G$ in $\AA$, we let $\widehat{\mathbb{R}} \hat x$ be the even
2-morphism
associated to $\mathbb{R} x: \mathbb{R} F \Rightarrow \mathbb{R} G$.
On an odd 2-morphism $\hat x:F \Rightarrow G$, coming from $x:F
\Rightarrow \pi_\mu G$, we let $\widehat{\mathbb{R}} \hat x$ be the 
odd 2-morphism associated to the composition
$j^{-1} (\mathbb{R} G) \circ c^{-1} \circ \mathbb{R} x: \mathbb{R} F \Rightarrow \mathbb{R} (\pi_\mu G)
\Rightarrow (\mathbb{R} \pi_\mu) (\mathbb{R} G)
\Rightarrow \pi_{\mathbb{R} \mu} (\mathbb{R} G)$.
We take the coherence maps $c$ and $i$ for $\widehat{\mathbb{R}}$ that
are defined
by the same data as $c$ and $i$ for $\mathbb{R}$.
As usual, there are various checks to be made:
\begin{itemize}
\item To see that $\widehat{\mathbb{R}}$ is a well-defined functor on
  morphism supercategories, one needs to check that
  $\widehat{\mathbb{R}} (\hat y \circ \hat x)
= \widehat{\mathbb{R}} \hat y \circ \widehat{\mathbb{R}} \hat
x$
for
 $\hat x:F \Rightarrow G, \hat y:G \Rightarrow H$ and
$F,G,H:\lambda\rightarrow \mu$.
This is immediate if $\hat x$ is even. If $\hat x$ is odd, it comes
from some 2-morphism $x:F \Rightarrow \pi_\mu G$ in $\AA$.
Suppose $\hat y$ is even, coming from $y :G \Rightarrow H$ in $\AA$.
Then we need to show that $$\qquad
j^{-1}(\mathbb{R} H) \circ c^{-1} \circ \mathbb{R} (\pi_\mu y)
\circ \mathbb{R} x = \pi_{\mathbb{R}\mu}(\mathbb{R} y) \circ j^{-1} (\mathbb{R}G)
\circ c^{-1} \circ \mathbb{R} x.$$
This follows by the commutativity of the following hexagon of
2-morphisms in $\BB$:
$$
\begin{tikzcd}
&
(\mathbb{R} \pi_\mu) (\mathbb{R} G)
\arrow[ddd,"(\mathbb{R} \pi_\mu) (\mathbb{R} y)",swap]
\arrow[dr,"c"]\\
\pi_{\mathbb{R}\mu} (\mathbb{R} G)
\arrow[d,swap,"\pi_{\mathbb{R}\mu} (\mathbb{R} y)"]
\arrow[ur,"j (\mathbb{R} G)"]
&
&
\mathbb{R} (\pi_\mu G)
\arrow[d,"\mathbb{R} (\pi_\mu y)"]
\\
\pi_{\mathbb{R} \mu} (\mathbb{R} H)
\arrow[dr,"j (\mathbb{R} H)",swap]
&&\mathbb{R}(\pi_\mu H)\\
&(\mathbb{R} \pi_\mu)(\mathbb{R} H)\arrow[ur,"c",swap]
\end{tikzcd}.
$$
To see this, note the left hand square commutes by the interchange
law, and the right hand square commutes by naturality of $c$.
The case that $\hat y$ is odd, coming from $y:G \Rightarrow
\mu_\mu H$, is similar but a little more complicated; ultimately, it
depends on the second coherence axiom from Definition~\ref{clean}(ii).
\item
To see that $c$ is a supernatural transformation, one needs to check
that 
$$
\begin{tikzcd}
(\widehat{\mathbb{R}} G) (\widehat{\mathbb{R}} F)
\arrow[d,"(\widehat{\mathbb{R}}\hat y)(\widehat{\mathbb{R}}\hat
x)",swap]
\arrow[r,"c"]&\widehat{\mathbb{R}}(GF)\arrow[d,"\widehat{\mathbb{R}}(\hat y \hat x)"]
\\
(\widehat{\mathbb{R}} K)(\widehat{\mathbb{R}} H)\arrow[r,"c",swap]&
\widehat{\mathbb{R}} (KH)
\end{tikzcd}
$$
commutes.
We leave this lengthy calculation to the reader, just noting when $\hat x$ is odd that it
depends also on the first coherence axiom from Definition~\ref{clean}(ii).
\end{itemize}
The proof of the next lemma is similar to the proof of Lemma~\ref{maineq}.
Note also that the remaining part of Theorem~\ref{bop} from the introduction
follows from this result (on restricting to 2-(super)categories with one object).

\begin{lemma}\label{groups}
The functors $D_2$ and $E_2$ are mutually inverse equivalences between
the categories
$\piTCat$ and $\piTSCat$.
\end{lemma}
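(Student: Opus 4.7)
The plan is to mimic the proof of Lemma~\ref{maineq}, upgraded to the 2-categorical setting. First, by direct inspection of the definitions, $E_2 \circ D_2 = I_{\piTCat}$: the underlying 2-category $\underline{\widehat{\AA}}$ has the same objects, 1-morphisms and even 2-morphisms as $\AA$, and the data $(\pi,\beta,\xi)$ extracted by $E_2$ from the $\Pi$-2-supercategory $(\widehat{\AA},\pi,\zeta)$ recovers the original $(\pi,\beta,\xi)$ of $\AA$ because $\zeta$ in $\widehat{\AA}$ is the identity viewed as odd, so the $\beta$ and $\xi$ produced by Lemma~\ref{fish} unwind to the given ones.

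Next I would construct an isomorphism $T:D_2 \circ E_2 \stackrel{\sim}{\Rightarrow} I_{\piTSCat}$. For each $\Pi$-2-supercategory $(\AA,\pi,\zeta)$, define a strict 2-superfunctor $T_\AA:\widehat{\underline{\AA}} \to \AA$ that is the identity on objects and 1-morphisms. On 2-morphisms, $T_\AA$ sends an even 2-morphism $\hat x$ (coming from an even $x:F \Rightarrow G$ in $\AA$) to $x$, and an odd 2-morphism $\hat x:F\Rightarrow G$ (coming from an even $x:F \Rightarrow \pi_\mu G$ in $\AA$) to $\zeta_\mu G \circ x$. The coherence maps are identities. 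Bijectivity on morphism superspaces is clear since $\zeta_\mu$ is invertible.

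The main work is to check that $T_\AA$ is compatible with vertical and horizontal composition. Vertical composition was essentially handled in the proof of Lemma~\ref{maineq} for each morphism supercategory $\mathcal{H}om_{\AA}(\lambda,\mu)$, so the essentially new step is horizontal composition, which I expect to be the main obstacle. One has to check the four cases according to the parities of $\hat x$ and $\hat y$; the non-trivial cases use the supernaturality of $\zeta$ (in the mixed parity cases, via the formula $(\beta_{\nu,\mu})_K = -\zeta_\nu K \zeta_\mu^{-1}$ from Lemma~\ref{fish}) and, in the case when both are odd, the identity $\xi_\nu = -\zeta_\nu \circ \zeta_{\pi_\nu}$ from Lemma~\ref{fish}(iv) combined with $\zeta_{\pi_\nu}\circ\pi_\nu y = -y\circ \zeta_\mu$, exactly parallelling the four-case calculation in Lemma~\ref{maineq}. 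Each case reduces to a short super interchange computation.

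Finally, naturality of $T$ requires that $F\,T_\AA = T_\BB\,\widehat{\underline{F}}$ for every 2-superfunctor $F:\AA \rightarrow \BB$, and analogously at the level of 2-natural transformations. This is tautological on objects and even 2-morphisms, while on odd 2-morphisms it follows from the defining equation $\mathbb{R}\zeta_\lambda\circ j = i\circ \zeta_{\mathbb{R}\lambda}$ in the coherence of $\widehat{\mathbb{R}}$, plus the super interchange law, exactly as in the last paragraph of the proof of Lemma~\ref{maineq}. Combining these checks establishes $D_2\circ E_2 \cong I_{\piTSCat}$, completing the equivalence.
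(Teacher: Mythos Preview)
Your proposal is correct and follows essentially the same route as the paper: verify $E_2\circ D_2 = I$ directly, then build the natural isomorphism $\mathbb{T}_\AA:\widehat{(\underline{\AA})}\to\AA$ by $\hat x\mapsto x$ (even) and $\hat x\mapsto \zeta_\mu G\circ x$ (odd), checking compatibility with both compositions and naturality in the 2-superfunctor. One small correction: in the odd--odd horizontal case your symbol ``$\zeta_{\pi_\nu}$'' does not parse in the 2-categorical setting (here $\zeta$ is indexed by objects of $\AA$, not by 1-morphisms); the relevant identity is $\pi_\nu\zeta_\nu = -\zeta_\nu\pi_\nu$ from Lemma~\ref{fish}(iii), which the paper uses to obtain $\xi_\nu K\circ \pi_\nu(\beta_{\nu,\mu})_K^{-1} = \zeta_\nu K\zeta_\mu$ and hence the desired equality.
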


\begin{proof}
We first observe that $E_2 \circ D_2 = I_{\piTCat}$. To show that $D_2 \circ E_2\cong I_{\piTSCat}$,
we have to define a natural isomorphism $\mathbb{T}:D_2 \circ E_2
\stackrel{\sim}{\Rightarrow} I_{\piTSCat}$.
So for each $\Pi$-2-supercategory $(\AA, \pi, \zeta)$, we need
to produce 
an isomorphism of 2-supercategories $\mathbb{T}_\AA:\widehat{{(}\underline{\AA}{)}}
\stackrel{\sim}{\rightarrow}
\AA$.
This is the identity on objects and
$1$-morphisms.
On a homogeneous 2-morphism $\hat x:F \Rightarrow G$ 
between 1-morphisms
$F, G:\lambda \rightarrow \mu$ in $\widehat{(\underline{\AA})}$, 
we let $\mathbb{T}_\AA \hat x := x$ if $\hat x$ is even
coming from $x:F \Rightarrow G$ in $\AA$,
or $\mathbb{T}_\AA \hat x := \zeta_\mu G \circ x$ if $\hat x$ is odd coming
from $x:F \Rightarrow \pi_\mu G$ in $\AA$.
Since $\mathbb{T}_\AA$ is 
clearly bijective on 2-morphisms, it will certainly be a
2-isomorphism, but we still need to verify that it is indeed a
well-defined 2-superfunctor, i.e. we need to show that it respects
horizontal and vertical composition of 2-morphisms.
In the next paragraph, we go through the details of this in the most interesting
situation when both 2-morphisms are odd (also assuming $\AA$ is strict to simplify notation).

For vertical composition, take $F, G, H:\lambda \rightarrow \mu$
and odd 2-morphisms $\hat x:F \Rightarrow G$, $\hat y:G \Rightarrow H$ in
$\widehat{(\underline{\AA})}$ coming from 
$x:F \Rightarrow \pi_\mu G$, $y:G \Rightarrow \pi_\mu H$ in
$\AA$.
The vertical composition $\hat y \circ\hat x$ in
$\widehat{(\underline{\AA})}$ is by definition the 
composition $\xi_\mu H \circ \pi_\mu y \circ x$ in $\AA$.
We need to show that this is equal to $\zeta_\mu H \circ y \circ
\zeta_\mu G \circ x$:
$$
\zeta_\mu H \circ y \circ
\zeta_\mu G \circ x =
- \zeta_\mu H \circ \zeta_\mu \pi_\mu H \circ \pi_\mu y \circ x =
\xi_\mu H \circ \pi_\mu y \circ x.
$$
For horizontal composition, take $F, H:\lambda \rightarrow \mu,
G, K:\mu \rightarrow \nu$ and odd 2-morphisms $\hat x:F \Rightarrow H,
\hat y:G
\Rightarrow K$
coming from $x:F \Rightarrow \pi_\mu H$, $y:G \Rightarrow \pi_\nu K$. 
Recalling that $(\beta_{\nu,\mu})^{-1}_K = \zeta^{-1}_\nu K \zeta_\mu$, we
have that
$\zeta_\nu K \circ (\beta_{\nu,\mu})^{-1}_K = K \zeta_\mu$, hence
$\xi_\nu K \circ \Pi_\nu (\beta_{\nu,\mu})^{-1}_K  = \zeta_\nu K
\zeta_\mu$.
We deduce that
$$
-\xi_\nu KH \circ \Pi_\nu (\beta_{\nu,\mu})^{-1}_K H \circ yx
=- \zeta_\nu K \zeta_\mu H \circ yx
=
(\zeta_\nu K \circ
y)(\zeta_\mu H \circ x),
$$
establishing that
$\Phi(\hat y\hat x) =  \Phi(\hat y) \Phi(\hat x).$

To complete the proof we need to check naturality:
we have that $\mathbb{R} \mathbb{T}_\AA = \mathbb{T}_\BB
\widehat{(\underline{\mathbb{R}})}$
for each 2-superfunctor $\mathbb{R}:\AA\rightarrow\BB$ between
$\Pi$-2-supercategories $\AA$ and $\BB$.
The only tricky point is to see 
that they are equal on an odd 2-morphism
$\hat x:F \Rightarrow G$ in $\widehat{(\underline{\AA})}$ 
coming from $x:F \Rightarrow \pi_\mu G$ in $\underline{\AA}$.
For this, one needs to use the last of the unit axioms from
Definition~\ref{ms}(ii) plus the definition of $j$.
\end{proof}

Finally, we upgrade $E_2$ and $D_2$ to strict 2-functors
\begin{equation}
\mathbb{E}_2:\piTSCAT \rightarrow \piTCAT,
\qquad
\mathbb{D}_2:\piTCAT \rightarrow \piTSCAT.
\end{equation}
We take $\mathbb{E}_2$ to be equal to $E_2$ 
on objects and
1-morphisms.
On 2-morphisms, $\mathbb{E}_2$ sends 2-natural transformation
$(X,x):\mathbb{R} \Rightarrow \mathbb{S}$
to $(\underline{X}, \underline{x}):\underline{\mathbb{R}} \Rightarrow
\underline{\mathbb{S}}$
defined by $\underline{X}_\lambda := X_\lambda$ and
$\underline{x}_{\mu,\lambda} := x_{\mu,\lambda}$.
To check the coherence axiom from Definition~\ref{clean}(iii), we
need to check that the outside square in the following diagram
commutes:
$$
\begin{tikzcd}
\pi_{\mathbb{S}\lambda}\arrow[dr,"\zeta_{\mathbb{S}\lambda} X_\lambda"]
\arrow[ddd,"j X_\lambda",swap]X_\lambda\arrow[rrrr,"(\beta_{\mathbb{S}\lambda,\mathbb{R}\lambda})_{X_\lambda}"]&&&&X_\lambda
\pi_{\mathbb{R}\lambda}
\arrow[ddd,"X_\lambda j"]\arrow[dl,"X_\lambda \zeta_{\mathbb{R}\lambda}",swap]\\
&\unit_{\mathbb{S}\lambda} X_\lambda\arrow[d,"i X_\lambda",swap]\arrow[r,"l"]&X_\lambda\arrow[r,"r^{-1}"]&X_\lambda \unit_{\mathbb{R}\lambda}\arrow[d,"X_{\lambda} i"]&\\
&(\mathbb{S} \unit_\lambda)
X_\lambda&&\arrow[ll,"(x_{\lambda,\lambda})_{\unit_\lambda}"] X_\lambda
(\mathbb{R} \unit_\lambda)&
\\
(\mathbb{S}\pi_\lambda)
X_\lambda\arrow[ur,swap,"(\mathbb{S} \zeta_\lambda)
X_\lambda"]&&&&\arrow[llll,"(x_{\lambda,\lambda})_{\pi_\lambda}"]
X_\lambda(\mathbb{R}\pi_\lambda)\arrow[ul,"X_\lambda (\mathbb{R} \zeta_\lambda)"]
\end{tikzcd}.
$$
This follows because the other five faces commute: the middle square
by Definition~\ref{ms}(iii), the left and right squares by definition of
$j$, the top square by definition of $\beta$, and the bottom square by
naturality of $x_{\lambda,\lambda}$.

In the other direction, the strict 2-functor $\mathbb{D}_2$ is the
same as $D_2$ on objects and 1-morphisms.
It sends
$\Pi$-2-natural transformation
$(Y,y):\mathbb{R} \Rightarrow \mathbb{S}$
to $(\widehat{Y}, \hat y):\widehat{\mathbb{R}} \Rightarrow
\widehat{\mathbb{S}}$
defined by $\widehat{Y}_\lambda := Y_\lambda$ and $\hat
y_{\mu,\lambda} := y_{\mu,\lambda}$.
The content here is to check the supernaturality of
$y_{\mu,\lambda}$ on an odd 2-morphism $\hat x: F \Rightarrow G$, so
$F, G$ are 1-morphisms $\lambda \rightarrow \mu$ and $\hat x$ is
the odd 2-morphism associated  to a 2-morphism $x:F \Rightarrow
\pi_\mu G$. We need to show that $(\widehat{\mathbb{S}} \hat x)
\widehat{Y}_\lambda \circ (\hat y_{\mu,\lambda})_F = (\hat
y_{\mu,\lambda})_G \circ \widehat{Y}_\mu
(\widehat{\mathbb{R}}\hat x)$, which amounts to checking the
commutativity of the outside
of the following diagram:
$$
\begin{tikzcd}
\arrow[d,swap,"Y_\mu (\mathbb{R} x)"] Y_\mu (\mathbb{R}
F)\arrow[rr,"(y_{\mu,\lambda})_F"]&& (\mathbb{S} F)
Y_\lambda\arrow[d,"(\mathbb{S} x) Y_\lambda"]\\
Y_\mu(\mathbb{R}(\pi_\mu G)) \arrow[rr,"(y_{\mu,\lambda})_{\pi_\mu
  G}"]\arrow[d,swap,"Y_\mu c^{-1}"]&& (\mathbb{S}(\pi_\mu G))
Y_\lambda\arrow[drr,"c^{-1} Y_\lambda"]\\
\arrow[d,swap,"Y_\mu j^{-1} (\mathbb{R} G)"]Y_\mu (\mathbb{R} \pi_\mu) (\mathbb{R}
G)\arrow[rr,"(y_{\mu,\mu})_{\pi_\mu} (\mathbb{R} G)"]&&(\mathbb{S}
\pi_\mu) Y_\mu (\mathbb{R} G)\arrow[swap,rr,"(\mathbb{S} \pi_\mu)(y_{\mu,\lambda})_G"]
\arrow[d,swap,"j^{-1} Y_\mu (\mathbb{R} G)"]&&
(\mathbb{S} \pi_\mu) (\mathbb{S} G) Y_\lambda\arrow[d,"j^{-1}
(\mathbb{S} G) Y_\lambda"]\\
Y_\mu \pi_{\mathbb{R} \mu} (\mathbb{R}
G)\arrow[rr,swap,"(\beta_{\mathbb{S}\mu,\mathbb{R}\mu})^{-1}_{Y_\mu}
(\mathbb{R} G)"]&&\pi_{\mathbb{S}\mu} Y_\mu
(\mathbb{R} G)\arrow[rr,swap,"\pi_{\mathbb{S}\mu} (y_{\mu,\lambda})_G"]&&\pi_{\mathbb{S}\mu} (\mathbb{S} G) Y_\lambda
\end{tikzcd}.
$$
Now we observe that the top square commutes by naturality of
$y_{\mu,\lambda}$;
the pentagon commutes by the first axiom from
Definition~\ref{ms}(iii) (we are assuming strictness as
usual);
the bottom left square commutes by the axiom from
Definition~\ref{clean}(iii);
and the bottom right square commutes by the interchange law.

\begin{theorem}\label{powers}
The strict 2-functors $\mathbb{D}_2$ and $\mathbb{E}_2$ are
mutually inverse 2-equivalences between the strict 2-categories
$\piTCAT$ and $\piTSCAT$.
\end{theorem}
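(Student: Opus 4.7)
The plan is to promote Lemma~\ref{groups} to the 2-categorical level following the same pattern used to promote Lemma~\ref{maineq} to Theorem~\ref{pro}. First I would check that $\mathbb{E}_2 \circ \mathbb{D}_2 = \mathbb{I}_{\piTCAT}$ holds on the nose. By Lemma~\ref{groups} this is already true on objects and 1-morphisms, so it only remains to verify strict equality on 2-morphisms; but the formulas defining $\mathbb{D}_2$ on a $\Pi$-2-natural transformation $(Y,y)$ (namely $\widehat{Y}_\lambda := Y_\lambda$ and $\hat y_{\mu,\lambda} := y_{\mu,\lambda}$) are manifestly self-inverse under passing back through $\mathbb{E}_2$.

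Next I would construct a 2-natural isomorphism
\[
\mathbb{T}: \mathbb{D}_2 \circ \mathbb{E}_2 \stackrel{\sim}{\Rightarrow} \mathbb{I}_{\piTSCAT}
\]
in the 2-category $\piTSCAT$. On objects, I take the 2-supercategory isomorphisms $\mathbb{T}_\AA: \widehat{(\underline{\AA})} \stackrel{\sim}{\to} \AA$ already supplied by Lemma~\ref{groups}. For each 2-superfunctor $\mathbb{R}: \AA \to \BB$ one needs a comparison 2-morphism $\tau_\mathbb{R}: \mathbb{T}_\BB \circ \widehat{(\underline{\mathbb{R}})} \Rightarrow \mathbb{R} \circ \mathbb{T}_\AA$; by the naturality computation in the last paragraph of Lemma~\ref{groups}, the two composites agree strictly on objects, 1-morphisms, and even 2-morphisms, and a direct calculation unfolding the definitions of $\widehat{(\underline{\mathbb{R}})}$, of the coherence $j$ produced by $\mathbb{E}_2$, and of $\mathbb{T}_\BB$ shows agreement on odd 2-morphisms as well. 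One may therefore take each $\tau_\mathbb{R}$ to be an identity, which trivializes the two axioms of Definition~\ref{ms}(iii).

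Finally I would check that $(\mathbb{T},\tau)$ satisfies the extra coherence axiom required of a $\Pi$-2-natural transformation in $\piTSCAT$ (the strict $\Pi$-2-supercategory analog of Definition~\ref{clean}(iii)); after cancelling the identity $\tau$, this reduces to the identity $\mathbb{T}_\AA \zeta_{\widehat{(\underline{\AA})}} = \zeta_\AA \mathbb{T}_\AA$ already verified in the proof of Theorem~\ref{pro}. The main obstacle will be the sign bookkeeping in verifying the strict equality of $\mathbb{T}_\BB \widehat{(\underline{\mathbb{R}})}$ and $\mathbb{R} \mathbb{T}_\AA$ on an odd 2-morphism $\hat x$ coming from $x: F \Rightarrow \pi_\mu G$: both sides should simplify to the 2-morphism associated with $\mathbb{R}(\zeta_\mu G) \circ \mathbb{R} x$, but only after applying the compatibility $i \circ \zeta_{\mathbb{R}\lambda} = \mathbb{R}\zeta_\lambda \circ j$ built into the definition of $\mathbb{E}_2$, the supernaturality relation $\pi \zeta = -\zeta \pi$ from Corollary~\ref{pi}(i), and the hexagons in Definition~\ref{clean}(ii). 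Once these sign checks are in hand, $(\mathbb{T},\tau)$ is invertible component-wise (its object components are 2-superequivalences by Lemma~\ref{groups}) and Theorem~\ref{powers} follows.
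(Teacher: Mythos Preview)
Your proposal is correct and follows precisely the route the paper indicates: lift the natural isomorphism $\mathbb{T}$ from Lemma~\ref{groups} to a 2-natural isomorphism, exploiting the strict equality $\mathbb{R}\,\mathbb{T}_\AA = \mathbb{T}_\BB\,\widehat{(\underline{\mathbb{R}})}$ already established there so that all comparison cells are identities.

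One small correction: there is no ``extra coherence axiom required of a $\Pi$-2-natural transformation'' to check here. Unlike the situation in Theorem~\ref{pro}, where $\Pi\text{-}\underline{\SCAT}$ and $\piCAT$ are themselves $\Pi$-2-categories and $(T,t)$ must be a $\Pi$-2-natural isomorphism, the ambient 2-categories $\piTSCAT$ and $\piTCAT$ in Theorem~\ref{powers} are \emph{plain} strict 2-categories (their 2-morphisms are ordinary 2-natural transformations, cf.\ the end of Section~3). Hence the 2-natural isomorphism $(\mathbb{T},\tau)$ you construct need only satisfy the standard axioms of Definition~\ref{ms}(iii), which are trivial once each $\tau_{\mathbb{R}}$ is an identity. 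Your appeal to an analog of Definition~\ref{clean}(iii) and to the identity $\mathbb{T}_\AA\zeta = \zeta\,\mathbb{T}_\AA$ is unnecessary (and the reference to Theorem~\ref{pro} for that identity is to the 1-categorical $T_\A$, not $\mathbb{T}_\AA$). Dropping that step, your argument goes through unchanged.
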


\begin{proof}
This may be deduced from the proof of Lemma~\ref{groups} in a similar way to
how Theorem~\ref{pro} was obtained from the proof of Lemma~\ref{maineq}.
We leave the details to the reader.
\end{proof}

\begin{corollary}
The 2-supercategories
$\piTSCAT$ and $\Pi$-2-$\widehat{\mathfrak{Cat}}$
are 2-superequivalent.
\end{corollary}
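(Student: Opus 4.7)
The strategy is to deduce this from Theorem~\ref{powers} by taking the 2-equivalence $\mathbb{D}_2 \dashv \mathbb{E}_2$ into the super setting via the hat construction applied at the enriching level, mirroring the way Theorem~\ref{pro} upgrades Lemma~\ref{maineq}.

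First I would make precise the 2-supercategory structure on $\Pi$-2-$\widehat{\mathfrak{Cat}}$. The strict 2-category $\piTCAT$ carries a natural $\Pi$-2-category structure: each morphism category $\mathcal{H}om_{\piTCAT}(\AA, \BB)$ becomes a $\Pi$-category with parity-switching endofunctor $\pi_\BB\:-$ and coherence data $\xi, \beta$ inherited from $\BB$, the compatibility axioms reducing to those built into the definition of a $\Pi$-2-natural transformation. The symbol $\Pi$-2-$\widehat{\mathfrak{Cat}}$ then denotes the $\Pi$-2-supercategory whose objects are $\Pi$-2-categories and whose morphism supercategories are the associated $\Pi$-supercategories constructed before Lemma~\ref{maineq}. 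Concretely, odd 2-morphisms $\mathbb{R} \Rightarrow \mathbb{S}$ in $\Pi$-2-$\widehat{\mathfrak{Cat}}$ are $\Pi$-2-natural transformations $\mathbb{R} \Rightarrow \pi\mathbb{S}$, with vertical and horizontal composition given by the formulas in the definition of $\widehat{\A}$.

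Second I would promote $\mathbb{D}_2$ and $\mathbb{E}_2$ to strict 2-superfunctors between $\Pi$-2-$\widehat{\mathfrak{Cat}}$ and $\piTSCAT$. On objects, 1-morphisms, and even 2-morphisms they agree with $\mathbb{D}_2, \mathbb{E}_2$ from Theorem~\ref{powers}. On odd 2-morphisms the extension is forced by the one-level-down equivalence of Theorem~\ref{pro}: for the promoted $\widetilde{\mathbb{D}}_2$, an odd 2-morphism $\mathbb{R} \Rightarrow \mathbb{S}$ in $\Pi$-2-$\widehat{\mathfrak{Cat}}$ corresponds to a $\Pi$-2-natural transformation $y:\mathbb{R} \Rightarrow \pi\mathbb{S}$, and $\widetilde{\mathbb{D}}_2$ sends it to the odd 2-morphism $\widehat{\mathbb{R}} \Rightarrow \widehat{\mathbb{S}}$ in $\piTSCAT$ obtained from $\mathbb{D}_1 y$ applied hom-wise; the analogous recipe using $\mathbb{E}_1$ defines $\widetilde{\mathbb{E}}_2$. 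The coherence data $c, i$ are inherited from $\mathbb{D}_2, \mathbb{E}_2$, and compatibility with $\pi, \zeta$ is automatic because the hat construction commutes with the $\Pi$-structure by construction.

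Finally the 2-superequivalence follows: local superequivalence on each hom-supercategory is Theorem~\ref{pro} applied hom-wise, while essential surjectivity of $\widetilde{\mathbb{D}}_2$ on objects is Lemma~\ref{groups}. The main obstacle will be verifying that $\widetilde{\mathbb{D}}_2$ preserves horizontal composition of odd 2-morphisms; this reduces to sign bookkeeping already carried out in the proof of Lemma~\ref{groups}, where the isomorphism $\mathbb{T}_\AA : \widehat{(\underline{\AA})} \cong \AA$ is shown to intertwine horizontal composition in every parity combination. Translating that calculation one categorical level up yields the required 2-natural isomorphism $\widetilde{\mathbb{D}}_2 \circ \widetilde{\mathbb{E}}_2 \cong \mathbb{I}$, while $\widetilde{\mathbb{E}}_2 \circ \widetilde{\mathbb{D}}_2 = \mathbb{I}$ holds on the nose.
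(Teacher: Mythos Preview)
Your approach diverges from the paper's, which gives a two-line deduction rather than a direct construction. The paper observes that Theorem~\ref{pro} already exhibits $\mathbb{E}_1 : \mathbb{E}_2(\piSCAT) \to \piCAT$ as an equivalence \emph{internal to} the 2-category $\piTCAT$; since $\mathbb{D}_2 : \piTCAT \to \piTSCAT$ is itself a 2-equivalence (Theorem~\ref{powers}), applying it carries $\mathbb{E}_1$ to an equivalence internal to $\piTSCAT$, and the isomorphism $\mathbb{D}_2 \mathbb{E}_2 \cong \mathbb{I}$ from the same theorem identifies the domain with $\piSCAT$. Nothing new is built; the corollary is a purely formal consequence of the two theorems already in hand.

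You instead read the statement one categorical level higher, attempting to promote $\mathbb{D}_2$ and $\mathbb{E}_2$ themselves to 2-\emph{super}functors between super-enriched versions of $\piTCAT$ and $\piTSCAT$. But the paper's proof invokes $\mathbb{E}_1$ and $\piSCAT$, not $\mathbb{E}_2$ and $\piTSCAT$---the corollary concerns the 2-supercategory of $\Pi$-supercategories, not the 2-category of $\Pi$-2-supercategories, and the extra ``$2$'' in the displayed symbols appears to be a typographical slip. Even setting that aside, your construction has a genuine gap: $\piTSCAT$ is defined in the paper only as a strict 2-\emph{category}, and you never say what its 2-supercategory structure is supposed to be. Without introducing odd 2-morphisms there (which would require supermodifications, a layer the paper explicitly declines to develop in Definition~\ref{ms}(iv)), the codomain of your promoted $\widetilde{\mathbb{D}}_2$ is not well defined.
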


\begin{proof}
We've already shown in Theorem~\ref{pro} that $\mathbb{E}_1:
\mathbb{E}_2(\Pi$-$\SCAT)\rightarrow
\piCAT$ is a $\Pi$-2-equivalence.
Now apply $\mathbb{D}_2$ and use Theorem~\ref{powers}.
\end{proof}

\begin{remark}
Like  in Remark~\ref{phone}, one can go a level higher: the strict
3-category of $\Pi$-2-categories, $\Pi$-2-functors, $\Pi$-2-natural
transformations and modifications is 3-equivalent to the strict
3-category of $\Pi$-2-supercategories, 2-superfunctors, 2-natural
transformations and even supermodifications. In particular, this assertion
implies that the monoidal category underlying the Drinfeld center of a
$\Pi$-2-supercategory $\AA$ is monoidally equivalent to the Drinfeld center
of $\underline{\AA}$.
\end{remark}

\section{Gradings}

In the final section, we explain how to incorporate an additional
$\Z$-grading.
Since this is all is very similar to the theory so far (and there are
no additional issues with signs!), we will be quite
brief, introducing suitable language but leaving detailed proofs to the reader.
We continue to assume that $\k$ is a commutative ground
ring\footnote{Actually, everything prior to Definition~\ref{hale} makes sense more generally working
  over a graded commutative 
superalgebra
$\k = \bigoplus_{n
  \in \Z} \k_n = \bigoplus_{n \in \Z} \k_{n,\0}\oplus \k_{n,\1}$.},
so a superspace means a $\Z/2$-graded $\k$-module as before.

By a {\em graded superspace} we mean a $\Z$-graded superspace
$$
V = \bigoplus_{n \in \Z} V_n =
\bigoplus_{n \in \Z} V_{n,\0}\oplus V_{n,\1}.
$$
We stress that the $\Z$- and $\Z/2$-gradings on a graded superspace are independent of
each other.
We denote the degree $n$ of $v \in V_n$ also by $\deg(v)$.
Let $\underline{\GSVec}$ be the
category of graded superspaces 
and degree-preserving even linear maps, i.e. $\k$-module homomorphisms
$f:V \rightarrow W$ such that $f(V_{n,p}) \subseteq W_{n,p}$ for each
$n \in \Z$ and $p \in \Z/2$.
This is a symmetric monoidal category with $(V \otimes W)_n = \bigoplus_{r+s=n}
V_r \otimes W_s$, and the same braiding as in $\SVec$.

\begin{definition}
By a {\em graded supercategory} we mean a category enriched in
$\underline{\GSVec}$. 
A {\em graded superfunctor} between graded supercategories is a superfunctor that preserves
degrees of morphisms. 
A supernatural transformation
$x:F \Rightarrow G$ between graded
superfunctors
$F$ and $G$ is said to be {\em homogeneous of degree $n$} if
$x_\lambda:F \lambda \rightarrow G\lambda$ is of degree $n$ for all
objects $\lambda$. Let $\Hom(F,G)_n$ denote the superspace of all
homogeneous supernatural transformations of degree $n$.
Then a {\em graded supernatural transformation}
from $F$ to $G$
is 
an element of the graded superspace 
$\Hom(F,G):=
\bigoplus_{n \in \Z} \Hom(F,G)_n$.
\end{definition}

If $\A$ is a graded supercategory, the {\em underlying category}
$\underline{\A}$ is the $\k$-linear category with the same objects as $\A$
but only the even morphisms of degree zero.
Here are some basic examples of graded supercategories:
\begin{itemize}
\item
Any graded superalgebra 
$A = \bigoplus_{n \in \Z} A_n = \bigoplus_{n \in \Z} A_{n,\0}\oplus A_{n,\1}$
can be viewed as a graded supercategory
with one object.
\item For graded superalgebras $A$ and $B$, 
let $A\lrGSMod B$
denote the graded
  supercategory 
of {\em graded $(A,B)$-superbimodules} $V = \bigoplus_{n \in \Z} V_n
= \bigoplus_{n \in \Z} V_{n,\0} \oplus V_{n,\1}$.
Morphisms are defined from $\Hom(V, W) := \bigoplus_{n \in \Z}
\Hom(V, W)_n$ where
$\Hom(V,W)_n$ consists of all $(A,B)$-superbimodule homomorphisms
$f:V \rightarrow W$ that are {\em homogeneous of degree $n$}, i.e.
$f(V_m) \subseteq W_{m+n}$ for all $m \in \Z$.
\item Taking $A=B = \k$ in (ii), we get the graded supercategory
$\GSVec$
of graded superspaces.
The underlying category is $\underline{\GSVec}$ as defined above.
\item For graded supercategories $\A$, $\B$, the graded
supercategory $\mathcal{H}om(\A,\B)$ consists of all graded superfunctors and
graded supernatural transformations.
\end{itemize}

Let $\GSCat$ be the category of all
\smallcat graded supercategories and graded superfunctors. 
We make $\GSCat$ into
a monoidal category with tensor product operation $-\boxtimes -$
defined in just the same way as was
explained after Example~\ref{snuggly} in the introduction.

\begin{definition}
A {\em strict graded 2-supercategory} is a category enriched in
$\GSCat$, i.e. it is a 2-supercategory with an additional grading on 2-morphisms
which is respected by both horizontal and vertical composition.
\end{definition}

The basic example of a strict graded 2-supercategory is 
$\GSCAT$: graded supercategories, graded superfunctors
and graded supernatural transformations.
There is also the ``weak'' notion of {\em graded
  2-supercategory}, which is the obvious graded analog of
Definition~\ref{ms}(i). 
For example, there is a graded 2-supercategory 
$\mathfrak{GSBim}$ of
{\em graded superbimodules}, which has objects that are
graded superalgebras, the
morphism supercategories are defined from $\mathcal{H}om_{\mathfrak{GSBim}}(A,B) :=
B\lrGSMod A$, and horizontal composition is defined by tensor product.

Here is the graded analog of Definition~\ref{ms}(ii):

\begin{definition}
For graded 2-supercategories $\AA$ and $\BB$,  a {\em graded
  2-superfunctor} $\mathbb{R}:\AA \rightarrow \BB$ consists of:
\begin{itemize}
\item A function $\mathbb{R}:\ob \AA \rightarrow \ob \BB$.
\item Graded superfunctors
$\mathbb{R}:\mathcal{H}om_{\AA}(\lambda,\mu) \rightarrow
\mathcal{H}om_{\BB}(\mathbb{R}\lambda,\mathbb{R}\mu)$ for $\lambda,\mu \in \ob \AA$.
\item Homogeneous graded supernatural isomorphisms
$c:(\mathbb{R}\:-)\: (\mathbb{R}\:-) \stackrel{\sim}{\Rightarrow}
\mathbb{R}(-\:-)$
that are even of degree zero.
\item Homogeneous 2-isomorphisms
  $i:\unit_{\mathbb{R}\lambda}\stackrel{\sim}{\Rightarrow} \mathbb{R}
  \unit_\lambda$
that are even of degree zero
for all $\lambda \in \ob \AA$.
\end{itemize}
This data should satisfy the same axioms as in
Definition~\ref{ms}(ii).
\end{definition}

We leave it to the reader to formulate the graded versions of
Definition~\ref{ms}(iii) (2-natural transformations between graded
2-superfunctors) and Definition~\ref{ms}(iv) (graded supermodifications).

The next two definitions give the graded analogs of Definitions~\ref{defscat}
and \ref{georgia}.

\begin{definition}
A {\em graded $(Q,\Pi)$-supercategory} is a 
graded supercategory $\A$ plus 
the extra data of graded superfunctors $Q, Q^{-1}, \Pi:\A
\rightarrow \A$, an odd supernatural isomorphism
$\zeta:\Pi \stackrel{\sim}{\Rightarrow} I$ that is homogeneous of degree 0,
and even supernatural isomorphisms $\sigma:Q \stackrel{\sim}{\Rightarrow} I$ and
$\bar\sigma:Q^{-1}
\stackrel{\sim}{\Rightarrow} I$ that are homogeneous of degrees $-1$ and $1$,
respectively.
Note that 
$\ii:= \bar\sigma \sigma :Q^{-1} Q \stackrel{\sim}{\Rightarrow} I$, 
$\jj:= \sigma \bar\sigma :Q Q^{-1} \stackrel{\sim}{\Rightarrow} I$
and $\xi := \zeta \zeta:\Pi^2 \stackrel{\sim}{\Rightarrow} I$ are even isomorphisms of
degree zero, so that $Q$ and $Q^{-1}$ are mutually inverse graded superequivalences,
and $\Pi$ is a self-inverse graded superequivalence.
\end{definition}

For example, for graded superalgebras $A$ and $B$, we can view
$A\lrGSMod B$ as a graded $(Q, \Pi)$-supercategory by defining
$\Pi$ and $\zeta$ as in Example~\ref{psf}, and letting
$Q, Q^{-1}:A\lrGSMod B \rightarrow A\lrGSMod B$
be the upward and downward grading shift functors,
i.e. $(Q V)_n := V_{n-1},
(Q^{-1} V)_n := V_{n+1}$. We
take $\sigma, \bar \sigma$
to be induced by the identity function on the underlying
sets.

\begin{definition}\label{wash}
A {\em graded $(Q,\Pi)$-2-supercategory} is a graded 2-supercategory
$\AA$ plus families $q = (q_\lambda:\lambda\rightarrow
\lambda), q^{-1} =
(q^{-1}_\lambda:\lambda\rightarrow \lambda)$ and $\pi =
(\pi_\lambda:\lambda\rightarrow \lambda)$ of 1-morphisms, and 
families
$\sigma = (\sigma_\lambda:q_\lambda \stackrel{\sim}{\Rightarrow} \unit_\lambda),
\bar\sigma = (\bar\sigma_\lambda:q^{-1}_\lambda \stackrel{\sim}{\Rightarrow}
\unit_\lambda)$ and $\zeta = (\zeta_\lambda:\pi_\lambda \stackrel{\sim}{\Rightarrow} \unit_\lambda)$ 
of 2-isomorphisms which are even, even and odd of degrees $-1$, $1$ and
0, respectively.
\end{definition}

For example, there is a graded $(Q,\Pi)$-2-supercategory $\qpiGSCAT$ 
consisting of all \smallcat graded $(Q, \Pi)$-supercategories,
graded superfunctors and graded supernatural transformations.

\begin{lemma}\label{turkey}
Let $\AA$ be a graded $(Q,\Pi)$-2-supercategory, which we assume is
strict for simplicity.
\begin{itemize}
\item[(i)]
There are families $\beta
=(\beta_{\mu,\lambda}:\pi_\mu - \stackrel{\sim}{\Rightarrow} - \pi_\lambda)$ of even
supernatural isomorphisms of degree zero
and
$\xi = (\xi_\lambda:\pi_\lambda^2 \stackrel{\sim}{\Rightarrow} \unit_\lambda)$ of even
2-isomorphisms of degree zero
defined as in Lemma~\ref{fish}.
They satisfy the properties 
from Definition~\ref{clean}(i).
\item[(ii)]
There is a family $\gamma = (\gamma_{\mu,\lambda}:q_\mu - \stackrel{\sim}{\Rightarrow}
  - q_\lambda)$ of even supernatural isomorphisms of degree zero
defined from $(\gamma_{\mu,\lambda})_F := \sigma_\mu F
  \sigma_\lambda^{-1}$ for a 1-morphism $F:\lambda\rightarrow \mu$.
The pair $(q, \gamma)$ is an invertible object of the Drinfeld center of
$\AA$ with $(\gamma_{\lambda,\lambda})_{q_\lambda} =
1_{q_\lambda^2}$
and $(\gamma_{\lambda,\lambda})_{\pi_\lambda} = (\beta_{\lambda,\lambda})^{-1}_{q_\lambda}$.
\item[(iii)]
There are even $2$-isomorphisms of degree zero
$\ii_\lambda := \bar\sigma_\lambda \sigma_\lambda :q^{-1}_\lambda q_\lambda \stackrel{\sim}{\Rightarrow} \unit_\lambda$ and
$\jj_\lambda:=\sigma_\lambda \bar \sigma_\lambda:q_\lambda q^{-1}_\lambda \stackrel{\sim}{\Rightarrow} \unit_\lambda$.
Moreover 
$q_\lambda \ii_\lambda = \jj_\lambda q_\lambda$ 
and
$\ii_\lambda q^{-1}_\lambda = q^{-1}_\lambda \jj_\lambda$ in
$\Hom_{\AA}(q_\lambda q^{-1}_\lambda q_\lambda, q_\lambda)$ and
$\Hom_{\AA}(q^{-1}_\lambda q_\lambda q^{-1}_\lambda, q^{-1}_\lambda)$, respectively.
\end{itemize}
\end{lemma}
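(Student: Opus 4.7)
\textbf{Proof plan for Lemma~\ref{turkey}.}

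Part (i) is a direct rerun of Lemma~\ref{fish}. I set $(\beta_{\mu,\lambda})_F := -\zeta_\mu F \zeta_\lambda^{-1}$ and $\xi_\lambda := \zeta_\lambda \zeta_\lambda$, and the same super-interchange calculations verify supernaturality and the Drinfeld-center axioms. The only thing to track beyond Lemma~\ref{fish} is the $\Z$-grading, but this is automatic: $\zeta$ is homogeneous of degree $0$ (and so is $\zeta^{-1}$), hence $\beta$ and $\xi$ are degree $0$ as required.

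Part (ii) is parallel, with $(\gamma_{\mu,\lambda})_F := \sigma_\mu F \sigma_\lambda^{-1}$. There is no minus sign in the definition because $\sigma$ is even, so the super-interchange contributes no sign; the degree is $(-1) + 0 + (+1) = 0$ and the parity is $0$. Supernaturality of $\gamma_{\mu,\lambda}$, and the Drinfeld-center axioms (i)--(ii) of Lemma~\ref{fish}, follow from super interchange exactly as in that lemma but with sign-free calculations. The identity $(\gamma_{\lambda,\lambda})_{q_\lambda} = 1_{q_\lambda^2}$ follows once one observes that $q_\lambda \sigma_\lambda = \sigma_\lambda q_\lambda$ in $\Hom(q_\lambda^2, q_\lambda)$, which is obtained by computing $\sigma_\lambda \sigma_\lambda$ in the two ways $\sigma_\lambda \circ q_\lambda \sigma_\lambda$ and $\sigma_\lambda \circ \sigma_\lambda q_\lambda$ allowed by super interchange (no sign since $\sigma$ is even) and cancelling $\sigma_\lambda$.

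The delicate point is the identity $(\gamma_{\lambda,\lambda})_{\pi_\lambda} = (\beta_{\lambda,\lambda})_{q_\lambda}^{-1}$, where an even horizontal sandwich built from $\sigma$ must be matched with an odd one built from $\zeta$. My plan is to reduce each to a vertical composite via the super-interchange formula $yx = y H \circ G x = (-1)^{|x||y|} K x \circ y F$, obtaining
\[
(\gamma_{\lambda,\lambda})_{\pi_\lambda} = \pi_\lambda \sigma_\lambda^{-1} \circ \sigma_\lambda \pi_\lambda,
\qquad
(\beta_{\lambda,\lambda})_{q_\lambda} = q_\lambda \zeta_\lambda^{-1} \circ \zeta_\lambda q_\lambda,
\]
then compose and collapse the middle using $\sigma_\lambda \pi_\lambda \circ q_\lambda \zeta_\lambda^{-1} = \sigma_\lambda \zeta_\lambda^{-1}$ (formula~1 of the super interchange applied to the horizontal composite $\sigma_\lambda \zeta_\lambda^{-1}$) followed by $\sigma_\lambda \zeta_\lambda^{-1} \circ \zeta_\lambda q_\lambda = \pi_\lambda \sigma_\lambda$ (via $\sigma_\lambda \zeta_\lambda^{-1} = \zeta_\lambda^{-1} \circ \sigma_\lambda$ and $\sigma_\lambda \circ \zeta_\lambda q_\lambda = \zeta_\lambda \sigma_\lambda = \zeta_\lambda \circ \pi_\lambda \sigma_\lambda$, all following from the even super interchange with $\sigma$). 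This leaves $\pi_\lambda \sigma_\lambda^{-1} \circ \pi_\lambda \sigma_\lambda = \pi_\lambda(\sigma_\lambda^{-1} \circ \sigma_\lambda) = 1_{\pi_\lambda q_\lambda}$, as needed.

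For part (iii), the 2-morphisms $\ii_\lambda := \bar\sigma_\lambda \sigma_\lambda$ and $\jj_\lambda := \sigma_\lambda \bar\sigma_\lambda$ are even of degree $(-1)+(+1) = 0$. The zig-zag identities $q_\lambda \ii_\lambda = \jj_\lambda q_\lambda$ and $\ii_\lambda q^{-1}_\lambda = q^{-1}_\lambda \jj_\lambda$ follow from the commutation $q_\lambda \sigma_\lambda = \sigma_\lambda q_\lambda$ established in (ii) together with the analogous $q^{-1}_\lambda \bar\sigma_\lambda = \bar\sigma_\lambda q^{-1}_\lambda$ (proved by the identical super-interchange trick applied to $\bar\sigma_\lambda \bar\sigma_\lambda$), combined with the functoriality of horizontal composition. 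The main obstacle in the lemma is thus the mixed sign-bookkeeping in $(\gamma)_\pi = (\beta)_q^{-1}$; the rest is a rerun of Lemma~\ref{fish} with the $\Z$-grading tracked alongside.
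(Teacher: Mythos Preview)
Your proposal is correct and follows exactly the approach the paper takes: its proof is the single sentence ``Similar arguments to those in the proof of Lemma~\ref{fish},'' and you have carried out precisely that rerun, tracking the additional $\Z$-grading and noting where the evenness of $\sigma,\bar\sigma$ kills the signs that appeared for $\zeta$. In fact you supply more detail than the paper does, particularly for the mixed identity $(\gamma_{\lambda,\lambda})_{\pi_\lambda} = (\beta_{\lambda,\lambda})_{q_\lambda}^{-1}$ and for deriving the zig-zag identities in (iii) from the commutations $q_\lambda\sigma_\lambda=\sigma_\lambda q_\lambda$ and $q_\lambda^{-1}\bar\sigma_\lambda=\bar\sigma_\lambda q_\lambda^{-1}$; both of these computations check out.
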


\begin{proof}
Similar arguments to those in the proof of Lemma~\ref{fish}.
\end{proof}

\begin{corollary}\label{pigeon}
Let $\A$ and $\A'$
be graded $(Q,\Pi)$-supercategories.
\begin{itemize}
\item[(i)]
There are even supernatural
isomorphisms of degree zero 
$\xi := \zeta \zeta: \Pi^2 \stackrel{\sim}{\Rightarrow} I$,
$\ii := \bar\sigma\sigma:Q^{-1} Q \Rightarrow I$ and $\jj :=
\sigma\bar\sigma:QQ^{-1} \Rightarrow I$. 
Moreover, we have that
$\xi \Pi = \Pi \xi$, and
$\ii^{-1}$ and $\jj$ define the unit and counit of an adjunction making
$(Q, Q^{-1})$ into an adjoint pair of auto-equivalences of $\A$.
\item[(ii)]
Suppose that $F:\A \rightarrow \A'$ is a graded
superfunctor.
There are even supernatural isomorphisms of degree zero $\beta_F :=
-\zeta' F \zeta^{-1}: \Pi' F \stackrel{\sim}{\Rightarrow} F \Pi$ 
and $\gamma_F:= \sigma' F \sigma^{-1}:Q' F
\stackrel{\sim}{\Rightarrow} F Q$, with $\xi' F \xi^{-1} = \beta_F \Pi
\circ \Pi'
\beta_F$ as in Corollary~\ref{pi}(ii).
Also $\gamma_\Pi = \beta_Q^{-1}$.
\item[(iii)] Suppose that $x:F \Rightarrow G$ is a graded supernatural
  transformation.
Then $\beta_G \circ \Pi' x = x \Pi \circ \beta_F$ as in
Corollary~\ref{pi}(iii). Similarly,
$\gamma_G \circ Q'x = xQ \circ \gamma_G$.
\item[(iv)] 
We have that 
$\beta_{GF} = G \beta_F \circ \beta_G F, \beta_I = 1_\Pi$ and $\beta_\Pi =-1_{\Pi^2}$
as in Corollary~\ref{pi}(iv).
Similarly,
$\gamma_{GF}=G \gamma_F \circ \gamma_G F, \gamma_I = 1_Q$ and $\gamma_Q =1_{Q^2}$.
\end{itemize}
\end{corollary}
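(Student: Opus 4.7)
The plan is to obtain Corollary~\ref{pigeon} as a direct specialization of Lemma~\ref{turkey}, applied to the strict graded $(Q,\Pi)$-2-supercategory $\qpiGSCAT$ whose objects are graded $(Q,\Pi)$-supercategories, 1-morphisms are graded superfunctors, and 2-morphisms are graded supernatural transformations. For any such object $\A$ the additional data $\pi_\A$, $q_\A$, $q_\A^{-1}$, $\zeta_\A$, $\sigma_\A$, $\bar\sigma_\A$ internal to $\A$ gives precisely the extra data required by Definition~\ref{wash} for $\qpiGSCAT$, and the unit 1-morphism $\unit_\A$ is the identity graded superfunctor $I_\A$.

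First, applying Lemma~\ref{turkey}(i) to $\qpiGSCAT$ yields, for any two objects $\A, \A'$, even supernatural isomorphisms $\beta_{\A',\A}:\Pi_{\A'}-\stackrel{\sim}{\Rightarrow}-\Pi_\A$ and even 2-isomorphisms $\xi_\A:\Pi_\A^2\stackrel{\sim}{\Rightarrow} I_\A$ of degree zero. Taking $\A' = \A$ and unwinding the formulas from Lemma~\ref{fish} gives $\xi := \zeta\zeta$ together with the identity $\xi\Pi = \Pi\xi$ claimed in (i). Evaluating $\beta_{\A',\A}$ at a 1-morphism $F:\A\to\A'$ gives $\beta_F = -\zeta'F\zeta^{-1}$, which is the first half of (ii); the relation $\xi' F \xi^{-1} = \beta_F \Pi \circ \Pi'\beta_F$ is then the last condition in Definition~\ref{clean}(i). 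The supernaturality of $\beta_{\A',\A}$, i.e.\ $\beta_G \circ \Pi' x = x\Pi \circ \beta_F$ for $x:F\Rightarrow G$, gives the first equation of (iii), while Lemma~\ref{fish}(i),(ii) (still specialized to $\qpiGSCAT$) yields $\beta_{GF} = G\beta_F\circ \beta_G F$, $\beta_I = 1_\Pi$, and using Lemma~\ref{fish}(iii) that $\beta_\Pi = -1_{\Pi^2}$.

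Next, applying Lemma~\ref{turkey}(ii) to $\qpiGSCAT$ produces even supernatural isomorphisms $\gamma_{\A',\A}:Q_{\A'}-\stackrel{\sim}{\Rightarrow}-Q_\A$ of degree zero. Evaluation at $F:\A\to\A'$ gives $\gamma_F = \sigma'F\sigma^{-1}$, as required in (ii), and the Drinfeld center axioms for $(q,\gamma)$ transcribe directly to $\gamma_{GF}=G\gamma_F\circ\gamma_G F$ and $\gamma_I = 1_Q$, completing (iv). The additional sign-free identity $(\gamma_{\A,\A})_{q_\A} = 1_{q_\A^2}$ from Lemma~\ref{turkey}(ii) gives $\gamma_Q = 1_{Q^2}$, and the formula $(\gamma_{\A,\A})_{\pi_\A} = (\beta_{\A,\A})^{-1}_{q_\A}$ translates to $\gamma_\Pi = \beta_Q^{-1}$. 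Supernaturality of $\gamma_{\A',\A}$ gives the second equation in (iii).

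Finally, Lemma~\ref{turkey}(iii) provides even $2$-isomorphisms $\ii_\A: Q_\A^{-1} Q_\A \stackrel{\sim}{\Rightarrow} I_\A$ and $\jj_\A: Q_\A Q_\A^{-1}\stackrel{\sim}{\Rightarrow} I_\A$ of degree zero, together with the identities $Q_\A \ii_\A = \jj_\A Q_\A$ and $\ii_\A Q_\A^{-1} = Q_\A^{-1}\jj_\A$. These are exactly the triangle identities (the first on the generator $Q_\A$, the second on the generator $Q_\A^{-1}$), so on specializing $\A$ we obtain $\ii := \bar\sigma\sigma$ and $\jj := \sigma\bar\sigma$ and deduce that $\ii^{-1}$ and $\jj$ are the unit and counit of an adjunction $(Q,Q^{-1})$. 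Since each of $Q,Q^{-1},\Pi$ is already given as part of the graded $(Q,\Pi)$-supercategory structure, there is nothing substantive left to prove beyond this translation step; the only thing to be vigilant about is verifying, in each bullet of (ii)--(iv), that the formulas emerging from Lemma~\ref{turkey} in $\qpiGSCAT$ match the explicit expressions stated in the corollary, which is routine bookkeeping. The main (and only) obstacle, therefore, is confirming that $\qpiGSCAT$ really is a graded $(Q,\Pi)$-2-supercategory in the sense of Definition~\ref{wash}, i.e.\ that horizontal composition of graded superfunctors and graded supernatural transformations is compatible with the degree and parity gradings and that the families $\pi,q,q^{-1},\zeta,\sigma,\bar\sigma$ assemble into 1-morphisms and 2-isomorphisms of the stated parities and degrees; this is immediate from the definitions.
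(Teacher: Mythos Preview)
Your proposal is correct and follows essentially the same approach as the paper: both obtain the corollary by specializing Lemma~\ref{turkey} to the strict graded $(Q,\Pi)$-2-supercategory $\qpiGSCAT$, with your version merely spelling out the bookkeeping in more detail. The one slight imprecision is calling $Q\ii=\jj Q$ and $\ii Q^{-1}=Q^{-1}\jj$ ``exactly the triangle identities''---the paper is a touch more careful, noting instead that the triangle identities $\jj Q\circ Q\ii^{-1}=1_Q$ and $Q^{-1}\jj\circ \ii^{-1}Q^{-1}=1_{Q^{-1}}$ follow immediately from these equalities---but this does not affect the validity of the argument.
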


\begin{proof}
Everything 
follows by 
applying Lemma~\ref{turkey} to the $(Q,\Pi)$-2-supercategory
$\qpiGSCAT$.
In particular, the assertion in (i) that $\ii^{-1}$ and $\jj$
are the 
unit and counit of an adjunction means that
$Q \ii^{-1}\circ \jj Q: Q \Rightarrow Q Q^{-1} Q \Rightarrow Q$
and $\ii^{-1} Q^{-1} \circ Q^{-1} \jj: Q^{-1} \Rightarrow Q^{-1} Q Q^{-1} \Rightarrow Q^{-1}$ are identities;
this follows because $Q \ii = \jj Q$ and $\ii Q^{-1} = Q^{-1} \jj$.
\end{proof}

\vspace{2mm}

The analog of
Definition~\ref{pienv} in the presence of a grading is as follows.

\begin{definition}\label{qpe}
The {\em $(Q, \Pi)$-envelope} of a graded supercategory $\A$
is the graded $(Q,\Pi)$-supercategory $\A_{q,\pi}$ with objects $\{Q^m
\Pi^a
\lambda\:|\:\lambda \in \ob \A, m \in \Z, a \in \Z/2\}$ and
$$
\Hom_{\A_{q,\pi}}(Q^m \Pi^a \lambda, Q^n \Pi^b \mu)
:= Q^{n-m}\Pi^{a+b} \Hom_{\A}(\lambda,\mu),
$$
where $Q$ and $\Pi$ on the right hand side are the (invertible) grading and parity
shift functors on $\GSVec$.
We denote the morphism $Q^m\Pi^a \lambda \rightarrow  Q^n \Pi^b \mu$
coming from a homogeneous $f:\lambda\rightarrow\mu$ under this
identification by $f^{n,b}_{m,a}$.
Composition in $\A_{q,\pi}$ is defined by
$g^{n,c}_{m,b} \circ f^{m,b}_{l,a} := (g \circ f)^{n,c}_{l,a}$.
The parity-switching functor $\Pi$ and $\zeta$ are
defined as in Definition \ref{pienv}.
The degree shift functors $Q, Q^{-1}$ are given
by $Q (Q^m\Pi^a  \lambda) :=  Q^{m+1}\Pi^a \lambda$,
$Q^{-1}(  Q^m\Pi^a \lambda) :=  Q^{m-1}\Pi^a \lambda$, and $\sigma,
\bar \sigma$ are induced by the identity morphism in $\A$.
\end{definition}

In an analogous way to (\ref{music}),
Definition~\ref{qpe} may be extended to produce a strict graded 2-superfunctor
\begin{equation}
-_{q,\pi}:\GSCAT \rightarrow \qpiGSCAT.
\end{equation}
There is a canonical graded superfunctor $J:\A \rightarrow
\A_{q,\pi}$ which satisfies a universal property similar to
Lemma~\ref{uniprop}.
Also $J$ is a graded superequivalence if and only if 
$\A$ is {\em $(Q,\Pi)$-complete}, meaning that
every object
$\lambda$ of $\A$ is the target of even isomorphisms of degrees $\pm 1$ and
the target of an odd isomorphism of degree $0$; cf. Lemma~\ref{e}.
The analog of Theorem~\ref{2adj} is as follows:

\begin{theorem}
For all graded supercategories $\A$ and graded $(Q,\Pi)$-supercategories $\B$,
there is a functorial graded superequivalence
$\mathcal{H}om(\A, \nu \B)
\rightarrow
\mathcal{H}om(\A_{q,\pi}, \B)$, where
$\nu:\qpiGSCAT\rightarrow\GSCAT$ denotes the obvious
forgetful 2-superfunctor.
Hence, the strict graded 2-superfunctor $-_{q,\pi}$ is left 2-adjoint to $\nu$.
\end{theorem}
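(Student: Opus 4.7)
The plan is to imitate the proof of Theorem~\ref{2adj}, tracking an additional $\Z$-grading throughout. The key intermediate result will be the graded analog of Lemma~\ref{uniprop}: for any graded superfunctor $F:\A\to\nu\B$, there is a canonical graded superfunctor $\tilde F:\A_{q,\pi}\to\B$ such that $F=\tilde F\circ J$, and any graded supernatural transformation $x:F\Rightarrow G$ extends uniquely to $\tilde x:\tilde F\Rightarrow\tilde G$ with $x=\tilde x J$. On objects, I would set $\tilde F(Q^m\Pi^a\lambda) := (Q_\B)^m (\Pi_\B)^a (F\lambda)$, interpreting $(Q_\B)^m$ as iterated application of $Q_\B$ for $m\geq 0$ and of $Q_\B^{-1}$ for $m<0$. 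On a morphism $f^{n,b}_{m,a}$ coming from a homogeneous $f:\lambda\to\mu$ in $\A$, I would define $\tilde F(f^{n,b}_{m,a})$ to be $Ff:F\lambda\to F\mu$ conjugated by the appropriate composites of $\sigma_\B^{\pm 1}$, $\bar\sigma_\B^{\pm 1}$ and $\zeta_\B^{\pm 1}$ (combined using $\ii_\B,\jj_\B,\xi_\B$ of Corollary~\ref{pigeon}(i) where needed to glue powers of $Q_\B$ and $\Pi_\B$). The formula for $\tilde x$ on $Q^m\Pi^a\lambda$ is forced by supernaturality on the isomorphisms $(1_\lambda)^{0,0}_{m,a}$, and one then verifies supernaturality on arbitrary $f^{n,b}_{m,a}$ by a direct calculation using that $\sigma_\B$ and $\bar\sigma_\B$ are even of degrees $\mp 1$ and $\zeta_\B$ is odd of degree $0$.

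Having the graded Lemma~\ref{uniprop}, I would prove the theorem by showing the assignment $F\mapsto \tilde F$, $x\mapsto\tilde x$ is a fully faithful, evenly dense graded superfunctor $\mathcal{H}om(\A,\nu\B)\to\mathcal{H}om(\A_{q,\pi},\B)$. Full faithfulness is immediate from the uniqueness part of the lemma, since $\tilde x J = x$ determines $\tilde x$ on all objects of the form $Q^0\Pi^0\lambda$, and the forced formula produces a graded supernatural transformation of the same degree and parity as $x$. For even density of degree zero, given any graded superfunctor $G:\A_{q,\pi}\to\B$, I would construct an even, degree-zero graded supernatural isomorphism $\widetilde{G J}\stackrel{\sim}{\Rightarrow}G$. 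On an object $Q^m\Pi^a\lambda$, the target is $G(Q^m\Pi^a\lambda)$ while the source is $(Q_\B)^m(\Pi_\B)^a G(Q^0\Pi^0\lambda)$, and the required isomorphism is supplied by iterating the $\gamma_G,\beta_G$ of Corollary~\ref{pigeon}(ii) to commute the powers of $Q_\B,\Pi_\B$ past $G$. The naturality of this comparison isomorphism follows from Corollary~\ref{pigeon}(iii)--(iv). The final statement, that $-_{q,\pi}$ is left 2-adjoint to $\nu$, is then immediate from the functorial graded superequivalence just established.

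The main obstacle will be the bookkeeping in the construction of $\tilde F$ and the comparison map for even density: one must choose coherent normal forms for composites of $Q_\B^{\pm 1}$ and $\Pi_\B$ (using $\ii_\B,\jj_\B,\xi_\B$) and then verify that the resulting formulas are compatible with composition in $\A_{q,\pi}$, and that the signs/degrees behave correctly when $\Pi_\B$ is moved past odd morphisms and when $Q_\B$ is moved past morphisms of nonzero degree. This is exactly analogous to the sign/degree tracking in Lemmas~\ref{fish} and~\ref{shot}, but with two independent $\Z/2$- and $\Z$-shifts interacting; once one fixes the convention $\beta_Q = \gamma_\Pi^{-1}$ from Corollary~\ref{pigeon}(ii), all required identities reduce to super interchange plus the properties already collected in Lemma~\ref{turkey} and Corollary~\ref{pigeon}. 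I would therefore carry out the detailed verification in the strict case and invoke the graded coherence theorem for the general case, as is done elsewhere in the paper.
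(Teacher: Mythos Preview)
Your proposal is correct and follows essentially the same approach as the paper: the paper does not write out a proof for this theorem but explicitly states it as the graded analog of Theorem~\ref{2adj}, having just noted that $J:\A\to\A_{q,\pi}$ satisfies a universal property similar to Lemma~\ref{uniprop}, so the intended argument is precisely the adaptation you describe, with Corollary~\ref{pigeon} playing the role that Corollary~\ref{pi} played before.
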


Moving on to 2-categories, here is the graded analog of
Definition~\ref{sixo}:

\begin{definition}\label{ed}
The {\em $(Q,\Pi)$-envelope} of a graded 2-supercategory $\AA$
is the $(Q,\Pi)$-2-supercategory $\AA_{q,\pi}$ with the same object
set as $\AA$ and
morphism supercategories 
that are the $(Q,\Pi)$-envelopes of the graded morphism supercategories
in $\AA$.
Thus, the set of 1-morphisms $\lambda \rightarrow \mu$
in $\AA_{q,\pi}$ is
$$
\{Q^m \Pi^a F\:|\:\text{for all 1-morphisms $F:\lambda\rightarrow\mu$ in
$\AA$, $m \in \Z$ and $a \in
\Z/2$}\}.
$$
The
graded superspace of
2-morphisms
$Q^m \Pi^a F \Rightarrow Q^n \Pi^b G$ in $\AA_{q,\pi}$ is defined from
$$
\Hom_{\AA_{q,\pi}}(Q^m \Pi^a F, Q^n \Pi^b G)
:=
Q^{n-m}\Pi^{a+b}\Hom_{\AA}(F, G).
$$
We denote the 2-morphism $Q^m \Pi^a F \Rightarrow Q^n \Pi^b G$ coming from
a homogeneous 2-morphism
$x:F \Rightarrow G$ in $\AA$ under this identification by
$x^{n,b}_{m,a}$.
In the strict case, one might represent $x^{n,b}_{m,a}$
diagrammatically by 
$$
\mathord{
\begin{tikzpicture}[baseline = 0]
	\draw[-,thick,darkred] (0.08,-.4) to (0.08,-.13);
	\draw[-,thick,darkred] (0.08,.4) to (0.08,.13);
      \draw[thick,darkred] (0.08,0) circle (4pt);
   \node at (0.08,0) {\color{darkred}$\scriptstyle{x}$};
   \node at (0.45,0.03) {$\scriptstyle{\lambda}$};
   \node at (-0.32,0) {$\scriptstyle{\mu}$};
   \node at (0.05,0.58) {$\scriptstyle{G}$};
   \node at (0.05,-0.55) {$\scriptstyle{F}$};
\draw[-,thin,red](.4,-.4) to (-.24,-.4);
\draw[-,thin,red](.4,.4) to (-.24,.4);
\node at (.5,.4) {$\color{red}\scriptstyle b$};
\node at (.5,-.4) {$\color{red}\scriptstyle a$};
\node at (-.4,.4) {$\color{red}\scriptstyle n$};
\node at (-.4,-.4) {$\color{red}\scriptstyle m$};
\end{tikzpicture}
}.
$$
This is of  parity $|x|+a+b$ and degree $\deg(x)+n-m$
(where $|x|$ and $\deg(x)$ denote the parity and degree of $x$ in
$\AA$).
Vertical composition 
is defined from 
$$
y^{n,c}_{m,b} \circ x^{m,b}_{l,a} := (y \circ x)^{n,c}_{l,a}.
$$
Horizontal composition of 1-morphisms
is defined by 
$$
(Q^n \Pi^b G)(Q^m \Pi^a F) := Q^{m+n} \Pi^{a+b} (GF)
$$
and 2-morphisms by
$$
y^{l,d}_{n,b} x^{k,c}_{m,a} := (-1)^{b|x|+|y|c+bc+ab}(yx)^{k+l,c+d}_{m+n,a+b}.
$$
Finally, $q, q^{-1}$ and $\pi$
are given by $q_\lambda := Q^1 \Pi^{\0}\unit_\lambda, q^{-1}_\lambda := Q^{-1}\Pi^{\0} \unit_\lambda$
and $\pi_\lambda := Q^0\Pi^{\1} \unit_\lambda$; the 2-morphisms
$\sigma_\lambda, \bar\sigma_\lambda$ and $\xi_\lambda$ are induced by $1_{\unit_\lambda}$.
\end{definition}

Again, 
there is a canonical strict 2-superfunctor
$\mathbb{J}:\AA \rightarrow \AA_{q,\pi}$,
which is a graded 2-superequivalence if and only
if  $\AA$ is {\em $(Q,\Pi)$-complete}, meaning that for each
$\lambda\in\ob\AA$ it
possesses 1-morphisms $q_\pi^{\pm}:\lambda\rightarrow\lambda$ and $\pi_\lambda:\lambda\rightarrow \lambda$,
and homogeneous 2-isomorphisms 
$q_\pi^{\pm} \stackrel{\sim}{\Rightarrow} \unit_\lambda$
that are even of degrees $\mp 1$,
and $\pi_\lambda \stackrel{\sim}{\Rightarrow}\unit_\lambda$
that is odd of degree 0.
Like in (\ref{massage}), one can extend Definition~\ref{ed} to obtain
a strict 2-functor
\begin{equation}\label{macky}
-_{q,\pi}:\TGSCAT \rightarrow \qpiTGSCAT.
\end{equation}
The analog of Lemma~\ref{uniprop2} is as follows.

\begin{lemma}\label{uniprop3}
Suppose $\AA$ is a graded 2-supercategory and $\BB$ is a
graded $(Q,\Pi)$-2-supercategory.
\begin{itemize}
\item[(i)]
Given a graded 2-superfunctor $\mathbb{R}:\AA \rightarrow \BB$, there
is a canonical graded
2-superfunctor $\tilde{\mathbb{R}}:\AA_{q,\pi} \rightarrow \BB$ such that
$\mathbb{R} = \tilde{\mathbb{R}}  \mathbb{J}$.
\item[(ii)]
Given a 2-natural transformation $(X,x):\mathbb{R} \Rightarrow
\mathbb{S}$ for graded 2-superfunctors $\mathbb{R},\mathbb{S}:\AA
\rightarrow \BB$, there is a unique
2-natural transformation $(\tilde X, \tilde x):\tilde{\mathbb{R}} \Rightarrow \tilde{\mathbb{S}}$
such that 
${\tilde X}_\lambda = X_\lambda$ 
and $x_{\mu,\lambda} = \tilde x_{\mu,\lambda} \mathbb{J}$
for all
$\lambda,\mu \in \ob \AA$.
\end{itemize}
\end{lemma}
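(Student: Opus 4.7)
The plan is to imitate the proof of Lemma~\ref{uniprop2} verbatim, running the new degree-shift structure $(q, q^{-1}, \sigma, \bar\sigma)$ on $\BB$ in parallel with the parity-switch structure $(\pi, \zeta)$. Corollary~\ref{pigeon} and Lemma~\ref{turkey} supply, in particular, the even degree-zero swap isomorphisms $\gamma_{\mu,\lambda}: q_\mu\,- \stackrel{\sim}{\Rightarrow} -\, q_\lambda$ alongside $\beta_{\mu,\lambda}: \pi_\mu\,- \stackrel{\sim}{\Rightarrow} -\, \pi_\lambda$, together with the ``multiplication'' 2-isomorphisms $\ii_{\mathbb{R}\lambda}, \jj_{\mathbb{R}\lambda}, \xi_{\mathbb{R}\lambda}$ that allow us to merge iterated horizontal powers of $q_{\mathbb{R}\lambda}$ and $\pi_{\mathbb{R}\lambda}$. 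As in the ungraded case we may assume throughout that $\BB$ is strict; the general case follows by the routine insertion of coherence maps.

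For (i), we take $\tilde{\mathbb{R}}\lambda := \mathbb{R}\lambda$ and, on a $1$-morphism $F:\lambda\rightarrow\mu$ in $\AA$, we set
$$
\tilde{\mathbb{R}}(Q^m\Pi^a F) := q_{\mathbb{R}\mu}^{m}\,\pi_{\mathbb{R}\mu}^{a}\,(\mathbb{R} F),
$$
where $q_{\mathbb{R}\mu}^{m}$ denotes the $|m|$-fold horizontal composition of $q_{\mathbb{R}\mu}$ or $q_{\mathbb{R}\mu}^{-1}$ according as $m\geq 0$ or $m<0$, and $\pi_{\mathbb{R}\mu}^a$ is $\unit_{\mathbb{R}\mu}$ or $\pi_{\mathbb{R}\mu}$ as $a=\0$ or $\1$. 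On a $2$-morphism $x^{n,b}_{m,a}$ coming from a homogeneous $x: F\Rightarrow G$ we define $\tilde{\mathbb{R}}(x^{n,b}_{m,a})$ by sandwiching $\mathbb{R} x$ with the appropriate powers of $\sigma_{\mathbb{R}\mu}, \bar\sigma_{\mathbb{R}\mu}$ and $\zeta_{\mathbb{R}\mu}$, directly generalizing (\ref{snowy}); the only sign is the $(-1)^{a|x|}$ arising from super interchange, since $\sigma$ and $\bar\sigma$ are even. Taking $\tilde\imath := i$ and defining $\tilde c_{Q^n\Pi^b G, Q^m\Pi^a F}$ as the composite that first pulls $q_{\mathbb{R}\nu}^{m}\pi_{\mathbb{R}\nu}^{a}$ past $\mathbb{R} G$ using $\gamma_{\mathbb{R}\nu,\mathbb{R}\mu}^{-1}$ and $\beta_{\mathbb{R}\nu,\mathbb{R}\mu}^{-1}$, then merges the external powers of $q_{\mathbb{R}\nu}$ and $\pi_{\mathbb{R}\nu}$ via $\ii_{\mathbb{R}\nu}, \jj_{\mathbb{R}\nu}, \xi_{\mathbb{R}\nu}$, and finally applies $c_{G,F}$, one verifies naturality of $\tilde c$ and the two coherence hexagons by the same commutative-diagram calculations as in Lemma~\ref{uniprop2}(i), merely doubled up.

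For (ii), uniqueness of $(\tilde X, \tilde x)$ is forced by naturality on the canonical $2$-isomorphisms $(1_{\unit_\lambda})^{0,\0}_{\pm 1,\0}$ and $(1_{\unit_\lambda})^{0,\0}_{0,\1}$ of $\AA_{q,\pi}$: we must take $\tilde X_\lambda := X_\lambda$ and define $(\tilde x_{\mu,\lambda})_{Q^m\Pi^a F}$ by the evident analog of (\ref{steak}), with the parity-and-degree shifts $q_{\mathbb{S}\mu}^m\pi_{\mathbb{S}\mu}^a$ swapped past $X_\mu$ via iterates of $\gamma^{-1}$ and $\beta^{-1}$. Verification of 2-naturality on a general 2-morphism $x^{n,b}_{m,a}$ and of the two axioms of Definition~\ref{ms}(iii) then runs exactly parallel to the corresponding checks in Lemma~\ref{uniprop2}(ii). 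The main obstacle is bookkeeping rather than any genuinely new phenomenon: because $\gamma$ and $\sigma$ are even (so contribute no signs) while $\beta$ and $\zeta$ are of degree $0$ (so contribute no degree shifts), the $\Z$-graded and $\Z/2$-graded sides of the construction do not interact. Consequently all the sign calculations of Lemma~\ref{uniprop2} transport verbatim, and the degree-shift checks reduce to the parallel statements in the purely even graded setting, established by essentially the same arguments.
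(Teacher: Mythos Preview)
Your proposal is correct and follows essentially the same approach as the paper: both assume $\BB$ is strict, set $\tilde{\mathbb{R}}(Q^m\Pi^a F) := q_{\mathbb{R}\mu}^{m}\pi_{\mathbb{R}\mu}^{a}(\mathbb{R}F)$, define $\tilde{\mathbb{R}}$ on $2$-morphisms by sandwiching with the appropriate powers of $\sigma,\bar\sigma,\zeta$, and build $\tilde c$ by first using $\beta,\gamma$ to commute the shift $1$-morphisms past $\mathbb{R}G$ and then collapsing via $\ii,\jj$ and (the sign-corrected) $-\xi$ before applying $c_{G,F}$. The paper's proof is in fact just a brief sketch of part~(i) that refers back to Lemma~\ref{uniprop2}, so your account is if anything slightly more detailed, and your observation that the $\Z$- and $\Z/2$-graded structures do not interact (since $\sigma,\bar\sigma$ are even and $\zeta$ is degree~$0$) is exactly the reason the argument transports verbatim.
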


\begin{proof}
Since this is similar to the proof of Lemma~\ref{uniprop2}, we just go
briefly through the
definition of $\tilde{\mathbb{R}}$ in (i) (assuming that $\BB$ is strict).
On objects, we take 
$\tilde{\mathbb{R}} \lambda := \mathbb{R} \lambda$.
For $\lambda \in \ob \BB$, 
let $\zeta_\lambda^a:\pi_\lambda^a \stackrel{\sim}{\Rightarrow} \unit_\lambda$ be
defined as in the proof of Lemma~\ref{uniprop2}. 
Also for $m \in \Z$ let 
$\sigma_\lambda^m:q_\lambda^m \stackrel{\sim}{\Rightarrow}
\unit_\lambda$ 
be $(\sigma_\lambda)^m : (q_\lambda)^m \stackrel{\sim}{\Rightarrow}
\unit_\lambda$ if $m \geq 0$
or 
$(\bar\sigma_\lambda)^{-m} : (q_\lambda^{-1})^{-m}
\stackrel{\sim}{\Rightarrow} \unit\lambda$ if $m \leq 0$.
Then, for a 1-morphism $F:\lambda \rightarrow \mu$ in $\AA$,
$m \in \Z$ and $a\in \Z/2$,
we set $\tilde{\mathbb{R}} (Q^m\Pi^a F) := q_{\mathbb{R}\mu}^m 
\pi_{\mathbb{R}\mu}^a (\mathbb{R}F)$.
Also, if $x:F \Rightarrow G$ is a 2-morphism in $\AA$
for $F, G:\lambda\rightarrow\mu$,
we define $\tilde{\mathbb{R}}(x_{m,a}^{n,b}):\tilde{\mathbb{R}}(\Pi^a F)
\Rightarrow 
\tilde{\mathbb{R}}(\Pi^b G)$ to be the following composition:
$$
\begin{CD}
q_{\mathbb{R}\mu}^m \pi_{\mathbb{R}\mu}^a (\mathbb{R}F)
&@>\sigma_{\mathbb{R}\mu}^m\zeta_{\mathbb{R}\mu}^a(\mathbb{R}F)>>
&\mathbb{R}F
&
@>\mathbb{R}x>>
&\mathbb{R}G
&@>(\sigma_{\mathbb{R}\mu}^{n}\zeta_{\mathbb{R}\mu}^b)^{-1}(\mathbb{R}G)>>
&q_{\mathbb{R}\mu}^n \pi_{\mathbb{R}\mu}^b (\mathbb{R}G).
\end{CD}
$$
We also need coherence maps $\tilde\imath$ and $\tilde c$ for
$\mathbb{R}$, 
which are defined like in the proof of Lemma~\ref{uniprop2}.
In particular, $\tilde c_{Q^n \Pi^b G, Q^m \Pi^a F}$ is the following composition:
$$
q_{\mathbb{R}\nu}^n \pi^b_{\mathbb{R}\nu} (\mathbb{R} G) q_{\mathbb{R}\mu}^m\pi^a_{\mathbb{R}\mu} (\mathbb{R}
F)
\longrightarrow
q_{\mathbb{R}\nu}^n q_{\mathbb{R}\nu}^m \pi^b_{\mathbb{R}\nu} \pi^a_{\mathbb{R}\nu} (\mathbb{R} G) (\mathbb{R}
F)
\longrightarrow
q_{\mathbb{R}\nu}^{m+n} \pi^{a+b}_{\mathbb{R} \nu} \mathbb{R}(GF),
$$
where the first map is defined using the supernatural isomorphisms
$\beta$ and $\gamma$ from Lemma~\ref{turkey}(i)--(ii), and the second map is
defined by collapsing powers of $q_{\mathbb{R}\nu}$ using the
2-isomorphisms
from Lemma~\ref{turkey}(iii), collapsing $\pi_{\mathbb{R}\nu}
\pi_{\mathbb{R}\nu}$ using $-\xi$, and also using the given coherence
map 
$c_{G,F}:(\mathbb{R} G) (\mathbb{R} F) \stackrel{\sim}{\Rightarrow}
\mathbb{R} (GF)$.
\end{proof}

Using Lemma~\ref{uniprop3}, one gets also the analog of Theorem~\ref{golly}:
the functor $-_{q,\pi}$ from (\ref{macky}) is 
 left 2-adjoint to the
forgetful functor.


Next, we explain the graded analogs of Definitions~\ref{jazz} and
\ref{clean}, and extend the results of Section 5. The following is an
efficient formulation of
the general notion of a strict action of
the group $\Z \oplus \Z/2$ on a $\k$-linear category.

\begin{definition}\label{hale}
(i) A {\em $(Q,\Pi)$-category}
is a $\k$-linear category $\A$
equipped with the following additional data: an endofunctor $\Pi:\A \rightarrow
\A$ and a natural isomorphism
$\xi:\Pi^2 \stackrel{\sim}{\Rightarrow} I$
such that $\xi \Pi = \Pi \xi$ in $\Hom(\Pi^3, \Pi)$;
endofunctors $Q, Q^{-1}:\A \rightarrow \A$ and 
natural isomorphisms $\ii:Q^{-1} Q \stackrel{\sim}{\Rightarrow} I, \jj:Q Q^{-1}
\stackrel{\sim}{\Rightarrow} I$ so that $\ii^{-1}$ and $\jj$ define a unit and a counit
 making $(Q,Q^{-1})$ into an adjoint pair of auto-equivalences;
a natural isomorphism $\beta_Q: \Pi Q \stackrel{\sim}{\Rightarrow} Q \Pi$
such that $\xi Q \xi^{-1} = \beta_Q \Pi \circ \Pi \beta_Q$ in
$\Hom(\Pi^2 Q, Q \Pi^2)$.

(ii) Given $(Q,\Pi)$-categories $\A$ and $\A'$,
a {\em $(Q,\Pi)$-functor} $F:\A \rightarrow \A'$
is a $\k$-linear functor with the additional data of natural isomorphisms
$\beta_F:\Pi' F \stackrel{\sim}{\Rightarrow} F \Pi$ 
and $\gamma_F: Q' F \stackrel{\sim}{\Rightarrow} FQ$ 
such that $\xi' F \xi^{-1}=\beta_F \Pi \circ \Pi' \beta_F$
in $\Hom((\Pi')^2 F, F \Pi^2)$.
For example,  $I$,
$\Pi$ and $Q$ are $(Q,\Pi)$-functors with $\beta_I := 1_\Pi, \beta_\Pi := -
1_{\Pi^2}$, $\beta_Q$ as
specified in (i), 
$\gamma_I := 1_Q, \gamma_\Pi := \beta_Q^{-1}$ 
and $\gamma_Q := 1_{Q^2}$.

(iii) Given $(Q,\Pi)$-functors $F, G:\A \rightarrow \A'$,
a {\em $(Q,\Pi)$-natural transformation} is a natural transformation 
$x:F \Rightarrow G$ such that 
$x \Pi \circ \beta_F=\beta_G \circ \Pi' x$
and
$x Q \circ \gamma_F= \gamma_G \circ Q' x$.
\end{definition}

There is a 2-category $\qpiCAT$
consisting of $(Q,\Pi)$-categories, $(Q,\Pi)$-functors and
$(Q,\Pi)$-natural transformations.
We want to compare this to 
$(Q,\Pi)$-$\underline{\GSCAT}$,
the 2-category
 of graded 
$(Q,\Pi)$-supercategories, graded superfunctors and homogeneous even supernatural
transformations of degree zero.
Like in (\ref{lun}), there is a strict 2-functor
\begin{equation}\label{worf}
\mathbb{E}:
(Q,\Pi)\text{-}\underline{\GSCAT}
\rightarrow
\qpiCAT
\end{equation}
sending a graded $(Q,\Pi)$-supercategory $\A$ to the underlying
category $\underline{\A}$, which is a $(Q,\Pi)$-category thanks to
Corollary~\ref{pigeon}(i).
It sends a graded superfunctor $F:\A \rightarrow \B$ to the
restriction
$\underline{F}:\underline{\A}\rightarrow \underline{\B}$, made into a
$(Q,\Pi)$-functor as in Corollary~\ref{pigeon}(ii).
It sends a homogeneous graded supernatural transformation $x:F
\Rightarrow G$ of degree zero to $\underline{x}:\underline{F}
\Rightarrow \underline{G}$ defined from $\underline{x}_\lambda :=
x_\lambda$, which is a $(Q,\Pi)$-natural transformation thanks to
Corollary~\ref{pigeon}(iii).

\begin{theorem}\label{th}
The 2-functor $\mathbb{E}$
from (\ref{worf}) is a 2-equivalence of 2-categories.
\end{theorem}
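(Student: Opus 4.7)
The plan is to mimic the strategy of Theorem~\ref{pro}, producing an explicit strict $2$-functor $\mathbb{D}: \qpiCAT \rightarrow (Q,\Pi)\text{-}\underline{\GSCAT}$ that is inverse to $\mathbb{E}$ on the nose on one side and isomorphic to the identity via a canonical $2$-natural isomorphism on the other.

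First I would define $\mathbb{D}$ on objects by the graded analog of the construction preceding Lemma~\ref{maineq}. Given a $(Q,\Pi)$-category $(\A, Q_\A, Q_\A^{-1}, \Pi_\A, \ii_\A, \jj_\A, \xi_\A, \beta_{Q_\A})$, I take $\widehat{\A}$ to be the graded supercategory with the same objects and morphisms
\[
\Hom_{\widehat{\A}}(\lambda,\mu)_{n,p} := \Hom_{\A}(\lambda, Q_\A^n \Pi_\A^p \mu),
\]
where $Q_\A^n$ for $n<0$ means iterated application of $Q_\A^{-1}$, with the conventional identifications provided by $\ii_\A, \jj_\A$. Composition of $\hat f : \lambda \to Q_\A^m \Pi_\A^a \mu$ with $\hat g : \mu \to Q_\A^n \Pi_\A^b \nu$ is defined by first applying $Q_\A^m \Pi_\A^a$ to $g$, then collapsing the resulting string $Q_\A^m \Pi_\A^a Q_\A^n \Pi_\A^b \nu$ into $Q_\A^{m+n} \Pi_\A^{a+b} \nu$ using $\beta_{Q_\A}$ to slide the $Q$'s past the $\Pi$'s, $\xi_\A$ to cancel adjacent $\Pi^2$ factors, and $\ii_\A, \jj_\A$ to cancel adjacent $QQ^{-1}$ factors; we also insert a sign $(-1)^{ab}$ coming from $\xi$ precisely as in the proof of Lemma~\ref{maineq}. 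The extra structure making $\widehat{\A}$ a graded $(Q,\Pi)$-supercategory (endofunctors $\widehat Q, \widehat Q^{-1}, \widehat \Pi$ and 2-isomorphisms $\sigma,\bar\sigma,\zeta$) is defined via the same device used in Definition~\ref{qpe}. On 1-morphisms and 2-morphisms, I set $\mathbb{D}(F) := \widehat F$ where $\widehat F$ uses the coherence data $\beta_F, \gamma_F$ to translate morphisms in $\widehat\A$ to morphisms in $\widehat\B$, and $\mathbb{D}(x) := \hat x$ with $\hat x_\lambda := x_\lambda$.

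The second step is to verify that $\mathbb{D}$ is a well-defined strict $2$-functor. Associativity of composition in $\widehat\A$ amounts to a diagram chase whose nontrivial input is the compatibility $\xi_\A Q_\A \xi_\A^{-1} = \beta_{Q_\A}\Pi_\A \circ \Pi_\A \beta_{Q_\A}$ from Definition~\ref{hale}(i), together with the unit-counit identities for $(Q_\A, Q_\A^{-1})$. Functoriality of $\widehat F$ in the odd and shifted cases depends on the relations $F\xi_\A = \xi_\B F \circ \Pi_\B\beta_F^{-1}\circ \beta_F^{-1}\Pi_\A$ and its $Q$-counterpart involving $\gamma_F$, and the super interchange law for $\hat x$ uses the two axioms on $(Q,\Pi)$-natural transformations in Definition~\ref{hale}(iii). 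Each of these checks is parallel to the corresponding one in Lemma~\ref{maineq}, just with one extra grading to keep track of.

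Third, I observe that $\mathbb{E}\circ\mathbb{D} = I_{\qpiCAT}$ by direct inspection: the degree-zero even piece of $\Hom_{\widehat\A}(\lambda,\mu)$ is exactly $\Hom_\A(\lambda,\mu)$, and the induced $\beta, \gamma$ on underlying functors recover the original data. Then I construct a $(Q,\Pi)$-2-natural isomorphism $\mathbb{T}:\mathbb{D}\circ\mathbb{E}\stackrel{\sim}{\Rightarrow} I$. For a graded $(Q,\Pi)$-supercategory $\A$, I define $\mathbb{T}_\A:\widehat{\underline\A}\stackrel{\sim}{\to}\A$ to be the identity on objects, and on a homogeneous morphism $\hat f : \lambda\to\mu$ of bidegree $(n,p)$ coming from $f:\lambda \to Q_\A^n\Pi_\A^p\mu$ in $\underline\A$, by $\mathbb{T}_\A(\hat f) := \sigma_\mu^n \zeta_\mu^p \circ f$, where $\sigma^n, \zeta^p$ are the iterated versions defined in the proof of Lemma~\ref{uniprop3}. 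Checking that $\mathbb{T}_\A$ is a graded superfunctor reduces case-by-case to the super interchange law in $\A$ and the relations $\Pi\zeta = -\zeta\Pi$ and $Q\sigma = \sigma Q$ (plus $\sigma\zeta = \zeta\sigma$), exactly as in the four-case verification carried out for $T_\A$ in Lemma~\ref{maineq}; bijectivity on each homogeneous component follows because $\sigma_\mu^n\zeta_\mu^p$ is invertible. Naturality in $\A$ and the coherence axiom in Definition~\ref{clean}(iii) (adapted to the $(Q,\Pi)$ setting) come essentially for free from the definitions of $\beta_F, \gamma_F$ and the super interchange law.

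The main obstacle will be the bookkeeping in the associativity verification for composition in $\widehat\A$ when both factors have mixed $Q$- and $\Pi$-shifts. The subtlety is that sliding a $Q$ past a $\Pi$ costs an application of $\beta_Q$ whose square is controlled by the hexagon axiom relating $\xi$ and $\beta_Q$, and one must confirm that the resulting signs and rearrangements match the ones prescribed by the formulas in Definition~\ref{ed} (with the two-object strict case specialized from Definition~\ref{qpe}); once these signs are pinned down, the remaining checks are rote.
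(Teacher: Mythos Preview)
Your proposal is correct and follows essentially the same route as the paper: construct an inverse $2$-functor $\mathbb{D}$ by defining the associated graded $(Q,\Pi)$-supercategory $\widehat{\A}$ with $\Hom_{\widehat{\A}}(\lambda,\mu)_{m,a}=\Hom_\A(\lambda,Q^m\Pi^a\mu)$, then verify $\mathbb{E}\circ\mathbb{D}=I$ and build a $2$-natural isomorphism $\mathbb{T}:\mathbb{D}\circ\mathbb{E}\stackrel{\sim}{\Rightarrow}I$ exactly as in Theorem~\ref{pro}. The paper is slightly more explicit than you are about the coherence data for negative powers of $Q$ (it writes down canonical isomorphisms $c_{m,n}:Q^mQ^n\stackrel{\sim}{\Rightarrow}Q^{m+n}$ and a recursive formula for $\gamma_F^{-n}$, and records the identities (\ref{prop1})--(\ref{prop3}) needed for associativity), but this is precisely the ``bookkeeping'' you flag at the end, and your outline captures all the essential ideas.
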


Theorem~\ref{th} is proved in a similar way to Theorem~\ref{pro}.
The key point of course is to define the appropriate strict 2-functor
$\mathbb{D}$ in the opposite direction. We just go briefly through
the definition of this, since there are a few subtleties.
So let $\A$ be a $(Q,\Pi)$-category.
Let $Q^n := Q \cdots Q$ ($n$ times) if $n \geq 0$ or $Q^{-1} \cdots Q^{-1}$ ($-n$ times) if $n \leq 0$. 
Given any composition $C$ of $r$ of the functors $Q$ and
$s$ of the functors $Q^{-1}$ (in any order), there is an isomorphism
$c:C \stackrel{\sim}{\Rightarrow} Q^{r-s}$ defined by repeatedly applying $\ii$ and $\jj$ to cancel
$Q^{-1} Q$- or $Q Q^{-1}$-pairs.
The isomorphism $c$ is independent of the particular order chosen for
these cancellations.
In particular, we obtain in this way a canonical isomorphism
$c_{m,n}:Q^m Q^n \stackrel{\sim}{\Rightarrow} Q^{m+n}$ for any $m,n \in \Z$, and deduce
that
\begin{equation}\label{prop1}
c_{l,m+n}\circ Q^l c_{m,n} = c_{l+m,n} \circ c_{l,m} Q^n
\end{equation}
in
$\Hom(Q^l Q^m Q^n, Q^{l+m+n})$.
Next, let $F: \A \rightarrow \A'$ be a
$(Q,\Pi)$-functor between two $(Q,\Pi)$-categories.
For each $n \in \Z$, we define an isomorphism 
$\gamma^{n}_F:(Q')^n F \stackrel{\sim}{\Rightarrow} F Q^n$ as follows:
set $\gamma_F^{0} := 1_F$;
then
for $n \geq 1$ recursively define
\begin{align*}\gamma_F^{n} &:= \gamma_F^{(n-1)} Q  \circ (Q')^{n-1} \gamma_F,\\
\gamma_F^{-n} &:= \gamma_F^{1-n} Q^{-1} \circ (Q')^{1-n} \ii' Q^{-1}
\circ (Q')^{-n} (\gamma_F)^{-1} Q^{-1} \circ (Q')^{-n} F \jj^{-1}.
\end{align*}
One can show that 
\begin{equation}\label{prop2}
\gamma^{m+n}_F\circ c'_{m,n} F = F c_{m,n} \circ
\gamma_F^m (Q^n)
\circ (Q')^m \gamma_F^n
\end{equation}
in $\Hom((Q')^m (Q')^n F, F Q^{m+n})$.
In particular, taking $F := \Pi:\A \rightarrow \A$, this gives us an isomorphism
$\gamma^{n}_\Pi: Q^n \Pi \stackrel{\sim}{\Rightarrow} \Pi Q^n$; let
$\beta_{Q^n}: \Pi Q^n \stackrel{\sim}{\Rightarrow} Q^n \Pi$ be its inverse.
This together with $\gamma_{Q^n}:= c_{n,1}^{-1} \circ c_{1,n}: Q Q^n
\stackrel{\sim}{\Rightarrow} Q^n Q$
makes $Q^n$ into a $(Q,\Pi)$-functor, i.e. we have that
\begin{align}
\label{prop4}
\xi Q^n &= Q^n \xi \circ \beta_{Q^n} \Pi \circ \Pi \beta_{Q^n}.\\\intertext{We note also that}
\label{prop3}
c_{m,n} \Pi\circ \beta_{Q^{m+n}} 
&= 
Q^m \beta_{Q^n} \circ \beta_{Q^m} Q^n
\circ  \Pi c_{m,n}.
\end{align}
Now, for a $(Q,\Pi)$-category $\A$, we are ready to define the {\em associated graded
  $(Q,\Pi)$-supercategory}
$\widehat{\A}$. 
It has the same objects as $\A$, and
morphisms
$\Hom_{\widehat{\A}}(\lambda,\mu)_{m,a} := \Hom_{\A}(\lambda,
Q^m \Pi^a \mu)$.
The composition $\hat g \circ \hat f$
of $\hat f, \hat g$ coming from $f:\lambda \rightarrow Q^m\Pi^a 
\mu,
g:\mu \rightarrow Q^n \Pi^b \nu$, respectively, is obtained from
$(Q^m \Pi^a g) \circ f:\lambda \rightarrow Q^m\Pi^a Q^n\Pi^b \nu$
by first using $\beta_{Q^n}$ to commute $Q^n$ past $\Pi^a$ if necessary,
then using $\xi$ and $c_{m,n}$ to simplify $Q^m Q^n \Pi^a \Pi^b \nu$ to
$Q^{m+n} \Pi^{a+b}\nu$.
The check that this is associative uses (\ref{prop1}),
(\ref{prop4})--(\ref{prop3}) and the identity $\xi \Pi = \Pi \xi$.
For a $(Q,\Pi)$-functor $F:\A \rightarrow \A'$,
we get $\widehat F:\widehat{\A} \rightarrow \widehat{\A}'$
by composing $Ff:F \lambda \rightarrow F Q^m \Pi^a \mu$ 
with the map $F Q^m \Pi^a \mu \rightarrow Q^m\Pi^a F \mu$
obtained using
$\beta_F$ and $\gamma_F^{n}$.
The check that $\widehat{F}(\hat g \circ \hat f) = (\widehat{F}\hat
g) \circ (\widehat{F}\hat f)$ uses (\ref{prop2}).
In particular, since $\Pi, Q$ and $Q^{-1}$ are all $(Q,\Pi)$-functors, this gives us
the functors
$\widehat{\Pi}, \widehat{Q}$ and $\widehat{Q}^{-1}$ needed to make
$\widehat{\C}$ into a graded $(Q,\Pi)$-supercategory.

\begin{definition}\label{dirty}
A {\em $(Q,\Pi)$-2-category} 
is a $\k$-linear $2$-category $\AA$
plus
families $\pi = (\pi_\lambda:\lambda\rightarrow\lambda), q =
(q_\lambda:\lambda\rightarrow \lambda)$ and $q^{-1} = (q^{-1}_{\lambda}:\lambda\rightarrow\lambda)$ of $1$-morphisms,
families $\beta = (\beta_{\mu,\lambda}:\pi_\mu
-\stackrel{\sim}{\Rightarrow} - \pi_\lambda)$  
and $\gamma = (\gamma_{\mu,\lambda}:q_\mu
-\stackrel{\sim}{\Rightarrow} - q_\lambda)$
of natural isomorphisms, and
families
$\xi = (\xi_\lambda:\pi_\lambda^2 \stackrel{\sim}{\Rightarrow} \unit_\lambda)$,
$\ii = (\ii_\lambda:q_\lambda^{-1} q_\lambda \stackrel{\sim}{\Rightarrow} \unit_\lambda)$
and $\jj = (\jj_\lambda:q_\lambda q_\lambda^{-1} \stackrel{\sim}{\Rightarrow} \unit_\lambda)$
of 2-isomorphisms, such that the following hold (assuming strictness):
\begin{itemize}
\item[(i)]
$(\pi, \beta)$ and $(q, \gamma)$  are objects in the Drinfeld center of
$\AA$;
\item[(ii)] 
$(\beta_{\lambda,\lambda})_{\pi_\lambda} =
-1_{\pi_\lambda^2}$,
$(\gamma_{\lambda,\lambda})_{q_\lambda} =
1_{q_\lambda^2}$
and
$(\gamma_{\lambda,\lambda})_{\pi_\lambda} =\left((\beta_{\lambda,\lambda})_{q_\lambda}\right)^{-1}$;
\item[(iii)]
$\xi_\mu F \xi_\lambda^{-1} = 
(\beta_{\mu,\lambda})_F \pi_\lambda\circ \pi_\mu (\beta_{\mu,\lambda})_F$
for all $1$-morphisms
$F:\lambda\rightarrow \mu$;
\item[(iv)]
$q_\lambda \ii_\lambda = \jj_\lambda q_\lambda$ and $\ii_\lambda
q_\lambda^{-1} = q_{\lambda}^{-1} \jj_\lambda$.
\end{itemize}
\end{definition}

The story here continues just as it did for $\Pi$-2-supercategories and
$\Pi$-2-categories. For a graded $(Q,\Pi)$-2-supercategory $\AA$,
its {\em underlying 2-category} $\underline{\AA}$, consisting of the
same objects and 1-morphisms but just the even 2-morphisms of
degree zero, is a $(Q,\Pi)$-2-category. Conversely, for a
$(Q,\Pi)$-2-category $\AA$, there is a construction of its {\em
  associated graded $(Q,\Pi)$-2-supercategory} $\widehat{\AA}$, which
we leave to the reader.
The constructions $\AA \mapsto \underline{\AA}$ 
and $\AA \mapsto \widehat{\AA}$ are mutual inverses (up to isomorphism), so that $(Q,\Pi)$-2-categories
and graded $(Q,\Pi)$-2-supercategories are equivalent notions.
Again, we leave it to the reader to formalize this statement by
writing down
the appropriate analog of Theorem~\ref{powers}.

Finally, we discuss Grothendieck groups/rings in the graded setting:
\begin{itemize}
\item
For a graded supercategory $\A$, let $\GSKar(\A)$ denote
$\Kar(\underline{\A}_{q,\pi})$, that is, the additive Karoubi envelope
of the underlying category to the $(Q,\Pi)$-envelope of $\A$.
This is a $(Q,\Pi)$-category that is additive and idempotent complete.
Its Grothendieck group $K_0(\GSKar(\A))$ is a
$\Z^\pi[q,q^{-1}]$-module
with $\pi$ acting as $[\Pi]$ and $q$ acting as $[Q]$.
\item
For a graded 2-supercategory $\AA$, let $\GSKAR(\AA) :=
\KAR(\underline{\AA}_{q,\pi})$ denote the additive Karoubi
envelope of the 
$(Q,\Pi)$-2-category underlying the $(Q,\Pi)$-envelope of $\AA$.
It is an additive,
idempotent complete $(Q,\Pi)$-2-category. Its Grothendieck ring
$K_0(\GSKAR(\AA))$ is naturally a locally unital ring with a disinguished system
of mutually orthogonal idempotents $\{1_\lambda\:|\:\lambda \in \ob
\AA\}$.
Moreover this ring is actually a $\Z^\pi[q,q^{-1}]$-algebra with 
$\pi$ and $q$ 
acting on $1_\mu K_0(\GSKAR(\AA)) 1_\lambda$ by left multiplication
by $[\pi_\mu]$ and $[q_\mu]$ (equivalently, right multiplication by
$[\pi_\lambda]$ and $[q_\lambda]$), respectively.
\end{itemize}
This construction will be used in particular in \cite{BE} in order
to pass from the Kac-Moody 2-supercategory $\mathfrak{U}(\g)$
introduced there 
to the modified integral form of the corresponding covering quantum group
$U_{q,\pi}(\g)$ as in \cite{Clark}.

\appendix
\section{Odd Temperley-Lieb}

In this appendix, we prove Theorem~\ref{stl}.
Throughout we let $\eps := -1$.
If instead one takes $\eps:= +1$ and works in the purely even
setting, replacing the quantum superalgebra $\dot
U_q(\mathfrak{osp}_{1|2})$ with the quantum algebra $\dot
U_q(\mathfrak{sl}_2)$,
the arguments below may be used to recover 
the classical result for the Temperley-Lieb category
$\mathcal{TL}(\delta)$.
We assume some familiarity with the combinatorics from that story; e.g. see \cite{W}.

Let $\k$ be a field of characteristic different from 2, and $q \in \k^\times$ be a scalar that is not a root of unity.
For any $n \in \Z$, let $[n]$ denote $\frac{q^n-(\eps q)^{-n}}{q-\eps
  q^{-1}\:}$. 
For $n \in \N$, 
the element $[n]$ is the same as $[n]_{q,\eps}$ from (\ref{qint}), 
and $[-n] = -\eps^n [n]$.
Also set $\delta := -[2] = -(q+\eps q^{-1})$.
Recall that $\mathcal{STL}(\delta)$ is the strict monoidal supercategory
with one generating object $\color{darkpurple}{\cdot}$
and two odd generating morphisms
$\mathord{
\begin{tikzpicture}[baseline = 0]
	\draw[-,thick,darkpurple] (0.3,0.25) to[out=-90, in=0] (0.1,-0.05);
	\draw[-,thick,darkpurple] (0.1,-0.05) to[out = 180, in = -90] (-0.1,0.25);
\end{tikzpicture}
}$
and 
$\mathord{
\begin{tikzpicture}[baseline = 0]
	\draw[-,thick,darkpurple] (0.3,-0.1) to[out=90, in=0] (0.1,0.2);
	\draw[-,thick,darkpurple] (0.1,0.2) to[out = 180, in = 90] (-0.1,-.1);
\end{tikzpicture}
}\,$, 
subject to the following relations:
\begin{align*}
\mathord{
\begin{tikzpicture}[baseline = 0]
  \draw[-,thick,darkpurple] (0.2,0) to (0.2,.5);
	\draw[-,thick,darkpurple] (0.2,0) to[out=-90, in=0] (0,-0.35);
	\draw[-,thick,darkpurple] (0,-0.35) to[out = 180, in = -90] (-0.2,0);
	\draw[-,thick,darkpurple] (-0.2,0) to[out=90, in=0] (-0.4,0.35);
	\draw[-,thick,darkpurple] (-0.4,0.35) to[out = 180, in =90] (-0.6,0);
  \draw[-,thick,darkpurple] (-0.6,0) to (-0.6,-.5);
\end{tikzpicture}
}
\,&=\,
\mathord{\begin{tikzpicture}[baseline=0]
  \draw[-,thick,darkpurple] (0,-0.4) to (0,.4);
\end{tikzpicture}
}\:,
\qquad
\mathord{
\begin{tikzpicture}[baseline = 0]
  \draw[-,thick,darkpurple] (0.2,0) to (0.2,-.5);
	\draw[-,thick,darkpurple] (0.2,0) to[out=90, in=0] (0,0.35);
	\draw[-,thick,darkpurple] (0,0.35) to[out = 180, in = 90] (-0.2,0);
	\draw[-,thick,darkpurple] (-0.2,0) to[out=-90, in=0] (-0.4,-0.35);
	\draw[-,thick,darkpurple] (-0.4,-0.35) to[out = 180, in =-90] (-0.6,0);
  \draw[-,thick,darkpurple] (-0.6,0) to (-0.6,.5);
\end{tikzpicture}
}\,
=\,
\eps\:\,\mathord{\begin{tikzpicture}[baseline=0]
  \draw[-,thick,darkpurple] (0,-0.4) to (0,.4);
\end{tikzpicture}
}\:,\qquad
\mathord{
\begin{tikzpicture}[baseline = 0]
	\draw[-,thick,darkpurple] (0,-.25) to[out=180,in=-90] (-0.28,0.05);
	\draw[-,thick,darkpurple] (0,-.25) to[out=0,in=-90] (0.28,0.05);
	\draw[-,thick,darkpurple] (0,.35) to[out=180,in=90] (-0.28,0.05);
	\draw[-,thick,darkpurple] (0,.35) to[out=0,in=90] (0.28,0.05);
\end{tikzpicture}
}\,
=\delta.
\end{align*}
We denote the $n$-fold tensor product of the generating object
$\color{darkpurple}{\cdot}$
by $n$ and its identity endomorphism by $e_n$.

Using the string calculus, 
any crossingless matching connecting $m$ points on the bottom 
boundary and $n$ points
on the top boundary can be interpreted as a morphism $m \rightarrow n$
in $\mathcal{STL}(\delta)$. In view of the super interchange law,
isotopic crossingless matchings produce the same morphism up
to a sign. Moreover, to get a spanning set for
$\Hom_{\mathcal{STL}(\delta)}(m,n)$,
one just has to pick a system of representatives for the isotopy
classes of crossingless matchings.
Our first claim is that any such spanning set is actually a basis for
$\Hom_{\mathcal{STL}(\delta)}(m,n)$. For example, this assertion
implies that
$\Hom_{\mathcal{STL}(\delta)}(3,3)$ is of dimension 5 (the third
Catalan number) with basis
$$
\mathord{
\begin{tikzpicture}[baseline = 0]
  \draw[-,thick,darkpurple] (0.2,-.35) to [out=90,in=-90] (0.2,.35);
	\draw[-,thick,darkpurple] (-0.2,.35) to[out=-90, in=0] (-0.4,.05);
	\draw[-,thick,darkpurple] (-.4,.05) to[out = 180, in = -90] (-0.6,.35);
	\draw[-,thick,darkpurple] (-0.2,-.35) to[out=90, in=0] (-0.4,-.05);
	\draw[-,thick,darkpurple] (-0.4,-.05) to[out = 180, in =90] (-0.6,-.35);
\end{tikzpicture}
}\:,\qquad
\mathord{
\begin{tikzpicture}[baseline = 0]
  \draw[-,thick,darkpurple] (-0.6,-.35) to [out=90,in=-90] (-0.6,.35);
	\draw[-,thick,darkpurple] (0.2,.35) to[out=-90, in=0] (0,.05);
	\draw[-,thick,darkpurple] (0,.05) to[out = 180, in = -90] (-0.2,.35);
	\draw[-,thick,darkpurple] (0.2,-.35) to[out=90, in=0] (0,-.05);
	\draw[-,thick,darkpurple] (0,-.05) to[out = 180, in =90] (-0.2,-.35);
\end{tikzpicture}
}\:,\qquad
\mathord{
\begin{tikzpicture}[baseline = 0]
  \draw[-,thick,darkpurple] (0.2,-.35) to [out=90,in=-90] (-0.6,.35);
	\draw[-,thick,darkpurple] (0.2,.35) to[out=-90, in=0] (0,.05);
	\draw[-,thick,darkpurple] (0,.05) to[out = 180, in = -90] (-0.2,.35);
	\draw[-,thick,darkpurple] (-0.2,-.35) to[out=90, in=0] (-0.4,-.05);
	\draw[-,thick,darkpurple] (-0.4,-.05) to[out = 180, in =90] (-0.6,-.35);
\end{tikzpicture}
}\:,\qquad
\mathord{
\begin{tikzpicture}[baseline = 0]
  \draw[-,thick,darkpurple] (-0.6,-.35) to [out=90,in=-90] (0.2,.35);
	\draw[-,thick,darkpurple] (-0.2,.35) to[out=-90, in=0] (-0.4,.05);
	\draw[-,thick,darkpurple] (-0.4,.05) to[out = 180, in = -90] (-0.6,.35);
	\draw[-,thick,darkpurple] (0.2,-.35) to[out=90, in=0] (0,-.05);
	\draw[-,thick,darkpurple] (0,-.05) to[out = 180, in =90] (-0.2,-.35);
\end{tikzpicture}
}\:,\qquad
\mathord{
\begin{tikzpicture}[baseline = 0]
  \draw[-,thick,darkpurple] (0.2,-.35) to [out=90,in=-90] (0.2,.35);
  \draw[-,thick,darkpurple] (-0.2,-.35) to [out=90,in=-90] (-0.2,.35);
  \draw[-,thick,darkpurple] (-0.6,-.35) to [out=90,in=-90] (-0.6,.35);
\end{tikzpicture}
}\:.
$$
To prove it, we construct an explicit representation of
$\mathcal{STL}(\delta)$.

\begin{lemma}\label{tennis}
Let $V$ be the vector superspace on basis $v_1, v_{-1}$, where $v_1$
is even and $v_{-1}$ is odd.
There is a monoidal superfunctor $G:\mathcal{STL}(\delta) \rightarrow
\SVEC$ with
$G(n) = V^{\otimes n}$
and
\begin{align*}
G\big(\,\mathord{
\begin{tikzpicture}[baseline = 0]
	\draw[-,thick,darkpurple] (0.3,0.25) to[out=-90, in=0] (0.1,-0.05);
	\draw[-,thick,darkpurple] (0.1,-0.05) to[out = 180, in = -90] (-0.1,0.25);
\end{tikzpicture}
}\,\big)
&:\k \rightarrow V \otimes V,
\quad
&1 &\mapsto v_{-1} \otimes v_1 - q v_1 \otimes v_{-1};\!\!\!\!\!\!\!\!\!\!\!\!\!\!\!\!\!\\
G\big(\,\mathord{
\begin{tikzpicture}[baseline = 0]
	\draw[-,thick,darkpurple] (0.3,-0.1) to[out=90, in=0] (0.1,0.2);
	\draw[-,thick,darkpurple] (0.1,0.2) to[out = 180, in = 90] (-0.1,-.1);
\end{tikzpicture}
}\,\big)&:V \otimes V \rightarrow \k,
\quad
&v_1 \otimes v_1 &\mapsto 0,
&v_1 \otimes v_{-1} &\mapsto 1,\\
&&v_{-1} \otimes v_1 &\mapsto -\eps q^{-1},
&v_{-1} \otimes v_{-1} &\mapsto 0.
\end{align*}
\end{lemma}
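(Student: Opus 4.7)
The plan is to apply the universal property of the strict monoidal supercategory presented by generators and relations: to define a monoidal superfunctor $G$ out of $\mathcal{STL}(\delta)$ it suffices to specify the image of the generating object and generating morphisms (which is given), to check that the images of the generating morphisms have the correct parities, and then to verify that the three defining relations hold in $\SVEC$.

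First I would check parities: the image of the cup lies in the odd subspace of $V\otimes V$ (both summands have one even and one odd factor), so $G(\text{cup})$ is indeed odd; similarly $G(\text{cap})$ kills the even summand $V_{\0}\otimes V_{\0}\oplus V_{\1}\otimes V_{\1}$ of $V\otimes V$ and sends the odd summand into $\k$, hence is odd. The circle relation is then immediate:
$$G(\text{cap})\circ G(\text{cup})(1) = G(\text{cap})(v_{-1}\otimes v_1 - q\, v_1\otimes v_{-1}) = -\eps q^{-1} - q = -(q+\eps q^{-1}) = \delta.$$

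Next I would check the two zigzag relations by evaluating both sides on the basis $\{v_1, v_{-1}\}$ of $V$, using the tensor-product-of-morphisms formula $(f\otimes g)(v\otimes w) = (-1)^{|g||v|}f(v)\otimes g(w)$. This sign is the crucial input: tensoring $1_V$ with an odd morphism and applying the result to $v_{-1}$ picks up an extra minus. Tracking these signs through the first snake $(G(\text{cap})\otimes 1_V)\circ(1_V\otimes G(\text{cup}))$ applied to $v_1$ and $v_{-1}$, the $q$-weighted and the ``$-\eps q^{-1}$''-weighted contributions combine (using $\eps=-1$ for the odd case) to reproduce $v_1$ and $v_{-1}$, as required. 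For the second snake $(1_V\otimes G(\text{cap}))\circ(G(\text{cup})\otimes 1_V)$, the same kind of computation produces $\eps v_1$ and $\eps v_{-1}$, matching the second defining relation.

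The main obstacle, though not deep, is careful sign bookkeeping: the coefficient $-\eps q^{-1}$ in the definition of $G(\text{cap})$ is chosen precisely so that the $q$-powers cancel and the residual sign in the second snake comes out to $\eps$ rather than $-\eps$. Aside from these sign computations, nothing else is required: once the three relations are verified in $\SVEC$, the universal property of $\mathcal{STL}(\delta)$ produces the claimed monoidal superfunctor $G$.
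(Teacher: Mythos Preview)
Your proposal is correct and follows exactly the approach the paper takes: its entire proof is ``Check the three relations,'' and you have simply carried out that check in detail, including the parity verification and the sign bookkeeping coming from the super interchange law.
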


\begin{proof}
Check the three relations.
\end{proof}

\begin{theorem}\label{stlbasis}
Any set of representatives for the isotopy classes of crossingless
matchings from $m$ points to $n$ points defines a basis for
$\Hom_{\mathcal{STL}(\delta)}(m,n)$.
\end{theorem}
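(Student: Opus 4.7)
The proof of Theorem~\ref{stlbasis} splits into spanning and linear independence. First, if $m+n$ is odd there are no crossingless matchings and one checks directly (by counting endpoints of any composition of cups, caps and identities) that $\Hom_{\mathcal{STL}(\delta)}(m,n)=0$, so we may assume $m+n$ is even. For spanning, any morphism in $\Hom_{\mathcal{STL}(\delta)}(m,n)$ is by construction a $\k$-linear combination of vertical and horizontal compositions of cups, caps, and identity strands, i.e.\ a linear combination of planar tangle diagrams. The super interchange law implies that sliding a cup or cap past another strand changes the diagram only by an overall sign, while the two zig-zag relations straighten any S-curves, and the bubble relation $\,\cap\!\cup\,=\delta\,$ removes each closed loop at the cost of a factor of $\delta$. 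Iterating these moves reduces any planar tangle to a scalar multiple of a crossingless matching, so a chosen set of representatives of the isotopy classes spans.

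For linear independence, the plan is to apply the monoidal superfunctor $G:\mathcal{STL}(\delta)\to\SVEC$ from Lemma~\ref{tennis} and show that, for any finite collection $D_1,\dots,D_N$ of pairwise non-isotopic crossingless matchings $m\to n$, the linear maps $G(D_1),\dots,G(D_N):V^{\otimes m}\to V^{\otimes n}$ are linearly independent. On the standard basis $\{v_{\epsilon_1}\otimes\cdots\otimes v_{\epsilon_m}\mid \epsilon_i=\pm 1\}$, $G(D)$ has an explicit description: each through-strand transports a label (with a sign coming from super interchange), each bottom-cup forces the two labels it connects to be opposite $\pm 1$ on pain of evaluating to zero, and each top-cup inserts the tensor $v_{-1}\otimes v_1-qv_1\otimes v_{-1}$ at the corresponding positions.

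The key step will be to produce, for each crossingless matching $D$, a distinguishing pair of basis vectors $\underline v_D\in V^{\otimes m}$ and $\underline w_D\in V^{\otimes n}$ such that the coefficient of $\underline w_D$ in $G(D)(\underline v_D)$ is a nonzero monomial in $q,q^{-1}$, while $G(D')(\underline v_D)$ has coefficient zero at $\underline w_D$ for every $D'$ occurring strictly earlier in a fixed total order on the set of matchings. A natural order is obtained by reading the pair-pattern of $D$ left-to-right along bottom then top; the choice of $\underline v_D$ is then dictated by ``feeding'' through-strands a specified $\pm 1$ and bottom-cups the pair $(+1,-1)$ in the leftmost/innermost order, and $\underline w_D$ is chosen so as to pick out a specific summand of each top-cup contribution. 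With this setup, the Gram-type matrix $\bigl[\text{coefficient of }\underline w_{D_i}\text{ in }G(D_j)(\underline v_{D_i})\bigr]_{i,j}$ is upper-triangular with diagonal entries lying in $\{\pm q^k\mid k\in\Z\}$, hence invertible. Since $q$ is not a root of unity the diagonal entries are nonzero, and linear independence follows.

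The main obstacle is the bookkeeping of super signs produced by the super interchange law when one converts a given crossingless matching into a composition of elementary cup/cap/identity generators. I would handle this by fixing once and for all a ``Morse-like'' normal form for each isotopy class in which all cups, caps and crossings sit at distinct heights read bottom-to-top, then computing the coefficient of $\underline w_D$ in $G(D)(\underline v_D)$ as a product over these elementary events. The signs contributed by super interchange depend only on the pattern of odd generators passed, not on the choice of $\underline v_D,\underline w_D$, so they contribute a global $\pm 1$ to each diagonal entry and do not affect triangularity. Linearity in both arguments then lets us conclude, completing the proof.
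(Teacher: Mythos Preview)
Your overall strategy coincides with the paper's: spanning is a routine rewriting argument, and linear independence is proved by applying the monoidal superfunctor $G$ of Lemma~\ref{tennis} and exhibiting a triangular system. The one substantive difference is that the paper first reduces to the case $n=0$ via the bending map
\[
\Hom_{\mathcal{STL}(\delta)}(m,n)\longrightarrow \Hom_{\mathcal{STL}(\delta)}(m+n,0),\qquad f\mapsto c_n\circ(f\otimes e_n),
\]
where $c_n$ consists of $n$ nested caps. After this reduction the images under $G$ are linear functionals $\theta_s:V^{\otimes m}\to\k$, so only a single test vector $v_s$ per matching is needed rather than a pair $(\underline v_D,\underline w_D)$. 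The paper then uses the standard bijection between cap patterns and Dyck sequences: setting $v_s:=v_{s_1}\otimes\cdots\otimes v_{s_m}$ for the associated Dyck sequence $(s_1,\dots,s_m)$, one gets $\theta_s(v_s)=\pm 1$ and $\theta_s(v_t)=0$ unless $t\leq s$ in the dominance order on Dyck paths. This makes the triangularity completely explicit and sidesteps the sign bookkeeping you flag as the main obstacle.

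Your direct approach for general $m,n$ should also go through, but as written the ``natural order'' and the construction of $\underline v_D,\underline w_D$ are not actually specified, only gestured at; you would need to pin these down to have a proof. Also, the sentence ``Since $q$ is not a root of unity the diagonal entries are nonzero'' is misplaced: your diagonal entries are of the form $\pm q^k$, which are units regardless, so no hypothesis on $q$ is needed at this step.
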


\begin{proof}
We just need to prove linear independence.
There is a linear map 
$$
\Hom_{\mathcal{STL}(\delta)}(m,n)
\rightarrow \Hom_{\mathcal{STL}(\delta)}(m+n,0),\qquad
f \mapsto c_n \circ (f \otimes e_n),
$$
where $c_n\in \Hom_{\mathcal{STL}(\delta)}(2n,0)$
is the morphism defined by $n$ nested caps.
Using this, one reduces to proving the result in the special case that
$m$ is even and 
$n=0$, i.e. our crossingless matchings consist of $m/2$ caps.
Let $S$ be a set of representatives for such matchings.
For $s \in S$, let 
$\theta_s:V^{\otimes m} \rightarrow \k$ be the linear map obtained by
applying the monoidal superfunctor $G$ from Lemma~\ref{tennis}
to the morphism in $\mathcal{STL}(\delta)$ that is defined by $s$.
It suffices to show that the linear maps $\{\theta_s\:|\:s \in S\}$ are
linearly independent.

By writing $+1$ underneath the left hand vertex and $-1$ underneath the right hand vertex
of each cap of $s \in S$ then reading off the resulting sequence, 
we obtain a function from $S$ to the set of {\em Dyck sequences}
$(s_1,\dots,s_m)$ with $s_1,\dots,s_m \in \{\pm 1\}$ and
$s_1+\cdots+s_k \geq 0$ for each $k=1,\dots,m$. 
As $s$ can be recovered uniquely (up to
isotopy) from its Dyck sequence, the vectors $\{v_s := v_{s_1}\otimes\cdots\otimes v_{s_m} \in
V^{\otimes m}\:|\:s \in S\}$ are linearly
independent.
Finally, we observe that $\theta_s(v_s) = \pm 1$ and $\theta_s(v_t) = 0$
unless $t \leq s$, where $\leq$ is the partial order 
defined by $s \leq t$ if and only if the corresponding Dyck sequences satisfy
$s_1 + \cdots + s_k \leq
t_1+\cdots + t_k$ for each $k=1,\dots,m$.
The required linear independence follows.
\end{proof}

Now we can prove Theorem~\ref{stl}:

\begin{theorem}\label{old}
For $\delta$ as above,
$\SKar(\mathcal{STL}(\delta))$ is
a semisimple Abelian category.
Moreover, as a based ring,
$K_0(\SKar(\mathcal{STL}(\delta)))$ is isomorphic to the subring 
of $\Z^\pi[x,x^{-1}]$ with basis $\left\{[n+1]_{x,\pi}, \pi
  [n+1]_{x,\pi}\:\big|\:n \in \N\right\}$.
\end{theorem}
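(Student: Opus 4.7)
The plan is to construct super analogs of the Jones-Wenzl projectors, mimicking the classical approach for the Temperley-Lieb category. The main obstacle will be tracking signs from the super interchange law when establishing the Clebsch-Gordan type fusion rule matching (\ref{qint2}).

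Step 1. I first define idempotents $p_n \in \End_{\mathcal{STL}(\delta)}(n)$ by a super version of Wenzl's recursion: $p_0 := e_0$, $p_1 := e_1$, and inductively
\[
p_{n+1} := (p_n \otimes e_1) + \alpha_n \cdot (p_n \otimes e_1) \circ U_n \circ (p_n \otimes e_1),
\]
where $U_n$ denotes the cup-cap element on the last two strands and $\alpha_n \in \k$ is determined (with the appropriate sign) by the quantum integer $[n+1]_{q,\eps}$. Since $q$ is not a root of unity, every $[n]$ is invertible, so the recursion is well-defined. An induction on $n$, using the defining relations of $\mathcal{STL}(\delta)$ together with the super interchange law, shows that each $p_n$ is an even idempotent that is annihilated by any cup or cap attached to two adjacent strands.

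Step 2. Let $L_n := (n, p_n)$ denote the image of $p_n$ in $\SKar(\mathcal{STL}(\delta))$. The diagrammatic basis of Theorem~\ref{stlbasis} together with the cap/cup annihilation property forces $p_n \circ \End_{\mathcal{STL}(\delta)}(n) \circ p_n = \k \cdot p_n$, hence $\End_{\SKar}(L_n) \cong \k$ and $L_n$ is absolutely simple. The $\Z/2$-grading on $\End(n)$ and a dimension count using the crossingless-matching basis distinguish $L_m$ from $L_n$ and $\Pi L_n$ whenever $m \neq n$, and distinguish $L_n$ from $\Pi L_n$.

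Step 3 (main obstacle). I prove the fusion rule $L_n \otimes L_1 \cong L_{n+1} \oplus \Pi L_{n-1}$ by exhibiting an orthogonal idempotent decomposition $p_n \otimes e_1 = p_{n+1} + q_n$ and constructing an isomorphism $\mathrm{im}(q_n) \cong \Pi L_{n-1}$. The parity twist $\Pi$ on the second summand is forced by the second defining relation of $\mathcal{STL}(\delta)$ (the rightward-facing zigzag equals $\eps$ times the identity): the natural morphism $L_{n-1} \to \mathrm{im}(q_n)$ is built by capping off the rightmost two strands of $n\otimes 1$ after inserting $p_{n-1}$, but this cap is odd, so the induced isomorphism lands in $\Pi L_{n-1}$ rather than $L_{n-1}$ itself. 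Verifying the orthogonality and idempotency of $p_{n+1}$ and $q_n$ requires careful bookkeeping of signs that do not appear in the classical Temperley-Lieb setting.

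Step 4. Iterating Step 3 proves by induction on $\min(m,n)$ the full Clebsch-Gordan decomposition $L_m \otimes L_n \cong \bigoplus_{r=0}^{\min(m,n)} \Pi^r L_{m+n-2r}$. Since the generating object of $\mathcal{STL}(\delta)$ is $L_1$, every object $n$ therefore decomposes in $\SKar$ as a direct sum of objects of the form $\Pi^a L_m$. Combined with the absolute simplicity from Step 2, this establishes that $\SKar(\mathcal{STL}(\delta))$ is semisimple with irreducible objects $\{L_n, \Pi L_n \mid n \in \N\}$. On $K_0$, the fusion rule translates into $[L_m]\cdot[L_n] = \sum_{r=0}^{\min(m,n)} \pi^r [L_{m+n-2r}]$, matching (\ref{qint2}) exactly. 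The assignment $[L_n] \mapsto [n+1]_{x,\pi}$ then defines the desired ring isomorphism from $K_0(\SKar(\mathcal{STL}(\delta)))$ onto the stated subring of $\Z^\pi[x,x^{-1}]$.
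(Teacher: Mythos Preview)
Your proposal is correct and follows essentially the same route as the paper: super Jones--Wenzl projectors via Wenzl's recursion, scalar endomorphism rings from the crossingless-matching basis plus cup/cap annihilation, and the fusion rule $L_n \otimes L_1 \cong L_{n+1} \oplus \Pi L_{n-1}$ via an explicit orthogonal idempotent decomposition with odd morphisms witnessing the $\Pi$-shift. The one difference is that the paper deduces semisimplicity directly from $f_n A f_m = 0$ for $m \neq n$ (passing to the locally unital superalgebra $A = \bigoplus_{m,n} \Hom(m,n)$ and arguing that every irreducible is some $P(n)=f_nA$) rather than by iteratively decomposing tensor powers, which is slightly cleaner and makes your detour through the full Clebsch--Gordan formula unnecessary---the single relation from Step~3 already pins down the ring structure.
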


\begin{proof}
We begin by defining super analogs of the {\em Jones-Wenzl
  projectors}
$$
f_n = 
\mathord{
\begin{tikzpicture}[baseline = -2.2]
	\draw[-,thick,darkpurple] (0.28,-.4) to (0.28,-.15);
	\draw[-,thick,darkpurple] (-0.12,-.4) to (-0.12,-.15);
	\draw[-,thick,darkpurple] (-0.12,.4) to (-0.12,.15);
	\draw[-,thick,darkpurple] (0.28,.4) to (0.28,.15);
	\draw[-,thick,darkpurple] (-0.3,.15) to (0.46,.15);
	\draw[-,thick,darkpurple] (0.46,-.15) to (0.46,.15);
	\draw[-,thick,darkpurple] (-0.3,-.15) to (0.46,-.15);
	\draw[-,thick,darkpurple] (-0.3,-.15) to (-0.3,.15);
   \node at (0.08,0) {\color{darkpurple}$\scriptstyle{n}$};
   \node at (0.08,.28) {\color{darkpurple}$\scriptstyle{\cdots}$};
   \node at (0.08,-.28) {\color{darkpurple}$\scriptstyle{\cdots}$};
\end{tikzpicture}
}\:
\in \End_{\mathcal{STL}(\delta)}(n).
$$
These are defined recursively by setting
$f_0 := \unit$ and
$$
\mathord{
\begin{tikzpicture}[baseline = -2.2]
	\draw[-,thick,darkpurple] (0.33,-.4) to (0.33,-.15);
	\draw[-,thick,darkpurple] (-0.17,-.4) to (-0.17,-.15);
	\draw[-,thick,darkpurple] (-0.17,.4) to (-0.17,.15);
	\draw[-,thick,darkpurple] (0.33,.4) to (0.33,.15);
	\draw[-,thick,darkpurple] (-0.35,.15) to (0.51,.15);
	\draw[-,thick,darkpurple] (0.51,-.15) to (0.51,.15);
	\draw[-,thick,darkpurple] (-0.35,-.15) to (0.51,-.15);
	\draw[-,thick,darkpurple] (-0.35,-.15) to (-0.35,.15);
   \node at (0.08,0) {\color{darkpurple}$\scriptstyle{n+1}$};
   \node at (0.08,.28) {\color{darkpurple}$\scriptstyle{\cdots}$};
   \node at (0.08,-.28) {\color{darkpurple}$\scriptstyle{\cdots}$};
\end{tikzpicture}
}
:=
\mathord{
\begin{tikzpicture}[baseline = -2.2]
	\draw[-,thick,darkpurple] (0.28,-.4) to (0.28,-.15);
	\draw[-,thick,darkpurple] (0.6,-.4) to (0.6,.4);
	\draw[-,thick,darkpurple] (-0.12,-.4) to (-0.12,-.15);
	\draw[-,thick,darkpurple] (-0.12,.4) to (-0.12,.15);
	\draw[-,thick,darkpurple] (0.28,.4) to (0.28,.15);
	\draw[-,thick,darkpurple] (-0.3,.15) to (0.46,.15);
	\draw[-,thick,darkpurple] (0.46,-.15) to (0.46,.15);
	\draw[-,thick,darkpurple] (-0.3,-.15) to (0.46,-.15);
	\draw[-,thick,darkpurple] (-0.3,-.15) to (-0.3,.15);
   \node at (0.08,0) {\color{darkpurple}$\scriptstyle{n}$};
   \node at (0.08,.28) {\color{darkpurple}$\scriptstyle{\cdots}$};
   \node at (0.08,-.28) {\color{darkpurple}$\scriptstyle{\cdots}$};
\end{tikzpicture}
}\:
+\frac{[n]}{[n+1]}
\mathord{
\begin{tikzpicture}[baseline = -2.2]
	\draw[-,thick,darkpurple] (0.28,-.15) to[out=90,in=90] (0.6,-.15);
	\draw[-,thick,darkpurple] (0.6,-.15) to (0.6,-.7);
	\draw[-,thick,darkpurple] (-0.12,.7) to (-0.12,.45);
	\draw[-,thick,darkpurple] (0.28,.7) to (0.28,.45);
	\draw[-,thick,darkpurple] (-0.3,.45) to (0.46,.45);
	\draw[-,thick,darkpurple] (0.46,.15) to (0.46,.45);
	\draw[-,thick,darkpurple] (-0.3,.15) to (0.46,.15);
	\draw[-,thick,darkpurple] (-0.3,.15) to (-0.3,.45);
   \node at (0.08,0.3) {\color{darkpurple}$\scriptstyle{n}$};
   \node at (0.08,.58) {\color{darkpurple}$\scriptstyle{\cdots}$};
   \node at (0.08,0) {\color{darkpurple}$\scriptstyle{\cdots}$};
	\draw[-,thick,darkpurple] (0.28,-.7) to (0.28,-.45);
	\draw[-,thick,darkpurple] (-0.12,-.7) to (-0.12,-.45);
	\draw[-,thick,darkpurple] (-0.12,.15) to (-0.12,-.15);
	\draw[-,thick,darkpurple] (0.28,.15) to[out=-90,in=-90] (0.6,.15);
	\draw[-,thick,darkpurple] (0.6,.15) to (0.6,.7);
	\draw[-,thick,darkpurple] (-0.3,-.15) to (0.46,-.15);
	\draw[-,thick,darkpurple] (0.46,-.45) to (0.46,-.15);
	\draw[-,thick,darkpurple] (-0.3,-.45) to (0.46,-.45);
	\draw[-,thick,darkpurple] (-0.3,-.45) to (-0.3,-.15);
   \node at (0.08,-0.3) {\color{darkpurple}$\scriptstyle{n}$};
   \node at (0.08,-.58) {\color{darkpurple}$\scriptstyle{\cdots}$};
\end{tikzpicture}
}\:.
$$
Clearly, $f_n$ is equal to $e_n$
plus a linear combination of
diagrams with at least one cup and cap. Hence, using Theorem~\ref{stlbasis},
$f_n$ is non-zero.
By (\ref{qint2}), we have that
$[n][2] = [n+1] + \eps [n-1]$.
Using this,
an easy but crucial inductive calculation shows that
$$
\mathord{
\begin{tikzpicture}[baseline = -2.2]
	\draw[-,thick,darkpurple] (0.28,-.2) to[out=-90,in=-90] (0.6,-.2);
	\draw[-,thick,darkpurple] (0.28,.2) to[out=90,in=90] (0.6,.2);
	\draw[-,thick,darkpurple] (0.28,-.2) to (0.28,-.15);
	\draw[-,thick,darkpurple] (0.6,-.2) to (0.6,.2);
	\draw[-,thick,darkpurple] (-0.12,-.4) to (-0.12,-.15);
	\draw[-,thick,darkpurple] (-0.12,.4) to (-0.12,.15);
	\draw[-,thick,darkpurple] (0.28,.2) to (0.28,.15);
	\draw[-,thick,darkpurple] (-0.3,.15) to (0.46,.15);
	\draw[-,thick,darkpurple] (0.46,-.15) to (0.46,.15);
	\draw[-,thick,darkpurple] (-0.3,-.15) to (0.46,-.15);
	\draw[-,thick,darkpurple] (-0.3,-.15) to (-0.3,.15);
   \node at (0.08,0) {\color{darkpurple}$\scriptstyle{n}$};
   \node at (0.08,.28) {\color{darkpurple}$\scriptstyle{\cdots}$};
   \node at (0.08,-.28) {\color{darkpurple}$\scriptstyle{\cdots}$};
\end{tikzpicture}
}
=
-\frac{[n+1]}{[n]}
\mathord{
\begin{tikzpicture}[baseline = -2.2]
	\draw[-,thick,darkpurple] (0.28,-.4) to (0.28,-.15);
	\draw[-,thick,darkpurple] (-0.12,-.4) to (-0.12,-.15);
	\draw[-,thick,darkpurple] (-0.12,.4) to (-0.12,.15);
	\draw[-,thick,darkpurple] (0.28,.4) to (0.28,.15);
	\draw[-,thick,darkpurple] (-0.3,.15) to (0.46,.15);
	\draw[-,thick,darkpurple] (0.46,-.15) to (0.46,.15);
	\draw[-,thick,darkpurple] (-0.3,-.15) to (0.46,-.15);
	\draw[-,thick,darkpurple] (-0.3,-.15) to (-0.3,.15);
   \node at (0.08,0) {\color{darkpurple}$\scriptstyle{n-1}$};
   \node at (0.08,.28) {\color{darkpurple}$\scriptstyle{\cdots}$};
   \node at (0.08,-.28) {\color{darkpurple}$\scriptstyle{\cdots}$};
\end{tikzpicture}
}\:,
$$
and each $f_n$ is an idempotent.
Moreover,
one gets zero 
if one vertically composes $f_n$ on top (resp. bottom)
with any diagram involving a cap (resp. a cup).

To prove the semisimplicity, we find it convenient to 
replace the supercategory $\mathcal{STL}(\delta)$ with the superalgebra
$$
A := \displaystyle\bigoplus_{m,n \in \N}
\Hom_{\mathcal{STL}(\delta)}(m,n),
$$
whose multiplication is induced by composition in $\mathcal{STL}(\delta)$.
Note that $A$ is a  {\em locally unital superalgebra}
with distinguished idempotents $\{e_n\:|\:n \in \N\}$.
Moreover, it is 
{\em locally finite dimensional} in the sense that each
$e_n A e_m$ is a finite-dimensional superspace.
Consider the $\Pi$-supercategory $\rSMod A$ consisting of right
$A$-supermodules $V$ 
which are themselves locally unital in the sense that
$V = \bigoplus_{n \in \N} V e_n$. 
Like in Example~\ref{day}(i),
there is an equivalence between $\SKar(\mathcal{STL}(\delta))$ and the
full subcategory of 
$\underline{\rSMod} A$ consisting of all finitely generated projective
supermodules.
Thus, we are reduced to working in $\rSMod A$.
Let $P(n) := f_n A$, which is a projective supermodule.
Let $L$ be any irreducible $A$-supermodule. Let $n \in \N$ be minimal such
that $L e_n \neq 0$. The minimality of $n$ ensures that any basis
element of $A$ with a cup in its diagram acts as zero on
$L e_n$.
We deduce that $\Hom_A(P(n), L) \cong L f_n = L e_n \neq 0$, demonstrating
that $L$ is a quotient of $P(n)$ or $\Pi P(n)$.
Moreover, $\End_A(P(n)) = f_n A f_n \cong \k$, so 
$P(n)$ is indecomposable; equivalently,
$f_n$ is a primitive idempotent.
Also for $m \neq n$, we have that 
$\Hom_A(P(m), P(n)) = f_n A f_m = 0$.
These observations together imply that 
every $A$-supermodule
is completely reducible, and
each irreducible $A$-supermodule is evenly isomorphic 
to a unique 
one of the supermodules
$\{P(n), \Pi P(n)\:|\:n \in \N\}$, which are themselves irreducible.

The previous paragraph implies that $\SKar(\mathcal{STL}(\delta))$
is a semisimple Abelian category.
Moreover, we get a basis for $K_0(\SKar(\mathcal{STL}(\delta)))$
by taking the isomorphism classes in
$\SKar(\mathcal{STL}(\delta))$ corresponding to
the primitive idempotents
$\left\{(f_n)^{\0}_{\0}, (f_n)^{\1}_{\1} \:\big|\:n \in \N\right\}$.
Thus, 
we can identify $K_0(\SKar(\mathcal{STL}(\delta)))$ with
the ring in the statement of the theorem using the correspondence
$(f_n)^{\0}_{\0} \leftrightarrow [n+1]_{x,\pi}$
and 
$(f_n)^{\1}_{\1} \leftrightarrow \pi [n+1]_{x,\pi}$.
To complete the proof of the theorem, it remains to check that the
ring structures agree. Since 
$[n]_{x,\pi} [2]_{x,\pi} = [n+1]_{x,\pi} + \pi [n-1]_{x,\pi}$ by (\ref{qint2}),
we must show that
the idempotents $(f_{n-1})^{\0}_{\0} \otimes (f_1)^{\0}_{\0}$
and $(f_n)^{\0}_{\0} + (f_{n-2})^{\1}_{\1}$
are equivalent for each $n \geq 2$.
We have that $(f_{n-1})^{\0}_{\0} \otimes (f_1)^{\0}_{\0}
= (f_{n-1} \otimes f_1)^{\0}_{\0} = (f_n)_{\0}^{\0}+(g_n)_{\0}^{\0}$
where
$$
g_n := 
-\frac{[n-1]}{[n]}
\mathord{
\begin{tikzpicture}[baseline = -2.2]
	\draw[-,thick,darkpurple] (0.28,-.15) to[out=90,in=90] (0.6,-.15);
	\draw[-,thick,darkpurple] (0.6,-.15) to (0.6,-.7);
	\draw[-,thick,darkpurple] (-0.12,.7) to (-0.12,.45);
	\draw[-,thick,darkpurple] (0.28,.7) to (0.28,.45);
	\draw[-,thick,darkpurple] (-0.3,.45) to (0.46,.45);
	\draw[-,thick,darkpurple] (0.46,.15) to (0.46,.45);
	\draw[-,thick,darkpurple] (-0.3,.15) to (0.46,.15);
	\draw[-,thick,darkpurple] (-0.3,.15) to (-0.3,.45);
   \node at (0.08,0.3) {\color{darkpurple}$\scriptstyle{n-1}$};
   \node at (0.08,.58) {\color{darkpurple}$\scriptstyle{\cdots}$};
   \node at (0.08,0) {\color{darkpurple}$\scriptstyle{\cdots}$};
	\draw[-,thick,darkpurple] (0.28,-.7) to (0.28,-.45);
	\draw[-,thick,darkpurple] (-0.12,-.7) to (-0.12,-.45);
	\draw[-,thick,darkpurple] (-0.12,.15) to (-0.12,-.15);
	\draw[-,thick,darkpurple] (0.28,.15) to[out=-90,in=-90] (0.6,.15);
	\draw[-,thick,darkpurple] (0.6,.15) to (0.6,.7);
	\draw[-,thick,darkpurple] (-0.3,-.15) to (0.46,-.15);
	\draw[-,thick,darkpurple] (0.46,-.45) to (0.46,-.15);
	\draw[-,thick,darkpurple] (-0.3,-.45) to (0.46,-.45);
	\draw[-,thick,darkpurple] (-0.3,-.45) to (-0.3,-.15);
   \node at (0.08,-0.3) {\color{darkpurple}$\scriptstyle{n-1}$};
   \node at (0.08,-.58) {\color{darkpurple}$\scriptstyle{\cdots}$};
\end{tikzpicture}
}\:\:.
$$
Using the properties from the first paragraph of the proof,
we have that $g_n\circ g_n = g_n$ and $g_n \circ f_n = f_n \circ g_n =
0$, so $(f_n)^{\0}_{\0}$ and $(g_n)^{\0}_{\0}$ are orthogonal
idempotents.
It just remains to observe that 
$$
u_n :=
-\frac{[n-1]}{[n]}
\mathord{
\begin{tikzpicture}[baseline = 6]
	\draw[-,thick,darkpurple] (0.28,.05) to[out=-90,in=-90] (0.6,.05);
	\draw[-,thick,darkpurple] (0.6,.05) to (0.6,.7);
	\draw[-,thick,darkpurple] (0.28,.05) to (0.28,.15);
	\draw[-,thick,darkpurple] (-0.12,.7) to (-0.12,.45);
	\draw[-,thick,darkpurple] (0.28,.7) to (0.28,.45);
	\draw[-,thick,darkpurple] (-0.3,.45) to (0.46,.45);
	\draw[-,thick,darkpurple] (0.46,.15) to (0.46,.45);
	\draw[-,thick,darkpurple] (-0.3,.15) to (0.46,.15);
	\draw[-,thick,darkpurple] (-0.3,.15) to (-0.3,.45);
	\draw[-,thick,darkpurple] (-0.12,-.1) to (-0.12,.15);
   \node at (0.08,0.3) {\color{darkpurple}$\scriptstyle{n-1}$};
   \node at (0.08,.58) {\color{darkpurple}$\scriptstyle{\cdots}$};
   \node at (0.08,0.02) {\color{darkpurple}$\scriptstyle{\cdots}$};
\end{tikzpicture}
}\:\:,
\qquad
v_n :=
\mathord{
\begin{tikzpicture}[baseline = -10.3]
	\draw[-,thick,darkpurple] (0.28,-.05) to[out=90,in=90] (0.6,-.05);
	\draw[-,thick,darkpurple] (0.6,-.05) to (0.6,-.7);
	\draw[-,thick,darkpurple] (0.28,-.05) to (0.28,-.15);
	\draw[-,thick,darkpurple] (0.28,-.7) to (0.28,-.45);
	\draw[-,thick,darkpurple] (-0.12,-.7) to (-0.12,-.45);
	\draw[-,thick,darkpurple] (-0.12,.15) to (-0.12,-.15);
	\draw[-,thick,darkpurple] (-0.3,-.15) to (0.46,-.15);
	\draw[-,thick,darkpurple] (0.46,-.45) to (0.46,-.15);
	\draw[-,thick,darkpurple] (-0.3,-.45) to (0.46,-.45);
	\draw[-,thick,darkpurple] (-0.3,-.45) to (-0.3,-.15);
   \node at (0.08,-0.3) {\color{darkpurple}$\scriptstyle{n-1}$};
   \node at (0.08,-.58) {\color{darkpurple}$\scriptstyle{\cdots}$};
   \node at (0.08,-0.02) {\color{darkpurple}$\scriptstyle{\cdots}$};
\end{tikzpicture}
}\:\:
$$
are odd morphisms in $\mathcal{STL}(\delta)$ such that $u_n \circ v_n
= g_n$ and $v_n \circ u_n = f_{n-2}$.
Hence, we get that
$(v_n)_{\0}^{\1} \circ (g_n)_{\0}^{\0} \circ (u_n)^{\0}_{\1} =
(f_{n-2})^{\1}_{\1}$, i.e.
$(g_n)^{\0}_{\0}$ is
equivalent to $(f_{n-2})^{\1}_{\1}$ in
$\SKar(\mathcal{STL}(\delta))$, as required.
\end{proof}

To explain what is really going on here, assume finally that the ground field
$\k$ is of characteristic zero.
Let $U = \dot U_q(\mathfrak{osp}_{1|2})$ be the locally unital
superalgebra
with homogeneous distinguished idempotents $\{1_n\:|\:n\in \Z\}$
and odd generators $E_n \in 1_{n+2} U 1_n$
and $F_n \in 1_{n-2} U 1_n$,
subject to the relations
$$
E_{n-2} F_n - \eps F_{n+2} E_n = [n] 1_n.
$$
This is the idempotented form of the quantum supergroup $U_q(\mathfrak{osp}_{1|2})$
introduced in \cite{CW}\footnote{More precisely, our $U$ is the 
idempotented form of the 
algebra 
from
 \cite{CW} as defined in \cite{Clark}. Also, we are using a
different convention for quantum integers compared to
\cite{CW, Clark}: our $q$ is the
same as the parameter $q^{-1}$ of \cite{CW} or the parameter $v^{-1}$ of \cite{Clark}.}.
Let $\mathcal C$ be the $\Pi$-supercategory
of all finite-dimensional left $U$-supermodules $V$
which are locally unital in the sense that $V = \bigoplus_{n \in \Z} 1_n V$.
By \cite{CW}, the underlying $\Pi$-category
$\underline{\C}$
is a semisimple Abelian category, and a complete set of pairwise
inequivalent irreducible objects is given by $\{V(n), \Pi V(n)\:|\:n
\in \N\}$, where $V(n)$ is defined as follows.
It has a homogeneous basis $v_{n},
v_{n-2},\dots, v_{-n}$ with $|v_{i}| = (n-i)/2\pmod{2}$.
We have that $1_i v_i = v_i$. The appropriate $E$'s and $F$'s act on the basis by
the following scalars:
\begin{align*}E&:v_{n} \stackrel{[n]}{\longleftarrow}
v_{n-2} 
\stackrel{[n-1]}{\longleftarrow}
\cdots 
\stackrel{[2]}{\longleftarrow}
v_{2-n}
\stackrel{[1]}{\longleftarrow}
v_{-n},\\
 F&:v_{n} \stackrel{[1]}{\longrightarrow}
v_{n-2} 
\stackrel{\eps[2]}{\longrightarrow}
v_{n-4} 
\stackrel{[3]}{\longrightarrow}
\cdots 
\stackrel{\eps^{n-1}[n]}{\longrightarrow}
v_{-n}.
\end{align*}
For example: $(E_{n-2} F_{n} - \eps F_{n+2} E_{n}) v_{n} =
E_{n-2} v_{n-2} = [n] v_{n}$.

We wish next to make $U$ into a Hopf superalgebra by introducing a
comultiplication $\Delta$ and counit $\eps$ defined on generators by
the following:
\begin{align*}
\Delta(1_n) &= \sum_{a+b=n} 1_a \otimes 1_b,
&\eps(1_n) &= \delta_{n,0} 1,\\
\\
\Delta(E_n) &= \sum_{a+b=n} (E_a \otimes 1_b + q^{-a} 1_a \otimes
E_b),
&
\eps(E_n) &= 0,\\
\\
\Delta(F_n) &= \sum_{a+b=n} (\eps^a 1_a\otimes F_b + q^b F_a \otimes
1_b),
&\eps(F_n) &= 0.
\end{align*}
However, some of these formulae involve infinite sums, so don't make sense
yet:
we need some completions!
If $A = \bigoplus_{x,y \in X} 1_x A 1_y$ is any locally unital (super)algebra
with distinguished idempotents indexed by some set
$X$,
we can form the completion $\widehat{A}$
consisting of all 
elements $(a_{xy})_{x,y \in X} \in \prod_{x,y \in X} 1_x A 1_y$
such that for each $x$ there are only finitely many $y$ with $a_{xy}
\neq 0$, and for each $y$ there are only finitely many $x$ with
$a_{xy} = 0$.
Clearly the multiplication on $A$ extends to $\widehat{A}$
to make it into a (super)algebra with $1 = \sum_{x \in X} 1_x$.
Applying this construction to $U$, we get the completion
$\widehat{U}$;
applying it to the superalgebra $U \otimes U$, which is locally unital
with distinguished
idempotents $\{1_m \otimes 1_n\:|\:m, n\in \Z\}$,
we get $\widehat{U \otimes U}$; the triple tensor product $U \otimes U
\otimes U$ may be completed similarly.
Now the formulae above extend canonically to define superalgebra
homomorphisms
$\Delta:\widehat{U} \rightarrow \widehat{U \otimes U}$ and
$\eps:\widehat{U} \rightarrow \k$,
satisfying completed versions of the usual coassociativitiy and counit axioms.
This makes $\widehat{U}$ into a Hopf superalgebra in a completed
sense.
(We remark there are several other possible choices of coproduct here; see \cite[$\S$2.4]{Clark}.)

Given $V, W \in \ob \mathcal C$, the tensor product $V
\otimes W$ is naturally a $U \otimes U$-supermodule.
Since it is finite dimensional, it is a $\widehat{U \otimes
  U}$-supermodule too, hence using $\Delta$ we can view it as a
$U$-supermodule.
This makes $\mathcal C$ into a monoidal $\Pi$-supercategory equipped with
a {\em fiber functor} 
$\nu:\mathcal C \rightarrow \SVEC$, namely,
the obvious forgetful superfunctor.
Setting $V := V(1)$, we also have the monoidal superfunctor $G:\mathcal{STL}(\delta)
\rightarrow \SVEC$ from Lemma~\ref{tennis}.

\begin{theorem}
There is a unique monoidal superfunctor $F:\mathcal{STL}(\delta)
\rightarrow \mathcal C$ such that $G = \nu \circ F$:
$$
\begin{tikzcd}
\mathcal{STL}(\delta)
\arrow[rr,"G"]\arrow[rd,swap,"F"]
&&\SVEC\\
&
\C\arrow[ur,"\nu",swap]
\end{tikzcd}.
$$
Moreover, $F$ induces a monoidal equivalence 
$\tilde F:\SKar(\mathcal{STL}(\delta))\rightarrow\underline{\mathcal C}$.
\end{theorem}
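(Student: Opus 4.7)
The plan is to define $F$ on generators, invoke faithfulness of the forgetful functor to get the defining relations for free, extend to the superadditive Karoubi envelope, and verify the equivalence by matching simples using semisimplicity of both sides.

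For the construction of $F$, set $F(n) := V^{\otimes n}$ (so $F(1) = V = V(1)$) and define $F$ on the cup and cap generators by exactly the formulas that appear in Lemma~\ref{tennis}, now viewed as morphisms in $\C$. Two checks are needed. First, these formulas really do define $U$-supermodule homomorphisms; this is a direct computation using the comultiplication on $\widehat{U}$, and amounts to noting that $v_{-1} \otimes v_1 - q\, v_1 \otimes v_{-1}$ is the invariant vector spanning the trivial summand of the Clebsch-Gordan decomposition $V \otimes V \cong V(2) \oplus V(0)$, with the cap giving the dual invariant. Second, $F$ respects the three defining relations of $\mathcal{STL}(\delta)$; since the relations hold in $\SVEC$ by Lemma~\ref{tennis} and $\nu$ is faithful on morphisms (it merely forgets the $U$-action), they automatically hold in $\C$. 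Uniqueness is immediate: $F$ is forced on objects because $\nu$ is the forgetful functor, and forced on generators by the requirement $G = \nu \circ F$.

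Next, extend $F$ to $\tilde F:\SKar(\mathcal{STL}(\delta)) \to \underline{\C}$. Since $\underline{\C}$ is a $\k$-linear additive idempotent-complete $\Pi$-category (semisimple by Clark-Wang), the universal property of $\SKar$ produces a monoidal functor $\tilde F$. The heart of the proof is the inductive identification $\tilde F((f_n)_\0^\0) \cong V(n)$ in $\underline{\C}$ for all $n \in \N$. The base cases $n = 0, 1$ are immediate. For the inductive step, I would apply $\tilde F$ to the decomposition $(f_{n-1})_\0^\0 \otimes (f_1)_\0^\0 \cong (f_n)_\0^\0 \oplus (f_{n-2})_\1^\1$ established in the proof of Theorem~\ref{old}, obtaining $V(n-1) \otimes V \cong \tilde F((f_n)_\0^\0) \oplus \Pi V(n-2)$. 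On the other hand, a direct computation in $\C$ using the $v_i$-basis shows $V(n-1) \otimes V \cong V(n) \oplus \Pi V(n-2)$; the parity shift appears because $v_i \in V(n-1)$ has parity $(n-1-i)/2 \pmod 2$, which forces the highest-weight vector of the $V(n-2)$-summand to be odd. By Krull-Schmidt $\tilde F((f_n)_\0^\0) \cong V(n)$, completing the induction.

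It now follows that $\tilde F$ is a monoidal equivalence. Essential surjectivity up to even isomorphism holds because every object of $\underline{\C}$ is a direct sum of copies of $V(n)$ and $\Pi V(n) \cong \tilde F((f_n)_\1^\1)$ by Clark-Wang, all of which lie in the image. Full faithfulness follows from Schur's lemma: $\SKar(\mathcal{STL}(\delta))$ is semisimple by Theorem~\ref{old} with simples $\{L(n), \Pi L(n)\}_{n \in \N}$ corresponding to the primitive idempotents; $\underline{\C}$ is semisimple with simples $\{V(n), \Pi V(n)\}_{n \in \N}$; the inductive identification shows these bijections match under $\tilde F$, with $\End \cong \k$ on both sides. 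The main obstacle I anticipate is the parity bookkeeping throughout the induction, where one must consistently verify that the ``bottom'' summand in the quantum Clebsch-Gordan decomposition carries a $\Pi$ shift matching exactly the parity of the primitive idempotent $(f_{n-2})_\1^\1$ produced by the Jones-Wenzl recursion; this alignment is essentially what forces the particular sign conventions chosen in the definition of $\mathcal{STL}(\delta)$ and of $U_q(\mathfrak{osp}_{1|2})$ to be mutually consistent.
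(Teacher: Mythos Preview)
Your proof is correct, but the route you take to establish the equivalence differs from the paper's. For the construction and uniqueness of $F$ the two arguments are essentially identical. The divergence is in proving that $\tilde F$ is an equivalence. You match simples directly by an induction on $n$, applying $\tilde F$ to the Jones--Wenzl decomposition $(f_{n-1})_\0^\0 \otimes (f_1)_\0^\0 \cong (f_n)_\0^\0 \oplus (f_{n-2})_\1^\1$ and comparing with the Clebsch--Gordan formula $V(n-1)\otimes V \cong V(n)\oplus \Pi V(n-2)$ in $\underline{\C}$; Krull--Schmidt then yields $\tilde F((f_n)_\0^\0)\cong V(n)$, and Schur's lemma gives full faithfulness. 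The paper instead argues at the level of Grothendieck rings: it first observes that $\tilde F$ is faithful (inherited from the proof of Theorem~\ref{stlbasis}), then identifies both $K_0(\SKar(\mathcal{STL}(\delta)))$ and $K_0(\underline{\C})$ with the \emph{same} based subring of $\Z^\pi[x,x^{-1}]$ --- the latter via the supercharacter map $\operatorname{SCh}$. Since this ring is generated over $\Z^\pi$ by $[2]_{x,\pi}$, and $\tilde F$ sends the corresponding generator $1\mapsto V$, the induced ring map is the identity on these based rings, hence takes canonical basis to canonical basis. This bypasses the inductive parity bookkeeping entirely. Your approach has the virtue of being self-contained and making the correspondence $f_n \leftrightarrow V(n)$ explicit object-by-object; the paper's approach is shorter and isolates the reason the equivalence holds (both sides have the same based Grothendieck ring, generated by a single element that visibly matches).
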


\begin{proof}
All of the superspaces $V^{\otimes n}$ are naturally objects of
$\mathcal C$. Moreover, the linear maps defined in Lemma~\ref{tennis}
are $U$-supermodule homomorphisms.
This proves the existence and uniqueness of $F$.

The proof of Theorem~\ref{stlbasis} shows that $F$ is faithful. 
Hence, so is the induced functor $\tilde F:\SKar(\mathcal{STL}(\delta))
\rightarrow \underline{\C}$.
Both $\SKar(\mathcal{STL}(\delta))$ and $\underline{\C}$ are
semisimple Abelian. So, to prove that $\tilde F$ is an equivalence,
we just need to show that the induced $\Z^\pi$-algebra
homomorphism
$K_0(\SKar(\mathcal{STL}(\delta))) \rightarrow K_0(\underline{\C})$
sends the canonical basis coming from the classes of irreducibles
in $\SKar(\mathcal{STL}(\delta))$ to that of $\underline{\C}$.

In view of Theorem~\ref{old}, we may identify
$K_0(\SKar(\mathcal{STL}(\delta)))$ with the 
subring of
$\Z^\pi[x,x^{-1}]$ having canonical basis $\left\{[n+1]_{x,\pi}, \pi[n+1]_{x,\pi}\:|\:n
\in \N\right\}$.
Note this is generated as a $\Z^\pi$-algebra just by $[2]_{x,\pi}$,
which corresponds to the object $1$ in $\mathcal{STL}(\delta)$.
To understand $K_0(\underline{\C})$, consider the
map sending a finite-dimensional $U$-supermodule $M$ to its
{\em supercharacter} $$
\operatorname{SCh} M := \sum_{n \in \Z} 
(\dim (1_n M)_\0 x^n + \dim (1_n M)_\1 \pi x^n) \in \Z^\pi[x,x^{-1}].
$$
We have that $\operatorname{SCh} V(n) = [n+1]_{x,\pi}$.
Hence, $\operatorname{SCh}$ induces 
a $\Z^\pi$-algebra isomorphism between $K_0(\underline{\C})$ and the same based
subring of $\Z^\pi[x,x^{-1}]$ as $K_0(\SKar(\mathcal{STL}(\delta)))$.
Moreover, the generator $[2]_{x,\pi}$ is the supercharacter of $V$.
It remains to observe that $F(1) = V$.
\end{proof}

\begin{corollary} The irreducible $U$-supermodule
 $V(n)$ is isomorphic to the image of the idempotent 
$F(f_n) \in \End_{U}(V^{\otimes n})$, where $f_n$ is
the Jones-Wenzl projector from the proof of Theorem~\ref{old}.
\end{corollary}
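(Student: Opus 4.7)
The plan is to deduce the corollary essentially immediately from the monoidal equivalence $\tilde F:\SKar(\mathcal{STL}(\delta)) \stackrel{\sim}{\rightarrow} \underline{\mathcal C}$ just established, combined with the explicit identification of canonical basis elements on Grothendieck rings.

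First, I would recall from the proof of Theorem~\ref{old} that $f_n$ is a primitive idempotent in $\End_{\mathcal{STL}(\delta)}(n)$, so that the pair $(n, (f_n)_\0^\0)$ is an irreducible object of $\SKar(\mathcal{STL}(\delta))$ whose isomorphism class corresponds to the canonical basis element $[n+1]_{x,\pi}$ under the identification $K_0(\SKar(\mathcal{STL}(\delta))) \cong \bigoplus_{n\in\N}\left(\Z[n+1]_{x,\pi}\oplus\Z\pi [n+1]_{x,\pi}\right)$.

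Next, I would apply the monoidal superfunctor $\tilde F$ to this object. Since $F(n) = V^{\otimes n}$ and $\tilde F$ is obtained from $F$ by formally adjoining images of idempotents, $\tilde F(n, (f_n)_\0^\0)$ is by construction the image of the idempotent $F(f_n) \in \End_U(V^{\otimes n})$. Because $\tilde F$ is an equivalence of semisimple $\k$-linear categories, it sends irreducibles to irreducibles; hence $\operatorname{im}(F(f_n))$ is an irreducible object of $\underline{\mathcal C}$.

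Finally, to pin down which irreducible it is, I would use supercharacters. The proof of the previous theorem showed that under $\tilde F$ the canonical basis element $[n+1]_{x,\pi} \in K_0(\SKar(\mathcal{STL}(\delta)))$ is sent to the corresponding element in $K_0(\underline{\mathcal C})$ via the supercharacter map, and $\operatorname{SCh}(V(n)) = [n+1]_{x,\pi}$ while $\operatorname{SCh}(\Pi V(n)) = \pi[n+1]_{x,\pi}$. Since the irreducibles $\{V(n), \Pi V(n)\:|\:n \in \N\}$ are pairwise non-isomorphic and distinguished by their supercharacters, we conclude $\operatorname{im}(F(f_n)) \cong V(n)$. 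The main (very minor) point to verify carefully is just that $\tilde F$ indeed sends the formal image of $(f_n)_\0^\0$ to the honest image of $F(f_n)$ inside $V^{\otimes n}$, which is immediate from how $\tilde F$ is defined on the Karoubi envelope.
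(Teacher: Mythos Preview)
Your proposal is correct and is exactly the argument the paper intends: the corollary is stated without proof immediately after the theorem, precisely because it follows at once from the equivalence $\tilde F$ together with the matching of canonical bases that was just established. Your write-up simply makes explicit the one-line deduction the authors leave to the reader.
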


\end{document}